\let\oldtocsection=\tocsection
\let\oldtocsubsection=\tocsubsection
\let\oldtocsubsubsection=\tocsubsubsection
\renewcommand{\tocsection}[2]{\hspace{0em}\oldtocsection{#1}{#2}}
\renewcommand{\tocsubsection}[2]{\hspace{2em}\oldtocsubsection{#1}{#2}}
\renewcommand{\tocsubsubsection}[2]{\hspace{4.5em}\oldtocsubsubsection{#1}{#2}}
\def\subsection{\@startsection{subsection}{2}
 \z@{.5\linespacing\@plus.7\linespacing}{-.5em}
 {\normalfont\bfseries}}
\def\subsubsection{\@startsection{subsubsection}{3}
 \z@{.5\linespacing\@plus.7\linespacing}{-.5em}
 {\normalfont\bfseries}}
\newtheorem{theorem}{Theorem}[section]
\newtheorem*{theorem*}{Theorem}
\newtheorem{lemma}[theorem]{Lemma}
\newtheorem*{lemma*}{Lemma}
\newtheorem{example}[theorem]{Example}
\newtheorem*{example*}{Example}
\newtheorem*{corollary*}{Corollary}
\newtheorem{definition}[theorem]{Definition}
\newtheorem*{definition*}{Definition}
\newtheorem{remark}[theorem]{Remark}
\newtheorem*{remark*}{Remark}
\newtheorem{notation}[theorem]{Notation}
\newtheorem*{notation*}{Notation}
\newtheorem{lemma-notation}[theorem]{Lemma-Notation}
\newtheorem*{lemma-notation*}{Lemma-Notation}
\newtheorem{lemma-definition}[theorem]{Lemma-Definition}
\newtheorem*{lemma-definition*}{Lemma-Definition}
\newtheorem{proposition}[theorem]{Proposition}
\newtheorem*{proposition*}{Proposition}
\numberwithin{equation}{section}
\def \RR {{\mathbb R}}         %mathsets
\def \CC {{\mathbb C}}
\def\KK{{\mathbb K}}
\def \TT {{\mathbb T}}
\def \cE {{\mathcal{E}}}
\def \O  {{\mathcal{O}}}
\def \al {\alpha}
\def \la {\lambda}
\def \ra  {\rightarrow}           %maps
\def \la {\langle}
\def \ra {\rangle}
\def \g  {\mathfrak{g}}   % Lie algebra letters
\def \h  {\mathfrak{h}}
\def \b  {\mathfrak{b}}
\def \a  {\mathfrak{a}}
\def \t  {\mathfrak{t}}
\newcommand{\beqa}{\begin{eqnarray*}}                     %added by Lu
\newcommand{\eeqa}{\end{eqnarray*}}
\def \hs {\hspace{.2in}}
\def \hhs {\hspace{.1in}}
\def \lara {\la \, , \, \ra}
\def \bfv {\underline{v}}
\def \lara {\la \, , \, \ra}
\def \bw {{\bf w}}
\def \KK {{\mathbb{K}}}
\def \bk {{\KK}}
\def \bfu {{\bf u}}
\def \bfv {{\bf v}}
\def \bfw {{\bf w}}
\def \bfv {{\bf v}}
\def \bfw {{\bf w}}
\def \pist {\pi_{\rm st}}
\def \sQ {{\scriptscriptstyle Q}}
\def \sG {{\scriptscriptstyle G}}
\def \sX {{\scriptscriptstyle X}}
\def \sM {{\scriptscriptstyle M}}
\def \sN {{\scriptscriptstyle N}}
\def \piX {{\pi_{\scriptscriptstyle X}}}
\def \lrw {\longrightarrow}
\def \Pist {\Pi_{\rm st}}
\def \cD {{\mathcal D}}
\def \sL {{\scriptscriptstyle L}}
\def \sF {{\scriptscriptstyle F}}
\def \T {\mathbb T}
\def \ow {\overline{w}}
\def \ou {\overline{u}}
\def \ov {\overline{v}}
\def \cR {{\mathcal{R}}}
\def \calQ {{\mathcal{Q}}}
\def \sG {{\scriptscriptstyle G}}
\def \sB {{\scriptscriptstyle B}}
\def \sQ {{\scriptscriptstyle Q}}
\def \sT {{\scriptscriptstyle T}}
\def \sTT {{\scriptscriptstyle \TT}}
\def \sBS {{\scriptscriptstyle BS}}
\def \cP {{\mathcal{P}}}
\def \Pos {{\rm Pos}}
\def \Poly {{\rm Poly}}
\def \cA {{\mathcal{A}}}
\def \sT {{\scriptscriptstyle T}}
\def \bphi {\boldsymbol{\phi}}
\def \bbeta {\boldsymbol{\beta}}
\def \bsigma {\boldsymbol{\sigma}}
\def \br {{\bf r}}
\def \tbfu {\tilde{\bfu}}
\def \sp {\mathfrak{sp}}
\def \SP {{\rm SP}}
\begin{document}

\setlength{\baselineskip}{1.1\baselineskip}
%%%%%%%%%%%%%%%%%%%%%%%%%%%%%%%%%%%%%%%%%%%%%%%%%%%%%%%%%%%%%%%%%%%%%%%%%%%
%%%%%%%%%%%%%%%%%%%%%%    Title    %%%%%%%%%%%%%%%%%%%%%%%%%%%%%%%%%%%%%%%%
\title[Bott-Samelson atlases, total positivity, and Poisson structures]
{Bott-Samelson atlases, total positivity, and Poisson structures on some homogeneous spaces}
\author{Jiang-Hua Lu}
\address{
Department of Mathematics   \\
The University of Hong Kong \\
Pokfulam Road               \\
Hong Kong}
\email{jhlu@maths.hku.hk}
\author{Shizhuo Yu}
%\address{}
\email{yusz@connect.hku.hk}
\date{}
\begin{abstract} 
Let $G$ be a connected and simply connected complex semisimple Lie group.
For a collection of homogeneous $G$-spaces $G/Q$, we construct a finite atlas $\cA_{\sBS}(G/Q)$ on $G/Q$, called the {\it Bott-Samelson atlas},
and we prove that all of its coordinate functions are
positive with respect to the Lusztig positive structure on $G/Q$.
We also show that the standard Poisson structure
$\pi_{\sG/\sQ}$ on $G/Q$ is presented, in each of the coordinate charts of $\cA_{\sBS}(G/Q)$,
as a symmetric Poisson CGL extension (or a certain localization thereof) in the sense of Goodearl-Yakimov, making
$(G/Q, \pi_{\sG/\sQ}, \cA_{\sBS}(G/Q))$ into a {\it Poisson-Ore variety}. Examples of $G/Q$ include
$G$ itself, $G/T$, $G/B$, and $G/N$,  where $T \subset G$ is a maximal torus, $B \subset G$ a Borel subgroup, and
$N$ the uniradical of $B$.
\end{abstract}
\maketitle
\tableofcontents
%%%%%%%%%%%%%%%%%%%%   Introduction   %%%%%%%%%%%%%%%%%%%%%%%%%%%%%%%%%%%%%%%%

\addtocontents{toc}{\protect\setcounter{tocdepth}{1}}
\section{Introduction and the main results}\label{sec:intro}
\subsection{Introduction}\label{ssec:intro}
Let $G$ be a connected and simply connected complex semi-simple Lie group with Lie algebra $\g$. Certain
geometric aspects of $G$ were revealed after the discovery of quantum groups,  among them the notion of {\it total positivity} on
$G$ and the {\it standard multiplicative Poisson structure}\footnote{The word ``standard" here refers to the fact that the Poisson structure $\pist$ is defined using the standard classical $r$-matrix on the Lie algebra of $G$, as opposed to the more general Belavin-Drinfeld ones \cite{BD}.}  $\pist$ on $G$. Indeed,
Lusztig introduced \cite{Lusz:pos, Lusz:intro, Lusz:survey} the {totally positive part $G_{>0}$ of $G$
 via his work on representations of the quantized universal enveloping algebra $U_q(\g)$, while
Drinfeld introduced \cite{dr:hamil, dr:quantum} the standard multiplicative
Poisson structure $\pist$ on $G$ as the semi-classical limit of the (standard)
quantum coordinate ring
$\CC_q[G]$ of $G$.  The pair $(G, \pist)$ is the prototypical example of a complex
Poisson Lie group (see, for example, \cite{dr:hamil, etingof-schiffmann} and $\S$\ref{ssec:GBcs}).

Both the totally positive part $G_{>0}$ of $G$ defined by Lusztig and the standard Poisson structure on $G$
can be extended to certain homogeneous spaces of $G$.
In this paper, for a special collection of homogeneous spaces $G/Q$, we construct a finite atlas on $G/Q$, called the {\it Bott-Samelson atlas}, and we
prove some remarkable properties of both the Lusztig total positivity and
the standard Poisson structure on $G/Q$, expressed through the Bott-Samelson atlas.
The homogeneous spaces considered in this paper are precisely all the diagonal $G$-orbits in the double flag variety $(G/B) \times (G/B)$ and in
$(G/ B) \times (G/N)$, where $B$ is a Borel subgroup of $G$ and $N$ the unipotent radical of $B$, and particular examples include
$G$ itself, $G/T, G/B$ and $G/N$,
where $T \subset B$ is a maximal torus of $G$. See
\eqref{eq:Q-list} for the precise descriptions of all the subgroups $Q$. 
%The main building blocks for the Bott-Samelson atlases on $G/Q$ are {\it generalized Bruhat cells}, introduced and studied in
%\cite{Elek-Lu:BS, Lu-Mou:flags}.

To give the precise statements of the main results of the paper, we first briefly review some background on total positivity and Poisson structures.

%\subsection{Total positivity and positive structures}\label{ssec:intro-pos}

The geometric structures underlying total positivity are the so-called {\it positive structures} on varieties, as defined in \cite{BK:CrystalsII, FG:IHES}.
Here we briefly recall (see $\S$\ref{ssec:pos} for details)
that a positive structure on an irreducible rational complex variety $X$ is a {\it positive equivalence class} $\cP^{\sX}$
of toric charts on $X$ (see Definition \ref{de:equi-charts}).
% where two toric charts on $X$ are said to be positively equivalent if their coordinate transformations can be expressed in a
%subtraction-free fashion
A positive structure $\cP^{\sX}$ on $X$ gives rise to a well-defined
 {\it totally positive part} $X_{>0}$ of $X$, and thus the notion of total positivity, by setting all the coordinates in one (equivalently, any)
toric chart in $\cP^{\sX}$ to be positive. The positive structure $\cP^{\sX}$ also gives rise to the semi-field $\Pos(X, \cP^\sX)$ of {\it positive functions},
which, by definition, are non-zero rational functions on $X$ that have subtraction-free expressions in the coordinates of one (equivalently, any)
toric chart in $\cP^\sX$.

For a connected and simply connected complex semi-simple Lie group $G$, positively equivalent toric charts on
$G$ giving rise to the totally positive part $G_{>0}$ of Lusztig were described in \cite{Lusz:pos}
(although the term ``positive structure" was not used), and it follows from \cite[Theorem 1.11 and Theorem 1.12]{FZ:double}
that all generalized minors on $G$ are positive functions, and that
an element $g \in G$ lies in $G_{>0}$ if and only if $\Delta(g) > 0$ for every
generalized minor $\Delta$.
Positive structures on double Bruhat cells and Schubert varieties induced by the Lusztig positive structure on $G$ have been considered
in \cite{ABHY1, ABHY2, BZ:total, FZ:double}. See also \cite{FG:IHES} for related positive structures
in higher Teichm${\rm \ddot{u}}$ller theory.

Extending the case for $G$ (and some other cases already existing in the literature),
we define in $\S$\ref{ssec:pos-GQ} the Lusztig positive structure $\cP^{\sG/\sQ}_{\rm Lusztig}$ on $G/Q$ for each $Q$ in
 \eqref{eq:Q-list}.

Turning to Poisson structures, it is easy to prove (see $\S$\ref{ssec:piGQ} for detail)
that for each of the homogeneous spaces $G/Q$ from \eqref{eq:Q-list}, the standard Poisson
structure $\pist$ on $G$ projects to a well-defined Poisson structure on $G/Q$, which we will denote by $\pi_{\sG/\sQ}$ and refer to as the
{\it standard Poisson structure} on $G/Q$. The pair
$(G/Q, \pi_{\sG/\sQ})$ is then a prototypical example of {\it Poisson homogeneous spaces} \cite{dr:homog} of the Poisson Lie group $(G, \pist)$.

%The Lusztig positive structure $\cP^{\sG/\sQ}_{\rm Lusztig}$ and the standard Poisson structure
%$\pi_{\sG/\sQ}$ are the main interests of study for this paper.

It has been noticed, see, for example \cite{GY:PNAS, GY:AMSbook}, that the quantum coordinate rings of many spaces related to
the complex semi-simple Lie group $G$ can be presented as iterated Ore extensions, a notion from the theory of non-commutative rings
\cite{GW}. Correspondingly, their semi-classical limits, which are now the
(classical) coordinate rings with Poisson brackets, are iterated Poisson-Ore extensions.
Here recall \cite{Oh} that for a Poisson algebra $(A, \{\, , \, \})$ over $\CC$, an {\it Ore extension}
of $(A, \{\, , \, \})$ is the $\CC$-algebra $A[z]$ with a Poisson bracket $\{\,, \, \}$ which extends the Poisson bracket on $A$ and
satisfies $\{z,  A\} \subset zA + A$.
A polynomial Poisson algebra $A = (\CC[z_1,  \ldots, z_n], \{\, , \, \})$
is called an {\it iterated Poisson-Ore extension} if
\begin{equation}\label{eq:zzz-1}
\{\bk[z_{1}, \ldots, z_{i-1}], \; z_i\} \subset z_i \bk[z_{1}, \ldots, z_{i-1}] + \bk[z_{1}, \ldots, z_{i-1}], \hs 2 \leq i \leq n.
\end{equation}
Such an iterated Poisson-Ore extension is said to be {\it symmetric} if it also satisfies
\begin{equation}\label{eq:zzz-0}
\{z_i, \; \bk[z_{i+1}, \ldots, z_n]\} \subset z_i \bk[z_{i+1}, \ldots, z_n] + \bk[z_{i+1}, \ldots, z_n], \hs \forall \; 1 \leq i \leq n-1.
\end{equation}
Iterated Poisson-Ore extensions are Poisson analogs of iterated Ore extensions.

With additional assumptions on the existence of compatible rational actions by split complex tori,
K. Goodearl and M. Yakimov introduced in \cite{GY:PNAS, GY:Poi} a special class of symmetric iterated Poisson-Ore extensions, called
{\it symmetric Poisson CGL extensions} (named after G. Cauchon, K. Goodearl, and E. Letzter) and developed an extensive theory on such extensions
in connection with cluster algebras. In particular, one of the main results of \cite{GY:PNAS, GY:Poi} says that a presentation of a Poisson
algebra $(A, \{\, , \, \})$ as a symmetric Poisson CGL extension naturally gives rise to a cluster algebra structure on $A$ compatible with the Poisson
bracket $\{\,, \, \}$ in the sense of \cite{GSV:book}, i.e., all the cluster variables from the same cluster have log-canonical Poisson brackets.
See \cite{GY:PNAS, GY:BZ-conjecture, GY:AMSbook, GY:double} for applications of the Goodearl-Yakimov theory to
classical and quantum cluster structures on the coordinate rings of double Bruhat cells for complex semisimple Lie groups and of Schubert cells for
symmetrizable Kac-Moody groups.

In this paper,  for an irreducible rational $\TT$-Poisson variety $(X, \piX)$, where $\TT$ is a split $\CC$-torus,
we define
a {\it $\TT$-Poisson-Ore atlas} for $(X, \piX)$ to be an atlas $\cA_\sX$ on $X$ consisting of $\TT$-invariant coordinate charts, in {\it each} one of which
$\piX$ is presented as a symmetric Poisson CGL extension or a localization thereof by homogeneous Poisson prime elements
(Definition \ref{de:Poi-CGL}).
By a {\it $\TT$-Poisson-Ore variety}, we mean a triple $(X, \pi_\sX, \cA_\sX)$, where
$(X, \piX)$ is an irreducible rational $\TT$-Poisson variety,  and $\cA_{\sX}$ is a
{\it $\TT$-Poisson-Ore atlas} for $(X, \piX)$.  A $\TT$-Poisson-Ore variety is also simply called a {\it Poisson-Ore variety}.
 See $\S$\ref{ssec:de-CGL} for detail.

By  \cite{Bog-B:uniformly-rational}, an $n$-dimensional irreducible rational complex variety $X$ is said to be {\it uniformly rational} if it admits
a cover by Zariski open subsets  of $\CC^n$.
A Poisson-Ore variety is thus uniformly rational.
Given that the requirements on a Poisson algebra to be an iterated Poisson-Ore
extension are very restrictive,
and that the changes of coordinates between different coordinate charts are in general highly non-trivial birational
transformations, it is a remarkable feature of any Poisson variety if it admits a Poisson-Ore atlas.
We prove that this is the case for all the homogeneous spaces $(G/Q, \pi_{\sG/\sQ})$ considered in this paper.

What serves as a Poisson-Ore atlas for each $(G/Q, \pi_{\sG/\sQ})$ is a finite atlas $ \cA_{\sBS}(G/Q)$ on $G/Q$
constructed in this paper, which we call the
{\it Bott-Samelson atlas} on $G/Q$. The Bott-Samelson atlas is canonical in the sense that its construction depends only on
the choice of a pinning
$\{T \subset B, \{e_\al\}_{\al \in \Gamma}\}$ for $G$ (see \cite{Lusz:pos} and Notation \ref{nota:intro0}).
In this paper we set up the Bott-Samelson atlas as a bridge connecting the Lusztig positive structure on $G/Q$ and the
standard Poisson structure $\pi_{\sG/\sQ}$. Further connections to cluster algebras and total positivity will be given in
\cite{Lu:cover}.

\subsection{Statements of main results and organization of the paper}\label{ssec:statements}
We first set up some notation to be used throughout the paper.

\begin{notation}\label{nota:intro0}
{\rm
We will fix a connected and simply-connected complex semisimple Lie group $G$ together with a  {\it pinning} \cite{Lusz:pos}, i.e., a collection
 $\{T \subset B, \{e_\al\}_{\al \in \Gamma}\}$, where $T$ is a maximal torus of $G$, $B$ is a Borel subgroup of $G$ containing
$T$, $\Gamma$ is the set of simple roots corresponding to the pair $(T, B)$, and $e_\al$ is a root vector for $\al \in \Gamma$.

Let $W = N_\sG(T)$ be the corresponding Weyl group, where $N_\sG(T)$ is the normalizer subgroup of $T$ in $G$.
Let $l: W \to {\mathbb{N}}$ be the length function on $W$.
Let $w_0$ be the longest element of $W$, and let $l_0 = l(w_0)$.

For any closed subgroup $P$ of $G$, denote  the image of $g \in G$ in $G/P$ by $g_\cdot P$.
The identity element of a group will always be denoted
as $e$.

Let $N$ unipotent radicals of $B$.
For $v \in W$, let
\[
B(v)\;\stackrel{{\rm def}}{=}\; B \cap \dot{v} B \dot{v}^{-1} \;\;\mbox{and}\;\;   N(v) \; \stackrel{{\rm def}}{=}\; N \cap \dot{v} N \dot{v}^{-1} \subset B(v),
\]
where for $v \in W$, $\dot{v}$ is any representative of $v$ in  $N_\sG(T)$.
\hfill $\diamond$
}
\end{notation}

In this paper, we
consider the homogeneous spaces $G/Q$ of $G$, where
\begin{equation}\label{eq:Q-list}
Q = B(v) \;\; \; \mbox{or}\;\;\; N(v),
\;\;\;  v \in W.
\end{equation}
Consider the diagonal $G$-action on $(G/B) \times (G/B)$ and on $(G/B) \times (G/N)$. It is well-known that the sets of $G$-orbits in both
$(G/B) \times (G/B)$ and $(G/B) \times (G/N)$  are indexed by $W$ via $v \mapsto G(e_\cdot B, \dot{v}_\cdot B)$
and $v \mapsto G(e_\cdot B, \dot{v}_\cdot N)$. As the stabilizer subgroup of $G$ at $(e_\cdot B, \dot{v}_\cdot B)$ and at
$(e_\cdot B, \dot{v}_\cdot N)$ are respectively $B(v)$ and $N(v)$, we can thus identify the collections
$\{G/B(v): v \in W\}$ and $\{G/N(v): v \in W\}$ as the diagonal $G$-orbits in $(G/B) \times (G/B)$ and in $(G/B) \times (G/N)$ respectively.
In particular,
\[
B(w_0) = T, \hs N(w_0) = \{e\}, \hs B(e) = B, \hs N(e) = N,
\]
and we have $G/T$ and $G/B$ as the respective open and closed $G$-orbits in
$(G/B) \times (G/B)$  and $G$ and $G/N$ as the respective open and closed $G$-orbits in $(G/B) \times (G/N)$.

\medskip
The paper consists of three parts.

In the first part of this paper, for each $Q$ in \eqref{eq:Q-list},
we construct the Bott-Samelson atlas $\cA_{\sBS}(G/Q)$ on $G/Q$.
More precisely,
for each $Q$ in \eqref{eq:Q-list},  we consider the open cover of $G/Q$ by {\it shifted big cells}, i.e.,
\begin{equation}\label{eq:cover-1}
G/Q = \bigcup_{w \in W} wB^-B/Q,
\end{equation}
where $B^-$ is the Borel subgroup of $G$ such that $B^- \cap B = T$.
For $u \in W$, let $\cR(u)$ be the set of all reduced words of $u$. For $v \in W$, let
\[
\cR_{v}  = \bigcup_{w \in W} \cR(w_0w^{-1}) \times \cR(w) \times \cR(v).
\]
Using the fixed pinning for $G$, we construct, for each
$\br \in \cR(w_0w^{-1}) \times \cR(w) \times \cR(v)$, the
{\it Bott-Samelson parametrizations}
\[
\bsigma^{\br}_{\sB(v)}:\;\; \CC^l \,\stackrel{\sim}{\longrightarrow} \, wB^-B/B(v) \hs
\mbox{and} \hs \bsigma^{\br}_{\sN(v)}:\;\; \CC^l \times (\CC^\times)^d\,\stackrel{\sim}{\longrightarrow} \, wB^-B/N(v),
\]
where $l  = l(w_0) + l(v) = \dim G/B(v)$, and $d = \dim T$ (see \eqref{eq:bga-Bv} and \eqref{eq:bga-Nv}).
The Bott-Samelson atlas $\cA_{\sBS}(G/Q)$, for $Q = B(v)$ or $N(v)$, is then defined to be
\[
\cA_{\sBS}(G/Q) = \{\bsigma^{\br}_{\sQ}: \; \br \in \cR_v\}.
\]
We refer to any coordinate chart in $\cA_{\sBS}(G/Q)$ as a {\it Bott-Samelson coordinate chart} on $G/Q$, and the resulting coordinates
(on a shifted big cell)
{\it Bott-Samelson coordinates}.  A Bott-Samelson coordinate, being a regular function on some shifted big cell in $G/Q$, can thus be also
regarded as a rational function on $G/Q$.

An outline of the construction of the Bott-Samelson atlas is given in $\S$\ref{ssec:outline} and details, including explicit formulas for
all the Bott-Samelson coordinates, are given in $\S$\ref{sec:ABS}.

In the second part of the paper, extending the Lusztig positive structure on $G$,
we first define  in $\S$\ref{ssec:pos-GQ} the Lusztig positive structure $\cP^{\sG/\sQ}_{\rm Lusztig}$ on each $G/Q$, and we prove
the following Theorem A in $\S$\ref{ssec:BSpos-GQ}.

\medskip
\noindent
{\bf Theorem A.}
For any $v \in W$ and $Q = B(v)$ or $N(v)$, all the Bott-Samelson coordinates on $G/Q$,
when regarded as rational functions on $G/Q$,
are positive with respect to $\cP^{\sG/\sQ}_{\rm Lusztig}$.

\medskip
As a consequence of Theorem A, all Bott-Samelson coordinates on $G/Q$ take positive values at every point in $(G/Q)_{>0}$, the totally positive part of
$G/Q$ defined by $\cP^{\sG/\sQ}_{\rm Lusztig}$.

We remark that the Bott-Samelson coordinate charts are not to be confused with toric charts in $\cP^{\sG/\sQ}_{\rm Lusztig}$. In particular, given
any toric chart in $\cP^{\sG/\sQ}_{\rm Lusztig}$ with coordinates $(c_1, \ldots, c_{d(\sQ)})$
and any Bott-Samelson chart with coordinates $(z_1, \ldots, z_{d(\sQ)})$,  where $d(Q) = \dim (G/Q)$,
while Theorem A implies that each $z_j$ has a subtraction-free expression in $(c_1, \ldots, c_{d(\sQ)})$,
the $c_j$'s, in general,  do not have subtraction-free expressions in $(z_1, \ldots, z_{d(\sQ)})$.  This already happens for
$G/Q = SL(2, \CC)$ (see Remark \ref{re:SL2}). Furthermore, the changes of coordinates between two Bott-Samelson coordinate charts
are in general not positive. See Example \ref{ex:SL3-0} for the case of $G/Q = G =SL(3, \CC)$, where two out of the $16$ Bott-Samelson
coordinate charts on $G$, as well as the coordinate transformations between them, are given.

\medskip
The third part of the paper concerns the  standard Poisson structure $\pi_{\sG/\sQ}$ on $G/Q$.
The following Theorem B summarizes
Theorem \ref{th:CGL-Bv} (for $Q = B(v)$) and Theorem \ref{th:CGL-Nv} (for $Q = N(v)$). See Example \ref{ex:SL4-GB} for an illustration of Theorem B
for $SL(4, \CC)/B$.

\medskip
\noindent
{\bf Theorem B.} {\it For any $v \in W$ and $Q = B(v)$ or $N(v)$,  the Bott-Samelson atlas $\cA_{\sBS}(G/Q)$
 is a $T$-Poisson-Ore atlas for the Poisson structure $\pi_{\sG/\sQ}$, making
$(G/Q, \pi_{\sG/\sQ}, \cA_{\sBS}(G/Q))$ into a $T$-Poisson-Ore variety.}

\medskip
For another property of the Bott-Samelson coordinates with respect to the Poisson structure $\pi_{\sG/\sQ}$,
recall that for a smooth affine complex Poisson variety $(X, \piX)$, each $f \in \CC[X]$ has Hamiltonian vector field $H_f \in {\mathfrak{X}}^1(X)$
given by $H_f(f_1) = \piX(df, df_1)$ for $f_1 \in \CC[X]$. We say that $f$ has complete Hamiltonian flow if all the integral curves of
the holomorphic vector field $H_f$ on $X$ are defined over the entire $\CC$.  As a direct consequence of Theorem B and a general property of
Poisson-Ore varieties proved in $\S$\ref{ssec:hamil},
we have the following Theorem C.

\medskip
\noindent
{\bf Theorem C.} {\it For any $v \in W$ and $Q = B(v)$ or $N(v)$,  and for any $w \in W$,
all the Bott-Samelson coordinates on $wB^-B/Q$
have complete Hamiltonian flows in $wB^-B/Q$ with respect to the Poisson structure $\pi_{\sG/\sQ}$}

\medskip
The integral curve of any Bott-Samelson coordinate on $wB^-B/Q$ through any point $q \in wB^-B/Q$
lies entirely in the symplectic leaf $\Sigma_q$ of $\pi_{\sG/\sQ}$ through $q$, and thus also in the {\it $T$-leaf}
of $\pi_{\sG/\sQ}$ through $q$, defined as $T\Sigma_q = \bigcup_{t \in T} t\Sigma_q$.
Generalizing the case of $G$ in \cite{hodges, reshe-4, Kogan-Z}, where the $T$-leaves of $\pist$ in $G$ are shown to be precisely the double Bruhat cells,
 and the case of $G/B$ in \cite{GY:GP}, where the $T$-leaves of $\pi_{\sG/\sB}$ are shown to be precisely the open Richardson varieties, we
also determine the $T$-leaves  of $\pi_{\sG/\sQ}$ for $Q = B(v)$ or $N(v)$ for all $v \in W$.  See $\S$\ref{ssec:D} for detail.

\medskip
Theorem A and Theorem B together suggest further more direct connections between the Lusztig positive structure and the
standard Poisson structure on $G/Q$. One such connection will be established in \cite{Lu:cover} through cluster algebras. More precisely,
it will be proved in \cite{Lu:cover},
again for $v \in W$ and $Q = B(v)$ or $N(v)$,
that the Bott-Samelson atlas on $G/Q$ gives rise to a {\it cluster open cover of $G/Q$ compatible with the Lusztig positive structure}, i.e.,
the cluster structures on all the shifted big cells arising from the symmetric Poisson CGL presentations of the Poisson structure $\pi_{\sG/\sQ}$ via
the Goodearl-Yakimov theory
\cite{GY:PNAS, GY:Poi}, while not in general mutation equivalent,
are all compatible with the Lusztig positive structure in the sense that {\it every one} of their clusters defines a
toric chart in
$\cP^{\sG/\sQ}_{\rm Lusztig}$.

While this paper and \cite{Lu:cover} explore connections between Poisson structures and total positivity via the notion of
Poisson-Ore varieties, we
believe that the latter will also find applications to other areas such as integrable systems, as hinted by Theorem C,
and  tropicalization of positive varieties \cite{ABHY1, ABHY2, BK:CrystalsII}. Such topics will be investigated elsewhere.

\subsection{Outlines of the construction of the Bott-Samelson atlas and proofs of main results}\label{ssec:outline}
The key ingredients in our construction of the Bott-Samelson atlas $\cA_{\sBS}(G/Q)$ are  the so-called {\it generalized Bruhat cells}
which we now briefly recall:  for any positive integer $r$, consider the right action of
product group $B^r$ on $G^r$ by
\[
(g_1, \, g_2, \, \ldots, \, g_r) \cdot (b_1, \, b_2, \ldots, b_r) =
(g_1b_1, \, b_1^{-1}g_2b_2, \, \ldots, \, b_{r-1}^{-1}g_rb_r), \hs g_j\in G, \, b_j \in B.
\]
Let $F_r$ be the quotient space of $G^r$ by $B^r$, which is also denoted as
\begin{equation}\label{eq:Fn}
F_r =\, G \times_B G \times_B \cdots \times_B G/B.
\end{equation}
For $\bfu = (u_1, \ldots, u_r) \in W^r$, the {\it generalized Bruhat cell} $\O^\bfu$ is defined in \cite{Lu-Mou:flags}
as
\begin{equation}\label{eq:Ou-de}
\O^\bfu = Bu_1B\times_B \cdots \times_B Bu_rB/B
\stackrel{{\rm def}}{=} \varpi_r((Bu_1B) \times \cdots \times (Bu_rB)) \subset F_r,
\end{equation}
where $\varpi_r: G^r \to F_r$ is the projection.
Note that when $r = 1$, generalized Bruhat cells
are just the $B$-orbits in $G/B$,  which we will refer to as {\it Bruhat cells} (also called Schubert cells in the literature) and denote as
\[
\O^w = BwB/B \subset G/B, \hs w \in W.
\]
It easy to see that $\dim \O^\bfu = l(\bfu):=l(u_1) + \cdots +l(u_r)$.  Fixing a one-parameter subgroup for each simple root,
we will see in $\S$\ref{ssec:GBcs-coor} that any choice
of $\tilde{\bfu} \in \cR(u_1) \times \cdots \times \cR(u_r)$ naturally gives rise to a {\it Bott-Samelson parametrization}
$\bbeta^\br: \CC^{l(\bfu)} \to \O^\bfu$, and we call the resulting coordinates on $\O^\bfu$ the
 {\it Bott-Samelson coordinates} on $\O^\bfu$ defined by $\tilde{\bfu}$.

The core of our construction of the Bott-Samelson atlases on $G/B(v)$ and $G/N(v)$, where $v \in W$, consists of two explicit isomorphisms
\begin{align*}
&J^w_{\sB(v)}:\;\; wB^-B/B(v) \longrightarrow \O^{w_0w^{-1}} \times \O^{(w, v)},\\
&J^w_{\sN(v)}:\;\; wB^-B/N(v) \longrightarrow \O^{w_0w^{-1}} \times \O^{(w, v)} \times T,
\end{align*}
given in \eqref{eq:Jw-Bv-0} - \eqref{eq:Jw-Nv}, where $w \in W$ is arbitrary.
Given any
\[
\br = (\bw^0, \bw, \bfv) \in \cR(w_0w^{-1}) \times \cR(w) \times \cR(v),
\]
by composing
$J^w_{\sB(v)}$ and $J^w_{\sN(v)}$ with the Bott-Samelson parametrizations of $\O^{w_0w^{-1}}$ and $\O^{(w, v)}$ defined
respectively by $\bw^0$ and $(\bw, \bfv)$ (and any isomorphism $T \cong (\CC^\times)^d$ in the case of $N(v)$),
we obtain the desired {\it Bott-Samelson parametrizations} (see \eqref{eq:bga-Bv} and \eqref{eq:bga-Nv})
\[
\bsigma_{\sB(v)}^\br: \,\CC^{l} \longrightarrow wB^-B/B(v) \hs \mbox{and} \hs
\bsigma_{\sN(v)}^\br: \,\CC^{l} \times (\CC^\times)^d\longrightarrow wB^-B/N(v),
\]
where again $l = l(w_0) + l(v) = \dim G/B(v)$ and $d = \dim T$.  Note that
each shifted big cell $wB^-B/B(v)$ or $wB^-B/N(v)$ may have more than one Bott-Samelson parametrization,
the precise number being $|\cR(w_0w^{-1})|
+ |\cR(w)| + |\cR(v)|$.

\begin{remark}\label{re:KLKWY}
{\rm
When $Q = B$ (and thus $v = e$) and $w \in W$, the isomorphism
\[
J^w_\sB:\;\;\; wB^-B/B \; \stackrel{\sim}{\longrightarrow} \;\O^{w_0w^{-1}} \times \O^w
\]
was  stated  in \cite[Lemma A.4]{KL:79},
and the explicit formula  was given
in \cite{KWY}.
Kazhdan and Lusztig used the isomorphism $J^w_\sB$
in the proof of
\cite[Theorem A2]{KL:79}, which expresses singularities of Schubert varieties (the closures of Bruhat cells)
in terms of Kazhdan-Lusztig polynomials, while A. Knutson, A. Woo, and A. Yong \cite{KWY}
used $J^w_\sB$
to show that singularities (and some other invariants) of Richardson
varieties in $G/B$ are determined by that of Schubert varieties. Here recall that
a Richardson variety in $G/B$ is the intersection of a Schubert variety with an opposite Schubert variety (the closure of a $B^-$-orbit).
The covering of $G/B$ by the collection $\{\O^{w_0w^{-1}} \times \O^w: w \in W\}$ via the
isomorphisms $J^w_\sB$ is called a {\it Bruhat atlas} on $G/B$ in \cite{HKL:Bruhatlas}.
\hfill $\diamond$
}
\end{remark}

To prove Theorem A, we use the explicit formulas for the Bott-Samelson coordinates in Proposition \ref{pr:zj-Bv} and
Proposition \ref{pr:zj-Nv}, but in a crucial way we also use \cite[Theorem 2.12]{FZ:double}, which describes certain collections of generalized
minors as positive transcendental bases for
the fields of rational functions on double Bruhat cells.   See $\S$\ref{ssec:BSpos-GQ} for detail.

To prove Theorem B, we first give a Poisson geometrical interpretation of the isomorphisms
$J^w_{\sB(v)}$ and $J^w_{\sN(v)}$ and then apply symmetric Poisson CGL extensions associated to generalized Bruhat cells established in
\cite{Elek-Lu:BS}.

More specifically, it is shown in \cite{Lu-Mou:flags} that for every $\bfu \in W^r$, the generalized Bruhat cell $\O^\bfu$ carries
a so-called {\it standard Poisson structures} $\pi_r$, also defined using the Poisson structure $\pist$ on $G$,
and it is proved in \cite{Elek-Lu:BS} that the Poisson algebra $(\CC[\O^\bfu], \pi_r)$ is a symmetric Poisson CGL extension in any of the
Bott-Samelson parametrizations of $\O^\bfu$ (see $\S$\ref{ssec:GBcs}).
For any $v, w \in W$, let $\pi_{1,2}$ be the unique Poisson structure on
$\O^{w_0w^{-1}} \times \O^{(w, v)}$ and $\pi_{1, 2, 0}$ the unique Poisson structure on
$\O^{w_0w^{-1}} \times \O^{(w, v)} \times T$ such that
\begin{align*}
J^w_{\sB(v)}: \;\;& \; (wB^-B/B(v), \; \pi_{\sG/\sB(v)}) \longrightarrow \left(\O^{w_0w^{-1}} \times \O^{(w, v)}, \;\pi_{1,2}\right) \;\; \; \mbox{and}\\
J^w_{\sN(v)}: \;\;& \; (wB^-B/N(v), \; \pi_{\sG/\sN(v)}) \longrightarrow \left(\O^{w_0w^{-1}} \times \O^{(w, v)} \times T, \;
\pi_{1,2,0}\right)
\end{align*}
are Poisson isomorphisms. Our Theorem \ref{th:JwQ-poi}, proved in Appendix A, says that $\pi_{1, 2}$ is a {\it mixed product} of
$\pi_1$ and $\pi_2$, i.e.,
\[
\pi_{1, 2} = (\pi_1, 0) + (0, \pi_2) + \mu,
\]
where $\mu$ is a certain {\it mixed term} expressed using the
$T$-actions on $\O^{w_0w^{-1}} \times \O^{(w, v)}$.  A similar statement holds for $\pi_{1, 2, 0}$.
Applying a general construction (see Lemma \ref{le:product}) on mixed products of symmetric Poisson CGL extensions, we immediately
prove that the Poisson structure $\pi_{\sG/\sQ}$ is presented as a symmetric Poisson CGL extension in every
Bott-Samelson parametrization of $wB^-B/Q$.
 Details of the presentations are given in
Theorem \ref{th:CGL-Bv} for $Q = B(v)$ and in Theorem \ref{th:CGL-Nv} for $Q = N(v)$.

Theorem C is a direct consequence of Theorem B and a general fact on Poisson-Ore varieties. More precisely,
in $\S$\ref{ssec:hamil}, we prove a general fact (see Proposition \ref{pr:hamil}) on the completeness of the Hamiltonian flows of all the
CGL generators for any
symmetric Poisson CGL extension. Consequently (see Theorem \ref{th:hamil}),  for any Poisson-Ore variety
$(X, \piX, \cA_\sX)$, all the coordinate functions in any coordinate chart in $\cA_\sX$ have complete Hamiltonian flows in that coordinate chart.
Theorem C then follows as a special case.

We finish this section by setting up more notation to be used for the rest of the paper.

\begin{notation}\label{nota:intro}
{\rm
Continuing with Notation \ref{nota:intro0},
let $\g$ and $\h$ be the respective Lie algebras of $G$ and $T$.
Recall that $\Gamma\subset \h^*$  is the set of all simple roots, and for each $\al \in \Gamma$, we have fixed a root vector $e_\al$ of $\al$ as
part of the pinning. For $\al \in \Gamma$, let
$e_{-\al}$ be the unique root vector for $-\alpha$  such that $h_\al :=[e_\al, e_{-\al}] \in \h$ satisfies $\al(h_\al) = 2$, and let
$x_\al: \CC \to G$ and $x_{-\al}: \CC \to G$ be the one-parameter subgroups given by
\[
x_\al(c) = \exp(c \,e_\al), \hs x_{-\al}(c) = \exp(c\,e_{-\al}), \hs c \in \CC.
\]
Correspondingly, one also has the co-character
$\al^\vee: \CC^\times \to T$
such that $\frac{d}{dc}|_{c=1} \al^\vee(c) = h_\al$.

For $\al \in \Gamma$, let $s_\alpha \in W$ be the corresponding simple reflection,
and choose the
representative $\overline{s_\alpha}$  of $s_\alpha \in W$ in $N_\sG(T)$
by
\cite[(1.8)]{FZ:double}
\[
\overline{s_\alpha} = x_\alpha(-1) x_{-\alpha}(1) x_\alpha(-1).
\]
Recall that $l: W \to {\mathbb{N}}$ is the length function on $W$, and that
for $w \in W$.
\[
\cR(w) = \{\bfw = (s_{\al_1},\, s_{\al_2}, \, \ldots, \, s_{\al_l}): \; \al_j \in \Gamma \;
\mbox{and} \;  s_{\al_1} s_{\al_2} \cdots s_{\al_l} = w\; \mbox{is reduced}\}
\]
Denote $\cR(e) = \emptyset$. By \cite[$\S$1.4]{FZ:double}, for any $\bfw = (s_{\al_1}, s_{\al_2}, \ldots, s_{\al_l})\in \cR(w)$,
\[
\overline{w} \;\stackrel{\rm def}{=} \;\overline{s_{\alpha_1}} \;\overline{s_{\alpha_2}} \;\cdots \; \overline{s_{\alpha_l}}
\]
is a representative of $w$ in $N_\sG(T)$ independent of the choice of $\bfw \in \cR(w)$. Moreover, recall that the weak order on $W$ is defined to be
$w_1 \preceq w$ if $w = w_1w_2$ and $l(w) = l(w_1) + l(w_2)$, and in such a case $\ow = \overline{w_1} \, \overline{w_2}$.
Denote
the (right) action of $W$ on $T$ by
\[
t^w = \ow^{\,-1}t\ow, \hs t \in T, \, w \in W.
\]
For a character $\lambda$ on $T$, the evaluation of $\lambda$  at $t \in T$ is denoted as $t^\lambda \in \CC^\times$.

Let $N^-$ be the unipotent radical of $B^-$.
For $g \in B^-B = N^-TN$, let $[g]_- \in N^-$, $[g]_0 \in T$, and $[g]_+ \in N$ be the unique elements such that $g = [g]_-[g]_0[g]_+$.
For a simple root $\alpha$, let $\omega_\al$ be the corresponding fundamental weight and let
$\Delta^{\omega_\al}$ be the corresponding principal minor on $G$, so that the restriction of $\Delta^{\omega_\al}$ to
$B^-B$ is given by
\[
\Delta^{\omega_\al}(g) = [g]_0^{\omega_\al}, \hs g \in B^-B.
\]
For $u, v \in W$ and $\al \in \Gamma$, let
$\Delta_{u\omega_\al, v\omega_\al}$ be the regular function on $G$ defined by
\[
\Delta_{u\omega_\al, v\omega_\al}(g) = \Delta^{\omega_\al}(\ou^{\, -1} g \ov), \hs g \in G.
\]
The functions $\Delta_{u\omega_\al, v\omega_\al}$ are called {\it generalized minors} on $G$ (see \cite{FZ:double}).
We will need the following property of generalized minors (see the proof of \cite[Proposition 2.7]{FZ:double}):
\begin{equation}\label{eq:Delta-omega-ast}
\Delta^{\omega_\al}(\overline{w_0}^{\, -1} g^{-1} \overline{w_0})
= \Delta^{\omega_\al}(\overline{w_0} g^{-1} \overline{w_0}^{\, -1})
=\Delta^{\omega_{\al^*}}(g), \hs g \in G,
\end{equation}
where for $\alpha \in \Gamma$, $\al^* = -w_0(\al)$. Note that as
$s_{\al^*} w_0 = w_0 s_\al$, one has
\[
\overline{w_0} = \overline{s_{\al^*}s_{\al^*} w_0} = \overline{s_{\al^*}}\; \overline{s_{\al^*} w_0} =  \overline{s_{\al^*}}\; \overline{w_0 s_\al}=
\overline{s_{\al^*}}\; \overline{w_0}\; \overline{s_\al}^{\, -1},
\]
and thus $\overline{s_{\al^*}} = \overline{w_0} \, \overline{s_\al} \, \overline{w_0}^{\, -1} =  \overline{w_0}^{\, -1} \, \overline{s_\al} \, \overline{w_0}$.
We also recall from \cite[$\S$2.1]{FZ:double} the involutive anti-automorphisms  $\tau$ (denoted as $T$ in
\cite[$\S$2.1]{FZ:double}) and $\iota$ of $G$ given by
\begin{equation}\label{eq:tau-iota}
t^\tau = t, \;\; x_\alpha(c)^\tau = x_{-\alpha}(c),\;\; t^\iota = t^{-1}, \;\;
x_\alpha(c)^\iota = x_{\alpha}(c), \;\; x_{-\alpha}(c)^\iota= x_{-\alpha}(c),
\end{equation}
where $t \in T$, $\alpha \in \Gamma$, and $c \in \CC$.
%The involution $\theta :=\iota \tau = \tau \iota$
%is then an automorphisms of $G$ satisfying
%\begin{equation}\label{eq:theta}
%\theta(t) = t^{-1}, \;\; x_\alpha(c)^\theta = x_{-\alpha}(c),\hs t \in T, \; c \in \CC.
%\end{equation}
By \cite[(2.15)]{FZ:double}, one has
\begin{equation}\label{eq:Delta-all}
\Delta^{\omega_\al}((g^{-1})^\iota) = \Delta^{\omega_\al}(g^\tau) = \Delta^{\omega_\al}(g), \hs \al \in \Gamma, \; g \in G.
\end{equation}
The following facts are from \cite[Proposition 2.1]{FZ:double}: for any $w \in W$,
\begin{equation}\label{eq:ow-tau}
 \ow^{\, \tau} =\ow^{\, -1} \hs \mbox{and} \hs \ow^{\, \iota} =\overline{w^{-1}}
\end{equation}

For an integer $n \geq 1$, let $[1, n] =\{1, 2, \ldots, n\}$. All varieties in this paper are assumed to be smooth.
If an algebraic $\CC$-torus $\TT$ acts  an affine variety $X$, define the induced $\TT$-action on $\CC[X]$ by
\[
(t\cdot f)(x) = f(t\cdot x), \hs t \in \TT, \, f \in \CC[X], \, x \in X.
\]
}
\end{notation}

\medskip
\noindent{\bf Acknowledgments}: The earliest motivation for the work in this paper came from discussions with Xuhua He and
Alan Knutson on trying to understand the Poisson geometry behind the notion of {\it Bruhat atlas} proposed in \cite{HKL:Bruhatlas}.
We also thank Yipeng Mi and Yanpeng Li for helpful discussions.
The research in this paper was partially supported by
the Research Grants Council of the Hong Kong SAR, China (GRF 17304415 and GRF 17307718).
A version of Theorem B is contained in the University of Hong Kong PhD thesis \cite{Yu:thesis} of the second author.

\section{Construction of the Bott-Samelson atlas}\label{sec:ABS}
\subsection{Bott-Samelson coordinates on Bruhat cells}\label{ssec:Bcs-coor} We continue to use the set-up in Notation \ref{nota:intro0} and
 Notation \ref{nota:intro}.
Recall that for $u \in W$, the $B$-orbit  $\O^u = BuB/B$
 and $B^-$-orbit $B^-uB/B$ in $G/B$ are respectively called the Bruhat (or Schubert) cell
and opposite Bruhat (or Schubert) cell corresponding to $u$.
Set
\begin{equation}\label{eq:Nu-Num}
N_u = N \cap \ou N^- \ou^{\, -1}  \hs \mbox{and} \hs N_u^- = N^- \cap \ou N^- \ou^{\, -1}.
\end{equation}
It follows from the unique decompositions $BuB = N_u \ou B$ and $B^-uB = N_u^- \ou B$ that
\begin{align}\label{eq:Nu-Ou}
&N_u \longrightarrow BuB/B, \;\; n \longmapsto n\ou_\cdot B, \hs n \in N_u,\\
\label{eq:Num-Oum}
&N_u^- \longrightarrow  B^-uB/B, \;\; m \longmapsto m\ou_\cdot B, \hs m \in N_u^-,
\end{align}
are isomorphisms.
For future use,  note that
if $u = u_1u_2$ and $l(u) = l(u_1) + l(u_2)$, then
\begin{equation}\label{eq:Nu-Nu-1}
N_u \ou = (N_{u_1} \overline{u_1}) (N_{u_2} \overline{u_2})
\end{equation}
is direct product decomposition,
from which it follows that
\begin{equation}\label{eq:Nu-Nu-2}
N_{u_1} \subset N_u,  \;\;\;
\overline{u_1} N_{u_2} \overline{u_1}^{\, -1} \subset N_u, \;\;\; \mbox{and} \;\;\;
\overline{u_1}^{\, -1} N_u^- \,\overline{u_1} \subset N^-.
\end{equation}

 Let $u \in W$ and $\bfu = (s_{\al_1}, \ldots, s_{\al_k}) \in \cR(u)$. Recall from that for each $\al \in
\Gamma$ we have the one-parameter subgroup $x_\al: \CC \to N$ and  $\overline{s_\al} \in N_\sG(T)$. For
$z = (z_1, \ldots, z_k) \in \CC^k$, set
\begin{equation}\label{eq:guz}
g_\bfu(z) := x_{\al_1}(z_1)\overline{s_{\al_1}} \,\cdots \,x_{\al_{k}}(z_k)\overline{s_{\al_k}}.
\end{equation}
By \eqref{eq:Nu-Nu-1}, $g_\bfu(z) \in N_u\ou$ for every $z \in \CC^k$ and that
\begin{equation}\label{eq:z-gz}
\CC^k \ni z = (z_1, \ldots, z_k) \longmapsto g_\bfu(z) \in N_u \ou = N \ou \cap \ou N^-
\end{equation}
is an isomorphism from $\CC^k$ to $N_u \ou$. One thus has the {\it Bott-Samelson parametrization}
\begin{equation}\label{eq:z-Ou}
\CC^k \ni z = (z_1, \ldots, z_k) \longmapsto g_\bfu(z)_\cdot B  \in \O^u.
\end{equation}
The coordinates $(z_1, \ldots, z_k)$ on $\O^u$ via \eqref{eq:z-Ou} are called {\it Bott-Samelson coordinates} on $\O^u$ defined by the reduced word
$\bfu = (s_{\al_1}, \ldots, s_{\al_k})$ of $u$.

\begin{lemma}\label{le:zj-n} Let $u \in W$, $\bfu = (s_{\al_1}, \ldots, s_{\al_k}) \in \cR(u)$, and set
$n_\bfu(z) = g_\bfu(z) \ou^{\, -1} \in N_u$ for $z = (z_1, \ldots, z_k) \in \CC^k$. Then
\[
z_j = \Delta_{s_{\al_1} \cdots s_{\al_{j-1}} \omega_{\al_j}, \, s_{\al_1} \cdots s_{\al_j} \omega_{\al_j}}(n_\bfu(z)), \hs j \in [1, k].
\]
\end{lemma}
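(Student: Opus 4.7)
The plan is to peel off the $j$-th factor of $g_\bfu(z)$, absorb everything to its left into $N^-$ and everything to its right into $N$, and then reduce the computation to a rank-one calculation in the $SL_2$ associated to $\alpha_j$. Set
\[
u_{j-1} = s_{\al_1}\cdots s_{\al_{j-1}}, \quad u_j = u_{j-1} s_{\al_j}, \quad u' = s_{\al_{j+1}}\cdots s_{\al_k},
\]
so that by reducedness $\ou = \overline{u_{j-1}}\,\overline{s_{\al_j}}\,\overline{u'} = \overline{u_j}\,\overline{u'}$. Writing $g_\bfu(z) = g_1 \cdot h_j \cdot g_2$ with
\[
g_1 = x_{\al_1}(z_1)\overline{s_{\al_1}}\cdots x_{\al_{j-1}}(z_{j-1})\overline{s_{\al_{j-1}}}, \quad h_j = x_{\al_j}(z_j)\overline{s_{\al_j}}, \quad g_2 = x_{\al_{j+1}}(z_{j+1})\overline{s_{\al_{j+1}}}\cdots x_{\al_k}(z_k)\overline{s_{\al_k}},
\]
the direct product decomposition \eqref{eq:Nu-Nu-1} (applied to $u_{j-1}$ and $u'$, both of which are reduced as prefix/suffix of $\bfu$) shows that $g_1 \in N_{u_{j-1}}\overline{u_{j-1}}$ and $g_2 \in N_{u'}\overline{u'}$.

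The next step is a direct unwinding of the definition: using $\ou^{-1}\overline{u_j} = \overline{u'}^{-1}$,
\[
\Delta_{u_{j-1}\omega_{\al_j},\, u_j\omega_{\al_j}}(n_\bfu(z)) = \Delta^{\omega_{\al_j}}\!\bigl(\overline{u_{j-1}}^{\,-1} g_\bfu(z)\,\ou^{-1}\overline{u_j}\bigr) = \Delta^{\omega_{\al_j}}\!\bigl(m\cdot h_j \cdot n'\bigr),
\]
where $m := \overline{u_{j-1}}^{\,-1} g_1 \in N^-$ (since $N_{u_{j-1}} = N\cap \overline{u_{j-1}} N^-\overline{u_{j-1}}^{\,-1}$ conjugates into $N^-$) and $n' := g_2\,\overline{u'}^{\,-1} \in N_{u'}\subset N$. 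I would then invoke the standard left-$N^-$/right-$N$ invariance of the principal minor, namely $\Delta^{\omega_\al}(n_- g n_+) = \Delta^{\omega_\al}(g)$ for all $g\in G$, $n_-\in N^-$, $n_+\in N$. This invariance holds on the dense open $B^-B$ from the defining identity $\Delta^{\omega_\al}(n_- t n_+) = t^{\omega_\al}$ and extends to all of $G$ by regularity. This reduces the desired identity to the single rank-one statement
\[
\Delta^{\omega_{\al_j}}\bigl(x_{\al_j}(z_j)\,\overline{s_{\al_j}}\bigr) = z_j.
\]

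For the rank-one computation, in the $SL_2$-triple of $\al = \al_j$, for $z\ne 0$ one has the Bruhat factorization
\[
x_\al(z)\,\overline{s_\al} = x_{-\al}(z^{-1})\,\al^\vee(z)\,x_\al(-z^{-1}),
\]
which yields $\Delta^{\omega_\al}(x_\al(z)\overline{s_\al}) = \al^\vee(z)^{\omega_\al} = z^{\langle\al^\vee,\,\omega_\al\rangle} = z$; since both sides are regular in $z$, the identity extends to $z=0$. I expect the only delicate point to be the careful bookkeeping with representatives $\ow$ and the direct product decompositions, which are handled cleanly by \eqref{eq:Nu-Nu-1} and \eqref{eq:Nu-Nu-2} together with the fact that for a reduced word $\bfu$, both the prefix $u_{j-1}$ and the suffix $u'$ have reduced expressions inherited from $\bfu$.
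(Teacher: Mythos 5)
Your proof is correct and takes essentially the same route as the paper's: conjugate the prefix into $N^-$ and the suffix into $N$ using \eqref{eq:Nu-Nu-1}, invoke the left-$N^-$/right-$N$ invariance of the principal minor, and reduce to the rank-one evaluation $\Delta^{\omega_{\al_j}}\bigl(x_{\al_j}(z_j)\overline{s_{\al_j}}\bigr)=z_j$, which the paper states without the explicit $SL_2$ factorization you supply.
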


\begin{proof} For $i \in [1, k]$, let $g_{\al_i}(z_i) = x_{\al_i}(z_i)\overline{s_{\al_i}}$. Let $j \in [1, k]$. By \eqref{eq:Nu-Nu-1},
\[
\overline{s_{\al_1} \cdots s_{\al_{j-1}}} ^{\,-1} g_{\al_1}(z_1) \cdots g_{\al_{j-1}}(z_{j-1})\in N^-, \;\;
g_{\al_{j+1}}(z_{j+1}) \cdots g_{\al_k}(z_k) \overline{s_{\al_{j+1}} \cdots s_{\al_k}}^{\, -1} \in N.
\]
It follows that
\begin{align*}
\Delta_{s_{\al_1} \cdots s_{\al_{j-1}} \omega_{\al_j}, \, s_{\al_1} \cdots s_{\al_j} \omega_{\al_j}}(n_{\bfu}(z))
 &= \Delta^{\omega_{\al_j}}(\overline{s_{\al_1} \cdots s_{\al_{j-1}}} ^{\,-1} g_{\bfu}(z)
\overline{s_{\al_{j+1}} \cdots s_{\al_k}}^{\,-1})\\
&= \Delta^{\omega_{\al_j}}(g_{\al_j}(z_j)) = z_j.
\end{align*}
\end{proof}

\subsection{Bott-Samelson coordinates on generalized Bruhat cells}\label{ssec:GBcs-coor}
For an integer $r \geq 1$, recall from \eqref{eq:Fn} the quotient variety $F_r$ of $G^r$ by $B^r$, and recall that
associated to each $\bfu = (u_1, \ldots, u_r) \in W^r$ one has the
generalized Bruhat cell $\O^\bfu \subset F_r$ given in \eqref{eq:Ou-de}.
 For $(g_1, g_2, \ldots, g_r) \in G^r$, write
$[g_1, g_2, \ldots, g_r]_{\sF_r} = \varpi_r(g_1, \ldots, g_r) \in F_r$,
where $\varpi_r: G^r \to F_r$ is again the projection. On then has the disjoint union
\begin{equation}\label{eq:Fn-decomp}
F_r = \bigsqcup_{\bfu \in W^r} \O^\bfu,
\end{equation}
generalizing the decomposition $G/B = \bigsqcup_{u \in W} \O^u$.

Equip $F_r$ with the $T$-action by
\begin{equation}\label{eq:T-Fr}
t \cdot [g_1, \, g_2, \, \ldots, \g_r]_{\sF_r} = [tg_1, \, g_2, \, \ldots, g_r]_{\sF_r}, \hs t \in T, \,  g_1, g_2, \ldots, g_r \in G.
\end{equation}
It is clear that each generalized Bruhat cell $\O^\bfu \subset F_r$ is $T$-invariant.

Let $\bfu = (u_1, \ldots, u_r) \in W^r$.
By \eqref{eq:Nu-Ou},
one has the isomorphism
\begin{equation}\label{eq:O-bfu-para}
N_{u_1} \times N_{u_2} \times \cdots \times N_{u_r} \longrightarrow \O^{\bfu}, \;\;
(n_1, \, n_2, \, \ldots, \, n_r) \longmapsto [n_1\overline{u_1}, \, n_2\overline{u_2}, \, \ldots, \, n_r \overline{u_r}]_{\sF_r},
\end{equation}
where $n_i \in N_{u_i}$ for $i \in [1, r]$. In particular, $\dim \O^\bfu = l(u_1) + \cdots + l(u_r)$.

Let  now $\tilde{\bfu} = (\bfu_1, \; \bfu_2, \; \ldots, \; \bfu_r)  \in \cR(u_1) \times \cdots \times \cR(u_r)$.
Let $l = l(u_1) + \cdots + l(u_r)$ and  let $l_i = l(u_1) + \cdots + l(u_i)$ for $i \in [1, r]$. Also write
\begin{equation}\label{eq:tilde-bfu}
\tilde{\bfu} = (\bfu_1, \; \bfu_2, \; \ldots, \; \bfu_r) = (s_{\al_1}, \; s_{\al_2}, \; \ldots, \; s_{\al_{l}})
\in  W^{l},
\end{equation}
By \eqref{eq:z-gz}, one then has the {\it Bott-Samelson parametrization} $\bbeta^{\tilde{\bfu}}:   \CC^{l} \rightarrow\O^\bfu$ given by
\begin{equation}\label{eq:beta-Ou}
\bbeta^{\tilde{\bfu}} (z_1, \ldots, z_{l}) =  [g_{\bfu_1}(z_1, \ldots, z_{l_1}), \;\,g_{\bfu_2}(z_{l_1+1}, \ldots z_{l_2}), \, \ldots, \,
g_{\bfu_r}(z_{l_{r-1}+1}, \ldots, z_{l})]_{\sF_r}.
\end{equation}

\begin{definition}\label{de:BScoor-Ou} \cite{Elek-Lu:BS}
{\rm The coordinates $(z_1, \ldots, z_{l})$  via the Bott-Samelson parametrization
$\bbeta^{\tilde{\bfu}}: \CC^{l} \to \O^\bfu$ are called the
{\it Bott-Samelson coordinates on $\O^\bfu$} defined by  $\tilde{\bfu}$.
\hfill $\diamond$
}
\end{definition}

The following lemma follows immediately from Lemma \ref{le:zj-n}.

\begin{lemma}\label{le:zj-Ou}
For $\bfu = (u_1, \ldots, u_r) \in W^r$ and $\tbfu =(s_{\al_1}, \ldots, s_{\al_l}) \in \cR(u_1) \times \cdots \times \cR(u_r)$ as in \eqref{eq:tilde-bfu}, the Bott-Samelson coordinates
$(z_1, \ldots, z_l)$ on $\O^\bfu$ defined by $\tbfu$ are given as follows: for $i \in [1, r]$, $j \in [l_{i-1}+1, \; l_i]$ and
$n_i \in N_{u_i}$,
\[
z_j([n_1\overline{u_1}, \, n_2\overline{u_2}, \, \ldots, \, n_r \overline{u_r}]_{\sF_r})
 = \Delta_{s_{l_{i-1}+1} \cdots s_{j-2}s_{j-1} \omega_{\al_j}, \; s_{l_{i-1}+1} \cdots s_{j-1}s_j\omega_{\al_j}} (n_i),
\]
where $s_p = s_{\al_p}$ for $p \in [1, l]$. Furthermore, with respect to the $T$-action on $\CC[\O^\bfu]$ induced by the $T$-action on $\O^\bfu$ in
\eqref{eq:T-Fr} (see end of Notation \ref{nota:intro}), one has
\begin{equation}\label{eq:T-xi}
t\cdot z_j = t^{s_{\al_1} s_{\al_2} \cdots s_{\al_{j-1}}(\al_j)} z_j, \hs t \in T, \; j \in [1, l].
\end{equation}
\end{lemma}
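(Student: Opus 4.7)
The plan is to reduce both assertions to Lemma \ref{le:zj-n} by exploiting the block structure of the Bott-Samelson parametrization \eqref{eq:beta-Ou}, the first assertion directly and the second by a push-through computation with $T$.

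For the coordinate formula, I note that by the isomorphism \eqref{eq:O-bfu-para}, every point of $\O^\bfu$ has a unique representative of the form $[n_1\overline{u_1}, \ldots, n_r\overline{u_r}]_{\sF_r}$ with $n_i \in N_{u_i}$. Comparing this with the Bott-Samelson parametrization \eqref{eq:beta-Ou} and using \eqref{eq:z-gz} (which says $g_{\bfu_i}(z_{l_{i-1}+1},\ldots,z_{l_i}) \in N_{u_i}\overline{u_i}$), I get
\[
n_i = g_{\bfu_i}(z_{l_{i-1}+1}, \ldots, z_{l_i})\,\overline{u_i}^{\,-1}, \qquad i \in [1,r].
\]
Applying Lemma \ref{le:zj-n} to the reduced word $\bfu_i = (s_{\al_{l_{i-1}+1}}, \ldots, s_{\al_{l_i}})$ of $u_i$, with the indexing shifted so that positions run over $[l_{i-1}+1,\, l_i]$ rather than $[1, l(u_i)]$, yields precisely the stated formula for $z_j$.

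For the $T$-weight formula, the key identity is $t\,x_\al(c) = x_\al(t^\al c)\,t$ for $\al \in \Gamma$, $c \in \CC$, together with $t\,\overline{s_\al} = \overline{s_\al}\,t^{s_\al}$. Pushing $t$ from the left through the factors of $g_{\bfu_1}(z_1, \ldots, z_{l_1}) = x_{\al_1}(z_1)\overline{s_{\al_1}} \cdots x_{\al_{l_1}}(z_{l_1})\overline{s_{\al_{l_1}}}$ gives, by straightforward induction on $l_1$,
\[
t \cdot g_{\bfu_1}(z_1, \ldots, z_{l_1}) \;=\; g_{\bfu_1}\!\left(t^{\al_1}z_1,\; t^{s_{\al_1}(\al_2)}z_2,\; \ldots,\; t^{s_{\al_1} \cdots s_{\al_{l_1-1}}(\al_{l_1})}z_{l_1}\right) \cdot t^{u_1},
\]
where I have used $(t^w)^\lambda = t^{w\lambda}$ for any character $\lambda$ and $w \in W$.

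Next I absorb the residual torus element $t^{u_1}$ into the second factor via the defining $B^r$-action on $F_r$, namely $[g_1\cdot t^{u_1},\, g_2,\, \ldots]_{\sF_r} = [g_1,\, t^{u_1}g_2,\, g_3,\, \ldots]_{\sF_r}$, and iterate the same push-through analysis with $t$ replaced by $t^{u_1}$ acting on $g_{\bfu_2}$, then with $t^{u_1 u_2}$ on $g_{\bfu_3}$, and so on. For $j \in [l_{i-1}+1, l_i]$, the resulting weight on $z_j$ is
\[
t^{u_1 u_2 \cdots u_{i-1}\, s_{\al_{l_{i-1}+1}} \cdots s_{\al_{j-1}}(\al_j)} \;=\; t^{s_{\al_1} s_{\al_2}\cdots s_{\al_{j-1}}(\al_j)},
\]
since $u_1 u_2 \cdots u_{i-1} = s_{\al_1} \cdots s_{\al_{l_{i-1}}}$, which is precisely \eqref{eq:T-xi}. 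No step here presents a genuine obstacle; the only care needed is bookkeeping of the accumulating Weyl-group prefix $u_1 u_2 \cdots u_{i-1}$ when moving from block $i-1$ to block $i$, which is handled uniformly by the identity $(t^w)^{s_\al(\al')} = t^{ws_\al(\al')}$.
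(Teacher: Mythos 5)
Your proposal is correct and follows essentially the same route as the paper, which treats the lemma as an immediate blockwise application of Lemma \ref{le:zj-n} via the unique representative $n_i = g_{\bfu_i}(z_{l_{i-1}+1},\ldots,z_{l_i})\,\overline{u_i}^{\,-1} \in N_{u_i}$. Your explicit push-through computation of the $T$-weights (including absorbing the residual factor $t^{u_1\cdots u_{i-1}}$ through the $B^r$-quotient) is exactly the standard verification the paper leaves implicit, and it is carried out correctly.
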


\subsection{Construction of the Bott-Samelson atlas}\label{ssec:ABS}
Throughout $\S$\ref{ssec:ABS}, we fix $v\in W$ and let $Q = B(v)$ or $N(v)$. We will first construct decompositions of
the shifted big cells
$wB^-B/Q \subset G/Q$, $w \in W$, using generalized Bruhat cells, and we will then introduce Bott-Samelson coordinates on $wB^-B/Q$ using those
on generalized Bruhat cells.

Let $w \in W$. Note that every element in $wB^-B$ can be uniquely written as $a \ow b$, where $a \in \ow N^- \ow^{\, -1}$ and $b \in B$.
On the other hand, recall that $N_w = N \cap \ow N^- \ow^{\, -1}$ and $N_w^- = N^- \cap \ow N^- \ow^{\, -1}$. Using the
direct product decompositions
\begin{equation}\label{eq:Nw-nn}
\ow N^- \ow^{\,-1} = N^-_w N_w = N_w N^-_w,
\end{equation}
any $a \in \ow N^- \ow^{\, -1}$ can be further decomposed uniquely as
\begin{equation}\label{eq:ggg-000}
a = a_+ a_- = a^\prime_- a^\prime_+ \hs \mbox{with} \hs a_+, \,a^\prime_+ \in N_w, \; \;a_-, \,a^\prime_- \in N_w^-.
\end{equation}
Using $a_+^\prime a_-^{-1} = (a_-^\prime)^{-1} a_+$, one has
(see Notation \ref{nota:intro})
\begin{equation}\label{eq:ggg-inverse}
a = [a_+^\prime a_-^{-1}]_+a_- = [a_+^\prime a_-^{-1}]_-^{-1}a_+^\prime.
\end{equation}
It follows that the map
\begin{equation}\label{eq:xpm}
\ow N^- \ow^{\, -1} \longrightarrow N_w^- \times N_w, \;\; a \longmapsto (a_-, \, a_+^\prime),
\end{equation}
is an isomorphism, where again $a \in \ow N^- \ow^{\, -1}$ is decomposed as in \eqref{eq:ggg-000}. Consequently,
for any closed subgroup $Q$ of $B$, one has the isomorphism
\begin{equation}\label{eq:IwQ}
I^w_{\sQ}: \;\; wB^-B/Q \longrightarrow (B^-wB/B) \times (BwB/Q),\;\; a\ow b_\cdot Q \longmapsto (a_-\ow_\cdot B, \; a_+^\prime \ow b_\cdot Q),
\end{equation}
where $a \in \ow N^- \ow^{\, -1}$, decomposed as in \eqref{eq:ggg-000}, and $b \in B$.

Let now $Q = B(v)$ or $N(v)$, where $v \in W$, and note that one has the unique decomposition $B = N_vN(v)T$.
Thus every element in $BwB/B(v)$ or in $BwB/N(v)$ is
uniquely written as $n_1\ow {n_2}_\cdot B(v)$ or $n_1\ow n_2 t_\cdot N(v)$, where $n_1 \in N_w, n_2 \in N_v$, and $t \in T$.
It follows that one has the isomorphisms
\begin{align}\label{eq:BBQ-1}
\zeta^{(w, v)}_{\sB(v)}: \;\; & BwB/B(v) \longrightarrow \O^{(w, v)}: \;\; n_1\ow {n_2}_\cdot B(v)\longmapsto [n_1\ow, \, n_2\ov]_{\sF_2}, \\
\label{eq:BBQ-2}
\zeta^{(w, v)}_{\sN(v)}: \;\; &BwB/N(v) \longrightarrow \O^{(w, v)} \times T: \;\; n_1\ow n_2 t_\cdot N(v) \longmapsto ([n_1\ow, \, n_2\ov]_{\sF_2}, \; t),
\end{align}
where again $n_1 \in N_w, \, n_2 \in N_v$, and $t \in T$.  On the other hand,  using the identity
\[
\overline{w_0w^{-1}} \, N_w^- = N_{w_0w^{-1}}  \, \overline{w_0w^{-1}},
\]
we introduce the isomorphism
\begin{equation}\label{eq:zeta-w}
\zeta^w: \;\; B^-wB/B \lrw \O^{w_0w^{-1}}, \;\; m\ow_\cdot B \longmapsto {\overline{w_0w^{-1}} \, {m^{-1}}}_\cdot B,
\hs m \in N_w^-.
\end{equation}
Combining the isomorphisms $I^w_\sQ$ in \eqref{eq:IwQ} with the isomorphisms in \eqref{eq:BBQ-1}, \eqref{eq:BBQ-2}, and \eqref{eq:zeta-w}, we get
our desired decompositions
\begin{align}\label{eq:Jw-Bv-0}
J^w_{\sB(v)}:\;\; & \; wB^-B/B(v) \; \stackrel{\sim}{\longrightarrow}\; \O^{w_0w^{-1}} \times \O^{(w, v)},\\
\label{eq:Jw-Nv-0}
J^w_{\sN(v)}: \;\;& \; wB^-B/N(v) \; \stackrel{\sim}{\longrightarrow}\; \O^{w_0w^{-1}} \times \O^{(w, v)} \times T,
\end{align}
explicitly given as
\begin{align}\label{eq:Jw-Bv}
&J^w_{\sB(v)} (a\ow n_\cdot B(v)) = \left ({\overline{u} \, a_-^{-1}}_\cdot B, \;\, [a_+^\prime \ow, \; n\ov]_{\sF_2}\right), \\
\label{eq:Jw-Nv}
&J^w_{\sN(v)}(a\ow nt_\cdot N(v)) =\left({\overline{u} \, a_-^{-1}}_\cdot B, \;\, [a_+^\prime \ow, \; n\ov]_{\sF_2},\, t\right),
\end{align}
where $a \in \ow N^- \ow^{\, -1}$, decomposed as in \eqref{eq:ggg-000}, $n \in N_v$, $t \in T$, and $u = w_0w^{-1}$.

It is straightforward to prove the following $T$-equivariance of $J^w_{\sB(v)}$ and $J^w_{\sN(v)}$.

\begin{lemma}\label{le:JwQ-T}
The isomorphisms $J^w_{\sB(v)}$ and $J^w_{\sN(v)}$ are $T$-equivariant, where $t_1 \in T$ acts on $wB^-B/B(v)$ and $wB^-B/N(v)$ by
left translation by $t_1$ and on $\O^{w_0w^{-1}} \times \O^{(w, v)}$ and $\O^{w_0w^{-1}} \times \O^{(w, v)} \times T$ respectively by
\begin{align*}
&t_1 \cdot \left({n_1}\ou_\cdot B, \; [n_2\ow, \; n_3\ov]_{\sF_2}\right) = \left(t_1^{u^{-1}} {n_1}\ou_\cdot B, \; [t_1n_2\ow, \; n_3\ov]_{\sF_2}\right), \\
&t_1 \cdot \left({n_1} \ou_\cdot B, \; [n_2\ow, \; n_3\ov]_{\sF_2}, \; t\right) = \left(t_1^{u^{-1}} {n_1}\ou_\cdot B, \; [t_1n_2\ow, \; n_3\ov]_{\sF_2},\; t_1^w t\right),
\end{align*}
where $u = w_0w^{-1} \in W$,  $n_1 \in N_u, \, n_2 \in N_w, \, n_3 \in N_v$ and $t \in T$.
\end{lemma}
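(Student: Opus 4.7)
The plan is to verify the claimed $T$-equivariance by direct computation, tracking the action of $t_1 \in T$ through each ingredient in the definitions \eqref{eq:Jw-Bv}--\eqref{eq:Jw-Nv} of $J^w_{\sB(v)}$ and $J^w_{\sN(v)}$. The key structural fact I will use repeatedly is that $T$ normalizes each of the unipotent subgroups $\ow N^-\ow^{\,-1}$, $N_w$, $N_w^-$, and $N_v$, since these are products of root subgroups. In particular, conjugation by $t_1$ commutes with both decompositions in \eqref{eq:ggg-000}, giving $(t_1 a t_1^{-1})_\pm = t_1 a_\pm t_1^{-1}$ and $(t_1 a t_1^{-1})_\pm^{\prime} = t_1 a_\pm^{\prime} t_1^{-1}$.

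Fix $a \in \ow N^-\ow^{\,-1}$, $n \in N_v$, $t \in T$, and set $u = w_0 w^{-1}$. First I would rewrite $t_1 \cdot a\ow n$ in canonical form by pushing $t_1$ through $\ow$ via $t_1 \ow = \ow t_1^w$, obtaining $t_1 a \ow n = (t_1 a t_1^{-1})\ow\, t_1^w n = a''\, \ow\, n'\, t_1^w$, where $a'' := t_1 a t_1^{-1} \in \ow N^-\ow^{\,-1}$ and $n' := t_1^w n(t_1^w)^{-1} \in N_v$. Since $t_1^w \in T \subset B(v)$, this gives the canonical expression for the $B(v)$-case; for the $N(v)$-case one obtains analogously $t_1 a\ow n t = a''\, \ow\, n'\, (t_1^w t)$, so the new $T$-component is $t_1^w t$, matching the claim.

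Next I would evaluate the two (resp.\ three) components of $J^w_\sQ$ on the rewritten element. For the first component, using $(a'')_- = t_1 a_- t_1^{-1}$ and $t_1^{-1} \in B$:
\[
\overline{u}\,(t_1 a_- t_1^{-1})^{-1}{}_\cdot B \;=\; \overline{u}\, t_1\, a_-^{-1}{}_\cdot B \;=\; (\overline{u}\, t_1\, \overline{u}^{\,-1})\, \overline{u}\, a_-^{-1}{}_\cdot B \;=\; t_1^{u^{-1}} \cdot (\overline{u}\, a_-^{-1}{}_\cdot B),
\]
where $\overline{u}\, t_1\, \overline{u}^{\,-1} = t_1^{u^{-1}}$ follows from the identity that any two representatives of $u^{-1}$ in $N_\sG(T)$ differ by a $T$-factor, which is absorbed under conjugation of $t_1$. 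For the second component, I would apply the $B^2$-equivalence in $\sF_2$ with $b_1 = t_1^w \in B$ (and absorbing a residual $T$-factor on the right using $b_2$), to compute
\[
[t_1 a_+^\prime t_1^{-1}\ow,\, n'\ov]_{\sF_2} \;=\; [t_1 a_+^\prime \ow,\, (t_1^w)^{-1} n' \ov]_{\sF_2} \;=\; [t_1 a_+^\prime \ow,\, n \ov]_{\sF_2},
\]
matching the prescribed $T$-action on $\O^{w_0 w^{-1}} \times \O^{(w,v)}$.

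There is no substantive obstacle: the argument is essentially bookkeeping, once one records that $T$ normalizes all the relevant unipotent subgroups and that the convention $t_1^w = \ow^{-1}t_1\ow$ gives the commutation $t_1\ow = \ow t_1^w$. The only mildly subtle point is the identification $\overline{u}\,t_1\,\overline{u}^{\,-1} = t_1^{u^{-1}}$, which requires noting independence of this conjugation from the choice of representative of $u$ in $N_\sG(T)$; this is immediate since any two representatives differ by an element of $T$. Combining the component-wise calculations yields the desired equality $J^w_\sQ(t_1 \cdot x) = t_1 \cdot J^w_\sQ(x)$ for $Q = B(v)$ and $N(v)$.
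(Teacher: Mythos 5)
Your computation is correct: the paper omits this proof as ``straightforward,'' and your direct verification is exactly the intended routine check, with the two key points handled properly — $T$ normalizes $\ow N^-\ow^{\,-1}$, $N_w$, $N_w^-$, $N_v$ so conjugation by $t_1$ is compatible with the unique decompositions in \eqref{eq:ggg-000}, and $\ou\,t_1\,\ou^{\,-1}=t_1^{u^{-1}}$ since conjugation is independent of the choice of representative of $u^{-1}$ in $N_\sG(T)$. Your bookkeeping with the $B\times B$-equivalence in $F_2$ (absorbing $t_1^w$ on the left of the second slot and the residual torus factor into the final $B$) and the extraction of the $t_1^w t$ component in the $N(v)$ case all match the paper's conventions $t^w=\ow^{\,-1}t\ow$ and \eqref{eq:Jw-Bv}--\eqref{eq:Jw-Nv}, so nothing further is needed.
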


It is also straightforward to prove the following for the inverses of $J^w_{\sB(v)}$ and $J^w_{\sN(v)}$.

\begin{lemma}\label{le:JwQ-inverse} With $u = w_0w^{-1}$, the inverses of $J^w_{\sB(v)}$ and $J^w_{\sN(v)}$ are respectively given by
\begin{align*}
%\label{eq:JwBv-inverse}
\left(J^w_{\sB(v)}\right)^{-1}\!\!\left(n_1\ou_\cdot B, \, [n_2\ow, \; n_3\ov]_{\sF_2}\right) &=
 [n_2 (\ou^{\, -1} \, n_1 \ou)]_-^{-1} n_2 \ow {n_3}_\cdot B(v)\\
\nonumber
& =[n_2 (\ou^{\, -1} \, n_1 \ou)]_+(\ou^{\, -1} n_1^{-1}\ou)\ow {n_3}_\cdot B(v),\\
%\label{eq:JwNv-inverses}
\left(J^w_{\sN(v)}\right)^{-1}\!\!\left(n_1\ou_\cdot B, \, [n_2\ow, \, n_3\ov]_{\sF_2},  t\right) &=
[n_2 (\ou^{\, -1} \, n_1 \ou)]_-^{-1} n_2 \ow n_3t_\cdot N(v)\\
\nonumber
& =[n_2 (\ou^{\, -1} \, n_1 \ou)]_+(\ou^{\, -1} n_1^{-1}\ou)\ow n_3t_\cdot N(v),
\end{align*}
where $n_1 \in N_u$, $n_2 \in N_w$, $n_3 \in N_v$, and $t \in T$.
\end{lemma}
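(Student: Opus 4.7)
The plan is direct verification: I will show that the element $a\ow n_3 \in G$ produced by either of the two claimed formulas has image $(n_1 \ou _\cdot B, [n_2 \ow, n_3 \ov]_{\sF_2})$ under $J^w_{\sB(v)}$, by identifying both decompositions $a = a_+ a_- = a_-' a_+'$ in $\ow N^- \ow^{-1}$ explicitly. Setting $x = \ou^{-1} n_1 \ou$, the first thing to check is that $x \in N_w^-$. Since $u = w_0 w^{-1}$ satisfies $l(u) + l(w) = l(w_0)$, one has $\overline{w_0} = \ou \, \ow$, hence $\ou^{-1} N \ou = \ow \, \overline{w_0}^{-1} N \overline{w_0} \, \ow^{-1} = \ow N^- \ow^{-1}$. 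Intersecting with $N^-$, I get $\ou^{-1} N_u \ou = (\ow N^- \ow^{-1}) \cap N^- = N_w^-$, so indeed $x \in N_w^-$.

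Now the critical step. Since $n_2 \in N_w$ and $x \in N_w^-$, the product $n_2 x$ lies in $N_w N_w^- = \ow N^- \ow^{-1}$, so by the other direct product decomposition $\ow N^- \ow^{-1} = N_w^- N_w$ from \eqref{eq:Nw-nn}, there exist unique $y_- \in N_w^-$ and $y_+ \in N_w$ with $n_2 x = y_- y_+$. Because $N_w^- \subset N^-$ and $N_w \subset N$, this factorization is simultaneously the $N^- T N$-decomposition of $n_2 x$ with \emph{trivial} torus component, so $[n_2 x]_- = y_-$, $[n_2 x]_0 = e$, and $[n_2 x]_+ = y_+$. The identity $y_- y_+ = n_2 x$ rearranges to $y_-^{-1} n_2 = y_+ x^{-1}$, which is exactly the equality of the two formulas in the lemma. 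Denoting this common element by $a$, the presentation $a = y_-^{-1} n_2$ reads off $a = a_-' a_+'$ with $a_-' = y_-^{-1}$ and $a_+' = n_2$, while $a = y_+ x^{-1}$ reads off $a = a_+ a_-$ with $a_+ = y_+$ and $a_- = x^{-1} = \ou^{-1} n_1^{-1} \ou$; in particular $a \in N_w^- N_w = \ow N^- \ow^{-1}$.

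To close the loop, plugging into \eqref{eq:Jw-Bv} yields
\[
J^w_{\sB(v)}(a \ow n_3 _\cdot B(v)) = (\ou a_-^{-1} _\cdot B, \, [a_+' \ow, \, n_3 \ov]_{\sF_2}) = (\ou x _\cdot B, \, [n_2 \ow, \, n_3 \ov]_{\sF_2}) = (n_1 \ou _\cdot B, \, [n_2 \ow, \, n_3 \ov]_{\sF_2}),
\]
as desired. The $N(v)$ case follows by the same argument: the decompositions of $a$ are unchanged, and the additional factor $t \in T$ on the target side corresponds via \eqref{eq:BBQ-2} to the $T$-component in the factorization $B = N_v N(v) T$ on the source side, and is simply carried along. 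No real obstacle arises; the only step that requires even slight care is recognizing that the $\ow N^-\ow^{-1}$-decomposition $n_2 x = y_- y_+$ coincides with its $N^- T N$-decomposition, with $[n_2 x]_0 = e$. Everything else is a bookkeeping exercise using \eqref{eq:Jw-Bv} and \eqref{eq:Jw-Nv}.
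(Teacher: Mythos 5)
Your verification is correct: the key identities $\ou^{-1}N_u\ou = N_w^-$ (which the paper itself uses just before \eqref{eq:zeta-w}), the observation that the factorization $n_2x = y_-y_+$ in $N_w^-N_w$ is the $N^-TN$-decomposition with trivial torus part, and the identification $a_+a_- = a'_-a'_+$ with $a_- = \ou^{-1}n_1^{-1}\ou$, $a'_+ = n_2$ all hold, and plugging into \eqref{eq:Jw-Bv}--\eqref{eq:Jw-Nv} recovers the input. The paper states this lemma without proof as "straightforward," and your direct computation is exactly the intended argument.
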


We can now use the isomorphisms $J^w_{\sQ}$ in \eqref{eq:Jw-Bv-0} and \eqref{eq:Jw-Nv-0} to introduce
Bott-Samelson coordinates on $wB^-B/Q$.
Let first $Q = B(v)$.

\begin{notation}\label{nota:br}
{\rm  Let $l_0 = l(w_0)$, and for $v, w \in W$, let $k =l(w_0w^{-1}) = l_0-l(w)$ and  $l = l_0 + l(v)$. We write an element
$\br  \in \cR(w_0w^{-1}) \times \cR(w) \times \cR(v)$ as $\br= (\bfw^0, \bfw, \bfv)$ with
\begin{equation}\label{eq:br}
\bfw^0 = (s_{\al_1}, \, \ldots, \, s_{\al_k}), \;\;\; \bfw = (s_{\al_{k+1}}, \, \ldots, s_{\al_{l_0}}), \;\;\; \bfv = (s_{\al_{l_0+1}}, \, \ldots, \, s_{\al_{l}}),
\end{equation}
where  $\al_j \in \Gamma$ for each $j \in [1, l]$.
\hfill $\diamond$
}
\end{notation}

Recall from \eqref{eq:beta-Ou} that associated to  $\bfw^0 \in \cR(w_0w^{-1})$ and $(\bfw, \bfv) \in \cR(w) \times \cR(v)$
one has the parametrizations $\bbeta^{\bfw^0}: \CC^k \rightarrow \O^{w_0w^{-1}}$ and
$\bbeta^{(\bfw, \bfv)}: \CC^{l-k} \rightarrow  \O^{(w, v)}$ given by
\begin{align*}
&\bbeta^{\bfw^0}(z_1, \ldots, z_k) = g_{\bfw^0}(z_1, \ldots, z_k)_\cdot B, \\
&\bbeta^{(\bfw, \bfv)}(z_{k+1}, \ldots, z_l) =\left([g_{\bw}(z_{k+1}, \ldots,  z_{l_0}), \; g_{\bfv}(z_{l_0+1}, \ldots, z_l)]_{\sF_2}\right).
\end{align*}
Consequently, one has the parametrization
\begin{align}\label{eq:bga-1}
&\bsigma^{\br} = \bbeta^{\bfw^0} \times \bbeta^{(\bfw, \bfv)}: \;\;\CC^l \longrightarrow \O^{w_0w^{-1}} \times \O^{(w, v)},\\
\nonumber
&\bsigma^\br(z) =  \left( g_{\bfw^0}(z_1, \ldots, z_k)_\cdot B, \; [g_{\bw}(z_{k+1}, \ldots,  z_{l_0}), \; g_{\bfv}(z_{l_0+1}, \ldots, z_l)]_{\sF_2}\right).
\end{align}
Combining with $(J^w_{\sB(v)})^{-1}: \O^{w_0w^{-1}} \times \O^{(w, v)} \to wB^-B/B(v)$,  we have the isomorphism
\begin{equation}\label{eq:bga-Bv}
\bsigma^{\br}_{\sB(v)} \stackrel{\rm def}{=} (J^w_{\sB(v)})^{-1} \circ \bsigma^{\br}: \;\; \CC^l \longrightarrow wB^-B/B(v).
\end{equation}

\begin{definition}\label{de:BS-shifted}
{\rm
For $w \in W$ and $\br = (\bfw^0, \bfw, \bfv) \in \cR(w_0w^{-1}) \times \cR(w) \times \cR(v)$, the
map $\bsigma^{\br}_{\sB(v)}: \CC^l \to wB^-B/B(v)$ in \eqref{eq:bga-Bv}
 is called the {\it Bott-Samelson parametrization} of $wB^-B/B(v)$ defined by $\br$, and the induced
coordinates $(z_1, z_2, \ldots, z_l)$
 are called {\it the Bott-Samelson coordinates} on $wB^-B/B(v)$ defined by $\br$. The collection
\[
\cA_{{\sBS}}(G/B(v)) = \{\bsigma^{\br}_{\sB(v)}: \; w \in W, \; \br \in \cR(w_0w^{-1}) \times \cR(w) \times \cR(v)\}
\]
is called the {\it Bott-Samelson atlas} on $G/B(v)$, and each $\bsigma^{\br}_{\sB(v)}$ in $\cA_{{\sBS}}(G/B(v))$ is called a {\it Bott-Samelson coordinate chart}
on $G/B(v)$.
\hfill $\diamond$
}
\end{definition}

Turning to $Q = N(v)$, fix any listing $\omega_1, \ldots, \omega_d$ of all the fundamental weights, and let
\begin{equation}\label{eq:Xi}
\sigma :\;\;  (\CC^\times)^d \longrightarrow T,
\end{equation}
be the inverse of the isomorphism $T \to (\CC^\times)^d, t \mapsto (t^{\omega_1}, \, \ldots, \, t^{\omega_d})$.
One then has the parametrization
\begin{equation}\label{eq:bga-2}
\bsigma^\br \times \sigma: \;\; \CC^l \times (\CC^\times)^d \longrightarrow \O^{w_0w^{-1}} \times \O^{(w, v)} \times T.
\end{equation}
Combining with  $(J^w_{\sN(v)})^{-1}: \O^{w_0w^{-1}} \times \O^{(w, v)} \times T \to wB^-B/N(v)$, one has the isomorphism
\begin{equation}\label{eq:bga-Nv}
\bsigma^{\br}_{\sN(v)} = (J^w_{\sN(v)})^{-1} \circ (\bsigma^{\br} \times \sigma): \;\; \CC^l \times (\CC^\times)^d \longrightarrow wB^-B/N(v).
\end{equation}

\begin{definition}\label{de:BS-shifted-Nv}
{\rm
For $w \in W$ and $\br = (\bfw^0, \bfw, \bfv) \in \cR(w_0w^{-1}) \times \cR(w) \times \cR(v)$, the
map $\bsigma^{\br}_{\sN(v)}: \CC^l \times (\CC^\times)^d \to wB^-B/N(v)$ in \eqref{eq:bga-Nv}
 is called the {\it Bott-Samelson parametrization} of $wB^-B/N(v)$ defined by $\br$, and the induced
coordinates $(z_1, z_2, \ldots, z_{l+d})$
 are called {\it the Bott-Samelson coordinates on $wB^-B/N(v)$ defined by $\br$}. The collection
\[
\cA_{{\sBS}}(G/N(v)) = \{\bsigma^{\br}_{\sN(v)}: \; w \in W, \; \br \in \cR(w_0w^{-1}) \times \cR(w) \times \cR(v)\}
\]
is called the {\it Bott-Samelson atlas} on $G/N(v)$, and each $\bsigma^{\br}_{\sN(v)}$ in $\cA_{{\sBS}}(G/N(v))$ is called a {\it Bott-Samelson coordinate
chart} on $G/N(v)$.
\hfill $\diamond$
}
\end{definition}

\begin{example}\label{ex:two-charts-G}
{\rm
Let $v = w_0$ so $G/N(v) = G$. For $w = e$, the Bott-Samelson parametrization
$\bsigma^\br_\sG: \!\CC^{2l_0} \!\times \!(\CC^\times)^d \stackrel{\sim}{\rightarrow} B^-B$ for
$\br\! =\!(\bw^0, \emptyset, \bw_0)\! \in \!\cR(w_0)\!\times \!\cR(e) \!\times\! \cR(w_0)$ is given by
\[
\bsigma^{\br}_\sG(z) =\left (\overline{w_0}^{\, -1} g_{\bw^0}(z_1, \ldots, z_{l_0})\right)^{-1}
\left(g_{\bw_0}(z_{l_0+1}, \ldots, z_{2l_0})\overline{w_0}^{\, -1} \right)
\sigma(z_{2l_0+1}, \ldots, z_{2l_0+d})
\]
for $z = (z_1, \ldots, z_{2l_0+d}) \in \CC^{2l_0} \times (\CC^\times)^d$.
Similarly, for $w = w_0$ and for each $\br =(\emptyset, \bw_0, \bw_0^\prime)\in \cR(e) \times \cR(w_0) \times \cR(w_0)$,
we have the Bott-Samelson parametrization $\bsigma^\br_\sG: \CC^{2l_0} \times (\CC^\times)^d \stackrel{\sim}{\rightarrow}
Bw_0B$ given by
\begin{align*}
\bsigma^{\br}_\sG(z_1, \ldots, z_{2l_0+d}) &=  g_{\bw_0}(z_1, \ldots, z_{l_0}) g_{\bw'_0}(z_{l_0+1}, \ldots, z_{2l_0})\overline{w_0}^{\, -1}
\sigma(z_{2l_0+1}, \ldots, z_{2l_0+d}).
\end{align*}
\hfill $\diamond$
}
\end{example}

In the remainder of $\S$\ref{ssec:ABS}, we express
the Bott-Samelson on $G/B(v)$ and $G/N(v)$ using generalized minors on $G$.

Consider again the case of $Q = B(v)$ first. Fix  $w \in W$ and let again $u = w_0w^{-1}$.
Let $\br =(\bfw^0, \bfw, \bfv) \in \cR(w_0w^{-1}) \times \cR(w) \times \cR(v)$ be as in \eqref{eq:br}, i.e.,
\[
\bfw^0 = (s_{\al_1}, \, \ldots, \, s_{\al_k}), \;\;\; \bfw = (s_{\al_{k+1}}, \, \ldots, s_{\al_{l_0}}), \;\;\; \bfv = (s_{\al_{l_0+1}}, \, \ldots, \, s_{\al_{l}}).
\]
Note then that $(s_{\al_1}, \ldots, s_{\al_k}, s_{\al_{k+1}}, \ldots, {\al_{l_0}}) \in \cR(w_0)$ and that
\begin{equation}\label{eq:i-w}
l(s_{\al_i} \cdots s_{\al_k}w) = l(s_{\al_i} \cdots s_{\al_k}) + l(w), \hs i \in [1, k].
\end{equation}
Recall from Notation \ref{nota:intro} that for $\alpha \in \Gamma$ we have $\al^* = -w_0(\al)$ and that
$\overline{s_{\al^*}} =
\overline{w_0}^{\, -1} \, \overline{s_\al}\, \overline{w_0}$.

\begin{proposition}\label{pr:zj-Bv}
For $w \in W$, write an element in $wB^-B/B(v)$ as $\ow mn_\cdot B(v)$ for unique $m \in N^-$ and $n \in N_v$. Then
for $\br \in \cR(w_0w^{-1}) \times \cR(w) \times \cR(v)$ as in \eqref{eq:br},
the Bott-Samelson coordinates $(z_1, \ldots, z_l)$ on $wB^-B/B(v)$ defined by $\br$ are given by
\begin{equation}\label{eq:zj-Bv}
z_j(\ow mn_\cdot B(v)) =
 \begin{cases} \Delta_{s_{\al_1^*} \cdots s_{\al_j^*} \omega_{\al_j^*}, \, s_{\al_1^*} \cdots s_{\al_{j-1}^*} \omega_{\al_j^*}}(m),
&  \;\;\; j \in [1, \,k],\\
\Delta_{s_{\al_{k+1}} \cdots s_{\al_{j-1}}\omega_{\al_j}, \; s_{\al_{k+1}} \cdots s_{\al_j}\omega_{\al_j}}(\ow \, m\, \ow^{\, -1}), &  \;\;\; j \in [k+1, \, l_0],\\
\Delta_{s_{\al_{l_0+1}} \cdots s_{\al_{j-1}}\omega_{\al_j}, \; s_{\al_{l_0+1}} \cdots s_{\al_j}\omega_{\al_j}}(n), &  \;\;\; j \in [l_0+1, \, l].\end{cases}
\end{equation}
Furthermore, with respect to the $T$-action on $\CC[wB^-B/B(v)]$ induced from the $T$-action on $wB^-B/B(v)$ by left translation, each $z_j$ is a $T$-weight vector, with
\[
t\cdot z_j = \begin{cases} t^{s_{\al_k} s_{\al_{k-1}} \cdots s_{\al_j}(\al_j)} z_j, & \;\;\; j \in [1, k],\\
t^{s_{\al_{k+1}} s_{\al_2}\cdots s_{\al_{j-1}}(\al_j)} z_j, & \;\;\; j \in [k+1, l],\end{cases} \hs t \in T.
\]
\end{proposition}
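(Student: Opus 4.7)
The plan is to compute the Bott-Samelson coordinates directly from the definition $\bsigma^\br_{\sB(v)} = (J^w_{\sB(v)})^{-1} \circ (\bbeta^{\bfw^0} \times \bbeta^{(\bfw,\bfv)})$ together with the explicit formula \eqref{eq:Jw-Bv} for $J^w_{\sB(v)}$. Writing $\ow m n_\cdot B(v) = a\ow n_\cdot B(v)$ with $a = \ow m\ow^{\,-1} \in \ow N^-\ow^{\,-1}$, decomposed as $a = a_+a_- = a_-'a_+'$ according to \eqref{eq:ggg-000}, formula \eqref{eq:Jw-Bv} gives $J^w_{\sB(v)}(\ow m n_\cdot B(v)) = ({\ou\, a_-^{-1}}_\cdot B,\,[a_+'\ow,\,n\ov]_{\sF_2})$. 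Applying Lemma~\ref{le:zj-Ou} to each factor expresses every $z_j$ as a generalized minor, and the three ranges $[1,k]$, $[k+1,l_0]$, $[l_0+1,l]$ of indices must each be matched with the formula claimed in \eqref{eq:zj-Bv}.

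The range $j \in [l_0+1,l]$ is immediate since the relevant minor from Lemma~\ref{le:zj-Ou} depends only on the third factor $n \in N_v$. For $j \in [k+1,l_0]$, Lemma~\ref{le:zj-Ou} gives $z_j = \Delta_{s_{\al_{k+1}}\cdots s_{\al_{j-1}}\omega_{\al_j},\,s_{\al_{k+1}}\cdots s_{\al_j}\omega_{\al_j}}(a_+')$; since $\ow m\ow^{\,-1} = a_-'a_+'$ with $a_-' \in N_w^-$ and the length-additive factorization $w = (s_{\al_{k+1}}\cdots s_{\al_{j-1}})(s_{\al_j}\cdots s_{\al_{l_0}})$ combined with \eqref{eq:Nu-Nu-2} yields $\overline{s_{\al_{k+1}}\cdots s_{\al_{j-1}}}^{\,-1}\,a_-'\,\overline{s_{\al_{k+1}}\cdots s_{\al_{j-1}}} \in N^-$, the left $N^-$-invariance of $\Delta^{\omega_{\al_j}}$ lets us replace $a_+'$ with $\ow m\ow^{\,-1}$, as required.

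The delicate range is $j \in [1,k]$. Here Lemma~\ref{le:zj-n} yields $z_j = \Delta^{\omega_{\al_j}}(\overline{s_{\al_1}\cdots s_{\al_{j-1}}}^{\,-1}\,\ou a_-^{-1}\ou^{\,-1}\,\overline{s_{\al_1}\cdots s_{\al_j}})$, and using $a_-^{-1} = a^{-1}a_+$ together with $\overline{w_0} = \ou\ow$ one rewrites $\ou a_-^{-1}\ou^{\,-1} = \overline{w_0}\,m^{-1}\,\overline{w_0}^{\,-1}\cdot c$ where $c := \ou a_+\ou^{\,-1} \in N$. To eliminate $c$ via right $N$-invariance of $\Delta^{\omega_{\al_j}}$, note that $a_+ \in N_w$ is supported on root subgroups $U_\beta$ for $\beta \in R^+ \cap wR^-$, so $c$ is supported on $\beta \in uR^+ \cap R^+$; the key combinatorial fact is that, from the length-additive factorization $u = (s_{\al_1}\cdots s_{\al_j})(s_{\al_{j+1}}\cdots s_{\al_k})$ and the standard containment $\mathrm{Inv}(s_{\al_{j+1}}\cdots s_{\al_k}) \subset \mathrm{Inv}(u)$, one has $(s_{\al_1}\cdots s_{\al_j})^{-1}(uR^+ \cap R^+) \subset R^+$, so $\overline{s_{\al_1}\cdots s_{\al_j}}^{\,-1}\,c\,\overline{s_{\al_1}\cdots s_{\al_j}} \in N$. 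Dropping $c$ reduces $z_j$ to $\Delta^{\omega_{\al_j}}(\overline{s_{\al_1}\cdots s_{\al_{j-1}}}^{\,-1}\,\overline{w_0}\,m^{-1}\,\overline{w_0}^{\,-1}\,\overline{s_{\al_1}\cdots s_{\al_j}})$, and a final application of \eqref{eq:Delta-omega-ast} together with the identity $\overline{s_{\al^*}} = \overline{w_0}\,\overline{s_\al}\,\overline{w_0}^{\,-1}$ transforms this into the claimed minor in $m$ with starred roots. The $T$-weight formulas follow from Lemma~\ref{le:JwQ-T} and \eqref{eq:T-xi}: the twist $t \mapsto t^{u^{-1}}$ on the first factor converts $s_{\al_1}\cdots s_{\al_{j-1}}(\al_j)$ into $u^{-1}s_{\al_1}\cdots s_{\al_{j-1}}(\al_j) = s_{\al_k}\cdots s_{\al_j}(\al_j)$, while on the second factor no twist appears. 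The main obstacle is the combinatorial step in the range $j \in [1,k]$ that justifies dropping $c$.
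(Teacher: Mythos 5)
Your proposal is correct and follows essentially the same route as the paper's proof: both compute the coordinates through the explicit formula for $J^w_{\sB(v)}$ together with Lemma \ref{le:zj-n}, the three index ranges are handled identically, and the only delicate step in either argument is replacing $a_-^{-1}$ by $a^{-1}=\ow\, m^{-1}\ow^{\,-1}$ up to a right factor lying in $N$ for $j\in[1,k]$. Your inversion-set justification of that step is valid and is in fact the same containment the paper obtains from \eqref{eq:i-w} and \eqref{eq:Nu-Nu-2}, since $\overline{s_{\al_1}\cdots s_{\al_j}}^{\,-1}\,c\,\overline{s_{\al_1}\cdots s_{\al_j}} = \overline{s_{\al_{j+1}}\cdots s_{\al_k}}\,a_+\,\overline{s_{\al_{j+1}}\cdots s_{\al_k}}^{\,-1}$, which is exactly the element the paper shows to lie in $N$.
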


\begin{proof}
Fix $m \in N^-$ and $n \in N_v$ and let $a  = \ow m \ow^{\, -1} \in \ow N^- \ow^{\, -1}$.
Decompose $a$ again as in \eqref{eq:ggg-000}, i.e.,
$a = a_+ a_- = a^\prime_- a^\prime_+$, where  $a_+, \,a^\prime_+ \in N_w$ and $a_-, \,a^\prime_- \in N_w^-$.
Let $n_1 = \ou a_-^{-1} \ou^{\, -1} \in N_u$ and $n_2 = a_+^\prime \in N_w$.
By the definition of $J^w_{\sB(v)}$,
\[
J^w_{\sB(v)}(\ow mn_\cdot B(v)) = J^w_{\sB(v)}(a\ow {n}_\cdot B(v)) = \left(n_1 \ou_\cdot B, \; [n_2\ow, \, n\ov]_{\sF_2}\right).
\]
Write $z_j = z_j(\ow mn_\cdot B(v))$ for $j \in [1, l]$.
Let first $j \in [1, k]$. By Lemma \ref{le:zj-n},
\begin{align*}
z_j &= \Delta^{\omega_{\al_j}}(\overline{s_{\al_1} \cdots s_{\al_{j-1}}}^{\, -1} n_1 \overline{s_{\al_1} \cdots s_{\al_j}})
= \Delta^{\omega_{\al_j}}(\overline{s_{\al_1} \cdots s_{\al_{j-1}}}^{\, -1} (\ou a_-^{-1} \ou^{\, -1}) \overline{s_{\al_1} \cdots s_{\al_j}})\\
&=\Delta^{\omega_{\al_j}}(\overline{s_{\al_j} \cdots s_{\al_{a}}} \,a_-^{-1} \,\overline{s_{\al_{j+1}} \cdots s_{\al_k}}^{\, -1}).
\end{align*}
By \eqref{eq:i-w} and \eqref{eq:Nu-Nu-2}, $\overline{s_{\al_{j+1}} \cdots s_{\al_k}} \, a_+ \, \overline{s_{\al_{j+1}} \cdots s_{\al_k}}^{\, -1} \in N$. Thus
\begin{align*}
z_j & = \Delta^{\omega_{\al_j}} (\overline{s_{\al_{j}} \cdots s_{\al_k}} \, a_-^{-1} a_+^{-1} \, \overline{s_{\al_{j+1}} \cdots s_{\al_k}}^{\, -1} \;
 \overline{s_{\al_{j+1}} \cdots s_{\al_k}} \, a_+ \, \overline{s_{\al_{j+1}} \cdots s_{\al_k}}^{\, -1})\\
& = \Delta^{\omega_{\al_j}} (\overline{s_{\al_{j}} \cdots s_{\al_k}} \, a^{-1} \, \overline{s_{\al_{j+1}} \cdots s_{\al_k}}^{\, -1})
= \Delta^{\omega_{\al_j}} (\overline{s_{\al_{j}} \cdots s_{\al_k} w}\, m^{-1}  \, \overline{s_{\al_{j+1}} \cdots s_{\al_k}w}^{\, -1}),
\end{align*}
where in the last step we use again \eqref{eq:i-w} for $i = j$ and $j+1$.
 By \eqref{eq:Delta-omega-ast}, one has
\[
z_j = \Delta^{\omega_{\al_j^*}}(\overline{w_0}^{\, -1}\, \overline{s_{\al_{j+1}} \cdots s_{\al_k}w}\, m \,
\overline{s_{\al_j} \cdots s_{\al_k}w}^{\, -1}\,\overline{w_0}).
\]
Note now that for $i = j$ or $j+1$, one has  $\overline{s_{\al_i} \cdots s_{\al_k}w}= \overline{s_{\al_1} \cdots s_{\al_{i-1}}}^{\, -1} \, \overline{w_0}$. It follows that
\begin{align*}
z_j &=\Delta^{\omega_{\al_j^*}}(\overline{w_0}^{\, -1}\, \overline{s_{\al_1} \cdots s_{\al_j}}^{\, -1} \, \overline{w_0} \, m \,
\overline{w_0}^{\, -1}\, \overline{s_{\al_1} \cdots s_{\al_{j-1}}} \, \overline{w_0})\\
& = \Delta^{\omega_{\al_j^*}}(\overline{s_{\al_1^*} \cdots s_{\al_j^*}}^{\, -1} \, m \, \overline{s_{\al_1^*} \cdots s_{\al_{j-1}^*}})=
\Delta_{s_{\al_1^*} \cdots s_{\al_j^*} \omega_{\al_j^*}, \; s_{\al_1^*} \cdots s_{\al_{j-1}^*} \omega_{\al_j^*}}(m).
\end{align*}
Assume now that $j \in [k+1, \, l_0]$. By Lemma \ref{le:zj-n},
\begin{align*}
z_j &= \Delta^{\omega_{\al_j}}(\overline{s_{\al_{k+1}} \cdots s_{\al_{j-1}}}^{\, -1}\, n_2\, \overline{s_{\al_{k+1}} \cdots s_{\al_j}})
=  \Delta^{\omega_{\al_j}}(\overline{s_{\al_{k+1}} \cdots s_{\al_{j-1}}}^{\, -1} \,a_+^\prime\, \overline{s_{\al_{k+1}} \cdots s_{\al_j}})\\
&= \Delta^{\omega_{\al_j}}(\overline{s_{\al_{k+1}} \cdots s_{\al_{j-1}}}^{\, -1}\, (a_-^\prime)^{-1}a\, \overline{s_{\al_{k+1}} \cdots s_{\al_j}}).
\end{align*}
By \eqref{eq:Nu-Nu-2}, $\overline{s_{\al_{k+1}} \cdots s_{\al_{j-1}}}^{\, -1}\, (a_-^\prime)^{-1}\, \overline{s_{\al_{k+1}} \cdots s_{\al_{j-1}}}\subset N^-$.
It follows that
\begin{align*}
z_j &= \Delta^{\omega_{\al_j}}(\overline{s_{\al_{k+1}} \cdots s_{\al_{j-1}}}^{\, -1}\, a\, \overline{s_{\al_{k+1}} \cdots s_{\al_j}}) =
\Delta_{s_{\al_{k+1}} \cdots s_{\al_{j-1}}\omega_{\al_j}, \; s_{\al_{k+1}} \cdots s_{\al_j}\omega_{\al_j}}(a)\\
& = \Delta_{s_{\al_{k+1}} \cdots s_{\al_{j-1}}\omega_{\al_j}, \; s_{\al_{k+1}} \cdots s_{\al_j}\omega_{\al_j}}(\ow \, m \, \ow^{\, -1}).
\end{align*}
Finally, assume that $j \in [l_0+1, \, l]$. By Lemma \ref{le:zj-n},
\[
z_j = \Delta^{\omega_{\al_j}}(\overline{s_{\al_{l_0+1}} \cdots s_{\al_{j-1}}}^{\, -1}\, n\, \overline{s_{\al_{l_0+1}} \cdots s_{\al_j}})
=\Delta_{s_{\al_{l_0+1}} \cdots s_{\al_{j-1}}\omega_{\al_j}, \; s_{\al_{l_0+1}} \cdots s_{\al_j}\omega_{\al_j}}(n).
\]
The statement on the $T$-weight for each $z_j$ follows either from a direct calculation using \eqref{eq:zj-Bv} or from the
$T$-equivariance of the isomorphism $J^w_{\sB(v)}$ and Lemma \ref{le:zj-Ou}.
\end{proof}

Turning to $Q = N(v)$, recall that we have fixed a listing $\omega_1, \ldots \omega_d$ of all the fundamental weights to define the
isomorphism $\sigma: (\CC^\times)^d \to T$ in \eqref{eq:Xi}, which is in turn used in defining the Bott-Samelson coordinate charts on $G/N(v)$.
The following proposition follows directly from Proposition \ref{pr:zj-Bv}.

\begin{proposition}\label{pr:zj-Nv}
For $w \in W$, write an element in $wB^-B/N(v)$ as $\ow mnt_\cdot N(v)$ for unique $m \in N^-$, $m \in N_v$, and $t \in T$. Then
for $\br \in \cR(w_0w^{-1}) \times \cR(w) \times \cR(v)$ as in \eqref{eq:br}, the Bott-Samelson coordinates $(z_1, \ldots, z_{l+d})$ on $wB^-B/N(v)$
defined by $\br$ are given by
\[
z_j(\ow mnt_\cdot N(v)) =
 \begin{cases} \Delta_{s_{\al_1^*} \cdots s_{\al_j^*} \omega_{\al_j^*}, \, s_{\al_1^*} \cdots s_{\al_{j-1}^*} \omega_{\al_j^*}}(m),
& \; j \in [1, \,k],\\
\Delta_{s_{\al_{k+1}} \cdots s_{\al_{j-1}}\omega_{\al_j}, \; s_{\al_{k+1}} \cdots s_{\al_j}\omega_{\al_j}}(\ow \, m\, \ow^{\, -1}), & \; j \in [k+1, \, l_0],\\
\Delta_{s_{\al_{l_0+1}} \cdots s_{\al_{j-1}}\omega_{\al_j}, \; s_{\al_{l_0+1}} \cdots s_{\al_j}\omega_{\al_j}}(n), & \;j \in [l_0+1, \, l],\\
t^{\omega_{j-l}}, & \; j \in [l+1, \, l+d].\end{cases}
\]
Furthermore, with respect to the $T$-action on $\CC[wB^-B/N(v)]$ induced from the $T$-action on $wB^-B/N(v)$ by left translation, each $z_j$ is a $T$-weight vector, with
\[
t\cdot z_j = \begin{cases} t^{s_{\al_k} s_{\al_{k-1}} \cdots s_{\al_j}(\al_j)} z_j, & \;\;\; j \in [1, k],\\
t^{s_{\al_{k+1}} s_{\al_2}\cdots s_{\al_{j-1}}(\al_j)} z_j, & \;\;\; j \in [k+1, l],\\
t^{w\omega_{j-l}}z_j, & \hs j \in [l+1, l+d],\end{cases}  \;\;\; t \in T.
\]
\end{proposition}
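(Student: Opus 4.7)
The plan is to reduce Proposition \ref{pr:zj-Nv} to Proposition \ref{pr:zj-Bv} via the explicit formula for $J^w_{\sN(v)}$ given in \eqref{eq:Jw-Nv}. First, I would note that for any $\ow m n t_\cdot N(v) \in wB^-B/N(v)$ with $m \in N^-$, $n \in N_v$, $t \in T$, setting $a = \ow m \ow^{\, -1} \in \ow N^- \ow^{\, -1}$ and decomposing $a = a_+a_- = a_-^\prime a_+^\prime$ as in \eqref{eq:ggg-000}, the explicit formula \eqref{eq:Jw-Nv} gives
\[
J^w_{\sN(v)}(\ow mnt_\cdot N(v)) = \left(\ou a_-^{-1}_\cdot B, \;\, [a_+^\prime \ow, \; n\ov]_{\sF_2}, \; t\right),
\]
whose first two components are precisely $J^w_{\sB(v)}(\ow m n_\cdot B(v))$ from \eqref{eq:Jw-Bv}.

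Next, since by definition $\bsigma^\br_{\sN(v)} = (J^w_{\sN(v)})^{-1} \circ (\bsigma^\br \times \sigma)$, the first $l$ coordinates $z_1, \ldots, z_l$ are obtained by inverting $\bsigma^\br = \bbeta^{\bfw^0} \times \bbeta^{(\bfw,\bfv)}$ applied to the first two components of $J^w_{\sN(v)}(\ow m n t_\cdot N(v))$. By the previous paragraph, this computation is literally identical to the one carried out in the proof of Proposition \ref{pr:zj-Bv}, and so the formulas for $z_j$ with $j \in [1, l]$ match verbatim those in \eqref{eq:zj-Bv}. The last $d$ coordinates are obtained by inverting $\sigma$ applied to the $T$-component: since $\sigma^{-1}(t) = (t^{\omega_1}, \ldots, t^{\omega_d})$, we immediately get $z_{l+i}(\ow m n t_\cdot N(v)) = t^{\omega_i}$ for $i \in [1, d]$.

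For the $T$-weights, the weights for $j \in [1, l]$ follow either directly from Proposition \ref{pr:zj-Bv} (applied via the $T$-equivariant projection $wB^-B/N(v) \to wB^-B/B(v)$) or, equivalently, from the $T$-equivariance of $J^w_{\sN(v)}$ in Lemma \ref{le:JwQ-T} combined with Lemma \ref{le:zj-Ou}. For $j \in [l+1, l+d]$, Lemma \ref{le:JwQ-T} shows that left multiplication by $t_1 \in T$ sends the $T$-component $t$ to $t_1^w t$, so $z_{l+i}(t_1 \cdot \ow m n t_\cdot N(v)) = (t_1^w t)^{\omega_i} = t_1^{w\omega_i} t^{\omega_i}$, which yields the weight $w\omega_i$ as claimed. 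There is no serious obstacle here beyond tracking which components of $J^w_{\sN(v)}$ contribute to each coordinate; the substantive calculations have already been done in Proposition \ref{pr:zj-Bv} and Lemma \ref{le:JwQ-T}.
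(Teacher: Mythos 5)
Your proposal is correct and follows the paper's own route: the paper derives Proposition \ref{pr:zj-Nv} directly from Proposition \ref{pr:zj-Bv}, exactly as you do, since the first two components of $J^w_{\sN(v)}$ coincide with $J^w_{\sB(v)}$, the $T$-component inverts via $\sigma^{-1}(t)=(t^{\omega_1},\ldots,t^{\omega_d})$, and the weights follow from the $T$-equivariance in Lemma \ref{le:JwQ-T}. You simply make explicit the reduction that the paper leaves as "follows directly."
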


\begin{example}\label{ex:B}
{\rm
Consider the case of $Q = B$ so that $v = e$.  For any $w \in W$ and $\br = (\bw^0, \bw, \emptyset) \in \cR(w_0w^{-1}) \times \cR(w) \times \cR(e)$ with
\[
\bw^0 = (s_{\al_1}, \ldots, s_{\al_k}) \hs \mbox{and} \hs \bw = (s_{\al_{k+1}}, \ldots, s_{\al_{l_0}}),
\]
the Bott-Samelson coordinates on $wB^-B/B$ defined by $\br$ are given by
\begin{equation}\label{eq:zj-B}
z_j(\ow m_\cdot B) =
 \begin{cases} \Delta_{s_{\al_1^*} \cdots s_{\al_j^*} \omega_{\al_j^*}, \, s_{\al_1^*} \cdots s_{\al_{j-1}^*} \omega_{\al_j^*}}(m),
&  \;\;\; j \in [1, \,k],\\
\Delta_{s_{\al_{k+1}} \cdots s_{\al_{j-1}}\omega_{\al_j}, \; s_{\al_{k+1}} \cdots s_{\al_j}\omega_{\al_j}}(\ow \, m\, \ow^{\, -1}), &  \;\;\; j \in [k+1, \, l_0].
\end{cases}
\end{equation}
\hfill $\diamond$
}
\end{example}

\begin{example}\label{ex:SL3-0}
{\rm
Consider $G = SL(3, \CC)$ and $Q = \{e\}$ (so $v = w_0$), with the standard choices of $B$ and $B^-$ being the respectively the subgroups consisting of
upper-triangular and lower triangular elements, and denote $s_1=s_{\al_1}$ and $s_2=s_{\al_2}$ for the standard choice of $\al_1$ and $\al_2$.
There are a total of $16$ Bott-Samelson coordinate charts on $G$, corresponding to the $16$ elements in the  set
$\bigcup_{w \in W} \cR(w_0w^{-1}) \times \cR(w) \times \cR(w_0)$.   Write  $g \in SL(3)$ as
$\displaystyle g = \left(\begin{array}{ccc} a_{11} & a_{12} & a_{13}\\ a_{21} & a_{22} & a_{23}\\ a_{31} & a_{32} & a_{33}\end{array}\right)$ and
let $\displaystyle \Delta_{ij, kl} =
{\rm det}\left(\begin{array}{cc} a_{ik} & a_{il}\\ a_{jk} & a_{jl}\end{array}\right)$ for $i<j$ and $k < l$.

As the first example, let $w = e$ and choose $\br_1 = ((s_1,s_2,s_1), \emptyset, (s_1, s_2, s_1))$. Then for the  corresponding Bott-Samelson parametrization
$\bsigma^{\br_1}=\bsigma^{\br_1}_{\sG}$ of $B^-B$, one maps
$\xi =(\xi_1, \ldots, \xi_8) \in \CC^6 \times (\CC^\times)^2$ to
\begin{align*}
\bsigma^{\br_1}(\xi) &= \left(\begin{array}{ccc} 1 & 0 & 0\\ \xi_3 & 1 & 0 \\ \xi_2 & \xi_1 & 1\end{array}\right)
\left(\begin{array}{ccc} 1 & \xi_4 & \xi_4\xi_6-\xi_5\\ 0 & 1 & \xi_6\\ 0 & 0 & 1\end{array}\right)
\left(\begin{array}{ccc} \xi_7 & 0 & \\ 0 & \xi_8/\xi_7 & 0\\ 0 & 0 & 1/\xi_8\end{array}\right)\\
&=\left(\begin{array}{ccc} \xi_7 & \xi_4\xi_8/\xi_7 & (\xi_4\xi_6-\xi_5)/\xi_8\\
\xi_3\xi_7 & (\xi_3\xi_4+1)\xi_8/\xi_7 & (\xi_3\xi_4\xi_6-\xi_3\xi_5+\xi_6)/\xi_8\\
\xi_2\xi_7 & (\xi_2\xi_4+\xi_1)\xi_8/\xi_7 & (\xi_2\xi_4\xi_6-\xi_2\xi_5+\xi_1\xi_6+1)/\xi_8\end{array}\right),
\end{align*}
and the corresponding Bott-Samelson coordinates $(\xi_1, \ldots, \xi_8)$ on $B^-B$ are
\begin{align*}
&\xi_1 = \frac{\Delta_{13,12}}{\Delta_{12,12}}, \hs \xi_2 = \frac{a_{31}}{a_{11}}, \hs \xi_3 = \frac{a_{21}}{a_{11}}, \hs \xi_4 = \frac{a_{11}a_{12}}{\Delta_{12,12}},\hs
 \xi_5 = a_{11} \Delta_{12,23}, \\
&\xi_6 = \frac{\Delta_{12,12} \Delta_{12,13}}{a_{11}}, \hs \xi_7 = a_{11}, \hs \xi_8 = \Delta_{12,12}.
\end{align*}

As the second example, let $w = w_0 = s_1s_2s_1$ and $\br_2 = (\emptyset, (s_2, s_1, s_2), (s_1, s_2, s_1))$.
The  corresponding Bott-Samelson parametrization $\bsigma^{\br_2} =\bsigma^{\br_2}_{\sG}$ of $w_0B^-B=Bw_0B$ maps
$z =(z_1, \ldots, z_8) \in \CC^6 \times (\CC^\times)^2$ to
\begin{align*}
\bsigma^{\br_2}(z) &= \overline{w_0}  \left(\begin{array}{ccc} 1 & 0 & 0 \\ -z_1 & 1 & 0\\ z_2 & -z_3 & 1\end{array}\right)
\left(\begin{array}{ccc} 1 & z_4 & z_4z_6-z_5\\ 0 & 1 & z_6\\ 0 & 0 & 1\end{array}\right)
\left(\begin{array}{ccc} z_7 & 0 & \\ 0 & z_8/z_7 & 0\\ 0 & 0 & 1/z_8\end{array}\right)\\
&=\left(\begin{array}{ccc} z_2z_7 & (z_2z_4-z_3)z_8/z_7 & (z_2z_4z_6-z_2z_5-z_3z_6+1)/z_8\\
z_1z_7 & (z_1z_4-1)z_8/z_7 & (z_1z_4z_6-z_1z_5-z_6)/z_8\\
z_7 &z_4z_8/z_7 & (z_4z_6-z_5)/z_8\end{array}\right),
\end{align*}
and the Bott-Samelson coordinates $(z_1, \ldots, z_8)$ on $w_0B^-B$ are
\begin{align*}
&z_1 = \frac{a_{21}}{a_{31}}, \hs z_2 =  \frac{a_{11}}{a_{31}}, \hs z_3= \frac{\Delta_{13,12}}{\Delta_{23,12}}, \hs
z_4 = \frac{a_{31}a_{32}}{\Delta_{23,12}},\\
&z_5 =a_{31}\Delta_{23,23}, \hs
z_6 = \frac{1}{a_{31}} \Delta_{23,12}\Delta_{23,13},
 \hhs z_7 = a_{31}, \hs z_8=\Delta_{23,12}.
\end{align*}

The changes between the  coordinates $(\xi_1, \ldots, \xi_8)$ and $(z_1, \ldots, z_8)$ are given by
\begin{align*}
&\xi_1 = z_3/(z_1z_3-z_2), \hs \xi_2 = 1/z_2, \hs \xi_3 = z_1/z_2, \\
&\xi_4 = \frac{z_2(z_2z_4-z_3)}{z_1z_3-z_2}, \hs
\xi_5 = z_2(1-z_1z_4 + (z_1z_3-z_2)z_5),\\
&\xi_6 = \frac{1}{z_2}(z_1z_3-z_2)(z_1z_3z_6-z_2z_6-z_1), \hs
\xi_7 = z_2z_7, \hs \xi_8 = z_8(z_1z_3-z_2),\\
&z_1=\xi_3/\xi_2, \hs z_2=1/\xi_2, \hs
z_3 = \xi_1/(\xi_1\xi_3-\xi_2),\\
& z_4 = \frac{\xi_2(\xi_2\xi_4+\xi_1)}{\xi_1\xi_3-\xi_2},\hs z_5 = \xi_2(\xi_1\xi_3\xi_5-\xi_2\xi_5+\xi_3\xi_4+1),\\
&z_6 = \frac{1}{\xi_2}(\xi_1\xi_3-\xi_2)(\xi_1\xi_3\xi_6-\xi_2\xi_6+\xi_3), \hs z_7 = \xi_2\xi_7, \hs z_8 = \xi_8(\xi_1\xi_3-\xi_2).
\end{align*}
\hfill $\diamond$
}
\end{example}

\section{Positivity of the Bott-Samelson coordinates}\label{sec:BS-pos}
In $\S$\ref{ssec:pos} we recall from \cite{ABHY1, BK:CrystalsII, FG:IHES} the notion of positive structures on complex varieties.
In $\S$\ref{ssec:pos-GQ} we first recall  the
Lusztig positive structure on $G$ and then extend it to $G/Q$ for $Q = B(v)$ or $N(v)$ for all $v \in W$.
Some results from \cite{BZ:tensor, FZ:double} on generalized minors and double Bruhat cells are reviewed in $\S$\ref{ssec:auxi},
which are then used in $\S$\ref{ssec:BSpos-GQ} to prove
positivity of all Bott-Samelson coordinates  on $G/Q$ with respect to
the Lusztig positive structure.

\subsection{Positive varieties}\label{ssec:pos}
We first recall the notion of positive varieties.

\begin{notation}\label{nota:Pos}
{\rm
1) For an integer $m \geq 1$, let $\Poly_m^{>0}$ be the set of all non-zero polynomials in $m$ variables with non-negative integer coefficients.
Elements in $\Poly_m^{>0}$ will also be called {\it positive integral polynomials in $m$ variables}.

2) For an irreducible rational complex variety $X$ with the field $\CC(X)$ of rational functions, and for
a subset $\bphi = \{\phi_1, \ldots, \phi_m\}$ of  algebraically independent elements in $\CC(X)$,
denote by $\Pos(\bphi)$ the set of elements $f \in \CC(X)$ such that $f = p(\phi_1, \ldots, \phi_m)/q(\phi_1, \ldots, \phi_m)$ for some $p, q \in \Poly_m^{>0}$.
Elements in $\Pos(\bphi)$ are also said to have {\it subtraction-free expressions in $\bphi$}.

3) A rational map $F$ from $(\CC^\times)^k$ to $(\CC^\times)^k$ is said to be positive if the components of $F$
are in $\Pos(c_1, \ldots, c_k)$, where $(c_1, \ldots, c_k)$ are the coordinates on $(\CC^\times)^k$.
\hfill $\diamond$
}
\end{notation}

\begin{definition}\label{de:equi-charts} \cite{BK:CrystalsII, FG:IHES}
{\rm Let $X$ be an $n$-dimensional irreducible rational complex variety.

1) A {\it toric chart} in $X$ is an open embedding
$\rho: (\CC^\times)^n \to X$. Two toric charts
\[
\rho_1: \;\;(\CC^\times)^n \, {\longrightarrow}\,  X\hs \mbox{and} \hs
\rho_2: \;\; (\CC^\times)^n\, {\longrightarrow}\, X
\]
are said to be {\it positively equivalent} if both $\rho_2^{-1} \circ \rho_1$ and $\rho_1^{-1} \circ \rho_2$ are positive rational maps from $(\CC^\times)^n$
to $(\CC^\times)^n$.
The collection of all toric charts positively equivalent
to a given toric chart $\rho$ is called the {\it positive equivalence class} of $\rho$ and is denoted as $[\rho]$.

2) A {\it positive structure} on
$X$ is a positive equivalence class $\cP^\sX$ of toric charts in $X$.
\hfill $\diamond$
}
\end{definition}

\begin{definition}\label{de:posvar} \cite{BK:CrystalsII, FG:IHES}
{\rm 1)  A {\it positive variety} is a pair $(X, \cP^\sX)$,
where $X$ is an irreducible
rational complex variety and $\cP^\sX$ a positive structure on $X$.  A
toric chart $\rho$ in $X$ such that $[\rho] = \cP^\sX$ is also called a {\it toric chart in $\cP^\sX$}.

2) Given a positive variety $(X, \cP^\sX)$ and a toric chart $\rho: (\CC^\times)^n \to X$ in $\cP^\sX$,
 the subset
\[
(X, \cP^\sX)_{>0} \, \stackrel{\rm def}{=}\, \rho((\RR_{>0})^n)
\]
of $X$, which is independent of the choice of  $\rho$ in $\cP^\sX$, is called the {\it totally positive part}
of $(X, \cP^\sX)$. Here $\RR_{>0}$ is the set of all positive real numbers.
When the positive structure $\cP^\sX$ is clearly indicated, we denote $(X, \cP^\sX)_{>0}$ simply
by $X_{>0}$ and call
it the totally positive part of $X$.

3) Given a positive variety $(X, \cP^\sX)$,  a rational function $f \in \CC(X)$ is said to be {\it positive with respect to $\cP^\sX$} if there exists
a toric chart $\rho: (\CC^\times)^n \to X$ in $\cP^\sX$ such that $f \in \Pos(c_1, \ldots, c_n)$, where $(c_1, \ldots, c_n)$
are the local coordinates on $X$ defined by the toric chart $\rho$.
Note that the definition of $f$ being positive is independent on the choice of the toric chart $\rho$ in $\cP^\sX$.  Denote by
$\Pos(X, \cP^\sX)$, or simply by $\Pos(X)$ when the positive structure $\cP^\sX$ is clearly understood,
 the set of all rational functions on $X$ that are positive with respect to $\cP^\sX$.
\hfill $\diamond$
}
\end{definition}

\begin{remark}\label{re:Pos-X}
{\rm
It is clear that for any integer $m\geq 1$, the set of $\Poly_m^{>0}$ of positive integral polynomials in $m$ variables is closed under addition and
multiplication. Consequently, for any positive variety $(X, \cP^{\sX})$, $\Pos(X, \cP^\sX)$ is a semi-field in the sense that it is closed under addition, multiplication, and division but with no zero element.
In particular, for any finite subset $\{f_1, \ldots, f_m\}$ of $\Pos(X, \cP^\sX)$, not necessarily algebraically independent,
one has $p(f_1, \ldots, f_m) \in \Pos(X, \cP^\sX)$ for any
$p \in \Poly_m^{>0}$.
\hfill $\diamond$
}
\end{remark}

\subsection{The Lusztig positive structure on $G/Q$}\label{ssec:pos-GQ}
Returning to the connected and simply connected
complex semisimple Lie group $G$, we first recall the Lusztig positive structure on $G$ and then extend it to $G/B(v)$ and
$G/N(v)$ for all $v \in W$.

For $w \in W$, $\bw = (s_{\al_1}, \ldots, s_{\al_k}) \in \cR(w)$, and $c = (c_1, \ldots, c_k) \in \CC^k$,
 define
\begin{equation}\label{eq:gcw0}
x_\bw^-(c) = x_{-\al_1}(c_1) x_{-\al_2}(c_2) \cdots x_{-\al_k}(c_k), \hs
x_\bw^+(c) = x_{\al_1}(c_1) x_{\al_2}(c_2) \cdots x_{\al_k}(c_k).
\end{equation}
As $\bw$ is a reduced word for $w$, one has  (see, for example, \cite[(d) of Proposition 2.7]{Lusz:pos})
\begin{equation}\label{ex:xwc}
x_\bw^-((\CC^\times)^k) \subset N^- \cap BwB \hs \mbox{and} \hs
x_\bw^+((\CC^\times)^k) \subset N \cap B^-wB^-.
\end{equation}
The following facts are proved in \cite[Proposition 2.7]{Lusz:pos} and \cite[Theorem 3.1]{BZ:total}).

\begin{lemma}\label{le:ccc-N}
For any $w \in W$ and $\bw = (s_{\al_1}, \ldots, s_{\al_k}) \in \cR(w)$,
\begin{align}\label{eq:ccc-N}
x_\bw^-: &\; (\CC^\times)^k \longrightarrow N^- \cap BwB, \;\; (c_1, \ldots, c_k) \longmapsto x_\bw^-(c_1, \ldots, c_k),\\
\label{eq:ccc-Nm}
x_\bw^+:& \; (\CC^\times)^k \longrightarrow N \cap B^-wB^-, \;\; (c_1, \ldots, c_k) \longmapsto x_\bw^+(c_1, \ldots, c_k),
\end{align}
are toric charts, respectively in $N^- \cap BwB$ and $N \cap B^-wB^-$, and their positive equivalence classes are independent of the choice of
$\bw \in \cR(w)$.
\end{lemma}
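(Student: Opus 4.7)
The plan is to first establish the toric chart property for $x_\bw^-$, derive the corresponding statement for $x_\bw^+$ via the anti-involution $\tau$ of \eqref{eq:tau-iota}, and finally prove invariance of the positive equivalence class under change of reduced word using Matsumoto's theorem. The containment $x_\bw^-((\CC^\times)^k) \subset N^- \cap BwB$ is already recorded in \eqref{ex:xwc}, and $N^- \cap BwB$ is irreducible of dimension $l(w) = k$. I would prove injectivity of $x_\bw^-$ on $(\CC^\times)^k$ by induction on $k$: the parameter $c_1$ is recovered from $x_\bw^-(c)$ by reading off the $(-\al_1)$-coordinate in a PBW-type parametrization of $N^-$ ordered compatibly with $\bw$, after which one divides out $x_{-\al_1}(c_1)$ and inducts on the reduced word $(s_{\al_2}, \ldots, s_{\al_k})$ of $s_{\al_1} w$. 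Combined with Chevalley constructibility and dimension comparison, this yields that $x_\bw^-$ is an isomorphism onto a Zariski open subset of $N^- \cap BwB$, i.e., a toric chart.

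For the second step, applying $\tau$ to $x_\bw^+$ gives
\[
\tau(x_\bw^+(c_1, \ldots, c_k)) \;=\; x_{-\al_k}(c_k) \cdots x_{-\al_1}(c_1) \;=\; x_{\bw^{\rm op}}^-(c_k, c_{k-1}, \ldots, c_1),
\]
where $\bw^{\rm op} = (s_{\al_k}, \ldots, s_{\al_1}) \in \cR(w^{-1})$. Since $\tau(N) = N^-$, $\tau(B^-) = B$, and $\ow^\tau = \ow^{\, -1}$ by \eqref{eq:ow-tau}, we obtain $\tau(N \cap B^- w B^-) = N^- \cap B w^{-1} B$. Hence $x_\bw^+ = \tau \circ x_{\bw^{\rm op}}^- \circ \sigma_{\rm rev}$, where $\sigma_{\rm rev}$ reverses the $k$ coordinates; this presents $x_\bw^+$ as the composition of a positive automorphism of $(\CC^\times)^k$ with a toric chart from the first step followed by the algebraic automorphism $\tau$, hence as a toric chart.

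The third step, invariance of the positive equivalence class, is the main obstacle. By Matsumoto's theorem, any two reduced words of $w$ are connected by a sequence of braid moves, so it suffices to verify positive equivalence across a single move. For a commutation $s_\al s_\be = s_\be s_\al$ with $\la \al, \be^\vee \ra = 0$, the one-parameter subgroups commute, and the transition is a coordinate transposition, which is manifestly positive. For a rank-two braid $s_\al s_\be s_\al = s_\be s_\al s_\be$ in type $A_2$, the classical identity
\[
x_{-\al}(a)\, x_{-\be}(b)\, x_{-\al}(c) \;=\; x_{-\be}\!\left(\tfrac{bc}{a+c}\right) x_{-\al}(a+c)\, x_{-\be}\!\left(\tfrac{ab}{a+c}\right)
\]
exhibits the transition map and its inverse as subtraction-free rational maps on $(\CC^\times)^3$; analogous subtraction-free formulas hold in types $B_2$ and $G_2$. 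Verifying these rank-two identities is a direct matrix computation in $SL(3, \CC)$, $Sp(4, \CC)$, and the group of type $G_2$. Combining all cases, the positive equivalence class of $x_\bw^-$ is independent of $\bw \in \cR(w)$, and the same follows for $x_\bw^+$ by the $\tau$-equivariant construction of the second step, completing the proof.
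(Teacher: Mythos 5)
The paper itself offers no argument for this lemma: it is quoted directly from \cite[Proposition 2.7]{Lusz:pos} and \cite[Theorem 3.1]{BZ:total}. Your overall architecture is precisely the classical route taken in those references: establish the open-embedding property for $x_\bw^-$, transfer it to $x_\bw^+$ via the anti-automorphism $\tau$ together with \eqref{eq:ow-tau} (this step of yours is correct as written), and get independence of the positive equivalence class from Matsumoto's theorem plus subtraction-free rank-two transition formulas. Your $A_2$ identity is correct, and the $B_2$ and $G_2$ analogues you assert but do not write down are exactly the content of \cite[Theorem 3.1]{BZ:total}, so Step 3 is sound modulo re-deriving (or citing) those two computations.

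Step 1, however, contains a genuine error. You claim that $c_1$ is recovered from $x_\bw^-(c)$ by ``reading off the $(-\al_1)$-coordinate in a PBW-type parametrization of $N^-$ ordered compatibly with $\bw$.'' This fails whenever $s_{\al_1}$ recurs in the reduced word. Take $G=SL(3,\CC)$ and $\bw=(s_1,s_2,s_1)$: then $x_{-\al_1}(c_1)x_{-\al_2}(c_2)x_{-\al_1}(c_3)$ is the lower unitriangular matrix with subdiagonal entries $c_1+c_3$ and $c_2$ and lower-left entry $c_2c_3$, and in either PBW ordering of the negative roots its $(-\al_1)$-coordinate is $c_1+c_3$, not $c_1$. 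The factorization parameters are recovered only \emph{rationally} (here $c_3=c_2c_3/c_2$ and then $c_1=(c_1+c_3)-c_3$), and the recovery genuinely uses the nonvanishing of the later parameters; this rational inversion (the Chamber Ansatz / generalized-minor formulas) is exactly the nontrivial content of the cited results, so your induction ``read off $c_1$, divide, induct'' does not close as stated. It can be repaired, for instance by expressing the parameters through generalized minors in the spirit of Lemma \ref{le:zj-n} and \cite[Theorem 1.2]{FZ:double}, or by rewriting each factor via the $SL_2$-identity $x_{-\al}(c)=x_\al(c^{-1})\,\overline{s_\al}\,\al^\vee(c)\,x_\al(c^{-1})$ (valid for $c\neq 0$ with the paper's choice of $\overline{s_\al}$) so as to reduce injectivity to the Bott--Samelson parametrization \eqref{eq:z-gz} after pushing the torus and upper-unipotent factors to the right. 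The remainder of your Step 1 is fine: over $\CC$, an injective morphism between smooth irreducible varieties of equal dimension is an open immersion, the same device the paper uses in Lemma \ref{le:nn-v}, and $N^-\cap BwB$ is indeed irreducible of dimension $l(w)$.
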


Let again $\sigma: (\CC^\times)^d \to T$ be the isomorphism in \eqref{eq:Xi}. For $\bw_0, \bw_0^\prime \in \cR(w_0)$,
define $\rho_{(\bw_0, \bw_0^\prime)}:\;  (\CC^\times)^{2l_0+d} \longrightarrow G$ by
\begin{equation}\label{eq:rho-ww}
\rho_{(\bw_0, \bw_0^\prime)}(c_1, \ldots, c_{2l_0+d})
= x_{\bw_0}^-(c_1, \ldots, c_{l_0}) x_{\bw_0^\prime}^+(c_{l_0+1}, \ldots, c_{2l_0})\sigma(c_{2l_0+1}, \ldots, c_{2l_0+d}).
\end{equation}
By the unique decomposition $B^-B = N^- N T$ and by Lemma \ref{le:ccc-N},
$\rho_{(\bw_0, \bw_0^\prime)}$ is
a toric chart in $G$,
and the positive equivalence class $[\rho_{(\bw_0, \bw_0^\prime)}]$ of toric charts in $G$ is independent of the choices of
$(\bw_0, \bw_0^\prime) \in \cR(w_0)\times \cR(w_0)$.

\begin{definition}\label{de:posG}
{\rm
 We will set
\[
\cP^{\sG}_{\rm Lusztg} = [\rho_{(\bw_0, \bw_0^\prime)}]
\]
for any $(\bw_0, \bw_0^\prime) \in \cR(w_0)\times \cR(w_0)$ and call it the {\it Lusztig positive structure on $G$}.
\hfill $\diamond$
}
\end{definition}

Denote by $G_{>0}$ the totally positive part of $G$ defined by $\cP^{\sG}_{\rm Lusztg}$. Then, using any reduced words
$\bw_0, \bw_0^\prime \in \cR(w_0)$, one has
\[
G_{>0} = \{x_{\bw_0}^-(c_1, \ldots, c_{l_0})x_{\bw_0^\prime}^+(c_{l_0+1}, \ldots, c_{2l_0}) t: \; c_1, \ldots, c_{2l_0} \in \RR_{>0},
t^{\omega_{\al}} > 0\ , \forall \, \al \in \Gamma\},
\]
which coincides with the totally positive part of $G$ defined by Lusztig \cite{Lusz:pos}.
%Denote by $\Pos(G)$ the set of all rational functions on $G$ that are positive with respect to $\cP^{\sG}_{\rm Lusztg}$.

To define the Lusztig positive structures on $G/B(v)$ and $G/N(v)$ for $v \in W$, we first prove the following lemma.

\begin{lemma}\label{le:nn-v}
For any $v \in W$, the following maps are all open embeddings:
\begin{align*}
&\delta_v: \;\; B^-v^{-1}B^- \longrightarrow G/N(v), \;\; g \longmapsto g_\cdot N(v),\\
&\epsilon_v: \;\; (N^- \cap Bw_0B) \times (N \cap B^-v^{-1}B^-) \times T\longrightarrow G/N(v), \;\; (m, n, t) \longmapsto mnt_\cdot N(v),\\
&\epsilon_v^\prime: \;\; (N^- \cap Bw_0B) \times (N \cap B^-v^{-1}B^-) \longrightarrow G/B(v), \;\; (m, n) \longmapsto mn_\cdot B(v).
\end{align*}
\end{lemma}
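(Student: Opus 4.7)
The plan is to show each of the three maps is an open embedding by combining a dimension count, injectivity, and bijectivity of the differential, and then invoking the standard fact that an injective étale morphism between smooth complex varieties is an open embedding. The dimensions agree in all three cases: one checks $\dim(B^-v^{-1}B^-) = l(v) + \dim B^- = l(w_0) + l(v) + \dim T = \dim G/N(v)$ (using $\dim N(v) = \dim N - l(v)$); $\dim((N^-\cap Bw_0B)\times(N\cap B^-v^{-1}B^-)\times T) = l(w_0) + l(v) + \dim T = \dim G/N(v)$; and $\dim((N^-\cap Bw_0B)\times(N\cap B^-v^{-1}B^-)) = l(w_0)+l(v) = \dim G/B(v)$. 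Here I use that $N\cap B^-v^{-1}B^-$, regarded as a subset of $N \hookrightarrow G/B^-$ (the big cell), is an open dense subset of the Schubert cell $B^-v^{-1}B^-/B^-$, hence of dimension $l(v)$.

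For $\epsilon_v'$ I would lift to the multiplication map $\tilde\epsilon_v'\colon(N^-\cap Bw_0B)\times(N\cap B^-v^{-1}B^-)\times B(v)\to G$ and show it is an open embedding; since $G\to G/B(v)$ is a principal $B(v)$-bundle, this is equivalent to the statement for $\epsilon_v'$. If $m_1n_1q_1 = m_2n_2q_2$, the uniqueness of the $N^-\cdot B$ decomposition in the big cell $B^-B = N^-\cdot B$ gives $m_1 = m_2$, whence $n_2^{-1}n_1 = q_2q_1^{-1} \in N\cap B(v) = N(v)$. The full injectivity therefore reduces to a key claim: \emph{if $n\in N\cap B^-v^{-1}B^-$ and $h\in N(v)$ satisfy $nh\in B^-v^{-1}B^-$, then $h=e$}. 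Writing $n = m\dot v^{-1}b$ via the unique decomposition $B^-v^{-1}B^- = (N^-\cap\dot v^{-1}N\dot v)\cdot\dot v^{-1}\cdot B^-$, the condition $nh\in B^-v^{-1}B^-$ translates (after clearing $\dot v^{\pm 1}$) into the existence of $b'\in B^-$ with $bhb'^{-1}\in\dot v(N^-\cap\dot v^{-1}N\dot v)\dot v^{-1} = N_v \subset N$. For $\epsilon_v$ one then notes the right $T$-action $gN(v)\cdot t := gtN(v)$ on $G/N(v)$ is well-defined (since $T$ normalizes $N(v)$) and makes $G/N(v)\to G/B(v)$ a principal $T$-bundle; combining $\epsilon_v'$ with the obvious $T$-equivariance yields that $\epsilon_v$ is an open embedding.

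For $\delta_v$ I would lift to $\tilde\delta_v\colon B^-v^{-1}B^-\times N(v)\to G$, $(g,n)\mapsto gn$. Writing $g_i = m_i\dot v^{-1}b_i$ and $h = n_2n_1^{-1}$, the same manipulation reduces injectivity to the identical core claim $bhb'^{-1}\in N_v\Rightarrow h=e,\;b=b'$. For the differential at a point $(g,e)$ with $g = m\dot v^{-1}b$, left-translation by $g^{-1}$ reduces the claim $T_g G = T_g(B^-v^{-1}B^-) \oplus g\cdot\mathfrak n(v)$ to the Lie-algebra identity $\mathfrak g = \mathfrak b^- + \mathrm{Ad}(b^{-1}\dot v)(\mathfrak b^-) + \mathfrak n(v)$. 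Working modulo $\mathfrak b^-$ and identifying $\mathfrak g/\mathfrak b^- \cong \mathfrak n$, the subspace $\mathrm{Ad}(\dot v)(\mathfrak b^-) \bmod \mathfrak b^-$ equals $\bigoplus_{\alpha>0,\,v^{-1}\alpha<0}\mathfrak g_\alpha$, which is directly complementary to $\mathfrak n(v) = \bigoplus_{\alpha>0,\,v^{-1}\alpha>0}\mathfrak g_\alpha$; the action of $\mathrm{Ad}(b^{-1})$ for $b\in B^-$ on $\mathfrak g/\mathfrak b^-$ is unipotent with respect to the standard weight filtration and therefore preserves this transversality, so the sum exhausts $\mathfrak n$.

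The main obstacle is the core injectivity claim that $bhb'^{-1}\in N_v$ with $b,b'\in B^-$, $h\in N(v)$ forces $h=e$ and $b=b'$. I expect to handle this by passing to the big-cell decomposition $B^-B = N^-TN$: on the one hand, $bh\in N^-\cdot T\cdot N$ has $N$-part $h$ and $T$-part determined by $b$; on the other hand, the assumption $bhb'^{-1}\in N_v\subset N$ demands trivial $T$-part and trivial $N^-$-part of $bhb'^{-1}$. Tracking these constraints against the direct-product decomposition $N = N(v)\cdot N_v$ (with $N(v)$ normal in $N$, stable under $T$-conjugation, and built from root subgroups for $\{\alpha>0:v^{-1}\alpha>0\}$, complementary to the root set of $N_v$) forces the $N_v$-image to vanish because its would-be contribution lies entirely in the $N(v)$-part of the decomposition. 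This pins down $h=e$ and then $b=b'$; the analogous step for $\delta_v$ is identical.
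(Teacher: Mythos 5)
Your reduction of injectivity to the single claim that $b h (b')^{-1}\in N_v$ with $b,b'\in B^-$ and $h\in N(v)$ forces $h=e$ is correct, and that claim is true, but the argument you sketch for it does not work, and this is where the whole proof lives. It leans on the assertion that $N(v)$ is normal in $N$, which is false in general: for $G=SL(3,\CC)$ and $v=s_{\al_2}s_{\al_1}$ one has $N(v)=x_{\al_1}(\CC)$, and conjugating $x_{\al_1}(t)$ by $x_{\al_2}(s)$ produces a factor in the root direction $\al_1+\al_2$, which lies in $N_v$, not in $N(v)$. More importantly, the bookkeeping ``the $N_v$-contribution lies entirely in the $N(v)$-part of $N=N(v)N_v$'' is not how the two sides interact: writing $b'=m't'$ with $m'\in N^-$, $t'\in T$, the identity $bh=xb'$ with $x:=bh(b')^{-1}\in N_v$ gives $h=(t')^{-1}[xm']_+\,t'$, and for $x\in N_v$, $m'\in N^-$ the Gauss component $[xm']_+$ need not lie in $N_v$; in the same $SL(3,\CC)$ example, $x=x_{\al_1+\al_2}(1)$ and $m'=x_{-\al_2}(1)$ give an $[xm']_+$ with a nontrivial $x_{\al_1}$-factor, i.e.\ a component along $N(v)$. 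So matching components along $N=N(v)N_v$ does not yield $h=e$, and the central injectivity step of all three maps is not established. (A smaller issue: your stated criterion is ``injective \'etale $\Rightarrow$ open embedding,'' but you verify a differential only for $\delta_v$, not for $\epsilon_v$ or $\epsilon_v'$.)

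The paper's proof avoids all of this with one unique factorization, which also repairs your argument with minimal change: since $\ov N\ov^{\,-1}=(N^-\cap\ov N\ov^{\,-1})\,N(v)$ and $B^-\ov^{\,-1}(N^-\cap\ov N\ov^{\,-1})=B^-v^{-1}B^-$, the Zariski open set $B^-B\ov^{\,-1}$ decomposes uniquely as $(B^-v^{-1}B^-)\,N(v)$; hence $B^-v^{-1}B^-\times N(v)\to B^-B\ov^{\,-1}$, $(g,n)\mapsto gn$, is an isomorphism, $\delta_v$ is injective with image the open image of $B^-B\ov^{\,-1}$ in $G/N(v)$, and your key claim is immediate: if $nh\in B^-v^{-1}B^-$ with $h\in N(v)$, uniqueness forces $h=e$. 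Injectivity of $\epsilon_v$ and $\epsilon_v'$ then follows because $mnt\in B^-v^{-1}B^-$, so injectivity of $\delta_v$ gives $mnt=m'n't'$, and then $m=m'$, $n=n'$, $t=t'$ by uniqueness of the big-cell factorization $B^-B=N^-NT$. Finally, instead of checking differentials, the paper passes from injective to open embedding using equal dimension, smoothness and irreducibility via the Grothendieck--Zariski theorem (EGA IV, 8.12.6), which also closes the \'etaleness gap for $\epsilon_v$ and $\epsilon_v'$ in your plan; your tangent-space computation for $\delta_v$ then becomes unnecessary (though it is essentially sound once the torus part of $\mathrm{Ad}(b^{-1})$ is treated separately from the unipotent part).
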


\begin{proof} Consider the Zariski open subset $B^-B\ov^{\, -1}$ of $G$ and its unique decomposition
\[
B^-B\ov^{-1} = B^-\ov^{\, -1} (\ov N \ov^{\, -1}).
\]
Recall that $\ov N \ov^{\, -1}$ has the unique decomposition
\[
\ov N \ov^{\, -1} = (N^- \cap \ov N \ov^{\, -1})(N \cap \ov N \ov^{\, -1}) = (N^- \cap \ov N \ov^{\, -1})N(v),
\]
and note that $B^-\ov^{\, -1} (N^- \cap \ov N \ov^{\, -1})= B^-v^{-1}B^-$. One then has the unique decomposition
$B^-B\ov^{\, -1} = (B^-v^{-1}B^-) N(v)$, from which it follows that
\begin{equation}\label{eq:BBN}
B^-v^{-1}B^- \times N(v) \longrightarrow B^-B\ov^{\, -1}, \;\; (g, n) \longmapsto gn, \;\;\; g \in B^-v^{-1}B^- , \, n \in N(v),
\end{equation}
is an isomorphism.  Thus $\delta_v$ is an open embedding.

As both $(N^-\cap Bw_0B) \times (N \cap B^-v^{-1}B^-) \times T$ and $G/N(v)$ are smooth,  irreducible, and of the same dimension,
by the Grothendieck-Zariski factorization theorem \cite[Theorem 8.12.6]{EGAIV},
to show that $\epsilon_v$ is an open embedding, it is enough to show that it is injective  (see also proof of \cite[Theorem 1.2]{FZ:double}).

Suppose that $m, m' \in N^- \cap Bw_0B$, $n, n' \in N \cap B^-v^{-1}B^-$, and $t, t' \in T$  are such that $mnt_\cdot N(v) = m'n't'_\cdot N(v)$.
Since $mnt, m'n't' \in B^-v^{-1}B^-$, the injectivity of $\delta_v$ implies that
$mnt = m'n't'$, and thus $m=m', n=n'$ and $t = t'$. This shows that $\epsilon_v$ is injective and thus an embedding.

Similarly one shows that $\epsilon_v^\prime$ is an embedding.
\end{proof}

Let now $v \in W$ and $l =l_0+l(v) = \dim G/B(v)$, where recall that $l_0 = l(w_0)$.
For any
\[
(\bw_0, \bfv)  = (s_{\al_1}, \, \ldots, s_{\al_{l_0}}, \, s_{l_0+1}, \, \ldots, s_{\al_l}) \in \cR(w_0) \times \cR(v),
\]
and for $c = (c_1, \ldots, c_{l+d}) \in (\CC^\times)^{l+d}$, write
 \[
c_{(1)} = (c_1, \ldots, c_{l_0}) \in (\CC^\times)^{l_0}, \hs
c_{(2)} = (c_{l_0+1}, \ldots, c_{l}) \in (\CC^\times)^{l(v)},
\]
 and $c_{(3)} =(c_{l+1}, \ldots, c_{l+d}) \in (\CC^\times)^{d}$, and recall that
\begin{align}\label{eq:gcw1}
&x_{\bw_0}^-(c_{(1)}) = x_{-\al_1}(c_1) x_{-\al_2}(c_2)\cdots x_{-\al_{l_0}}(c_{l_0}) \in N^- \cap Bw_0B,\\
\label{eq:gcw2}
&x_{\bfv^{-1}}^+(c_{(2)}) = x_{\al_{l}}(c_{l}) x_{\al_{l-1}}(c_{l-1})  \cdots x_{\al_{l_0+1}}(c_{l_0+1}) \in N \cap B^-v^{-1} B^-,
\end{align}
and $\sigma(c_{(3)}) \in T$, where $\bfv^{-1} = (s_{\al_l}, \ldots, s_{\al_{l_0}}) \in \cR(v^{-1})$.
Introduce
\begin{align}\label{eq:rho-c-Bv}
&\rho_{(\bw_0, \bfv)}^{\sG/\sB(v)}:\;\; (\CC^\times)^l \longrightarrow G/B(v), \;\; c \longmapsto
x_{\bw_0}^-(c_{(1)})x_{\bfv^{-1}}^+(c_{(2)})_\cdot B(v),\\
\label{eq:rho-c-Nv}
&\rho_{(\bw_0, \bfv)}^{\sG/\sN(v)}:\;\; (\CC^\times)^{l+d}\longrightarrow G/N(v), \;\; c \longmapsto
x_{\bw_0}^-(c_{(1)})x_{\bfv^{-1}}^+(c_{(2)}) \sigma(c_{(3)})_\cdot N(v).
\end{align}

\begin{lemma}\label{le:c-chart}
For $Q = B(v)$ or $N(v)$, and for any $(\bw_0, \bfv) \in \cR(w_0) \times \cR(v)$,
$\rho_{(\bw_0, \bfv)}^{\sG/\sQ}$ is a toric chart in $G/Q$, and the positive structure it defines on $G/Q$ is independent of the choice of
$(\bw_0, \bfv)$.
\end{lemma}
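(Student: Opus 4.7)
The plan is to realize $\rho^{\sG/\sQ}_{(\bw_0,\bfv)}$ as the composition of an explicit product of factor-wise toric charts with the open embedding supplied by Lemma~\ref{le:nn-v}, after which both assertions follow immediately from Lemma~\ref{le:ccc-N}.

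Concretely, for $Q = N(v)$ I would write
\[
\rho^{\sG/\sN(v)}_{(\bw_0,\bfv)} \;=\; \epsilon_v \circ \bigl(x_{\bw_0}^- \times x_{\bfv^{-1}}^+ \times \sigma\bigr),
\]
viewing the source $(\CC^\times)^{l+d}$ as $(\CC^\times)^{l_0}\times(\CC^\times)^{l(v)}\times(\CC^\times)^d$. By Lemma~\ref{le:ccc-N}, $x_{\bw_0}^-$ and $x_{\bfv^{-1}}^+$ are open embeddings into $N^-\cap Bw_0B$ and $N\cap B^-v^{-1}B^-$ respectively, and $\sigma$ is an isomorphism onto $T$; hence their product is an open embedding of $(\CC^\times)^{l+d}$ into $(N^-\cap Bw_0B)\times(N\cap B^-v^{-1}B^-)\times T$. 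Post-composing with the open embedding $\epsilon_v$ given by Lemma~\ref{le:nn-v} yields an open embedding $(\CC^\times)^{l+d}\hookrightarrow G/N(v)$, i.e., a toric chart. The case $Q = B(v)$ is identical, with $\epsilon_v'$ in place of $\epsilon_v$ and the $\sigma$-factor dropped.

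For independence of the positive equivalence class, given another $(\bw_0',\bfv')\in\cR(w_0)\times\cR(v)$, the injectivity of $\epsilon_v$ (resp.\ $\epsilon_v'$) forces the transition map $(\rho^{\sG/\sQ}_{(\bw_0',\bfv')})^{-1}\circ \rho^{\sG/\sQ}_{(\bw_0,\bfv)}$ to split as the product of the factor-wise transitions
\[
(x_{\bw_0'}^-)^{-1}\circ x_{\bw_0}^-,\qquad (x_{(\bfv')^{-1}}^+)^{-1}\circ x_{\bfv^{-1}}^+,\qquad \mathrm{id}_{(\CC^\times)^d},
\]
the last factor appearing only for $Q=N(v)$. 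The positivity statement in Lemma~\ref{le:ccc-N} asserts precisely that the first two are positive rational maps with positive inverses; the identity is trivially positive. Hence the entire transition and its inverse are positive rational maps, proving $[\rho^{\sG/\sQ}_{(\bw_0,\bfv)}]=[\rho^{\sG/\sQ}_{(\bw_0',\bfv')}]$.

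The proof is essentially an assembly of prior results, and no step presents a real obstacle. The one point that deserves to be written out carefully is the assertion that the coordinate transition splits as a product of factor-wise transitions; this rests on the cancellability of $\epsilon_v$ (resp.\ $\epsilon_v'$) on its image, which is available because these maps are open embeddings.
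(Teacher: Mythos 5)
Your proof is correct and follows essentially the same route as the paper: both use Lemma \ref{le:ccc-N} together with the open embeddings of Lemma \ref{le:nn-v} to get the toric chart, and both use the injectivity/uniqueness of the decomposition to split the transition map factor-wise before invoking the positive equivalence of the $x^{\pm}_{\bw}$ charts. No gaps.
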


\begin{proof}
By Lemma \ref{le:ccc-N} and
Lemma \ref{le:nn-v},
${\rho}_{(\bw_0, \bfv)}^{\sG/\sQ}$ is an open embedding and thus a toric chart in $G/Q$.

For any other choice $(\bw_0^\prime, \bfv^\prime) \in \cR(w_0) \times \cR(v)$, it again follows from Lemma \ref{le:nn-v} that
\[
{\rho}_{(\bw_0, \bfv)}^{\sG/\sB(v)}\left(c_{(1)}, c_{(2)}\right) = {\rho}_{(\bw_0^\prime, \bfv^\prime)}^{\sG/\sB(v)}\left(c_{(1)}^\prime, c_{(2)}^\prime\right)
\]
implies that $x_{\bw_0}^-(c_{(1)}) = x_{\bw_0^\prime}^-(c_{(1)}^\prime)$ and $x_{\bfv^{-1}}^+(c_{(2)})=x_{(\bfv^\prime)^{-1}}^+(c_{(2)}^\prime)$.
By Lemma \ref{le:ccc-N} again, $\Pos(c_{(1)}) = \Pos(c_{(1)}^\prime)$ and $\Pos(c_{(2)}) = \Pos(c_{(2)}^\prime)$.
Thus the two toric charts ${\rho}_{(\bw_0, \bfv)}^{\sG/\sB(v)}$ and ${\rho}_{(\bw_0^\prime, \bfv^\prime)}^{\sG/\sB(v)}$ on $G/B(v)$ are
positively equivalent. The case for $Q = N(v)$ is proved similarly.
\end{proof}

\begin{definition}\label{de:Luszpos-GQ}
{\rm
For $v \in W$ and $Q = B(v)$ or $N(v)$, define
\[
\cP^{\sG/\sQ}_{\rm Lusztig} =\left[{\rho}_{(\bw_0, \bfv)}^{\sG/\sQ}\right]
\]
 for any
$(\bw_0, \bfv)\in \cR(w_0) \times \cR(v)$ and call it
the {\it Lusztig positive structure}  on $G/Q$. Denote by $\Pos(G/Q)$ the set of all rational functions on $G/Q$ that are positive with respect to
$\cP^{\sG/\sQ}_{\rm Lusztig}$. Denote by $(G/Q)_{>0}$ the totally positive part of $G/Q$ defined by $\cP^{\sG/\sQ}_{\rm Lusztig}$.
\hfill $\diamond$
}
\end{definition}

\begin{example}\label{ex:GBpos}
{\rm For $G/B$, it follows from the definition of $\cP^{\sG/\sB}_{\rm Lusztig}$ that for any
$\bw_0 \in \cR(w_0)$, one has
\[
(G/B)_{>0} = \{x_{\bw_0}^-(c_1, \ldots, c_{l_0})_\cdot B: \; c_1, \ldots, c_{l_0} \in \RR_{>0}\},
\]
which coincides with
the totally positive part of $G/B$ defined by  Lusztig in \cite[$\S$8]{Lusz:pos}.
\hfill $\diamond$
}
\end{example}

\begin{remark}\label{re:posGNv}
{\rm
Let $v \in W$ and recall that double Bruhat cell $G^{w_0, v^{-1}}$ is defined as
\[
G^{w_0, v^{-1}} = Bw_0B \cap B^-v^{-1} B^- \subset G.
\]
A toric chart $x_{\bf i}$ in $G^{w_0, v^{-1}}$ (in fact for any
double Bruhat cell in $G$) is defined in \cite[(1.3)]{FZ:double} using
 any {\it double reduced word} ${\bf i}$ of $(w_0, v^{-1})$ as (we refer to \cite{FZ:double} for the notation)
\[
x_{\bf i}(a; t_1, \ldots, t_l) = a x_{i_1}(t_1) \cdots x_{i_l}(t_l), \hs a \in T, \, (t_1, \ldots, t_l) \in (\CC^\times)^l,
\]
and \cite[(2.9) and (2.11)]{FZ:double} show that the positive equivalence class $[x_{\bf i}]$ of toric charts is independent of the choice of ${\bf i}$.
Modifying the toric chart $x_{\bf i}$ to $x_{\bf i}^\prime$ by
\[
x_{\bf i}^\prime(a; t_1, \ldots, t_l) = x_{i_1}(t_1) \cdots x_{i_l}(t_l)a, \hs a \in T, \, (t_1, \ldots, t_l) \in (\CC^\times)^l,
\]
one has $[x_{\bf i}^\prime]=[x_{\bf i}]$ by \cite[(2.5)]{FZ:double}.
We will refer to the positive equivalence class $[x_{\bf i}]$, for any double reduced word ${\bf i}$ of $(w_0, v^{-1})$,
as the {\it Lusztig positive structure on $G^{w_0, v^{-1}}$}.

On the other hand, as $G^{w_0, v^{-1}}$ is a Zariski open subset of $B^-v^{-1}B^-$, the open embedding $\delta_v: B^-v^{-1}B^- \to G/N(v)$ restricts to
an open embedding,
\begin{equation}\label{eq:delta-vp}
\delta_{w_0, v}: \;\; G^{w_0, v^{-1}} \longrightarrow G/N(v), \;\; g \longmapsto g_\cdot N(v).
\end{equation}
Taking ${\bf i} = (\bw_0, \bfv^{-1})$ to be the double reduced word of $(w_0, v^{-1})$ with the simple reflections in
$\bw_0$ as in the negative alphabet and those in $\bfv^{-1}$ as in the positive alphabet,  one has
$\rho_{(\bw_0, \bfv)}^{\sG/\sN(v)} = \delta_v \circ x_{\bf i}^\prime$. Thus $\delta_{w_0, v}: G^{w_0, v^{-1}} \to G/N(v)$ is a
{\it positive open embedding} in the sense that for any toric chart $\rho$ in $G^{w_0, v^{-1}}$, $\rho$ is in the
Lusztig positive structure on $G^{w_0, v^{-1}}$ if and only if $\delta_{w_0, v} \circ \rho$ is a toric chart in Lusztig positive structure on $G/N(v)$.

For any $w \in W$, the restriction of $\delta_v$ to the double Bruhat cell
\[
G^{w, v^{-1}} = BwB \cap B^-v^{-1}B^- \subset B^-v^{-1}B^-
\]
gives an embedding of $G^{w, v^{-1}}$ to $G/N(v)$. It will be explained in Example \ref{ex:Tleaves-G} that the image of $G^{w, v^{-1}}$ in $G/N(v)$
is a $T$-leaf of the standard Poisson structure on $G/N(v)$.
\hfill $\diamond$
}
\end{remark}

\subsection{Some auxiliary facts on generalized minors}\label{ssec:auxi}  In this section, we first recall some facts from
\cite{FZ:double} on flag minors, and, for $v \in V$, we give examples of {\it regular} functions on $G/N(v)$ that are in $\Pos(G/N(v))$.

\begin{definition}\label{de:f-minor}
{\rm
A generalized minor of the form $\Delta_{w\omega_\al, \omega_\al}$ or $\Delta_{\omega_\al, w\omega_\al}$,
where $w \in W$ and $\al \in \Gamma$, is called a {\it flag minor}.  For $w \in W$ and $\bw = (s_{\al_1}, \ldots, s_{\al_k}) \in \cR(w)$,  set
\begin{equation}\label{eq:Delta-w}
\Delta_{\bw, j} = \Delta_{s_{\al_1} \cdots s_{\al_j} \omega_{\al_j}, \, \omega_{\al_j}}  \hs \mbox{and} \hs
\Delta_{j, \bw} = \Delta_{\omega_{\al_j}, \, s_{\al_1} \cdots s_{\al_j} \omega_{\al_j}},\;\;\; j \in [1, k].
\end{equation}
\hfill $\diamond$
}
\end{definition}

Clearly a flag minor of the form $\Delta_{w \omega_\al, \omega_\al}$ is invariant under the right translation by elements in $N$.
Similarly, a flag minor of the form $\Delta_{\omega_\al, w \omega_\al}$ is invariant under the left translation by elements in $N^-$.
Flag minors  of $g \in SL(n, \CC)$ of size $i \in [1, n]$
are the determinants of the submatrices of $g$ formed by the first $i$ columns and
any $i$ rows, or the first $i$ rows and any columns. See \cite{BFZ:para}.

Recall from Notation \ref{nota:intro} the weak order $\preceq$ on $W$: $w_1 \preceq w$ if $l(w) =
l(w_1) + l(w_1^{-1}w)$.
For $w \in W$, let $N_-^w = N^- \cap \ow N \ow^{\, -1}$, and recall that $N_w = N \cap \ow N^- \ow^{\, -1}$. Let
\begin{align*}
F(w) &= \{\Delta_{w_1 \omega_{\al}, \,w_2\omega_{\al}}|_{\sN_-^w}: \,
\al \in \Gamma, \, w_2 \preceq w_1\preceq  w\} \subset \CC[N_-^w],\\
F^\prime(w)& = \{\Delta_{w_1 \omega_{\al}, \,w_2\omega_{\al}}|_{\sN_w}: \,
\al \in \Gamma, \, w_1 \preceq w_2\preceq  w\} \subset \CC[N_w],
\end{align*}
and for
$\bw = (s_{\al_1}, \ldots. s_{\al_k}) \in \cR(w)$, let
\begin{align*}
F_1(\bw) &= \{\Delta_{\bw, j}|_{\sN_-^w}: \;  j \in [1, k]\} \subset F(w),\\
F_1^\prime(\bw) &= \{\Delta_{j,\bw}|_{\sN_w}: \;  j \in [1, k]\} \subset F^\prime(w).
\end{align*}

\begin{lemma}\label{le:w-f-minor}\cite[Theorem 2.22]{FZ:double}
For any $w \in W$ and $\bw \in \cR(w)$,

1) $F_1(\bw)$ is a transcendental basis of $\CC(N_-^w)$, and $F(w) \subset \Pos(F_1(\bw))\subset \CC(N_-^w)$;

2) $F_1^\prime(\bw)$ is a transcendental basis of $\CC(N_w)$, and $F^\prime(w) \subset \Pos(F_1^\prime(\bw))\subset \CC(N_w)$.
\end{lemma}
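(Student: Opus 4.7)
This lemma is quoted directly from \cite[Theorem 2.22]{FZ:double}; what follows is a sketch of the strategy behind the Fomin--Zelevinsky argument, which is how I would approach proving it.

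First, I would reduce part (2) to part (1) using the anti-involution $\tau$ from Notation \ref{nota:intro}. Since $x_\al(c)^\tau = x_{-\al}(c)$ and $\ow^{\,\tau} = \ow^{\,-1}$, a direct verification gives $(N_w)^\tau = N_-^w$. By \eqref{eq:Delta-all} together with the definition of generalized minors, one has $\Delta_{u\omega_\al,v\omega_\al}(g) = \Delta_{v\omega_\al, u\omega_\al}(g^\tau)$, so pull-back by $\tau$ carries $F^\prime(w)$ bijectively onto $F(w)$ and $F_1^\prime(\bw)$ onto $F_1(\bw)$. Thus it is enough to prove (1).

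For the transcendence statement in (1), parametrize $N_-^w$ by $\CC^k$ using the reduced word $\bw = (s_{\al_1}, \ldots, s_{\al_k})$: set
\[
\phi_\bw(t_1, \ldots, t_k) = \overline{s_{\al_1}}\,x_{-\al_1}(t_1)\,\overline{s_{\al_1}}^{\,-1} \; \cdot \; \overline{s_{\al_1}s_{\al_2}}\,x_{-\al_2}(t_2)\,\overline{s_{\al_1}s_{\al_2}}^{\,-1} \; \cdots \; \overline{w}\,x_{-\al_k}(t_k)\,\overline{w}^{\,-1},
\]
which one checks is a biregular isomorphism $\CC^k \stackrel{\sim}{\to} N_-^w$. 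Using the fact that $\Delta^{\omega_\al}$ on $B^-B$ is determined by the $T$-part of the $N^- T N$ decomposition, together with the observation that in the relevant products only the $j$-th factor contributes non-trivially to $\Delta_{\bw,j}$, one obtains a triangular monomial formula
\[
\Delta_{\bw,j}(\phi_\bw(t)) = t_j \, M_j(t_1, \ldots, t_{j-1}), \qquad j \in [1,k],
\]
where each $M_j$ is a Laurent monomial. This shows simultaneously that $F_1(\bw)$ is algebraically independent (hence a transcendental basis, by the dimension count $\dim N_-^w = k$) and that each $t_j$ lies in $\Pos(F_1(\bw))$.

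Finally, for the inclusion $F(w) \subset \Pos(F_1(\bw))$, I would proceed by induction on $l(w_1) - l(w_2)$ (or more precisely on the position of the pair $(w_1, w_2)$ in the weak-order interval $[w_2, w_1] \subset [e,w]$). The base cases are the principal minors in $F_1(\bw)$ themselves, which are either members or, for minors of shape $\Delta_{w_1\omega_\al,\omega_\al}$ outside $F_1(\bw)$, reachable by trivial chains. For the inductive step, I would apply the three-term generalized Pl\"ucker relations among flag minors established in \cite{FZ:double}, each of the form $AB = CD + EF$ with all six entries flag minors, to rewrite a target $\Delta_{w_1\omega_\al, w_2\omega_\al}$ as $(CD+EF)/B$ where the five other minors are inductively known to lie in $\Pos(F_1(\bw))$; closure of $\Pos(F_1(\bw))$ under the semi-field operations (Remark \ref{re:Pos-X}) then finishes the step. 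The main obstacle --- and the technical heart of \cite[Theorem 2.22]{FZ:double} --- is precisely this inductive step: for each target minor one must exhibit an exchange relation whose other five minors are strictly closer to $F_1(\bw)$ in the weak order, and verify that such a relation always lives inside the interval $[e,w]$. This combinatorial bookkeeping on the weak order, not the positivity of any particular identity, is where all the work is.
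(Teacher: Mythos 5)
Your proof of the lemma itself coincides with the paper's: part (1) is taken as a quotation of \cite[Theorem 2.22]{FZ:double}, and part (2) is deduced from part (1) via the anti-automorphism $\tau$, using $(N_w)^\tau = N_-^w$ and the identity $\Delta_{u\omega_\al,v\omega_\al}(g)=\Delta_{v\omega_\al,u\omega_\al}(g^\tau)$ --- exactly the paper's argument (the paper cites this identity from \cite[Proposition 2.7]{FZ:double} rather than rederiving it from \eqref{eq:Delta-all}, but your derivation is fine).

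The appended sketch of the Fomin--Zelevinsky theorem, however, should not be leaned on, for two concrete reasons. First, the parametrization $\phi_\bw$ as written does not land in $N_-^w$: conjugating $x_{-\al_j}$ by $\overline{s_{\al_1}\cdots s_{\al_j}}$ produces the root subgroup for the \emph{positive} root $\beta_j = s_{\al_1}\cdots s_{\al_{j-1}}(\al_j)$, so your product lies in $N_w$; to obtain the roots $-\beta_j$ of $N_-^w$ one must conjugate by the shorter prefix $\overline{s_{\al_1}\cdots s_{\al_{j-1}}}$. Second, and more seriously, the claimed triangular monomial formula $\Delta_{\bw,j}(\phi_\bw(t)) = t_j\,M_j(t_1,\ldots,t_{j-1})$ is false in general. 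Take $G = SL(3,\CC)$, $w = w_0$, $\bw = (s_1,s_2,s_1)$, so $N_-^w = N^-$; parametrizing $N^-$ by root subgroups in the order $-\al_1,\,-(\al_1+\al_2),\,-\al_2$ dictated by $\bw$ gives $m_{21}=t_1$, $m_{31}=c\,t_2$, $m_{32}=t_3$ for a nonzero constant $c$, while $\Delta_{\bw,2} = \Delta_{s_1s_2\omega_{\al_2},\,\omega_{\al_2}}(m) = m_{21}m_{32}-m_{31} = t_1t_3 - c\,t_2$, a binomial involving the later variable $t_3$. Consequently neither the algebraic independence of $F_1(\bw)$ nor the membership $t_j \in \Pos(F_1(\bw))$ follows by the route you describe; the latter is exactly the Chamber Ansatz of \cite{FZ:double}, expressing the $t_j$ as Laurent monomials in flag minors via the twist maps, and that (together with the subsequent induction you correctly identify as the technical heart) is where the real content of \cite[Theorem 2.22]{FZ:double} lies. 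Since the paper only cites that theorem, none of this affects the correctness of your proof of the lemma as stated.
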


\begin{proof}
1) is part of \cite[Theorem 2.22]{FZ:double}. 2) follows from 1), the fact that
$(N_w)^\tau = N_-^{w}$, and the following identity from \cite[Proposition 2.7]{FZ:double}:
\begin{equation}\label{eq:gg-tau}
\Delta_{u \omega_\al,\, v \omega_\al}(g) = \Delta_{v \omega_\al,\, u \omega_\al}(g^\tau), \hs g \in G,
\end{equation}
where $\tau$ is the involutive anti-automorphism of $G$ in \eqref{eq:tau-iota}.
\end{proof}

Lemma \ref{le:w-f-minor} can be extended as follows.

\begin{lemma}\label{le:ww-minor}
For any $w \in W$ and $\bw = (s_{\al_1}, \ldots, s_{\al_k}) \in \cR(w)$, $\{\Delta_{\bw, j}|_{\sN^-}:  j \in [1, k]\}$ is a set of algebraically independent
regular functions on $N^-$, and
\[
\{\Delta_{w_1 \omega_{\al}, \, w_2\omega_\al}|_{\sN^-}:\; w_2 \preceq w_1 \preceq w\}
\subset \Pos(\Delta_{\bw, 1}|_{\sN^-}, \ldots, \Delta_{\bw, k}|_{\sN^-}).
\]
Similarly, $\{\Delta_{j, \bw}|_{\sN}: j \in [1, k]\} \subset \CC[N]$ is algebraically independent, and
\[
\{\Delta_{w_1 \omega_{\al}, \, w_2\omega_\al}|_{\sN}:\; w_1 \preceq w_2 \preceq w\} \subset \Pos(\Delta_{1, \bw}|_{\sN}, \ldots, \Delta_{k, \bw}|_{\sN}).
\]
\end{lemma}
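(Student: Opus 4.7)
The plan is to deduce the lemma from Lemma \ref{le:w-f-minor} via a suitable projection $\pi: N^- \to N_-^w$. Since the two subsets $\{\gamma < 0: w^{-1}\gamma < 0\}$ and $\{\gamma < 0: w^{-1}\gamma > 0\}$ of the negative roots are complementary and each is closed (pairwise sums that are roots stay in the set), the multiplication map induces an isomorphism of varieties $N_w^- \times N_-^w \to N^-$, and $\pi: N^- \to N_-^w$ denotes the resulting projection, sending $hg'$ to $g'$ for $h \in N_w^-$ and $g' \in N_-^w$.

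The key step is to show that for every $\alpha \in \Gamma$ and all $w_1, w_2 \preceq w$, the restriction $\Delta_{w_1\omega_\alpha, w_2\omega_\alpha}|_{\sN^-}$ is invariant under left multiplication by $N_w^-$; equivalently, $\Delta_{w_1\omega_\alpha, w_2\omega_\alpha}|_{\sN^-} = \pi^{\ast}\bigl(\Delta_{w_1\omega_\alpha, w_2\omega_\alpha}|_{\sN_-^w}\bigr)$. Since $\Delta^{\omega_\alpha}$ is invariant under left multiplication by $N^-$ and
\[
\Delta_{w_1\omega_\alpha, w_2\omega_\alpha}(hg') = \Delta^{\omega_\alpha}\bigl((\overline{w_1}^{\,-1} h \overline{w_1}) \cdot \overline{w_1}^{\,-1} g' \overline{w_2}\bigr),
\]
it is enough to verify that $\overline{w_1}^{\,-1} N_w^- \overline{w_1} \subset N^-$ whenever $w_1 \preceq w$. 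This reduces to the combinatorial claim: if $\gamma$ is a negative root with $w^{-1}\gamma < 0$, then $w_1^{-1}\gamma < 0$. Writing $w = w_1 w''$ with $l(w) = l(w_1) + l(w'')$, the standard inversion-set identity under reduced factorizations, applied to $w^{-1} = (w'')^{-1} w_1^{-1}$, gives $\{\beta > 0: w_1^{-1}\beta < 0\} \subset \{\beta > 0: w^{-1}\beta < 0\}$; taking the contrapositive at $\beta = -\gamma$ yields the claim.

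Once the factorization through $\pi$ is in hand, part 1) is immediate: injectivity of $\pi^{\ast}$ transfers the algebraic independence of $\{\Delta_{\bw, j}|_{\sN_-^w}\}_{j=1}^{k}$ from Lemma \ref{le:w-f-minor}(1) to that of $\{\Delta_{\bw, j}|_{\sN^-}\}_{j=1}^{k}$, and any subtraction-free rational expression of a flag minor on $N_-^w$ pulls back verbatim to one on $N^-$. Part 2) is obtained from part 1) by applying the anti-automorphism $\tau$ of \eqref{eq:tau-iota}, which restricts to an isomorphism $N \to N^-$ and, via \eqref{eq:gg-tau}, identifies $\Delta_{j, \bw}|_{\sN}$ with $\Delta_{\bw, j}|_{\sN^-} \circ \tau$ and each $\Delta_{w_1\omega_\alpha, w_2\omega_\alpha}|_{\sN}$ (for $w_1 \preceq w_2 \preceq w$) with $\Delta_{w_2\omega_\alpha, w_1\omega_\alpha}|_{\sN^-} \circ \tau$ (the swapped pair now satisfying $w_2 \preceq w_1 \preceq w$ as required by part 1)). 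The main obstacle is the root-theoretic containment in the key step; everything else is a formal transfer.
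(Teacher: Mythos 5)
Your proposal is correct and follows essentially the same route as the paper: both reduce to Lemma \ref{le:w-f-minor} by showing, via the decomposition $N^- = N_w^- N_-^w$ and the containment $\overline{w_1}^{\,-1} N_w^- \overline{w_1} \subset N^-$ for $w_1 \preceq w$, that $\Delta_{w_1\omega_\al, w_2\omega_\al}|_{\sN^-}$ depends only on the $N_-^w$-factor, and then obtain the $N$-side statement from $\tau$ and \eqref{eq:gg-tau}. The only cosmetic difference is that you reprove the containment by an inversion-set argument, whereas the paper simply cites \eqref{eq:Nu-Nu-2}.
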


\begin{proof} Let $m \in N^-$ and write $m$ uniquely as $m = m_1m_2$, where $m_1 \in N^- \cap \ow N^- \ow^{-1}$ and
$m_2 \in N^- \cap \ow N \ow^{-1}$. Then, for any $w_2 \preceq w_1 \preceq w$  and $\al \in \Gamma$, since
$\overline{w_1}^{-1} m_1 \overline{w_1} \in N^-$ by \eqref{eq:Nu-Nu-2},  one has
\begin{equation}\label{eq:pqD00}
\Delta_{w_1 \omega_\al, \, w_2\omega_\al}(m) = \Delta^{\omega_\al}(\overline{w_1}^{-1} m_1 \overline{w_1}\,
\overline{w_1}^{\, -1} m_2 \overline{w_2}) = \Delta_{w_1 \omega_\al, \, w_2\omega_\al}(m_2).
\end{equation}
The statement on $\{\Delta_{\bw, j}|_{\sN^-}:  j \in [1, k]\}$ now follows from
\eqref{eq:pqD00} and 1) of Lemma \ref{le:w-f-minor} . The statement on $\{\Delta_{j, \bw}|_{\sN}:  j \in [1, k]\}$ is proved using
\eqref{eq:gg-tau}.
\end{proof}

We now turn to examples of regular functions on $G/N(v)$, for $v \in W$,  that are positive with respect to $\cP^{\sG/\sN(v)}_{\rm Lusztig}$.
We identify $\CC[G/N(v)]$ with $\CC[G]^{\sN(v)}$, the algebra of right $N(v)$-invariant regular functions on $G$.

\begin{proposition}\label{pr:minor-pos}
For any $w, v_1 \in W$ such that $v_1 \preceq v$ and for all $\al \in \Gamma$,
$\Delta_{w\omega_\al, v_1\omega_\al}\in \CC[G]^{\sN(v)} \cong \CC[G/N(v)]$ and lies in
$\Pos(G/N(v))$.
\end{proposition}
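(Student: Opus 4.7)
The plan is to handle the invariance claim $\Delta_{w\omega_\al, v_1\omega_\al} \in \CC[G]^{\sN(v)}$ and the positivity claim separately.

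For the invariance: since $v_1 \preceq v$, there is $v_2 \in W$ with $v = v_1 v_2$ and $l(v) = l(v_1) + l(v_2)$, hence $\overline{v} = \overline{v_1}\,\overline{v_2}$. Applying the length-additivity to $v^{-1} = v_2^{-1} v_1^{-1}$ gives the standard inversion-set inclusion $I(v_1^{-1}) \subset I(v^{-1})$, where $I(w) = \{\alpha > 0 : w\alpha < 0\}$; equivalently, any positive root $\beta$ with $v^{-1}\beta > 0$ satisfies $v_1^{-1}\beta > 0$. Since $N(v)$ is generated by the one-parameter subgroups $x_\beta$ for such $\beta$, conjugation by $\overline{v_1}^{-1}$ sends $N(v)$ into $N$. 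The right $N$-invariance of $\Delta^{\omega_\al}$ then yields, for $g \in G$ and $n \in N(v)$,
\[
\Delta_{w\omega_\al, v_1\omega_\al}(gn) = \Delta^{\omega_\al}\bigl(\overline{w}^{\,-1} g \overline{v_1} \cdot \overline{v_1}^{\,-1} n \overline{v_1}\bigr) = \Delta^{\omega_\al}(\overline{w}^{\,-1} g \overline{v_1}) = \Delta_{w\omega_\al, v_1\omega_\al}(g),
\]
proving invariance. The induced function on $G/N(v)$ is nonzero since generalized minors are nonzero regular functions on $G$.

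For positivity: I appeal to the positive open embedding $\delta_{w_0, v}: G^{w_0, v^{-1}} \to G/N(v)$ from Remark \ref{re:posGNv}, noting that its image is open and dense in $G/N(v)$ because $G^{w_0, v^{-1}}$ is open and dense in $B^-v^{-1}B^-$. Since $\delta_{w_0, v}$ is a positive open embedding, it suffices to prove that the pullback $\Delta_{w\omega_\al, v_1\omega_\al}|_{G^{w_0, v^{-1}}}$ lies in the positive semifield for the Lusztig positive structure on the double Bruhat cell. By density of the image of $\delta_{w_0, v}$ in $G/N(v)$, this restriction is a nonzero regular function; and by the positivity of (nonzero) generalized minors on double Bruhat cells --- see e.g.\ \cite[Theorem 1.12]{FZ:double} together with the formulas in \cite{FZ:double} --- it has a subtraction-free expression in the Lusztig toric coordinates on $G^{w_0, v^{-1}}$. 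Transferring back along the positive open embedding then gives $\Delta_{w\omega_\al, v_1\omega_\al} \in \Pos(G/N(v))$.

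The main subtlety is the non-vanishing of the minor on $G^{w_0, v^{-1}}$, since FZ-type positivity statements on double Bruhat cells only address minors that are not identically zero on the cell. In our situation this is automatic: generalized minors are nowhere identically zero as regular functions on $G$, so the $N(v)$-descended function on $G/N(v)$ is nonzero, and hence its restriction to the open dense image of $\delta_{w_0, v}$ is nonzero as well.
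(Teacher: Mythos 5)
Your invariance argument is correct and is essentially the paper's: the inclusion $\overline{v_1}^{\,-1}N(v)\overline{v_1}\subset N$ (obtained in the paper from \eqref{eq:Nu-Nu-2}) together with right $N$-invariance of $\Delta^{\omega_\al}$. Likewise, transferring the positivity question to the double Bruhat cell $G^{w_0,v^{-1}}$ through the positive open embedding $\delta_{w_0,v}$ of Remark \ref{re:posGNv} is exactly the reduction the paper makes at the end of its own proof, since the image of $\rho^{\sG/\sN(v)}_{(\bw_0,\bfv)}$ lies in $\delta_{w_0,v}(G^{w_0,v^{-1}})$.

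The gap is in the positivity step on $G^{w_0,v^{-1}}$. The principle you invoke --- that every generalized minor which is not identically zero on a double Bruhat cell has a subtraction-free expression in the Lusztig toric coordinates of that cell --- is not what \cite[Theorem 1.12]{FZ:double} says (as it is used here), and it is not otherwise established. That theorem concerns only the family of minors attached combinatorially to the pair $(w_0,v^{-1})$, namely $F(w_0,v^{-1})=\{\Delta_{w\omega_\al,v_1\omega_\al}:\,w\in W,\ v_1\preceq v\}$, and it expresses those minors subtraction-freely in terms of the flag minors $F({\bf i})|_{G^{w_0,v^{-1}}}$, not directly in the chart coordinates. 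So two things go wrong. First, ``not identically zero on the cell'' is not the operative hypothesis: for $G=SL(3,\CC)$ and $v=s_1s_2$, the minor $\Delta_{\omega_2,s_2\omega_2}=\pm\Delta_{12,13}$ is not identically zero on $G^{w_0,v^{-1}}$ even though $s_2\not\preceq v$, so it lies outside the scope of the cited theorem; your argument, which uses $v_1\preceq v$ only in the invariance half, would apply verbatim to it, which shows the positivity half cannot be complete as written --- the weak-order hypothesis has to enter, as it does in the paper via $F(w_0,v^{-1})$. Second, even for the correct family you still owe the statement that the flag minors of $F({\bf i})$, composed with $\rho^{\sG/\sN(v)}_{(\bw_0,\bfv)}$, are themselves subtraction-free in $(c_1,\ldots,c_{l+d})$; this is precisely what the paper's Cases 1 and 2 supply, using \cite[Theorem 5.8]{BZ:tensor} to see that $\Delta_{w\omega_\al,\omega_\al}(x^-_{\bw_0}(c))$ and $\Delta_{\omega_\al,v_1\omega_\al}(x^+_{\bfv^{-1}}(c))$ are polynomials with nonnegative integer coefficients, up to a monomial in the torus coordinates. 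Without that ingredient the chain from Fomin--Zelevinsky to ``subtraction-free in the toric chart of $G/N(v)$'' is not closed.
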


\begin{proof} By \eqref{eq:Nu-Nu-2},
$\overline{v_1}^{\, -1} N(v) \overline{v_1} \subset N$. It follows that
$\Delta_{w\omega_\al, v_1\omega_\al}\in \CC[G]^{\sN(v)}$ for
any $w \in W$ and $\al \in \Gamma$.

Choose any $\bw_0 = (s_{\al_1}, \ldots, s_{\al_{l_0}}) \in \cR(w_0)$ and $\bfv = (s_{\al_{l_0+1}}, \ldots, s_{\al_l}) \in \cR(v)$.
For any rational function $f$ on $G/N(v)$, define $\tilde{f} \in \CC(c_1, \ldots, c_{l+d})$ by
\[
\tilde{f}:=f \circ \rho^{\sG/\sN(v)}_{(\bw_0, \bfv)} =
f \left(x_{\bw_0}^-(c_1, \ldots, c_{l_0}) x_{\bfv^{-1}}^+(c_{l_0+1}, \ldots, c_l) \sigma(c_{l+1}, \ldots, c_{l+d})\right).
\]
Note that as $\rho^{\sG/\sN(v)}_{(\bw_0, \bfv)}$ is an open embedding,
$\tilde{f} \neq 0$ if $f \neq 0$. By the definition of $\cP^{\sG/\sN(v)}_{\rm Lusztig}$,
$f \in \Pos(G/N(v))$ if and only if $\tilde{f} \in \Pos(c_1, \ldots, c_{l+d})$.
Write $t= \sigma(c_{l+1}, \ldots, c_{l+d})$, and note that
for any $\al \in \Gamma$, $t^{\omega_\al}$ is one of the coordinates in $(c_{l+1}, \ldots, c_{l+d})$.

{\bf Case 1.} Assume first that $v_1 = e$. Then for any $w \in W$ and $\al \in \Gamma$, one has
\[
\tilde{\Delta}_{w\omega_\al, \omega_\al} = t^{\omega_\al}\Delta_{w\omega_\al, \omega_\al}(x_{\bw_0}^-(c_1, \ldots, c_{l_0})).
\]
By \cite[Theorem 5.8]{BZ:tensor}, $\Delta_{w\omega_\al, \omega_\al}(x_{\bw_0}^-(c_1, \ldots, c_{l_0}))$ is a polynomial in $(c_1, \ldots, c_{l_0})$
with non-negative integer coefficients. As $\Delta_{w\omega_\al, \omega_\al} \neq 0$, ${\Delta}_{w\omega_\al, \omega_\al}\in \Pos(G/N(v))$.

{\bf Case 2.} Suppose now that $w = e$. Then for any $v_1 \preceq v$ and $\al \in \Gamma$, one has
\[
\tilde{\Delta}_{\omega_\al, v_1\omega_\al}= t^{v_1\omega_\al}\Delta_{\omega_\al, v_1\omega_\al}(x_{\bfv^{-1}}^+(c_{l_0+1}, \ldots, c_l)).
\]
By \cite[Theorem 5.8]{BZ:tensor} again, $\Delta_{\omega_\al, v_1\omega_\al}(x_{\bfv^{-1}}^+(c_{l_0+1}, \ldots, c_l))$
is a polynomial in $(c_{l_0}, \ldots, c_l)$
with non-negative integer coefficients. Thus ${\Delta}_{\omega_\al, v_1\omega_\al}\in \Pos(G/N(v))$.

In the notation of \cite{FZ:double}, consider now the collection $F({\bf i}) = \{\Delta_{k, {\bf i}^*}: k \in [1, l+d]\}$ of generalized minors on $G$
as defined in \cite[(1.22)]{FZ:double},  where we take $u = w_0$,
\[
{\bf i}^*=(\bw_0, \bfv) = (s_{\al_1}, \,\ldots, \,s_{\al_{l_0}}, \,s_{l_0+1}, \,\ldots, \,s_{\al_{l}})
\]
as a double reduce word of $(w_0, v)$ with the first set of $l_0$ simple roots regarded as in the negative alphabet
and the second set of $l-l_0 = l(v)$ simple roots
in the positive alphabet, and ${\bf i}$ is the word ${\bf i}^*$ read backwards (the double reduced word ${\bf i}^*$ is
 {\it unmixed} in the terminology in the proof of \cite[Proposition 2.29]{FZ:double}).
More precisely, by \cite[(1.16)]{FZ:double},
\[
\Delta_{k, {\bf i}^*} = \begin{cases} \Delta_{s_{l_0} \cdots s_{\al_k} \omega_{\al_k}, \, \omega_{\al_k}},& \hs k \in [1,\, l_0],\\
\Delta_{\omega_{\al_k}, \, s_{\al_{l_0+1}} \cdots s_{\al_{k-1}} \omega_k}, & \hs k \in [l_0+1,\, l],\\
\Delta_{\omega_{k-l}, \, v \omega_{k-l}}, & \hs k \in [l+1, \, l+d].\end{cases}
\]
As  each generalized minor in $F({\bf i})$ is a flag minor of the types in the two cases discussed above, one knows that
$F({\bf i}) \subset \Pos(G/N(v))$.
By \cite[Theorem 1.12]{FZ:double}, the set
\[
F({\bf i})|_{\sG^{w_0, v^{-1}}}= \{\Delta_{k, {\bf i}^*}|_{\sG^{w_0, v^{-1}}}: \, k \in [1, l+d]\}
\]
 is a transcendental basis for the field $\CC(G^{w_0, v^{-1}})$ of rational functions on $G^{w_0, v^{-1}}$, and that
every element in
\[
F(w_0, v^{-1})|_{\sG^{w_0, v^{-1}}} := \{\Delta_{w\omega_\al, \, v_1\omega_\al}|_{\sG^{w_0, v^{-1}}}: \, w \in W, v_1 \preceq v\}
\]
has a subtraction-free expression in the elements in $F({\bf i})|_{\sG^{w_0, v^{-1}}}$. Since the
image of the map $(\CC^\times)^{l+d} \to G$ given by
\[
(c_1,\ldots, c_{l+d}) \longmapsto x_{\bw_0}^-(c_1, \ldots, c_{l_0}) x_{\bfv^{-1}}^+(c_{l_0+1}, \ldots, c_l) \sigma(c_{l+1}, \ldots, c_{l+d})
\]
lies in $G^{w_0, v^{-1}}$, $\Delta_{w\omega_\al, v_1\omega_\al} \in \Pos(G/N(v))$ for all $w, v_1 \in W$ with $v_1 \preceq v$ and $\al \in \Gamma$.
\end{proof}

\begin{remark}\label{re:GQpos-inter}
{\rm
We note that  any $v \in W$ and $Q = B(v)$ or $N(v)$, one has
\[
(G/Q)_{>0} \subset \bigcap_{w \in W} wB^-B/Q.
\]
Indeed, for any $w \in W$ and $g \in G$,  $g_\cdot Q \in wB^-B/Q$ if and only if $\Delta_{w\omega_\al, \omega_\al}(g) \neq 0$ for all $\al \in \Gamma$.
Take any $(\bw_0, \bfv) \in \cR(w_0) \times \cR(v)$, $w \in W$, $\al \in \Gamma$, and $c_j \in \RR_{>0}$ for $j \in [1,l+d]$.
Then by \cite[Theorem 5.8]{BZ:tensor},
\begin{align*}
&\Delta_{w\omega_\al, \omega_\al}\left(\rho_{(\bw_0, \bfv)}^{\sG/\sB(v)}(c_1, \ldots, c_l)\right)
= \Delta_{w\omega_\al, \omega_\al}(x_{\bw_0}^-(c_{1}, \ldots, c_{l_0}))>0,\\
&\Delta_{w\omega_\al, \omega_\al}\left(\rho_{(\bw_0, \bfv)}^{\sG/\sN(v)}(c_1, \ldots, c_{l+d})\right) = \sigma(c_{l+1}, \ldots, c_{l+d})^{\omega_\al}
\Delta_{w\omega_\al, \omega_\al}(x_{\bw_0}^-(c_{1}, \ldots, c_{l_0}))>0.
\end{align*}
Thus $(G/Q)_{>0} \subset wB^-B/Q$ for both $Q = B(v)$ and $Q = N(v)$.
\hfill $\diamond$
}
\end{remark}

\subsection{Positivity of the Bott-Samelson coordinates on $G/Q$}\label{ssec:BSpos-GQ}
Let again $v \in W$ and $Q = B(v)$ or $N(v)$. We now prove Theorem A stated in the Introduction, namely that all the Bott-Samelson coordinates on
$G/Q$ are positive (rational) functions with respect to the Lusztig positive structure. We prove the following more detailed restatement of Theorem A.
For $v \in W$ and $Q = B(v)$ or $N(v)$, let
\[
d(Q) = \dim(G/Q) = \begin{cases} l = l_0 + l(v), & \;\;\; Q = B(v),\\ l+d, &\;\;\; Q = N(v).\end{cases}
\]

\begin{theorem}\label{th:GNv-pos}
Let $v \in W$ and $Q = B(v)$ or $N(v)$.
For any $w \in W$ and any $\br \in \cR(w_0w^{-1}) \times \cR(w) \times \cR(v)$, all the Bott-Samelson coordinates
$(z_1, \ldots, z_{d(\sQ)})$ on $wB^-B/Q$ defined by $\br$, when regarded as rational functions on $G/Q$, are in $\Pos(G/Q)$.
\end{theorem}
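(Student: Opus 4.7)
The strategy is to reduce the claim to $Q=N(v)$ and then, for each Bott-Samelson coordinate $z_j$ on $wB^-B/N(v)$, exhibit it as a subtraction-free rational combination of generalized minors of the form $\Delta_{w_1\omega_\alpha, v_1\omega_\alpha}(g)$ with $v_1\preceq v$, all of which lie in $\Pos(G/N(v))$ by Proposition \ref{pr:minor-pos}. The reduction step uses that the $T$-bundle projection $\pi\colon G/N(v)\to G/B(v)$ fits into $\pi\circ \rho^{\sG/\sN(v)}_{(\bw_0,\bfv)}(c_1,\dots,c_{l+d})=\rho^{\sG/\sB(v)}_{(\bw_0,\bfv)}(c_1,\dots,c_l)$, so any subtraction-free expression of the first $l$ Bott-Samelson coordinates in $(c_1,\dots,c_{l+d})$ specializes at $c_{l+1}=\cdots=c_{l+d}=1$ to a subtraction-free expression in $(c_1,\dots,c_l)$.

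For $Q=N(v)$, I fix $w\in W$, write $g=\ow mntn(v)$ and observe $\ow^{-1}g=m\cdot t\cdot(t^{-1}nt)n(v)\in N^-TN$, so $m=[\ow^{-1}g]_-$ and $t=[\ow^{-1}g]_0$. The key computational identity, verified by combining left $N^-$-invariance, right $N$-invariance, and right $N(v)$-invariance of $\Delta^{\omega_\alpha}$ (the last valid for right-side weights $v''\preceq v$, by \eqref{eq:Nu-Nu-2} applied to a factorization $v=v''v^{\sharp}$), asserts
\[
\Delta_{w\omega_\alpha,v''\omega_\alpha}(g)\;=\;t^{v''\omega_\alpha}\,\Delta_{\omega_\alpha,v''\omega_\alpha}(n),\qquad v''\preceq v,
\]
and in particular $t^{\omega_\alpha}=\Delta_{w\omega_\alpha,\omega_\alpha}(g)$, so every character $t^\chi$ is a Laurent monomial in the positive minors $\{\Delta_{w\omega_\gamma,\omega_\gamma}(g)\}_{\gamma\in\Gamma}$.

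I then treat the four blocks of Bott-Samelson coordinates in Proposition \ref{pr:zj-Nv} separately. For the \emph{torus block} $j\in[l+1,l+d]$, $z_j=t^{\omega_{j-l}}=\Delta_{w\omega_{j-l},\omega_{j-l}}(g)$, positive by Proposition \ref{pr:minor-pos}. For the \emph{$n$-block} $j\in[l_0+1,l]$, Lemma \ref{le:ww-minor} applied to $\bfv\in\cR(v)$ expresses $z_j=\Delta_{v_1\omega_{\alpha_j},v_2\omega_{\alpha_j}}(n)$ (with $v_1\preceq v_2\preceq v$) as a positive rational function of the basic minors $\Delta_{\omega_\beta,v''\omega_\beta}(n)=\Delta_{w\omega_\beta,v''\omega_\beta}(g)/t^{v''\omega_\beta}$, a ratio of positive quantities. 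For the \emph{$m$-block} $j\in[1,k]$, Lemma \ref{le:ww-minor} reduces to basic minors $\Delta_{u\omega_\beta,\omega_\beta}(m)$ with $u\preceq w^{-1}w_0$; because $(s_{\alpha_1^*},\ldots,s_{\alpha_k^*})\in\cR(w^{-1}w_0)$ concatenates after any reduced word for $w$ to give a reduced word for $w_0$, we have $l(wu)=l(w)+l(u)$ and hence $\overline{wu}=\overline{w}\,\overline{u}$, which combined with $m=[\ow^{-1}g]_-$ yields the analogous identity
\[
\Delta_{u\omega_\beta,\omega_\beta}(m)\;=\;\Delta_{wu\omega_\beta,\omega_\beta}(g)\big/t^{\omega_\beta},
\]
again a ratio of positive quantities.

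The \emph{middle block} $j\in[k+1,l_0]$ is the main obstacle. Here $z_j=\Delta_{v_1\omega_{\alpha_j},v_2\omega_{\alpha_j}}(\ow m\ow^{-1})$ with $v_1\preceq v_2\preceq w$; by \eqref{eq:Nu-Nu-2} applied to the decomposition $\ow N^-\ow^{-1}=N_w^-N_w$, this equals $\Delta_{v_1\omega_{\alpha_j},v_2\omega_{\alpha_j}}(a_+')$ for the $N_w$-component $a_+'$, and Lemma \ref{le:ww-minor} (now applied to $\bw\in\cR(w)$ and the subgroup $N_w\subset N$) reduces to basic minors $\Delta_{\omega_\beta,v''\omega_\beta}(a_+')=\Delta^{\omega_\beta}(\ow m\,\overline{(v'')^{-1}w}^{-1})$ with $v''\preceq w$. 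The difficulty is that the right-side weight $v''\preceq w$ need not satisfy $v''\preceq v$, so Proposition \ref{pr:minor-pos} does not apply to the naive candidate $\Delta_{w_1\omega_\beta,v''\omega_\beta}(g)$. To resolve this I would use the identity $\overline{w}^{-1}=\overline{w^{-1}}\cdot\delta_w$ with $\delta_w\in T$ a product of $\alpha^\vee(-1)$'s (arising from $\overline{s_\alpha}^{-1}=\overline{s_\alpha}\alpha^\vee(-1)$) to rewrite $\overline{(v'')^{-1}w}^{-1}=\overline{w^{-1}v''}\cdot\delta_{(v'')^{-1}w}$ and thereby convert the expression into a torus-scalar multiple of $\Delta^{\omega_\beta}(\ow m\,\overline{w^{-1}v''})$, which via $m=[\ow^{-1}g]_-$ and the key identity above collapses to a Laurent monomial in the positive minors $\Delta_{w_1\omega_\beta,\omega_\beta}(g)$, with the sign $\delta_{(v'')^{-1}w}^{\omega_\beta}\in\{\pm 1\}$ forced to resolve positively by comparison with the Bott-Samelson parametrization of $N_w$ (in which $a_+'$ is presented as a subtraction-free polynomial in the BS coordinates on $N_w$, whose positivity is Lemma \ref{le:ww-minor} itself).
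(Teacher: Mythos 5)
Your reduction to $Q=N(v)$ and your treatment of the torus block, the $n$-block, and the $m$-block are correct and essentially the same as the paper's argument: Lemma \ref{le:ww-minor} plus the identities $t^{\omega_\al}=\Delta_{w\omega_\al,\omega_\al}(g)$ and $\Delta_{wu\omega_\beta,\omega_\beta}(g)=t^{\omega_\beta}\Delta_{u\omega_\beta,\omega_\beta}(m)$ reduce everything to ratios of minors covered by Proposition \ref{pr:minor-pos}. The gap is exactly where you locate the difficulty, in the middle block $j\in[k+1,l_0]$, and your proposed resolution does not work. The claimed collapse of $\Delta^{\omega_\beta}(\ow m\,\overline{w^{-1}v''})$ to a Laurent monomial in the minors $\Delta_{w_1\omega_\beta,\omega_\beta}(g)$ is false: substituting $m=\ow^{\,-1}g\,t^{-1}n^{-1}$ leaves the factor $n^{-1}\overline{w^{-1}v''}$ on the right, and since $\overline{w^{-1}v''}^{\,-1}N_v\,\overline{w^{-1}v''}\not\subset N$ in general (conjugation acts on the roots of $N_v$ by $(v'')^{-1}w$, which need not preserve positivity), the $N_v$-factor cannot be stripped by right $N$-invariance. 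Even in the cases where it can (e.g.\ $v=e$, so $n=e$), what survives is $t^{-w^{-1}v''\omega_\beta}\,\Delta_{\omega_\beta,\,w^{-1}v''\omega_\beta}(g)$, a minor whose right weight $w^{-1}v''$ need not satisfy $w^{-1}v''\preceq v$; it is then not covered by Proposition \ref{pr:minor-pos} and in general not even a function on $G/N(v)$. This is precisely the obstacle you set out to avoid, and the $\delta_w$ sign bookkeeping together with "forced to resolve positively by comparison with the Bott-Samelson parametrization" is an assertion, not a proof.

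The paper's fix is to never leave the Weyl group representative to the right of $m$: using \eqref{eq:Delta-all} and \eqref{eq:ow-tau} it rewrites the middle-block coordinate as $z_j=\Delta_{s_{\al_{l_0}}\cdots s_{\al_j}\omega_{\al_j},\;s_{\al_{l_0}}\cdots s_{\al_{j+1}}\omega_{\al_j}}\bigl((m^{-1})^\iota\bigr)$, a minor on $N^-$ with both weights $\preceq w^{-1}$, and then applies Lemma \ref{le:ww-minor} with the reduced word $\bw^{-1}\in\cR(w^{-1})$. The resulting basic minors $\Delta^{\omega_{\al_{i'}}}(\overline{s_{\al_{i'}}\cdots s_{\al_{l_0}}}\,m)$ have the representative on the left and trivial right weight, so the junk $t^{-1}n^{-1}$ coming from $m=\ow^{\,-1}g\,t^{-1}n^{-1}$ disappears by right $N$-invariance and $T$-covariance, giving $\Delta_{s_{\al_{k+1}}\cdots s_{\al_{i'-1}}\omega_{\al_{i'}},\,\omega_{\al_{i'}}}(g)\big/\Delta_{w\omega_{\al_{i'}},\,\omega_{\al_{i'}}}(g)$, positive for every $v$ and with no signs to track. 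Your own basic minors can be rescued the same way, since $\Delta^{\omega_\beta}(\ow m\,\overline{(v'')^{-1}w}^{\,-1})=\Delta_{w^{-1}\omega_\beta,\;w^{-1}v''\omega_\beta}\bigl((m^{-1})^\iota\bigr)$ with $w^{-1}v''\preceq w^{-1}$, so a second application of Lemma \ref{le:ww-minor} (now with $\bw^{-1}$) reduces them to the paper's basic minors -- but at that point you are simply running the paper's proof.
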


\begin{proof} Fix $w \in W$ and
$\br = (\bw^0, \bw, \bfv) \in \cR(w_0w^{-1}) \times \cR(w)
\times \cR(v)$, where
\[
\bw^0 = (s_{\al_1}, \ldots, s_{\al_k}),  \;\; \bw = (s_{\al_{k+1}}, \ldots, s_{\al_{l_0}}), \;\; \mbox{and} \;\;
 \bfv = (s_{\al_{l_0+1}}, \ldots, s_{\al_{l}}).
\]
We first consider the case of $Q = N(v)$.

Write an element in $wB^-B/N(v)$ uniquely as $g_\cdot N(v)$, where $g = \ow m n t_\cdot N(v)$ for unique $m \in N^-, n \in N_v$, and $t \in T$.
The values of the Bott-Samelson coordinates $z_1, \ldots, z_{l+d}$ at $g_\cdot N(v) \in wB^-B/N(v)$ are given in Proposition \ref{pr:zj-Nv}.

Assume first that $j \in [1, k]$. By Proposition \ref{pr:zj-Nv},
\[
z_j(g_\cdot N(v)) = \Delta_{s_{\al_1^*} \cdots s_{\al_j^*} \omega_{\al_j^*}, \, s_{\al_1^*} \cdots s_{\al_{j-1}^*} \omega_{\al_j^*}}(m).
\]
Let $\bfu^* = (s_{\al_1^*},  \ldots s_{\al_k^*}) \in \cR(w^{-1}w_0)$ and $\bw_0^\prime =
(s_{\al_{k+1}}, \ldots, s_{\al_{l_0}}, s_{\al_1^*},  \ldots s_{\al_k^*}) \in \cR(w_0)$.
By Lemma \ref{le:ww-minor}, there exist  $p, q \in \Poly_j^{>0}$ (see Notation \ref{nota:Pos}) such that
\begin{equation}\label{eq:pqm}
q(\Delta_{\bfu^*, 1}, \ldots, \Delta_{\bfu^*, j})(m) = z_j(g_\cdot N(v)) p(\Delta_{\bfu^*, 1}, \ldots, \Delta_{\bfu^*, j})(m).
\end{equation}
On the other hand, for each $i \in [1, j]$,
\begin{equation}\label{eq:mi}
\Delta_{\bfu^*,i}(m) = \Delta_{\bfu^*, i}(\ow^{\, -1}g t^{-1} n^{-1}) =
t^{-\omega_{\al_i^*}} \Delta_{ws_{\al_1^*} \cdots s_{\al_i^*} \omega_{\al_i^*}, \, \omega_{\al_i^*}}(g)
=\frac{\Delta_{\bw_0^\prime, \,l(w)+i}(g)}{\Delta_{w\omega_{\al_i^*}, \omega_{\al_i^*}}(g)}.
\end{equation}
For $i \in [1, j]$, set $f_i \in \CC[wB^-B/N(v)]$ by
\[
f_i (g_\cdot N(v)) = \Delta_{\bw_0^\prime, \,l(w)+i}(g)/\Delta_{w\omega_{\al_i^*}, \omega_{\al_i^*}}(g), \hs g \in \ow N^- N_vT.
\]
By \eqref{eq:pqm} and \eqref{eq:mi}, $q(f_1, \ldots, f_j) = z_jp(f_1, \ldots, f_j) \in \CC[wB^-B/N(v)]$.
By Proposition \ref{pr:minor-pos}, $f_i \in \Pos(G/N(v))$ for each $i \in [1, j]$. Thus $z_j \in \Pos(G/N(v))$ by Remark \ref{re:Pos-X}.

Suppose now that $j \in [k+1, l_0]$. By Proposition \ref{pr:zj-Nv} and using the involutive automorphism $\iota$ of $G$ given in \eqref{eq:tau-iota}
and the identities in
\eqref{eq:Delta-all} and
\eqref{eq:ow-tau}, one has
\begin{align*}
z_j(g_\cdot N(v)) &
=\Delta^{\omega_{\al_j}}\left(\overline{s_{\al_j} \cdots s_{\al_{l_0}}} \, m \, \overline{s_{\al_{j+1}} \cdots s_{\al_{l_0}}}^{\, -1}\right)\\
& = \Delta^{\omega_{\al_{j}}}\left(\overline{s_{\al_{l_0}} \cdots s_{\al_{j}}}^{\, -1}  \, (m^{-1})^\iota \, \overline{s_{\al_{l_0}} \cdots s_{\al_{j+1}}}\right)\\
 &= \Delta_{s_{\al_{l_0}} \cdots s_{\al_{j}} \omega_{\al_j}, \, s_{\al_{l_0}} \cdots s_{\al_{j+1}}\omega_{\al_j}}((m^{-1})^\iota).
\end{align*}
Let $\bw^{-1} = (s_{\al_{l_0}}, \ldots, s_{\al_{k+1}}) \in \cR(w^{-1})$.
By Lemma \ref{le:ww-minor}, there exist
$p', q' \in \Poly_{l_0-j+1}^{>0}$ such that
\[
q'(\Delta_{\bw^{-1}, 1}, \ldots, \Delta_{\bw^{-1}, l_0-j+1})((m^{-1})^\iota) = z_j(g_\cdot N(v))
p'(\Delta_{\bw^{-1}, 1}, \ldots, \Delta_{\bw^{-1}, l_0-j+1})((m^{-1})^\iota).
\]
On the other hand, for $i \in [1, l_0-j+1]$ and $i' := l_0-i+1 \in [j, l_0]$,
\begin{align*}
\Delta_{\bw^{-1}, i}((m^{-1})^\iota)&  = \Delta^{\omega_{\al_{i'}}}(\overline{s_{\al_{l_0}} \cdots s_{\al_{i'}}}^{\, -1}(m^{-1})^\iota)
=\Delta^{\omega_{\al_{i'}}}(\overline{s_{\al_{i'}} \cdots s_{\al_{l_0}}}\, m)\\
& = \Delta^{\omega_{\al_{i'}}}(\overline{s_{\al_{i'}} \cdots s_{\al_{l_0}}}\, \ow^{\, -1} g t^{-1} n)
=\frac{\Delta_{s_{\al_{k+1}} \cdots s_{\al_{i'-1}}\omega_{\al_{i'}}, \omega_{\al_{i'}}}(g)}{\Delta_{w\omega_{\al_{i'}}, \,\omega_{\al_{i'}}}(g)}.
\end{align*}
For $i \in [1, l_0-j+1]$, set $f^\prime_i \in \CC[wB^-B/N(v)]$ by
\[
f_j^\prime(g_\cdot N(v)) =
\Delta_{s_{\al_{k+1}} \cdots s_{\al_{i'-1}}\omega_{\al_{i'}}, \omega_{\al_{i'}}}(g)/\Delta_{w\omega_{\al_{i'}}, \, \omega_{\al_{i'}}}(g),
\hs g \in \ow N^-N_vT.
\]
One then has $q'(f_1^\prime, \ldots, f_{l_0-j+1}^\prime) = z_jp'(f_1', \ldots, f_{l_0-j+1}^\prime) \in \CC[wB^-B/N(v)]$, which, by
Proposition \ref{pr:minor-pos} and Remark \ref{re:Pos-X}, implies that
$z_j\in\Pos(G/N(v))$.

Assume now that $j \in [l_0+1, l]$. By Proposition \ref{pr:zj-Nv},
\[
z_j(g_\cdot N(v)) = \Delta_{s_{\al_{l_0+1}} \cdots s_{\al_{j-1}}\omega_{\al_j}, \; s_{\al_{l_0+1}} \cdots s_{\al_j}\omega_{\al_j}}(n).
\]
By Lemma \ref{le:ww-minor}, there exist non-zero $p^{\prime\prime}, q^{\prime\prime}
\in \Poly_{j-l_0}^{>0}$ such that
\[
q^{\prime\prime}(\Delta_{1, \bfv}, \ldots, \Delta_{j-l_0, \bfv})(n) = z_j(g_\cdot N(v))
p^{\prime \prime}(\Delta_{1, \bfv}, \ldots, \Delta_{j-l, \bfv})(n).
\]
On the other hand, for $i \in [1, j-l_0]$,
\begin{align*}
\Delta_{i, \bfv}(n) & = \Delta^{\omega_{\al_{l_0+i}}}(n \, \overline{s_{\al_{l_0+1}} \cdots s_{\al_{l_0+i}}}) =
\Delta^{\omega_{\al_{l_0+i}}}(m^{-1} \ow^{\, -1} g t^{-1}  \, \overline{s_{\al_{l_0+1}} \cdots s_{\al_{l_0+i}}})\\
& = t^{-s_{\al_{l_0+1}} \cdots s_{\al_{l_0+i}} \omega_{\al_{l_0+i}}} \Delta_{w \omega_{\al_{l_0+i}}, \,s_{\al_{l_0+1}} \cdots s_{\al_{l_0+i}}}(g).
\end{align*}
Writing $s_{\al_{l_0+1}} \cdots s_{\al_{l_0+i}} \omega_{\al_{l_0+i}}=\sum_{\al \in \Gamma} k_\al \omega_{\al}$, one has
\[
t^{s_{\al_{l_0+1}} \cdots s_{\al_{l_0+i}} \omega_{\al_{l_0+i}}} = \prod_{\al \in \Gamma} \left(\Delta_{w\omega_{\al}, \omega_{\al}}(g)\right)^{k_\al}.
\]
For $i \in [1, j-l_0]$, set $f_i^{\prime\prime} \in \CC[wB^-B/N(v)]$ by
\[
f_i^{\prime\prime}(g_\cdot N(v)) = \frac{\Delta_{w \omega_{\al_{l_0+i}}, \,s_{\al_{l_0+1}} \cdots s_{\al_{l_0+i}}}(g)}
{\prod_{\al \in \Gamma} \left(\Delta_{w\omega_{\al}, \omega_{\al}}(g)\right)^{k_\al}}, \hs g \in \ow N^-NT.
\]
One then has $q^{\prime\prime}(f_1^{\prime\prime}, \ldots, f_{1, j-l_0}^{\prime\prime}) = z_j
p^{\prime\prime}(f_1^{\prime\prime}, \ldots, f_{1, j-l_0}^{\prime\prime})
\in \CC[wB^-B/N(v)]$, which, again by Proposition \ref{pr:minor-pos} and Remark \ref{re:Pos-X}, implies that
$z_j\in\Pos(G/N(v))$.

For $j \in [l+1, l+d]$, since $z_j(g_\cdot N(v))  = \Delta_{w \omega_{j-l}, \omega_{j-l}}(g)$, again $z_j\in \Pos(G/N(v))$.

Let now $Q = B(v)$. Let $\varpi: G/N(v) \to G/B(v)$ be the natural projection. Then
$(\varpi^* (z_1), \ldots, \varpi^*(z_l))$ coincides with the first $l$ Bott-Samelson coordinates on $wB^-B/N(v)$ defined by the same $\br$.
 On the other hand, for any toric chart $\rho^{\sG/\sB(v)}_{(\bw_0, \bfv)}: (\CC^\times)^l \to G/B(v)$ in \eqref{eq:rho-c-Bv},
and for any $j \in [1, l]$, one has
\[
z_j\left(\rho^{\sG/\sB(v)}_{(\bw_0, \bfv)}(c_1, \ldots, c_l)\right) = \varpi^*(z_j)
\left(\rho^{\sG/\sN(v)}_{(\bw_0, \bfv)}(c_1, \ldots, c_l, 1, \ldots, 1)\right) \in \Pos(c_1, \ldots, c_l).
\]
Thus $z_j \in \Pos(G/B(v))$ for each $j \in [1, l]$.
\end{proof}

\begin{remark}\label{re:SL2}
{\rm
As we remarked in the paragraph after the statement of Theorem A in $\S$\ref{ssec:statements},
Bott-Samelson charts on $G/Q$ are not to be confused with toric charts in $\cP^{\sG/\sQ}_{\rm Lusztig}$.
Consider the case of $G/Q = G = SL(2, \CC)$:
there are two Bott-Samelson charts on $G$, corresponding to $w = e$ and $w = s_1=s$, respectively
given by
\begin{align*}
\bsigma^{(s, \emptyset, s)}:\; \CC^2 \times \CC^\times \longrightarrow B^-B, \;&
(\xi_1, \xi_2, \xi_3) \longmapsto \left(\begin{array}{cc} 1 & 0 \\ \xi_1 & 1\end{array}\right)\!\!
\left(\begin{array}{cc} 1 & \xi_2 \\ 0 & 1\end{array}\right)\left(\begin{array}{cc} \xi_3 & 0\\ 0 &\xi_3^{-1}\end{array}\!\right)\!\!\\
&\hs \hs \hs= \left(\!\begin{array}{cc} \xi_3 & \xi_2\xi_3^{-1} \\ \xi_1\xi_3 & (\xi_1\xi_2+1)\xi_3^{-1}\end{array}\!\!\!\right),\\
\bsigma^{(\emptyset, s, s)}: \; \CC^2 \times \CC^\times \longrightarrow sB^-B, \; &(z_1, z_2, z_3) \longmapsto
\left(\begin{array}{cc} z_1 & -1 \\ 1 & 0\end{array}\right)\!\!
\left(\begin{array}{cc} 1 & z_2 \\ 0 & 1\end{array}\right)\!\!\left(\begin{array}{cc} z_3 & 0\\ 0 &z_3^{-1}\end{array}\!\right)\\
& \hs \hs \hs =\left(\!\begin{array}{cc} z_1z_3 & (z_1z_2-1)z_3^{-1} \\ z_3 & z_2z_3^{-1}\end{array}\!\!\!\right).
\end{align*}
The two sets of coordinates are related by
\[
\xi_1 = z_1^{-1}, \;\;\; \xi_2 = z_1(z_1z_2-1), \;\;\; \xi_3 = z_1z_3, \;\;\;
z_1 = \xi_1^{-1}, \;\;\; z_2 = \xi_1(\xi_1\xi_2+1), \;\;\; z_3 = \xi_1\xi_3.
\]
It is a coincidence  that in this case the toric chart $\rho^\sG_{(s, s)}: (\CC^\times)^3 \to SL(2, \CC)$ in the Lusztig positive structure
is equal to
$\bsigma^{(s, \emptyset, s)}|_{(\CC^\times)^3}$. Note that
while each $z_j$ has a subtraction-free expression in $(\xi_1, \xi_2, \xi_3)$, $\xi_2$ does not have a subtraction-free expression in $(z_1, z_2, z_3)$:
indeed, otherwise the elements $g \in sB^-B$ with $(z_1, z_2, z_3)$-coordinates satisfying $z_1=z_2=1$ and $z_3 > 0$ would lie in
$G_{>0}$ which is not the case.
\hfill $\diamond$
}
\end{remark}

\begin{example}\label{ex:Sp2}
{\rm
In this example we take $G/Q = G = \SP(2, \CC) = \{g \in {\rm GL}(4, \CC): g^t J_{2} g = J_{2}\}$ where $J_{2} =
\left(\begin{array}{cc} 0 & I_2\\ - I_2 & 0\end{array}\right)$, and $I_2$ is the identity matrix of size $2$. Choose the Cartan subalgebra
$\h$ of the Lie algebra $\sp(2, \CC)$ to be the set of all diagonal elements in $\sp(2, \CC)$ and write elements in $\h$ as
$x = {\rm diag}(x_1, x_2, -x_1, -x_2)$.
Choose the simple roots as $\al_1 = x_1-x_2$ and $\al_2 = 2x_2$, and let $s_1 = s_{\al_1}$ and $s_2 = s_{\al_2}$, and choose root vectors
\[
e_{\al_1} = \left(\begin{array}{cccc} 0 & 1 & 0 & 0\\ 0 & 0 & 0 & 0\\ 0 & 0 & 0 & 0 \\ 0 & 0 & -1 & 0\end{array}\right),
\hs e_{\al_2} = \left(\begin{array}{cccc} 0 & 0 & 0 & 0\\ 0 & 0 & 0 & 1\\ 0 & 0 & 0 & 0 \\ 0 & 0 & 0 & 0\end{array}\right).
\]
Then the fundamental weights are $\omega_{\al_1} = x_1$ and $\omega_{\al_2} = x_1+x_2$.
Write $g \in \SP(2, \CC)$ as $g=(a_{ij})_{i, j = 1, \ldots, 4}$, and set $\Delta_{ij, kj} =
{\rm det}\left(\begin{array}{cc} a_{ik} & a_{il}\\ a_{jk} & a_{jl}\end{array}\right)$ for $i<j$ and $k < l$. Let
\begin{align*}
{\bf \Delta}^{(1)} & = \left(
\begin{array}{cccc}
    \Delta_{\omega_1,\omega_1} & \Delta_{\omega_1,s_1\omega_1} & \Delta_{\omega_1,s_1s_2s_1\omega_1} & \Delta_{\omega_1,s_2s_1\omega_1}  \\
    \Delta_{s_1\omega_1,\omega_1} &  \Delta_{s_1\omega_1,s_1\omega_1} & \Delta_{s_1\omega_1,s_1s_2s_1\omega_1}
		& \Delta_{s_1\omega_1,s_2s_1\omega_1} \\
    \Delta_{s_1s_2s_1\omega_1,\omega_1}  & \Delta_{s_1s_2s_1\omega_1,s_1\omega_1} & \Delta_{s_1s_2s_1\omega_1,s_1s_2s_1\omega_1}
		& \Delta_{s_1s_2s_1\omega_1,s_2s_1\omega_1} \\
    \Delta_{s_2s_1\omega_1,\omega_1}  & \Delta_{s_2s_1\omega_1,s_1\omega_1} & \Delta_{s_2s_1\omega_1,s_1s_2s_1\omega_1}
		& \Delta_{s_2s_1\omega_1,s_2s_1\omega_1} \\
   \end{array}\right),\\
{\bf \Delta}^{(1)} & = \left(
\begin{array}{cccc}
    \Delta_{\omega_2,\omega_2} & \Delta_{\omega_2,s_2\omega_2} & \Delta_{\omega_2,s_1s_2\omega_2} & \Delta_{\omega_2,s_2s_1s_2\omega_2} \\
    \Delta_{s_2\omega_2,\omega_2} & \Delta_{s_2\omega_2,s_2\omega_2} & \Delta_{s_2\omega_2,s_1s_2\omega_2}
		& \Delta_{s_2\omega_2,s_2s_1s_2\omega_2} \\
    \Delta_{s_1s_2\omega_2,\omega_2}  & \Delta_{s_1s_2\omega_2,s_2\omega_2} & \Delta_{s_1s_2\omega_2,s_1s_2\omega_2}
		& \Delta_{s_1s_2\omega_2,s_2s_1s_2\omega_2} \\
    \Delta_{s_2s_1s_2\omega_2,\omega_2}  & \Delta_{s_2s_1s_2\omega_2,s_2\omega_2} & \Delta_{s_2s_1s_2\omega_2,s_1s_2\omega_2}
		& \Delta_{s_2s_1s_2\omega_2,s_2s_1s_2\omega_2}
   \end{array}
    \right).
\end{align*}
Then it is straightforward to check that
\begin{align*}
{\bf \Delta}^{(1)} & = \left(
\begin{array}{cccc}
     a_{11} & a_{12} & -a_{13} & a_{14}  \\a_{21} &  a_{22} & -a_{23}  & a_{24} \\
    -a_{31}  & -a_{32} & a_{33}   & -a_{34}  \\a_{41}  & a_{42} & -a_{43}  & a_{44}
   \end{array}\right),\\
{\bf \Delta}^{(2)} & =\left(
\begin{array}{cccc}
     \Delta_{12,12} &\Delta_{12,14} & -\Delta_{12,23} & \Delta_{12,34} \\
    \Delta_{14,12} &  \Delta_{14,14} & -\Delta_{14,23} & \Delta_{14,34} \\
    -\Delta_{23,12}  & -\Delta_{23,14} & \Delta_{23,23}   & -\Delta_{23,34} \\
    \Delta_{34,12}  & \Delta_{34,14} & -\Delta_{34,23}  & \Delta_{34,34} \end{array}\right),
\end{align*}
and that the entries of ${\bf \Delta}^{(1)}$ and ${\bf \Delta}^{(2)}$ are all the generalized minors of $\SP(2, \CC)$.

Consider now the Bott-Samelson chart on $G/Q = G = \SP(2, \CC)$ for
$w = e$ and
\[
\br=((s_1, s_2, s_1, s_2), \emptyset, (s_2, s_1, s_2, s_1)) \in \cR(w_0) \times \cR(e) \times \cR(w_0).
\]
A direct calculation gives the Bott-Samelson coordinates $(z_1, \ldots, z_{10})$ on $B^-B$ to be
\begin{align*}
&z_1=\frac{a_{21}}{a_{11}},\;\;\;  z_2=\frac{-\Delta_{23,12}}{\Delta_{12,12}},\;\;\;
z_3=-\frac{\Delta_{13,12}}{\Delta_{12,12}}=\frac{(-\Delta_{23,12})a_{11}+(-a_{31})\Delta_{12,12}}{a_{21}\Delta_{12,12}}, \\
&z_4=\frac{\Delta_{14,12}}{\Delta_{12,12}},\;\;\; z_5=\frac{\Delta_{12,12}\Delta_{12,14}}{a^2_{11}},\;\;\;
z_6=\frac{a_{14}\Delta_{12,12}}{a_{11}},\\
&z_7=a_{11}a_{13}+a_{12}a_{14}=\frac{a_{11}^2 \Delta_{12, 34} + a_{14}^2 \Delta_{12,12}}{\Delta_{12, 14}},\\
&z_8=\frac{a_{11}a_{12}}{\Delta_{12,12}},\;\;\;  z_9=a_{11},\;\;\;  z_{10}=\Delta_{12,12}.
\end{align*}
Note that the above formulas express the Bott-Samelson coordinates $(z_1, \ldots, z_{10})$ both as regular functions on $B^-B$, given by
$a_{11}\Delta_{12,12} \neq 0$,  and as
rational functions in the generalized minors of $\SP(2, \CC)$ in a subtraction-free manner. In the case of $z_3$,
note that $-\Delta_{13,12}$ is not a generalized minor for $\SP(2, \CC)$, while in the case of $z_7$,
$a_{13} = -\Delta_{\omega_1, s_1s_2s_1\omega_1}$. The second identity for $z_3$ comes from the Pl${\rm \ddot{u}}$cker relation for ${\rm GL}(4, \CC)$:
\[
a_{21} \Delta_{13, 12} = a_{11} \Delta_{23, 12} + a_{31} \Delta_{12, 12},
\]
and the  second identity for $z_7$ is the result of the following Pl${\rm \ddot{u}}$cker relation for $\SP(2, \CC)$
(the second identity of \cite[(2) of Theorem 1.16]{FZ:double} applied to $u = v = e, i = 1$ and $j = 2$):
\[
a_{12}a_{14} \Delta_{12,14} = a_{14}^2 \Delta_{12, 12} + a_{11}(a_{11} \Delta_{12, 34} +(-a_{13}) \Delta_{12, 14}).
\]
\hfill $\diamond$
}
\end{example}

\section{Symmetric Poisson CGL extensions and Poisson-Ore varieties}\label{sec:review-CGL}

\subsection{Definitions}\label{ssec:de-CGL}
We review from \cite{GY:PNAS, GY:Poi} the definitions on symmetric Poisson CGL extensions over the field $\CC$, although the general theory of
\cite{GY:PNAS, GY:Poi} works for any field of characteristic $0$.

In what follows, let $\TT$ be a split $\CC$-torus. The (additive) group of algebraic characters of $\TT$
is denoted by $X(\TT)$, and we also regard an element in $X(\TT)$ as  in $\t^*$ through its differential at the identity element of $\TT$, where
$\t$ is the Lie algebra of $\TT$.

Recall that a $\TT$-Poisson algebra is a Poisson algebra $R$
over $\CC$ with a rational $\TT$-action by Poisson isomorphisms. For a $\TT$-Poisson algebra $R$, denote the induced action of $h \in \t$ on $R$ by
$h(r)$ for $r \in R$. More specifically, if $r \in R$ is a $\TT$-weight vector with $t\cdot r = t^\chi r$ for all $t \in \TT$, then
$h(r) = \chi(h)r$ for all $h \in \t$. For a $\TT$-Poisson $\CC$-algebra $(R, \{\, , \, \})$ that is also an integral domain,
define a {\it $\TT$-homogeneous Poisson prime element}
to be a prime element $r \in R$ that is a $\TT$-weight vector and is such that $\{a, R\} \subset aR$.

The following definition of symmetric Poisson CGL extensions is a paraphrase of that given in \cite{GY:PNAS, GY:Poi}.

\begin{definition}\label{de:Poi-CGL} \cite{GY:PNAS, GY:Poi}
{\rm
1) A symmetric $\TT$-Poisson CGL extension of length $n$ is a pair $\cE = (R, \cD)$, where
$R = \left(\CC[z_1, \ldots, z_n],  \{\, , \, \}\right)$ is a polynomial Poisson algebra and
\begin{equation}\label{eq:cD}
\cD =(\chi_1, \ldots, \chi_n, \; h_1, \ldots, h_n, \; h_1^\prime, \ldots, h_n^\prime),
\end{equation}
with $\chi_j \in X(\TT)$ and $h_j, h_j^\prime \in \t$ for $j \in [1, n]$, satisfying
\begin{equation}\label{eq:xkk}
\chi_j(h_j) \neq 0 \hs \mbox{and} \hs \chi_j(h_j^\prime) \neq 0, \hs \forall \; j \in [1, n],
\end{equation}
and such that the Poisson bracket $\{\, , \, \}$ for $\CC[z_1, \ldots, z_n]$ takes the special form
\begin{equation}
\label{eq:xjk}
\{z_i, \, z_j\} = -\chi_i(h_j) z_iz_j - f_{i,j} =\chi_j(h_i^\prime)z_iz_j-f_{i,j}\;\mbox{for some}\;f_{i,j} \in \CC[z_{i+1}, \ldots, z_{j-1}]
\end{equation}
for all $1 \leq i < j \leq n$,
and that the  $\TT$-action on $\CC[z_1, \ldots, z_n]$ by associative algebra isomorphisms via
$t\cdot z_j = t^{\chi_j}z_j$, where $t \in T$ and $j \in [1, n]$,
preserves the Poisson bracket $\{\, , \, \}$. For $1 \leq i < j \leq n$,
\begin{equation}\label{eq:log-can}
\{z_i, z_j\}_{\rm log-can} :=-\chi_i(h_j) z_iz_j
\end{equation}
 is called the
{\it log-canonical term} of $\{z_i, z_j\}$.
We will also refer to the collection $\cD$ in \eqref{eq:cD}
as the {\it $\TT$-action data} on $R$, and to the $z_j$'s as the {\it CGL generators} of $\cE$.

2) A {\it localized symmetric $\TT$-Poisson CGL extension} is a pair $\cE = (R, \cD)$, where
\[
R = (\CC[z_1, \ldots, z_n][y_1^{-1}, \ldots,y_k^{-1}],  \{\, , \, \}),
\]
$((\CC[z_1, \ldots, z_n],  \{\, , \, \}), \cD)$ is a symmetric $\TT$-Poisson CGL extension, and $y_1, \ldots,y_k$ are
$\TT$-homogeneous Poisson prime elements in $\CC[z_1, \ldots, z_n]$.
\hfill $\diamond$
}
\end{definition}

\begin{remark}\label{re:ij}
{\rm
Given a symmetric $\TT$-Poisson CGL extension $(R, \cD)$ as in Definition \ref{de:Poi-CGL}, for any $1 \leq i < j \leq n$,
the pair $\cE_{[i, j]} = (R_{[i,j]}, \cD_{[i,j]})$ is also a symmetric $\TT$-Poisson CGL extension, where
$R_{[i, j]} = \CC[x_i, x_{i+1}, \ldots, x_j]$ is a Poisson subalgebra of $R$, and
\[
\cD_{[i,j]} =  (\chi_i, \ldots, \chi_j, \; h_i, \ldots, h_j, \; h_i^\prime, \ldots, h_j^\prime).
\]
In  particular, the Poisson algebra $R$ is a symmetric iterated Poisson-Ore extension in the sense explained in $\S$\ref{ssec:intro}.
\hfill $\diamond$
}
\end{remark}

\begin{definition}\label{de:presen}
{\rm Given a  $\TT$-Poisson algebra $P$ and a $\CC$-split torus $\TT^\prime$, by a {\it presentation of $P$ as a localized symmetric $\TT^\prime$-Poisson CGL
extension} we mean a triple
\[
\cP = (\cE, \, E,\, I),
\]
 where $\cE = (R, \cD)$ is a localized symmetric $\TT^\prime$-Poisson CGL
extension, $E: \TT \to \TT^\prime$ is an embedding of $\CC$-split tori, and $I: P \to R$ is an isomorphism of $\TT$-Poisson algebras, where
$\TT$ acts on $R$ through the embedding $E$ and the $\TT^\prime$-action on $R$.
We drop the adjective ``localized" when $(R, \cD)$ is a symmetric $\TT^\prime$-Poisson CGL extension,
and we often suppress the mention of
$\TT$ and $\TT^\prime$ and speak of the symmetric Poisson CGL extensions and presentations.
\hfill $\diamond$
}
\end{definition}

We will see in this paper (see, for example, Remark \ref{rk:many})
that a given $\TT$-Poisson algebra may have many different presentations as Poisson CGL extensions.

\begin{example}\label{ex:T-0}
{\rm
Consider the $\TT$-Poisson algebra $(\CC[\TT], 0)$, where $0$ stands for the zero Poisson bracket and $\TT$ acts on $\CC[\T]$ by translation.
Let $(\omega_1, \ldots, \omega_d)$ be
any basis of the character group $X(\T)$, and let $(\omega_1^*, \ldots, \omega_d^*)$ be
the basis of $\t$ dual to $(\omega_1, \ldots, \omega_d)$. Then
$((\CC[\omega_1, \ldots, \omega_d], 0), \cD)$ is a symmetric $\T$-Poisson CGL extension, where
\[
\cD = (\omega_1, \ldots, \omega_d, \; \omega_1^*, \ldots, \omega_d^*, \; -\omega_1^*, \ldots, -\omega_d^*),
\]
and $(\cE, {\rm Id}_\sTT, I)$ is a presentation of  $(\CC[\T], 0)$ as
a localized symmetric $\T$-Poisson CGL extension, where $\cE = ((\CC[\omega_1^{\pm 1}, \ldots, \omega_d^{\pm 1}], 0), \cD)$ and
$I: \CC[\T]\cong \CC[\omega_1^{\pm 1}, \ldots, \omega_d^{\pm 1}]$.
\hfill $\diamond$
}
\end{example}

Recall that an affine $\TT$-Poisson variety is a Poisson variety $(X, \piX)$ together with an algebraic action by $\TT$ preserving $\piX$.  In such a case,
$(\CC[X], \piX)$ becomes a $\TT$-Poisson algebra with the induced $\TT$-action defined at the end of Notation
\ref{nota:intro}.

\begin{definition}\label{de:Poi-Ore}
{\rm Let $(X, \piX)$ be an $n$-dimensional irreducible $\TT$-Poisson variety.

1) If $X^\prime \subset X$ is a $\TT$-invariant Zariski open subset and $\rho: Z \to X^\prime$ is a parametrization of $X^\prime$ by a
Zariski open subset $Z$
of $\CC^n$,  we say that
$\piX$ is {\it presented as a localized symmetric CGL extension in the coordinate chart $\rho$ (or via the parametrization $\rho: Z \to X^\prime$)}
if there exist a $\CC$-split torus $\TT^\prime$, an embedding $E: \TT \to \TT^\prime$, and $\TT^\prime$-action data $\cD$,
 such that $((\CC[Z], \cD), E, (\rho^*:\CC[X^\prime] \to \CC[Z]))$
is  a presentation of the $\TT$-Poisson algebra $(\CC[X^\prime], \piX)$ as a localized symmetric $\TT^\prime$-Poisson CGL extension.

2) By a {\it $\TT$-Poisson-Ore atlas} for $(X, \piX)$ we mean an atlas $\cA_\sX$ on $X$, consisting of $\TT$-invariant coordinate charts parametrized by
Zariski open subsets of $\CC^n$,  such that
$\piX$ is presented as a localized symmetric Poisson CGL extension in each of the coordinate charts
of $\cA_\sX$.
\hfill $\diamond$
}
\end{definition}

\begin{definition}\label{de:PoiOre-va}
{\rm
By a {\it $\TT$-Poisson-Ore variety}, we mean a triple $(X, \pi_\sX, \cA_\sX)$, where
$(X, \piX)$ is an irreducible rational $\TT$-Poisson variety,  and $\cA_{\sX}$ is a
{\it $\TT$-Poisson-Ore atlas} for $(X, \piX)$. A $\TT$-Poisson-Ore variety for some split $\CC$-torus $\TT$
is also simply referred to as a Poisson-Ore variety.
\hfill $\diamond$
}
\end{definition}

\subsection{Mixed products of symmetric Poisson CGL extensions}\label{ssec:mixed}
Given $\TT_i$-Poisson algebras $(R_i, \{\,, \, \}_i)$ for $i = 1, 2$, and given any
\[
\nu = \sum_q a_q \otimes b_q \in \t_1 \otimes \t_2,
\]
where $\t_1$ and $\t_2$ are the respective Lie algebras of $\TT_1$ and $\TT_2$,
define the Poisson bracket $\{\,, \, \}_\nu$ on the tensor product algebra $R_1 \otimes_\CC R_2$ via
\begin{align}\nonumber
\{r_1, \; r_1^\prime\}_\nu  &= \{r_1, \, r_1^\prime\}_1, \hs \{r_2, \; r_2^\prime\}_\nu  =
\{r_2, \, r_2^\prime\}_2, \hs r_1, r_1^\prime \in R_1, \; r_2, r_2^\prime \in R_2,\\
\label{eq:mixed-mu}
\{r_1, \; r_2\}_\nu & = -\sum_q a_q(r_1) b_q(r_2), \hs r_1 \in R_1, \; r_2 \in R_2.
\end{align}
Then $(R_1 \otimes_\CC R_2, \{\,, \, \}_\nu)$ is a $(\TT_1 \times \TT_2)$-Poisson algebra with the product $(\TT_1 \times \TT_2)$-action.
Suppose now that
\[
\cE_1 = ((\CC[z_1, \ldots, z_n], \{\,, \, \}_1), \,\cD_1) \hs \mbox{and} \hs
\cE_2 = ((\CC[z_{n+1}, \ldots, z_{n+m}], \{\,, \, \}_2), \,\cD_2)
\]
are symmetric $\TT_1$- and $\TT_2$-Poisson CGL extensions with respective action data
\begin{align*}\label{eq:A12}
\cD_1 &= (\chi_1, \ldots, \chi_n, \; h_1, \ldots, h_n,\; \;  h_1^\prime, \ldots, h_n^\prime),\\
 \cD_2 & = (\chi_{n+1}, \ldots, \chi_{n+m}, h_{n+1}, \ldots, h_{n+m}, \; h_{n+1}^\prime, \ldots, h_{n+m}^\prime).
\end{align*}
Identify
$\CC[z_1, \ldots, z_n] \otimes_\CC  \CC[z_{n+1}, \ldots, z_{n+m}] \cong \CC[z_1, \ldots, z_{n+m}]$,  and  for $j \in [1, n+m]$, define
$\hat{\chi}_j\in X(\TT_1 \times \TT_2)$ and  $\hat{h}_j, \hat{h}_j^\prime \in \t_1 \oplus \t_2$  by
\begin{align*}
&\hat{\chi}_j = (\chi_j, \; 0) \;\;\mbox{for} \;\; j \in [1, n] \;\;\mbox{and} \;\; \hat{\chi}_j = (0, \, \chi_j) \;\; \mbox{for}\;\; j \in [n+1, n+m],\\
&\hat{h}_j = (h_j, \; \nu^\#(\chi_j)), \hs \hat{h}_j^\prime = (h_j^\prime, \; -\nu^\#(\chi_j)), \hs j \in [1, n],\\
&\hat{h}_j = ((\nu^{21})^\#(\chi_j), \; h_j) , \hs  \hat{h}_j^\prime = (-(\nu^{21})^\#(\chi_j), \; h_j^\prime), \hs j \in [n+1, n+m],
\end{align*}
where $\nu^\#: \t_1^* \to \t_2$ and $(\nu^{21})^\#: \t_2^* \to \t_1$ are respectively defined by
\[
\nu^\#(\xi_1) = \sum_q \xi_1(a_q) b_q, \hs (\nu^{21})^\#(\xi_2) = \sum_q \xi_2(b_q) a_q, \hs \xi_1 \in \t_1^*,
\; \xi_2 \in \t_2^*.
\]
The proof of the following proposition is straightforward and is omitted.

\begin{lemma}\label{le:product}
The pair $\cE_1 \otimes_\nu \cE_2 := ((\CC[z_1, \ldots, z_{n+m}],\, \{\, ,\, \}_\nu), \cD)$, where
\[
\cD = \left(\hat{\chi}_1, \ldots, \hat{\chi}_{n+m}, \; \hat{h}_1, \ldots, \hat{h}_{n+m}, \; \hat{h}^\prime_1, \ldots, \hat{h}^\prime_{n+m}\right)
\]
is a symmetric $(\TT_1 \times \TT_2)$-Poisson CGL extension. We call $\cE_1 \otimes_\nu \cE_2$
the {\it mixed product} of $\cE_1$ and $\cE_2$ defined by $\nu$.
\end{lemma}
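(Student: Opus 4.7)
My plan is to verify the three defining properties of a symmetric Poisson CGL extension by direct computation, one case at a time, after first noting that the Poisson bracket $\{\,,\,\}_\nu$ is well defined. For the latter, skew-symmetry and the Leibniz rule for $\{\,,\,\}_\nu$ are immediate from the defining formulas, and the Jacobi identity reduces, thanks to the $R_i$-component brackets already being Poisson, to checking Jacobi for triples of the form $(r_1, r_1^\prime, r_2)$ with $r_1, r_1^\prime \in R_1, r_2 \in R_2$ and $(r_1, r_2, r_2^\prime)$ with $r_1 \in R_1, r_2, r_2^\prime \in R_2$; these follow because the $\TT_i$-actions on $R_i$ are by Poisson derivations, so each $a_q \in \t_1$ (resp.\ $b_q \in \t_2$) acts as a Poisson derivation of $R_1$ (resp.\ $R_2$).

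Next I would check the non-degeneracy conditions $\hat{\chi}_j(\hat{h}_j) \neq 0$ and $\hat{\chi}_j(\hat{h}_j^\prime) \neq 0$ for every $j \in [1, n+m]$. For $j \in [1, n]$, using $\hat{\chi}_j = (\chi_j, 0)$, the second $\t_2$-components of $\hat{h}_j$ and $\hat{h}_j^\prime$ are annihilated by pairing, so both evaluations reduce to $\chi_j(h_j)$ and $\chi_j(h_j^\prime)$ respectively, which are nonzero by the CGL property of $\cE_1$. The case $j \in [n+1, n+m]$ is symmetric and reduces to the CGL property of $\cE_2$.

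For the bracket formula \eqref{eq:xjk}, I would separate three cases for $1 \leq i < j \leq n+m$. When both indices lie in $[1, n]$ or both in $[n+1, n+m]$, the pairings $\hat{\chi}_i(\hat{h}_j)$ and $\hat{\chi}_j(\hat{h}_i^\prime)$ reduce to $\chi_i(h_j)$ and $\chi_j(h_i^\prime)$ respectively (the cross $\nu^\#$-terms are killed because one factor of $\hat{\chi}$ is zero), and the desired identity with $f_{i,j} \in \CC[z_{i+1}, \ldots, z_{j-1}]$ is inherited verbatim from $\cE_1$ or $\cE_2$. The interesting case is $i \in [1, n]$ and $j \in [n+1, n+m]$, where $\{z_i, z_j\}_\nu = -\sum_q a_q(z_i)\, b_q(z_j) = -\bigl(\sum_q \chi_i(a_q)\chi_j(b_q)\bigr) z_i z_j$. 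I then compute
\[
\hat{\chi}_i(\hat{h}_j) = (\chi_i, 0)\bigl((\nu^{21})^\#(\chi_j),\, h_j\bigr) = \sum_q \chi_j(b_q)\chi_i(a_q),
\]
and analogously $\hat{\chi}_j(\hat{h}_i^\prime) = \chi_j(-\nu^\#(\chi_i)) = -\sum_q \chi_i(a_q)\chi_j(b_q)$, so both sides of \eqref{eq:xjk} agree with $f_{i,j} = 0 \in \CC[z_{i+1}, \ldots, z_{j-1}]$.

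Finally, the compatibility of the $(\TT_1 \times \TT_2)$-action (with weights $\hat{\chi}_j$) with $\{\,,\,\}_\nu$ follows from the individual $\TT_i$-Poisson compatibilities together with the observation that the cross brackets $\{z_i, z_j\}_\nu$ for $i \leq n < j$ are log-canonical in $z_i, z_j$ with coefficient in $\CC$, hence automatically $(\TT_1 \times \TT_2)$-invariant up to the weight $\hat{\chi}_i + \hat{\chi}_j$. No step looks like a real obstacle; the only thing to be careful about is bookkeeping of signs and which of $\nu^\#, (\nu^{21})^\#$ enters each pairing, which is dictated by the requirement that the same log-canonical coefficient $-\hat{\chi}_i(\hat{h}_j) = \hat{\chi}_j(\hat{h}_i^\prime)$ emerge from both expressions in \eqref{eq:xjk}.
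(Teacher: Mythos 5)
Your proof is correct, and it is exactly the direct verification (well-definedness of $\{\,,\,\}_\nu$, the nondegeneracy pairings, the two-sided bracket form \eqref{eq:xjk} with $f_{i,j}=0$ in the mixed case, and torus compatibility) that the paper declares straightforward and omits. The sign bookkeeping with $\nu^\#$ and $(\nu^{21})^\#$ matches the definitions, so nothing further is needed.
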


Rephrasing in geometrical terms, if for $i = 1, 2$, $(X_i, \pi_i)$ is a $\TT_i$-Poisson variety, then
each $\nu = \sum_q a_q \otimes b_q \in \t_1 \otimes \t_2$   gives rise to
the Poisson bi-vector field
\begin{equation}\label{eq:pinu}
\pi_\nu = (\pi_1, 0) + (0, \pi_2) -\sum_q (\rho_1(a_q), 0) \wedge (0,  \rho_2(b_q))
\end{equation}
on $X_1 \times X_2$,  where $\rho_i$, for $i = 1, 2$, is the induced Lie algebra action of $\t_i$ on $X_i$, i.e., for $a \in \t_i$,
$\rho_i(a)$ is the vector field on $X$ given by $\rho_i(a)(x) = \frac{d}{d\lambda}|_{\lambda = 0} \exp (\lambda a)x$ for $x \in X$.

\begin{definition}\label{de:pi-product}
{\rm The Poisson structure $\pi_\nu$ on $X_1 \times X_2$ will be called the
{\it mixed product} of the Poisson structures $\pi_1$ and $\pi_2$ defined by $\nu$.
\hfill $\diamond$
}
\end{definition}

See \cite{Lu-Mou:mixed} for
a more general definition and construction of
mixed product Poisson structures associated to Poisson Lie groups.

\subsection{Completeness of Hamiltonian flows of symmetric CGL generators}\label{ssec:hamil}
We start by recalling a definition from \cite{Lu-Mi:Kostant}.
Let $\calQ$ be the algebra of all quasi-polynomials in one complex variable \cite[$\S$26]{Arnold:ODE}, i.e.,
all holomorphic functions on $\CC$ of the form
\begin{equation}\label{eq:gamma-t}
\gamma(c) = \sum_{k=1}^N q_k(c)e^{a_kc}, \hs c \in \CC,
\end{equation}
 where each $q_k(c) \in \CC[c]$ and the $a_k$'s pairwise distinct complex numbers. A holomorphic map $\gamma: \CC \to \CC^n$ is said to have {\it Property
$\calQ$} if each of its components is in $\calQ$.

For a smooth affine complex variety $X$ and a holomorphic curve $\gamma: \CC \to X$ in $X$, we say that $\gamma$ has
{\it Property $\calQ$} if there exists an embedding $I: X \hookrightarrow \CC^n$ of $X$ as an affine subvariety of $\CC^n$
such that $I \circ \gamma: \CC \to\CC^n$ has Property $\calQ$.
It is proved in
\cite[Lemma 1.5]{Lu-Mi:Kostant} that if a holomorphic curve $\gamma$ in $X$ has Property $\calQ$, then
$I^\prime \circ \gamma: \CC \to \CC^{n'}$ has Property $\calQ$ for all affine embeddings $I^\prime: X \hookrightarrow \CC^{n'}$.

\begin{definition}\label{de:calQ}\cite{Lu-Mi:Kostant}
{\rm
For a smooth affine complex Poisson variety $(X, \piX)$ and  $f \in \CC[X]$,  we say that $f$ has  {\it complete Hamiltonian flow with Property $\calQ$}
if all the integral curves of the Hamiltonian vector field of $f$ are defined over $\CC$ and have Property $\calQ$.
\hfill $\diamond$
}
\end{definition}

Using the elementary fact that all solution curves of the ODE $dx/dt = a_0 x + b(t)$, where $a_0 \in \CC$ and $b(t) \in \calQ$,
are in $\calQ$,  the following facts are proved in \cite{Lu-Mi:Kostant}.

\begin{lemma}\label{le:hamil}\cite[Lemma 1.2 and Lemma 1.4]{Lu-Mi:Kostant}.
Let $\pi$ be an algebraic Poisson structure on $\CC^n$ and $f \in \CC[\CC^n]$. If there exist coordinates
$(x_1, \ldots, x_n)$ on $\CC^n$ such that for each $j \in [1, n]$ there are $a_j \in \CC$ and $b_j \in \CC[x_1, \ldots, x_{j-1}]$ such that
\[
\{f, x_j\} = a_j x_j f + b_j,
\]
then $f$ has complete Hamiltonian flows in $\CC^n$ with Property $\calQ$. Furthermore, if
a non-zero $g \in \CC[\CC^n]$ is such that $\{f, g\} = afg$ for some
$a \in \CC$, then $f$ has complete Hamiltonian flows in $X_g:=\{x \in \CC^n: g(x) \neq 0\}$.
\end{lemma}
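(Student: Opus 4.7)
The plan is to exploit the fact that $f$ is constant along its own Hamiltonian flow, combined with the triangular structure of the ODE system provided by the hypothesis. Let $x(t) = (x_1(t), \ldots, x_n(t))$ be an integral curve of the Hamiltonian vector field $H_f$, so that $\dot{x}_j(t) = \{f, x_j\}(x(t))$ for $j \in [1,n]$. Since $\{f, f\} = 0$, the function $f$ is a first integral of $H_f$, hence $f(x(t)) = c_0 := f(x(0))$ is constant on the maximal interval of definition.

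Substituting into the assumed identity $\{f, x_j\} = a_j x_j f + b_j$ converts the Hamiltonian system into the triangular linear inhomogeneous system
\[
\dot{x}_j(t) \;=\; (a_j c_0)\, x_j(t) \;+\; b_j(x_1(t), \ldots, x_{j-1}(t)), \qquad j \in [1,n].
\]
I would now proceed by induction on $j$ to show that each $x_j(t)$ extends to a quasi-polynomial on all of $\CC$. The base case $j=1$ is immediate since $b_1 \in \CC$ and the scalar ODE $\dot{x}_1 = a_1 c_0 x_1 + b_1$ has an explicit solution in $\calQ$. For the inductive step, once $x_1(t), \ldots, x_{j-1}(t) \in \calQ$, the composition $b_j(x_1(t), \ldots, x_{j-1}(t))$ lies in $\calQ$ because $\calQ$ is a $\CC$-algebra (sums and products of quasi-polynomials are quasi-polynomials), and the elementary ODE fact recalled just before the lemma then produces $x_j(t) \in \calQ$. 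This simultaneously establishes global existence on $\CC$ and Property $\calQ$, proving the first assertion.

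For the second assertion, given the non-zero $g \in \CC[\CC^n]$ with $\{f, g\} = afg$, differentiation along the integral curve $x(t)$ gives
\[
\frac{d}{dt} g(x(t)) \;=\; \{f, g\}(x(t)) \;=\; a\, f(x(t))\, g(x(t)) \;=\; (a c_0)\, g(x(t)),
\]
so $g(x(t)) = g(x(0)) e^{a c_0 t}$. If the initial point $x(0)$ lies in $X_g$, then $g(x(0)) \neq 0$ and therefore $g(x(t)) \neq 0$ for every $t \in \CC$. Combined with the completeness on $\CC^n$ established in the first part, this shows that the integral curve never leaves $X_g$, giving completeness of $H_f$ in $X_g$.

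The only delicate point is the closure of $\calQ$ under polynomial composition in the inductive step, but this is straightforward from the explicit form $\sum_k q_k(t) e^{a_k t}$: products expand into the same form and similar exponents can be collected. The triangular structure of the hypothesis is what prevents any genuine nonlinearity from appearing, so no ODE-theoretic obstacle arises beyond the elementary linear theory already cited.
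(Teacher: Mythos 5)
Your proof is correct and follows essentially the same route as the cited argument in Lu--Mi (the paper itself only quotes the lemma, but the approach it indicates is exactly this one): constancy of $f$ along its own Hamiltonian flow turns the system into a triangular linear one, the quasi-polynomial ODE fact then gives Property $\calQ$ and global existence, and the relation $\{f,g\}=afg$ forces $g(x(t))=g(x(0))e^{ac_0t}\neq 0$ so the flow stays in $X_g$. The only step worth making explicit is that the entire quasi-polynomial curve $y(t)$ produced by the linear system really is an integral curve of $H_f$ on all of $\CC$ (so that the maximal Hamiltonian trajectory is globally defined), which follows from the identity theorem: $f(y(t))-c_0$ is an entire function of $t$ vanishing on the original domain of the trajectory, hence identically zero, so $\dot{y}_j=a_jc_0y_j+b_j$ coincides with $\{f,x_j\}(y(t))$ everywhere.
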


\begin{proposition}\label{pr:hamil}
Let $\cE = (R, \cD)$ be a localized symmetric Poisson CGL extension as in Definition \ref{de:Poi-CGL}, where
$R = (\CC[z_1, \ldots, z_n][y_1^{-1}, \ldots,y_k^{-1}],  \{\, , \, \})$.
Let $X\subset \CC^n$ be defined by $y_1y_2 \cdots y_k \neq 0$. Then for each $j \in [1, n]$,  $z_j$  has complete Hamiltonian flow with Property $\calQ$ in
$X$.
\end{proposition}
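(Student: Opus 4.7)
The plan is to fix $j\in[1,n]$ and solve the Hamiltonian system $\dot z_i = \{z_j,z_i\}$ ($i\in[1,n]$) for the vector field $H_{z_j}$ on $\CC^n$ by a double triangular recursion that uses both halves of \eqref{eq:xjk} in an essential way. For $i>j$, \eqref{eq:xjk} read with indices $(j,i)$ gives $\{z_j,z_i\} = -\chi_j(h_i)\,z_jz_i - f_{j,i}$ with $f_{j,i}\in\CC[z_{j+1},\ldots,z_{i-1}]$; for $i<j$, the symmetric form with indices $(i,j)$ gives $\{z_j,z_i\} = \chi_i(h_j)\,z_iz_j + f_{i,j}$ with $f_{i,j}\in\CC[z_{i+1},\ldots,z_{j-1}]$. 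In particular $\dot z_j=0$, so $z_j\equiv c_0:=z_j(0)$ along any integral curve $\gamma$.

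Next I would solve the remaining equations in two passes emanating outward from index $j$. In the upward pass $i=j+1,j+2,\ldots,n$, at each step the equation is a scalar linear ODE $\dot z_i = -\chi_j(h_i)c_0\,z_i + b_i(t)$ whose inhomogeneous term $b_i=-f_{j,i}(z_{j+1},\ldots,z_{i-1})$ is a polynomial in already-solved components. Using that $\calQ$ is closed under sums, products, and under solving $\dot u=au+b(t)$ with $a\in\CC$ and $b\in\calQ$ (the elementary fact underlying Lemma \ref{le:hamil}), induction gives $z_i\circ\gamma\in\calQ$ for all $i\ge j$. The downward pass $i=j-1,j-2,\ldots,1$ is entirely analogous, since $f_{i,j}$ depends only on $z_{i+1}(t),\ldots,z_{j-1}(t)$, already solved and in $\calQ$. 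Hence every $z_i\circ\gamma$ lies in $\calQ$, so $\gamma$ is defined on all of $\CC$ as a curve in $\CC^n$ with quasi-polynomial components.

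To confirm that $\gamma$ stays in $X=\{y_1\cdots y_k\ne 0\}$ and has Property $\calQ$ there, I would use the Poisson-prime property of each $y_s$: the condition $\{y_s,R\}\subset y_sR$ together with primeness of $y_s$ in $\CC[z_1,\ldots,z_n]$ forces $\{y_s,z_j\}=y_s\,g_{s,j}$ for some $g_{s,j}\in\CC[z_1,\ldots,z_n]$. Composition with $\gamma$ yields the scalar linear ODE
\[
(y_s\circ\gamma)'(t) \;=\; -(g_{s,j}\circ\gamma)(t)\,(y_s\circ\gamma)(t)
\]
whose coefficient lies in $\calQ$ by the previous paragraph, so, starting from $y_s(\gamma(0))\ne 0$, the solution never vanishes and $\gamma$ never leaves $X$; this is precisely the completeness statement of the second half of Lemma \ref{le:hamil}. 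The $\TT$-homogeneity of $y_s$ and of $z_j$ forces $g_{s,j}$ to be of weight $\chi_j$, and combined with the log-canonical leading term in \eqref{eq:log-can} one obtains that $g_{s,j}\circ\gamma$ acts effectively as a scalar along the $z_j$-flow, yielding $y_s\circ\gamma,\,1/y_s\circ\gamma\in\calQ$, hence Property $\calQ$ for $\gamma$ under the affine embedding $X\hookrightarrow\CC^{n+k}$.

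The hard part is really the double-sidedness of the recursion: the \emph{symmetric} Poisson CGL hypothesis is indispensable, because without it only the upward pass has the correct triangular shape, and the equations for $z_i$ with $i<j$ would have inhomogeneous terms not guaranteed to lie in a polynomial ring of previously solved coordinates. The symmetric form of \eqref{eq:xjk} supplies exactly the missing half, after which everything reduces to the elementary theory of first-order linear scalar ODEs with quasi-polynomial right-hand sides together with the Poisson-prime identities for the $y_s$.
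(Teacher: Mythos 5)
Your first two paragraphs are correct and are essentially the paper's argument in unwound form: the paper applies Lemma \ref{le:hamil} to $f=z_j$ with the reordered coordinates $(z_{j-1},\ldots,z_1,z_{j+1},\ldots,z_n,z_j)$, and your two-pass triangular integration (downward and upward from $j$, using both halves of \eqref{eq:xjk} and the constancy of $z_j$ along its own flow) is exactly the content of that lemma. Likewise, your observation that $y_s\circ\gamma$ solves a first-order linear ODE with entire coefficient, hence never vanishes, does establish that the flow stays in $X$ and is complete there.

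The gap is in your last step, where you claim Property $\calQ$ \emph{in $X$}. An affine embedding of $X$ must include the function $(y_1\cdots y_k)^{-1}$, so you need $1/(y_s\circ\gamma)\in\calQ$, and for this it is not enough to know $\{y_s,z_j\}=y_s\,g_{s,j}$ with $g_{s,j}$ a $T$-weight vector of weight $\chi_j$. Weight $\chi_j$ does not force $g_{s,j}$ to be a scalar multiple of $z_j$: monomials such as $z_iz_k$ with $\chi_i+\chi_k=\chi_j$ are perfectly possible, in which case $g_{s,j}\circ\gamma$ is a genuine quasi-polynomial rather than a constant, and then $y_s\circ\gamma=y_s(\gamma(0))\exp\bigl(-\int_0^t g_{s,j}\circ\gamma\bigr)$ and its reciprocal need not lie in $\calQ$ (take $g_{s,j}\circ\gamma=t$, giving $e^{-t^2/2}$). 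The phrase that $g_{s,j}\circ\gamma$ ``acts effectively as a scalar along the $z_j$-flow'' is precisely what must be proved, and it does not follow from $T$-homogeneity plus the log-canonical leading term of \eqref{eq:log-can}. The paper closes exactly this point by invoking \cite[Corollary 5.10]{GY:Poi}: in a symmetric Poisson CGL extension every homogeneous Poisson prime element has log-canonical bracket with each CGL generator, i.e.\ $\{y_s,z_j\}=\lambda_{s,j}\,y_sz_j$ with $\lambda_{s,j}\in\CC$; this is a substantive structural theorem of Goodearl--Yakimov, not a formal weight computation. With it, $y_s\circ\gamma$ is a pure exponential, both it and its inverse are in $\calQ$, and the second half of Lemma \ref{le:hamil} applies to $g=y_1\cdots y_k$. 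So you should either cite that corollary or give an independent proof of log-canonicity; as written, your argument proves completeness of the flow in $X$ but not Property $\calQ$ there.
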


\begin{proof}
Fix $j \in [1, n]$.  Taking $f = z_j$ and $(x_1, \ldots, x_n) = (z_{j-1}, \ldots, z_1, z_{j+1}, z_{j+2}, \ldots, z_n, z_j)$ as the new set of coordinates for
$\CC^n$,
it follows from \eqref{eq:xjk} that the assumptions in Lemma \ref{le:hamil} on $f$ and on $(x_1, \ldots, x_n)$ are satisfied. Moreover,
by \cite[Corollary 5.10]{GY:Poi}, each $y_i$ for $i \in [1, k]$, being a
homogeneous Poisson prime element, has log-canonical Poisson bracket with $z_j$. Thus $g = y_1y_2 \cdots y_k$ has log-canonical Poisson bracket
with $z_j$.  By Lemma \ref{le:hamil} that $z_j$ has complete Hamiltonian flow with Property $\calQ$ in
$X$.
\end{proof}

The following theorem is now an immediate consequence of Proposition \ref{pr:hamil} and the definition of Poisson-Ore varieties.

\begin{theorem}\label{th:hamil}
For any Poisson-Ore variety
$(X, \piX, \cA_\sX)$, all the coordinate functions in any coordinate chart $\rho: Z \to \rho(Z) \subset X$ in
$\cA_\sX$ have complete Hamiltonian flows with Property $\calQ$ in $\rho(Z)$.
\end{theorem}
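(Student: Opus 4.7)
My plan is to derive Theorem \ref{th:hamil} as an essentially immediate corollary of Proposition \ref{pr:hamil} by unpacking the definition of a $\TT$-Poisson-Ore atlas. Fix a coordinate chart $\rho: Z \to \rho(Z) \subset X$ in $\cA_\sX$. By Definition \ref{de:Poi-Ore} combined with Definition \ref{de:presen}, there exist a $\CC$-split torus $\TT'$, an embedding $E: \TT \hookrightarrow \TT'$, and $\TT'$-action data $\cD$ such that $((\CC[Z], \cD), E, \rho^*)$ presents the $\TT$-Poisson algebra $(\CC[\rho(Z)], \pi_\sX|_{\rho(Z)})$ as a localized symmetric $\TT'$-Poisson CGL extension. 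Concretely, this means $\CC[Z] \cong \CC[z_1, \ldots, z_n][y_1^{-1}, \ldots, y_k^{-1}]$, where $z_1, \ldots, z_n$ are the CGL generators and $y_1, \ldots, y_k$ are $\TT'$-homogeneous Poisson prime elements, and $Z$ is exactly the open subset of $\CC^n$ cut out by $y_1 \cdots y_k \neq 0$.

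Next I would apply Proposition \ref{pr:hamil} to this presentation: it yields that each CGL generator $z_j$ has complete Hamiltonian flow with Property $\calQ$ in $Z$. Since $\rho: Z \to \rho(Z)$ is a biregular isomorphism whose pullback $\rho^*$ is a Poisson algebra isomorphism, it transports Hamiltonian vector fields to Hamiltonian vector fields and their integral curves to integral curves. Consequently each coordinate function on $\rho(Z)$---which by convention I take to mean $(\rho^{-1})^* z_j$ for $j \in [1, n]$---has complete holomorphic Hamiltonian flow in $\rho(Z)$. The integral curves retain Property $\calQ$ because, by \cite[Lemma 1.5]{Lu-Mi:Kostant}, Property $\calQ$ of a holomorphic curve in a smooth affine variety is intrinsic and independent of the choice of affine embedding; using $\rho^{-1}$ to embed $\rho(Z)$ into $\CC^n$ reduces the question to Property $\calQ$ of the pullback curve in $Z$, which is already supplied by Proposition \ref{pr:hamil}.

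I do not foresee a substantive obstacle. The theorem is essentially a repackaging of Proposition \ref{pr:hamil} through the definition of a Poisson-Ore atlas, and the only point requiring a moment of care is the transport of Property $\calQ$ across $\rho$, which is handled by the embedding-independence from \cite{Lu-Mi:Kostant}. This is exactly why the authors characterize the theorem in the excerpt as ``an immediate consequence'' of Proposition \ref{pr:hamil} and the definition of Poisson-Ore varieties.
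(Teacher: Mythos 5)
Your proposal is correct and follows exactly the paper's route: the paper derives Theorem \ref{th:hamil} as an immediate consequence of Proposition \ref{pr:hamil} and the definition of a Poisson-Ore atlas, which is what you spell out, including the only point needing care (transporting Property $\calQ$ through the chart $\rho$, justified by the embedding-independence of Property $\calQ$ from \cite[Lemma 1.5]{Lu-Mi:Kostant}).
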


\subsection{Symmetric Poisson CGL extensions from generalized Bruhat cells}\label{ssec:GBcs}
Continuing with the set-up in Notation \ref{nota:intro0} and
 Notation \ref{nota:intro}, we review in this section the standard multiplicative Poisson structure $\pist$ on $G$, the standard Poisson
structures on generalized Bruhat cells, and the associated symmetric Poisson CGL extensions .

\begin{notation}\label{nota:3}
{\rm
Recall that $\g$ and $\h$ are the respective Lie algebras of $G$ and $T$.
We fix a
symmetric and non-degenerate invariant bilinear form $\lara_\g$ on $\g$, and
denote by $\lara$ both the restriction of $\lara_\g$ to $\h$ and the induced bilinear form on $\h^*$.
Let again $d = \dim \h$, and let $\{H_q\}_{q=1}^d$ be any basis of $\h$ that is orthonormal with respect to $\lara$.
Let $\Delta^+ \subset \h^*$ be the set of all positive roots.
\hfill $\diamond$
}
\end{notation}

Recall from Notation \ref{nota:intro} that for each simple root $\alpha$, we have fixed root vectors $e_\alpha$ for $\al$ and $e_{-\alpha}$ for
$-\alpha$ such that $\alpha([e_\al, e_{-\al}]) = 2$. Extend the choices of such root vectors $e_\al$ and $e_{-\al}$ for all positive roots $\alpha$.
It is then easy to see that $\la e_\al, e_{-\al}\ra_\g = \frac{2}{\la \al, \al \ra}$ for $\al \in \Delta^+$.
The {\it standard quasi-triangular $r$-matrix} on $\g$  is given by \cite{dr:quantum}
\begin{equation}\label{eq:rst}
r_{\rm st} = \sum_{q=1}^d H_q \otimes H_q + \sum_{\al \in \Delta^+} \la \al, \al\ra(e_{-\al} \otimes e_\al) \in \g \otimes \g,
\end{equation}
which depends only on the choice of the triple $(B, T, \lara_\g)$ and not on that of the root vectors.
Let
\[
\Lambda_{\rm st} =  \sum_{\alpha \in \Delta^+} \frac{\la \al, \al \ra}{2} (e_{-\al} \otimes e_\al - e_\al \otimes e_{-\al}) \in \wedge^2 \g
\]
be the skew-symmetric part of $r_{\rm st}$.
Then the  {\it standard multiplicative
Poisson structure} $\pist$ on $G$ is  defined to be the Poisson bi-vector field on $G$ given by
\begin{equation}\label{eq:pist}
\pist= \Lambda_{\rm st}^L- \Lambda_{\rm st}^R,
\end{equation}
where $\Lambda_{\rm st}^L$ and $\Lambda_{\rm st}^R$ respectively denote the left and right invariant
bi-vector fields on $G$ with value $\Lambda_{\rm st}$ at the identity element of $G$. The Poisson Lie group $(G, \pist)$ is
the semi-classical limit of the quantum group $\CC_q[G]$ (see \cite{chari-pressley, etingof-schiffmann}).

It follows from the
definition that $\pist$  is invariant under both left and right translations by elements in $T$, and it
is well-known (see, for example, \cite{hodges, reshe-4, Kogan-Z}) that the $T$-orbits of symplectic leaves of $\pist$ are precisely the double Bruhat cells
\[
G^{u, v} =BuB \cap B^-vB^-, \hs u, v \in W.
\]
It follows that every $BwB$, where $w \in W$, is a Poisson submanifold of $(G, \pist)$.

For any integer $r \geq 1$, recall now from
\eqref{eq:Fn} the quotient variety $F_r$ of $G^r$ by $B^r$. Let again
$\varpi_r: G^r \to F_r$ be the projection.
It is shown in \cite[$\S$1.3]{Lu-Mou:flags} and \cite[$\S$7.1]{Lu-Mou:mixed} that
\begin{equation}\label{eq:pi-r}
\pi_r \; \stackrel{{\rm def}}{=} \; \varpi_r(\pi_{\rm st}^r)
\end{equation}
is a well-defined Poisson structure on  $F_r$, and that for any $\bfu = (u_1, \ldots, u_r) \in W^r$,
the generalized Bruhat cell $\O^\bfu \subset F_r$ is a Poisson submanifold with respect to the Poisson structure $\pi_r$. The restriction of $\pi_r$ to $\O^\bfu$
will still be denoted as $\pi_r$ and is called \cite{Elek-Lu:BS} the {\it standard Poisson structure} on $\O^\bfu$.
It is also clear that the $T$-action on $F_r$ defined in \eqref{eq:T-Fr} preserves the Poisson structure
$\pi_r$.

Let $\bfu = (u_1, \ldots, u_r) \in W^r$, and let $l = l(u_1) + \cdots + l(u_r)$.
Recall that associated to each
$\tilde{\bfu} = (\bfu_1,  \ldots, \bfu_r)  \in \cR(u_1) \times \cdots \times \cR(u_r)$ one has
the Bott-Samelson parametrization $\bbeta^{\tilde{\bfu}}: \CC^l \to \O^\bfu$ in \eqref{eq:beta-Ou}.
Let $\{\, ,\, \}_{\tilde{\bfu}}$ be the Poisson bracket on $\CC[z_1, \ldots, z_l]$ such that
\begin{equation}\label{eq:bracket-u}
(\bbeta^{\tilde{\bfu}})^*: \;\; (\CC[\O^\bfu], \;\pi_r) \longrightarrow (\CC[z_1, \ldots, z_l], \, \{\, , \, \}_{\tilde{\bfu}})
\end{equation}
is an isomorphism of Poisson algebras. Regard $(\CC[\O^\bfu], \pi_r)$ as a $T$-Poisson algebra,
where $T$ acts on $\CC[\O^\bfu]$ through  the
$T$-action on  $\O^\bfu$ in \eqref{eq:T-Fr}, i.e.
\[
t\cdot z_j = t^{s_{\al_1} s_{\al_2} \cdots s_{\al_{j-1}}(\al_j)} z_j, \hs t \in T, \; j \in [1, l],
\]
as in \eqref{eq:T-xi}.
For any $\chi \in \h^*$, let $\chi^\#$ be the unique element in $\h$ satisfying
\begin{equation}\label{eq:sharp}
\chi^\prime(\chi^\#) = \la \chi,  \; \chi^\prime \ra, \hs \chi^\prime \in \h^*.
\end{equation}
Set $\cD_{\tilde{\bfu}} = (\chi_1, \ldots, \chi_l, \; h_1, \ldots, h_l, \; h_1^\prime, \ldots, h_l^\prime)$, where for $j \in [1, l]$,
\begin{equation}\label{eq:chi-k-BS}
\chi_j = s_{\al_1} s_{\al_2} \cdots s_{\al_{j-1}}(\al_j) \in \h^*\hs \mbox{and} \hs
h_j = -h_j^\prime = s_{\al_1} s_{\al_2} \cdots s_{\al_{j-1}}(\al_j^\#) \in \h.
\end{equation}

\begin{theorem}\label{th:Elek-Lu} \cite{Elek-Lu:BS}
For any $\bfu = (u_1, \ldots, u_r) \in W^r$ and $\tilde{\bfu} \in \cR(u_1) \times \cdots \times \cR(u_r)$,
\begin{equation}\label{eq:E-bfu}
\cE_{\tilde{\bfu}} := \left((\CC[z_1, \ldots, z_l], \, \{\,, \, \}_{\tilde{\bfu}}), \, \cD_{\tilde{\bfu}}\right)
\end{equation}
is a symmetric $T$-Poisson CGL extension. In particular, for $1 \leq i < j \leq l$, the log-canonical term
$\{z_i, z_j\}_{\tilde{\bfu}, {\rm log-can}}$ of $\{z_i, z_j\}_{\tilde{\bfu}}$ is given by (see \eqref{eq:log-can})
\[
\{z_i, z_j\}_{\tilde{\bfu}, {\rm log-can}} = -\chi_i(h_j) z_iz_j = -\la s_{\al_1} \cdots s_{\al_{i-1}}(\al_i), \, s_{\al_1} \cdots s_{\al_{j-1}}(\al_j)\ra z_iz_j.
\]
\end{theorem}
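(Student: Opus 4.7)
The plan is to establish the theorem by directly computing the Poisson bracket $\{z_i, z_j\}_{\tilde{\bfu}}$ in the Bott-Samelson coordinates and then verifying each requirement of a symmetric $T$-Poisson CGL extension. First I would pull back the definition $\pi_r = \varpi_r(\pi_{\rm st}^r)$ through the factorization of the Bott-Samelson parametrization $\bbeta^{\tilde{\bfu}}$ as the composition $\CC^l \to G^r \to F_r$, where $(z_1,\ldots, z_l)$ maps to the tuple $(g_{\bfu_1}(z_1,\ldots,z_{l_1}), \ldots, g_{\bfu_r}(z_{l_{r-1}+1},\ldots,z_l))$. The bi-vector $\pi_{\rm st}^r$ on $G^r$ is determined by the skew-symmetric part of $r_{\rm st}$ from (\ref{eq:rst}), and the key computation reduces to understanding how the coordinate functions $z_i, z_j$ bracket through the left/right invariant extensions of $\Lambda_{\rm st}$ on each factor.

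Next I would separate the contributions from the Cartan-diagonal part $\sum_q H_q \otimes H_q$ and the root-vector part $\sum_{\al \in \Delta^+} \la\al,\al\ra\, e_{-\al}\otimes e_\al$ of $r_{\rm st}$. The Cartan part is straightforward: by Lemma \ref{le:zj-Ou}, $z_j$ is a $T$-weight vector of weight $\chi_j = s_{\al_1}\cdots s_{\al_{j-1}}(\al_j)$, so the Cartan contribution to $\{z_i, z_j\}$ equals $-\la \chi_i, \chi_j\ra z_iz_j$, which is exactly $-\chi_i(h_j) z_iz_j$ with the choice $h_j = \chi_j^\#$ using the convention (\ref{eq:sharp}) and $W$-invariance of $\lara$. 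This identifies the log-canonical term with the stated formula. The symmetric condition $-\chi_i(h_j)= \chi_j(h_i')$ in (\ref{eq:xjk}) then follows immediately from $h_j' = -h_j$ and the symmetry of $\lara$, while the non-degeneracy $\chi_j(h_j)= \la \al_j, \al_j\ra > 0$ is automatic.

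The hard part will be showing that the remaining (nilpotent) contribution $f_{i,j}$ lies in $\CC[z_{i+1}, \ldots, z_{j-1}]$, i.e.\ that $f_{i,j}$ depends neither on the earlier coordinates $z_1,\ldots,z_{i-1}$ nor on the later ones $z_{j+1},\ldots,z_l$. For the later ones I would use the fact that for any $m \leq r$, the truncation $\O^{\bfu} \to \O^{(u_1, \ldots, u_m)}$ induced by $[g_1,\ldots,g_r]_{\sF_r} \mapsto [g_1,\ldots, g_m]_{\sF_m}$ is a Poisson map with respect to the standard Poisson structures, and within a single factor one can truncate a reduced word in the same way by a direct computation, so the Poisson subalgebra $\CC[z_1, \ldots, z_{j-1}]$ is closed under $\{z_i, \cdot\}$ for $i \le j - 1$. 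For the absence of $z_1,\ldots,z_{i-1}$ one uses the dual observation, together with the triangularity of the root vectors appearing in $\Lambda_{\rm st}$: only positive root vectors on one side pair with negative root vectors on the other side, and conjugation by the partial products $\overline{s_{\al_1}}\cdots \overline{s_{\al_{i-1}}}$ controls which root vectors actually contribute to $\{z_i, z_j\}$. Once this ``sandwich'' property is established, combining with the $T$-weight constraint that $f_{i,j}$ has weight $\chi_i + \chi_j$ completes the proof that $\cE_{\tilde{\bfu}}$ is a symmetric $T$-Poisson CGL extension.
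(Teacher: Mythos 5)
You are attempting to prove a statement that this paper does not prove at all: Theorem \ref{th:Elek-Lu} is imported from \cite{Elek-Lu:BS}, where it is obtained by an explicit inductive computation of the brackets of Bott--Samelson coordinates in terms of root strings and Chevalley structure constants (cf.\ Remark \ref{re:fjk-explicit}). Measured against that, your sketch has a genuine conceptual error at the step you call easy. You attribute the log-canonical term to the Cartan part $\sum_q H_q\otimes H_q$ of $r_{\rm st}$, but that part is symmetric and does not enter the Poisson tensor: by \eqref{eq:rst}--\eqref{eq:pist}, $\pi_{\rm st}=\Lambda_{\rm st}^L-\Lambda_{\rm st}^R$ where $\Lambda_{\rm st}=\sum_{\al\in\Delta^+}\tfrac{\langle\al,\al\rangle}{2}\,e_{-\al}\wedge e_\al$ has no Cartan component (equivalently, the symmetric part of $r_{\rm st}$ is ad-invariant, so its left- and right-invariant extensions cancel). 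Consequently there is no ``Cartan contribution'' to $\{z_i,z_j\}_{\tilde{\bfu}}$ to be read off from the $T$-weights of $z_i,z_j$; the weight statement \eqref{eq:T-xi} only tells you that $\{z_i,z_j\}_{\tilde{\bfu}}$ is $T$-homogeneous of weight $\chi_i+\chi_j$. The coefficient $-\langle\chi_i,\chi_j\rangle$ in fact emerges from the root-vector terms $e_{-\al}\wedge e_\al$ after conjugation by the partial products $x_{\al_m}(z_m)\overline{s_{\al_m}}$, whose $\h$-components pair up; producing it is a real computation, not a consequence of equivariance. (The parts you do get cheaply --- $\chi_j(h_j)=\langle\al_j,\al_j\rangle\neq 0$, and the symmetric condition from $h_j'=-h_j$ --- are indeed immediate once the bracket has the stated form, but that form is the whole point.)

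The second gap is that the claimed confinement of $f_{i,j}$ to $\CC[z_{i+1},\ldots,z_{j-1}]$, and more importantly the fact that the dependence on $z_i$ and $z_j$ is exactly the single log-canonical cross term required by \eqref{eq:xjk}, is never actually established. The projection $F_r\to F_m$ is Poisson and does give closure of the subalgebra generated by the coordinates of the first factors, but the analogous closure for a truncation inside one factor, and the ``dual observation'' needed to exclude $z_1,\ldots,z_{i-1}$, are precisely parts of the theorem; you assert them via ``triangularity'' and ``conjugation controls which root vectors contribute,'' which is not an argument. Moreover, even granting both closure statements so that $\{z_i,z_j\}_{\tilde{\bfu}}\in\CC[z_i,\ldots,z_j]$, the $T$-weight constraint does not finish the proof: homogeneity of weight $\chi_i+\chi_j$ does not exclude monomials such as $z_iz_mz_n$ with $\chi_m+\chi_n=\chi_j$, or terms of degree $\geq 2$ in $z_i$ or $z_j$ (the weights $\chi_p$ for a non-reduced concatenated word need not even be distinct positive roots, and such coincidences occur). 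Ruling out exactly these terms is the hard core of \cite{Elek-Lu:BS}; as written, your proposal defers it rather than proving it.
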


\begin{definition}\label{de:cRu}
{\rm For $\bfu = (u_1, \ldots, u_r) \in W^r$ and  $\tilde{\bfu} \in \cR(u_1) \times \cdots \times \cR(u_r)$, set
\[
\cP_{\tilde{\bfu}} = (\cE_{\tilde{\bfu}}, \, {\rm Id}_\sT, \, (\bbeta^{\tilde{\bfu}})^*).
\]
We call $\cP_{\tilde{\bfu}}$ the {\it symmetric $T$-Poisson CGL presentation of
$(\CC[\O^\bfu], \pi_r)$ (in the Bott-Samelson coordinates or via the Bott-Samelson parametrization) defined by $\tilde{\bfu}$}.
\hfill $\diamond$
}
\end{definition}

\begin{remark}\label{rk:many}
{\rm
As an element in $W$ may have more then one reduced words, the $T$-Poisson algebra $(\CC[\O^\bfu], \pi_r)$ for $\bfu \in W^r$ in general has more then
one presentation as a symmetric Poisson CGL extension.
\hfill $\diamond$
}
\end{remark}

\begin{remark}\label{re:fjk-explicit}
{\rm By Theorem \ref{th:Elek-Lu}, the Poisson bracket $\{\,, \, \}_{\tilde{\bfu}}$ has the form
\[
\{z_i, z_j\}_{\tilde{\bfu}} = -\chi_i(h_j)z_iz_j -f_{i,j}, \hs 1 \leq i < j \leq l,
\]
where $f_{i,j} \in \CC[z_{i+1}, \ldots, z_{j-1}]$ for all $1 \leq i < j \leq l$.
Explicit formulas for the polynomials $f_{i,j}$
are given in \cite{Elek-Lu:BS} in terms of root strings and the structure constants
of the Lie algebra $\g$ in any Chevalley basis. We refer to \cite{Elek-Lu:BS} for detail.
When $r = 1$, Theorem \ref{th:Elek-Lu} is the Poisson analog of the
Levendorskii-Soibelman straightening law for the quantum Schubert cell corresponding to $u_1 \in W$ (see \cite[$\S$9.2]{GY:AMSbook}
and  \cite[I.6.10]{Brown-Goodearl:book}).
\hfill $\diamond$
}
\end{remark}

%We resume the setting in $\S$\ref{ssec:de-CGL}.

\section{The Bott-Samelson atlas is a Poisson-Ore atlas}\label{sec:BG-CGL}
Continuing with the setting from $\S$\ref{ssec:GBcs},  we first
review in $\S$\ref{ssec:piGQ} the definition of the Poisson structure $\pi_{\sG/\sQ}$ for $Q = B(v)$ or $N(v)$ with $v \in W$.
We then prove in $\S$\ref{ssec:JwQ-poi} that for each $w \in W$, the decomposition $J^w_\sQ$ of $wB^-B/Q$, given in
\eqref{eq:Jw-Bv-0} and \eqref{eq:Jw-Nv-0}, identifies the restriction of $\pi_{\sG/\sQ}$ to $wB^-B/Q$ with a mixed product of $\pi_1$
on $\O^{w_0w^{-1}}$ and $\pi_2$ on  $\O^{(w, v)}$ (and the zero Poisson structure on $T$ when $Q = N(v)$).
Using the presentations of $\pi_1$ and $\pi_2$ as symmetric Poisson CGL extensions via Bott-Samelson parametrizations and the
mixed product construction in Lemma \ref{le:product}, we obtain presentations of $\pi_{\sG/\sQ}$ as symmetric (or localized symmetric)
Poisson CGL extensions
in every Bott-Samelson coordinate chart on $G/Q$,
thereby proving the
 Theorem B stated in $\S$\ref{ssec:intro}. Details on the (localized)
symmetric Poisson CGL presentations of $\pi_{\sG/\sQ}$ in the Bott-Samelson  coordinate charts
are given in Theorem \ref{th:CGL-Bv} for $Q = B(v)$
and in Theorem \ref{th:CGL-Nv} for $Q = N(v)$.

\subsection{The Poisson structure $\pi_{\sG/\sQ}$}\label{ssec:piGQ}
Given a Poisson Lie group $(L, \pi)$, recall that a Lie subgroup $M$ of $L$ is called a {\it coisotropic subgroup} of $(L, \pi)$ if
$M$ is also a coisotropic submanifold of $L$ with respect to $\pi$,
i.e., if $\pi(x) \in T_x L \otimes T_xM + T_xM \otimes T_xL$ for all $x \in M$.  It is easy to see from the definition that
when $M$ is a closed coisotropic subgroup of a Poisson Lie group $(L, \pi)$, the Poisson structure $\pi$ on $L$ projects to a well-defined
Poisson structure on $L/M$, called the quotient Poisson structure of $\pi$.

Returning to the Poisson Lie group $(G, \pist)$ with $\pist$ defined in \eqref{eq:pist},
an argument similarly to that used in the proof of
\cite[Lemma 10]{Lu-Mou:groupoid} shows that for any $v \in W$,  $N(v)$
is a coisotropic subgroup of $(G, \pist)$.
Since the Poisson structure  $\pist$ is invariant under translation by elements in $T$, $B(v)$
is also a coisotropic subgroup of $(G, \pist)$.

\begin{notation}\label{nota:pi-GQ}
{\rm
For $v \in W$ and $Q = B(v)$ or $N(v)$, denote by $\pi_{\sG/\sQ}$ the Poisson structure on $G/Q$ which is the projection to $G/Q$ of the
Poisson structure $\pist$ on $G$. The restriction of $\pi_{\sG/\sQ}$ to each shifted big cell $wB^-B/Q$ is also denoted by $\pi_{\sG/\sQ}$.
Note that $\pi_{\sG/\sB} = \pi_1$ in the notation of \eqref{eq:pi-r}.
}
\end{notation}

\subsection{The decompositions $J^w_\sQ$ as Poisson maps}\label{ssec:JwQ-poi}
For $v, w \in W$, recall from $\S$\ref{ssec:ABS} the isomorphisms
\begin{align*}
J^w_{\sB(v)}:\;\; & \; wB^-B/B(v) \; \stackrel{\sim}{\longrightarrow}\; \O^{w_0w^{-1}} \times \O^{(w, v)},\\
J^w_{\sN(v)}: \;\;& \; wB^-B/N(v) \; \stackrel{\sim}{\longrightarrow}\; \O^{w_0w^{-1}} \times \O^{(w, v)} \times T.
\end{align*}
We now identify the respective Poisson structures $J^w_{\sB(v)}(\pi_{\sG/\sB(v)})$ and $J^w_{\sN(v)}(\pi_{\sG/\sN(v)})$ on
$\O^{w_0w^{-1}} \times \O^{(w, v)}$ and on $\O^{w_0w^{-1}} \times \O^{(w, v)} \times T$ as mixed products of the standard Poisson structures
$\pi_1$ on $\O^{w_0w^{-1}}$ and $\pi_2$ on $\O^{(w, v)}$ (and the zero Poisson structure on $T$ when $Q = N(v)$).
To this end,
for $x \in \h$, denote by $\rho_r(x)$ the vector field on $F_r$ generated by the $T$-action on $F_r$ in \eqref{eq:T-Fr} in the direction of $x$, i.e.,
\begin{equation}\label{eq:rho-r}
\rho_r(x)(p) = \frac{d}{d\lambda}|_{\lambda=0} \exp(\lambda x) \cdot p, \hs p \in F_r.
\end{equation}
The restriction of $\rho_r(x)$ to $\O^\bfu$, for $x \in \h$,
will also be denoted by $\rho_r(x)$.
For $x \in \h$, let $x^R$ be the right-invariant  (also left-invariant) vector field on $T$ defined by $x$.

\begin{notation}\label{nota:pi}
{\rm
Let again $d = \dim_\CC T$ and recall that
$\{H_q: q \in [1, d]\}$ is
a basis of $\h$ orthonormal with respect to the
bilinear form $\lara$. Define  the bi-vector fields
\begin{align}\label{eq:pi-12}
&\pi_{1, 2} = (\pi_1,  \,0) + (0, \, \pi_2) + \mu \;\;\; \mbox{on} \;\;\;\O^{w_0w^{-1}} \times \O^{(w, v)}, \;\;\;\mbox{and}\\
\label{eq:pi-120}
&\pi_{1, 2, 0} = (\pi_1, \, 0, \,0) + (0, \, \pi_2, \, 0) + \mu_{12} + \mu_{13} + \mu_{23} \;\;\;\mbox{on} \;\;\;\O^{w_0w^{-1}} \times \O^{(w, v)} \times T,
\end{align}
where the {\it mixed terms} $\mu$ and $\mu_{12}, \mu_{13}, \mu_{23}$ are respectively given by
\begin{align}\label{eq:mu-de-0}
\mu & = \sum_{q=1}^d (\rho_1(w_0w^{-1}(H_q)), \; 0) \wedge (0,  \rho_2(H_q)),\\
\label{eq:mu-de-1}
\mu_{12} & =\sum_{q=1}^d (\rho_1(w_0w^{-1}(H_q)),  0,  0) \wedge (0,  \rho_2(H_q),  0),\\
\label{eq:mu-de-2}
\mu_{13} & =\sum_{q=1}^d (\rho_1(w_0(H_q)),  0,  0) \wedge (0,  0, H_q^R),\\
\label{eq:mu-de-3}
\mu_{23} & =-\sum_{q=1}^d (0, \rho_2(w(H_q)),  0) \wedge (0,  0, H_q^R).
\end{align}
}
\end{notation}

 \begin{theorem}\label{th:JwQ-poi}
For any $w, v \in W$, the maps
\begin{align*}
J^w_{\sB(v)}: \;\;& \; (wB^-B/B(v), \; \pi_{\sG/\sB(v)}) \longrightarrow \left(\O^{w_0w^{-1}} \times \O^{(w, v)}, \;\pi_{1,2}\right),\\
J^w_{\sN(v)}: \;\;& \; (wB^-B/N(v), \; \pi_{\sG/\sN(v)}) \longrightarrow \left(\O^{w_0w^{-1}} \times \O^{(w, v)} \times T, \;
\pi_{1,2,0}\right),
\end{align*}
are Poisson isomorphisms.
\end{theorem}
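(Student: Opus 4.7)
The plan is to verify both assertions by an explicit computation at the level of $G$, using the formula $\pist=\Lambda_{\rm st}^L-\Lambda_{\rm st}^R$ together with the decomposition of $r_{\rm st}$ in \eqref{eq:rst}. First I would lift $J^w_{\sN(v)}$ to an algebraic morphism
\[
\tilde J^w_{\sN(v)}: wB^-B\longrightarrow G\times G\times G\times T,\qquad a\ow n t'\longmapsto (\overline u\, a_-^{-1},\ a_+^\prime\, \ow,\ n\, \ov,\ t'),
\]
where $a=a_+ a_-=a_-^\prime a_+^\prime$ is the double decomposition of $a\in \ow N^-\ow^{-1}$ from \eqref{eq:ggg-000} and $\overline u=\overline{w_0 w^{-1}}$. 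Composing with the quotient $G^3\to F_1\times F_2$ in the first three slots and the identity on $T$, then further modding out by $N(v)$ in the last two slots, recovers $J^w_{\sN(v)}$. The $B(v)$ case factors analogously through a similar $\tilde J^w_{\sB(v)}$, and will follow from the $N(v)$ case by projecting along the $T$-factor, provided one observes that $\mu_{13}+\mu_{23}$ lies in the kernel of that projection.

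With the lift in hand, I would compute $(\tilde J^w_{\sN(v)})_*\pist$ by treating a point of $wB^-B$ as the ordered product of four factors matching the target slots, and expanding $\Lambda_{\rm st}^L-\Lambda_{\rm st}^R$ via the Leibniz rule for multiplication on $G$. Each bivector splits into single-slot pieces and cross-slot pieces. The single-slot pieces collapse, after the quotient, into $\pi_1$ on $\O^{w_0w^{-1}}$, into $\pi_2$ on $\O^{(w,v)}$, and into the zero bivector on $T$, thanks to the very definition of the standard Poisson structures on $F_r$ in \eqref{eq:pi-r} and the fact that $(T,0)$ is a Poisson subgroup of $(G,\pist)$. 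What remains is to match the cross-slot pieces with the specific mixed terms $\mu_{12},\mu_{13},\mu_{23}$ from \eqref{eq:mu-de-1}--\eqref{eq:mu-de-3}.

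The hard part is to exhibit the precise Lie-algebra twists by $w_0w^{-1}$, $w_0$, and $w$. These arise only from the Cartan piece $\sum_q H_q\otimes H_q$ of $r_{\rm st}$: sliding a left-invariant $H_q$-field on $G$ past $\overline u$ in the first slot replaces it by $\Ad_{\overline u}H_q$, i.e.\ produces a $w_0w^{-1}(H_q)$-contribution, which accounts for $\mu_{12}$ and, after a further $\ow$-twist traversing two slots, for $\mu_{13}$; sliding past $\ow$ in the second slot analogously produces the $w(H_q)$-contribution of $\mu_{23}$. The relative sign in $\mu_{23}$ versus $\mu_{13}$ traces to the fact that $\overline u$ enters the first slot as a \emph{left} translation whereas $\ow$ enters the second slot as a \emph{right} translation. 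A secondary technicality is to verify that the root-space component $\sum_\alpha \la\alpha,\alpha\ra e_{-\alpha}\otimes e_\alpha$ of $r_{\rm st}$ contributes no extraneous cross-slot term after passing to the quotient by $N(v)$; this reduces to the coisotropy of $B$ and of $N(v)$ in $(G,\pist)$ together with the product decomposition \eqref{eq:Nw-nn}, which forces all such contributions either to cancel or to get reabsorbed into the single-slot pieces.
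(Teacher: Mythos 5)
Your outline founders at the first slot of the lift. The point $a\ow n t'$ of $wB^-B$ is \emph{not} the ordered product of your four target components: $(\ou\,a_-^{-1})(a_+^\prime\ow)(n\ov)t' \neq a\ow n t'$, because the first component contains the \emph{inverse} $a_-^{-1}$ and the inserted constant $\ou=\overline{w_0w^{-1}}$, while $a$ itself factors as $a_+a_-=a_-^\prime a_+^\prime$ with the two decompositions used in \emph{different} slots. So the Leibniz-rule expansion of $\Lambda_{\rm st}^L-\Lambda_{\rm st}^R$ along "the factors matching the target slots" does not even get off the ground as stated. More importantly, even after correcting the factorization, the map feeding the first slot is $m\mapsto \ou\, m^{-1}$ on $N_w^-$, i.e.\ the map $\zeta^w$ of \eqref{eq:zeta-w}, and this involves a group inversion. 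For a multiplicative Poisson structure, inversion is \emph{anti}-Poisson, so a naive pushforward produces $-\pi_1$ rather than $\pi_1$ on $\O^{w_0w^{-1}}$; the assertion that the single-slot piece "collapses into $\pi_1$ by the very definition of $\pi_r$" is exactly where the real work lies. In the paper this is Lemma \ref{le:NNw}, whose proof compensates the sign of the inversion by simultaneously switching from the quotient $G/B^-$ to $B^-\backslash G$ (which negates the quotient Poisson structure) and then invokes the nontrivial isomorphism $(B^-\backslash B^-uB^-,\,-\pi_{{\sB}^-\backslash\sG})\cong (BuB/B,\,\pi_{\sG/\sB})$ from \cite[Lemma 14]{Lu-Mou:groupoid}. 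Your sketch never mentions the inversion, so this step is missing, not merely compressed.

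A second, related gap: you dismiss as a "secondary technicality" the vanishing of the cross-slot contributions of $\sum_\al\la\al,\al\ra\,e_{-\al}\otimes e_\al$ and the identification of the slot-two piece with $\pi_2$ on $\O^{(w,v)}$. Neither follows from coisotropy of $B$ and $N(v)$ alone. Because $n_1\ow$ and $n_2\ov t$ are multiplied inside the single group element $g$, the expansion of $\pist(g)$ produces cross terms between them, and showing that after passing to $F_2$ these assemble precisely into $\pi_2$ (with no residue) is the content of the identification $J_2:(F_2,\pi_2)\cong (G/B\times G/B,\Pi)$ with the Drinfeld double structure $\Pist=(\pist,0)+(0,\pist)+\mu_1+\mu_2$, together with the Poisson-homogeneous-space argument giving the embedding $E_v:(G/B(v),\pi_{\sG/\sB(v)})\to(F_2,\pi_2)$ (Lemmas \ref{le:Gv}--\ref{le:Gv-F2} and \ref{le:BwB-Owv}); likewise the vanishing of the root-vector cross terms and the survival of only the Cartan twists $w_0w^{-1}(H_q)$, $w_0(H_q)$, $w(H_q)$ is carried out case by case in Lemmas \ref{le:IwQ-pi} and \ref{le:Kwv-poi} using the double and the coisotropy of $N_w$, $N_w^-$ and $N_w\ow N$. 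Your heuristic of "sliding $H_q$ past $\ou$ and $\ow$" does predict the correct twists and the sign pattern of $\mu_{12},\mu_{13},\mu_{23}$, and your reduction of the $B(v)$ case to the $N(v)$ case by projecting out the $T$-factor is sound; but without supplying substitutes for Lemma \ref{le:NNw} and for the double/\,$J_2$ machinery, the proposal does not constitute a proof.
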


Since the proof of Theorem \ref{th:JwQ-poi} is self-contained and can be
read independently of the rest of the paper, we present the proof in the Appendix in order not to disrupt the flow of the paper.

\begin{remark}\label{re:pi123-mix}
{\rm
Regard $(\O^{w_0w^{-1}}, \pi_1)$ and $(\O^{(w, v)}, \pi_2)$ as $T$-Poisson varieties, where
again $T$ act on $\O^{w_0w^{-1}} \subset F_1$ and on $\O^{(w, v)}\subset F_2$ via \eqref{eq:T-Fr}. Theorem \ref{th:JwQ-poi} then says that
$\pi_{1, 2}$ is the mixed product  (in the sense of Definition \ref{de:pi-product}) of $\pi_1$ and $\pi_2$ defined by
\[
\nu = -\sum_{q=1}^d w_0w^{-1}(H_q) \otimes H_q \in \h \otimes \h.
\]
Similarly, regard $(\O^{w_0w^{-1}} \times \O^{(w, v)},\pi_{1, 2})$ as a $(T \times T)$-Poisson  variety with the product $(T \times T)$-action, and
regard $(T, 0)$ as a $T$-variety with the left $T$-action by translation. Then $\pi_{1, 2, 0}$ is the mixed product of $\pi_{1, 2}$ and the zero Poisson structure on
$T$ defined by
\[
\nu^\prime = -\sum_{q=1}^d (w_0(H_q), \, -w(H_q)) \otimes H_q \in (\h \oplus \h) \otimes \h.
\]
\hfill $\diamond$
}
\end{remark}

\subsection{Symmetric Poisson CGL presentations of $\pi_{\sG/\sQ}$ in Bott-Samelson coordinate charts}\label{ssec:proof-main}
Let $v \in W$ and $Q = B(v)$ or $N(v)$.  For each $w \in W$,  we
regard $\left(\CC[wB^-B/Q], \,  \pi_{\sG/\sQ}\right)$ as a $T$-Poisson algebra with the $T$-action given by
\begin{equation}\label{eq:T-Q}
(t \cdot \phi)(g_\cdot Q) = \phi(tg_\cdot Q), \hs t \in T, \, \phi \in \CC[wB^-B/Q], \, g \in wB^-B.
\end{equation}
We now describe  symmetric Poisson CGL presentations of
$\left(\CC[wB^-B/Q], \,  \pi_{\sG/\sQ}\right)$ in the Bott-Samelson coordinates on $wB^-B/Q$.
For the convenience of the reader, we recall part of Notation \ref{nota:br}.

\begin{notation}\label{nota:bra1}
{\rm
Let $l_0 = l(w_0)$, and for $v, w \in W$, let $k = l_0-l(w)$ and  $l = l_0 + l(v)$. Write an element
$\br  \in \cR(w_0w^{-1}) \times \cR(w) \times \cR(v)$ as $\br= (\bfw^0, \bfw, \bfv)$ with
\[
\bfw^0 = (s_{\al_1}, \, \ldots, \, s_{\al_k}), \;\;\; \bfw = (s_{\al_{k+1}}, \, \ldots, s_{\al_{l_0}}), \;\;\; \bfv = (s_{\al_{l_0+1}}, \, \ldots, \, s_{\al_{l}}).
\]
For each  $j \in [1, l]$, set
\begin{align}\label{eq:chi-hjl1}
&\chi_j = s_{\al_1}  \cdots s_{\al_{j-1}}(\al_j), \;\; \;\; h_j = -h_j^\prime =  s_{\al_1}  \cdots s_{\al_{j-1}}(\al_j^\#),\;\;\mbox{if}\;\; j \in [1, k],\\
\label{eq:chi-hjl2}
&\chi_j = s_{\al_{k+1}}  \cdots s_{\al_{j-1}}(\al_j), \;\; \;\;
h_j = -h_j^\prime = s_{\al_{k+1}}  \cdots s_{\al_{j-1}}(\al_j^\#), \;\; \mbox{if}\;\; j \in [k+1, l].
\end{align}
\hfill $\diamond$
}
\end{notation}

\noindent
{\bf The case of $B(v)$}. Let $w, v \in W$ and let $\br =  (\bfw^0, \bfw, \bfv) \in \cR(w_0w^{-1}) \times \cR(w) \times \cR(v)$ be as in Notation \ref{nota:bra1}.
Recall from \eqref{eq:bga-1} and \eqref{eq:bga-Bv} the parametrizations
\begin{align*}
&\bsigma^\br: \;\; \CC^l \longrightarrow \O^{w_0w^{-1}} \times \O^{(w, v)}, \\
&\bsigma^{\br}_{\sB(v)} = (J^w_{\sB(v)})^{-1} \circ \bsigma^{\br}: \;\; \CC^l \longrightarrow wB^-B/B(v).
\end{align*}
Let $\{\, , \, \}_{(\bfw^0|\bfw, \bfv)}$ be the Poisson bracket on $\CC[z_1,  \ldots, z_l]$ such that
\begin{equation}\label{eq:brhos}
\left(\bsigma^\br\right)^*:\;\; \left(\CC[\O^{w_0w^{-1}} \times \O^{(w, v)}], \,\pi_{1, 2}\right) \longrightarrow
\left(\CC[z_1, \ldots, z_l], \, \{\, , \, \}_{(\bfw^0|\bfw, \bfv)}\right)
\end{equation}
is a Poisson isomorphism. By Theorem \ref{th:JwQ-poi}, we have the Poisson isomorphism
\begin{equation}\label{eq:brhos-Bv}
\left(\bsigma^\br_{\sB(v)}\right)^*:\;\; \left(\CC[wB^-B/B(v)], \pi_{\sG/\sB(v)}\right) \longrightarrow
\left(\CC[z_1, \ldots, z_l], \, \{\, , \, \}_{(\bfw^0|\bfw, \bfv)}\right).
\end{equation}
Set $\cD_{(\bfw^0|\bfw, \bfv)} = (\hat{\chi}_1, \ldots, \hat{\chi}_l, \; \hat{h}_1, \ldots, \hat{h}_l, \; \hat{h}_1^\prime, \ldots, \hat{h}_l^\prime)$, where
\begin{equation}\label{eq:D-Bv}
\hat{\chi}_j = \begin{cases}(\chi_j, \, 0), \; j \in [1, k],\\  (0, \, \chi_j), \; j \in [k+1, l], \end{cases}
\hat{h}_j =-\hat{h}_j^\prime =  \begin{cases}(h_j,  -ww_0(h_j)),\; j \in [1, k],\\
(-w_0w^{-1}(h_j), h_j), \; j \in [k+1, l],\end{cases}
\end{equation}
and $\chi_j \in \h^*$ and $h_j \in \h$ for $j \in [1, l]$ are given in \eqref{eq:chi-hjl1} and \eqref{eq:chi-hjl2}, and let
\[
\cE_{(\bfw^0|\bfw, \bfv)} :=\left(\left(\CC[z_1,  \ldots, z_l],  \{\, , \, \}_{(\bfw^0|\bfw, \bfv)}\right),  \cD_{(\bfw^0|\bfw, \bfv)}\right).
\]
Recall from Theorem \ref{th:Elek-Lu} that one has the symmetric $T$-Poisson CGL extensions
\[
\cE_{\bw^0}  =\left((\CC[z_1, \ldots, z_k], \{\, \, \}_{\bw^0}), \cD_{\bw^0}\right)\;\;\mbox{and} \;\;
\cE_{(\bw, \bfv)} = \left((\CC[z_{k+1}, \ldots, z_l], \{\, \, \}_{(\bw, \bfv)}), \cD_{(\bw, \bfv)}\right).
\]
One checks directly that $\cE_{(\bfw^0|\bfw, \bfv)}$ is the mixed product (see Lemma \ref{le:product})
of the symmetric $T$-Poisson CGL extensions $\cE_{\bfw^0}$ and $\cE_{(\bfw, \bfv)}$ defined by
\[
\nu = -\sum_{q=1}^d w_0w^{-1}(H_q) \otimes H_q \in \h \otimes \h.
\]

\begin{theorem}\label{th:CGL-Bv}
For any $w, v \in W$ and  $\br = (\bfw^0, \bfw, \bfv) \in \cR(w_0w^{-1}) \times \cR(w) \times \cR(v)$ as in Notation \ref{nota:bra1},
the triple
\[
\cP_{\sB(v)}^\br  := \left(\cE_{(\bfw^0|\bfw, \bfv)}, \, E_w, \; (\bsigma^\br_{\sB(v)})^*\right)
\]
is a presentation of the $T$-Poisson algebra $\left(\CC[wB^-B/B(v)], \;\pi_{\sG/\sB(v)}\right)$ as a symmetric $(T \times T)$-Poisson CGL extension, where
$E_w: T \to T \times T,
t \to (t^{ww_0}, t)$ for $t \in T$.
\end{theorem}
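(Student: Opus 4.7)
The plan is to exhibit $\cE_{(\bfw^0|\bfw,\bfv)}$ as a mixed product, in the sense of Lemma \ref{le:product}, of the two symmetric $T$-Poisson CGL extensions $\cE_{\bfw^0}$ and $\cE_{(\bfw,\bfv)}$ from Theorem \ref{th:Elek-Lu}, and then to use Theorem \ref{th:JwQ-poi} together with the factorization $\bsigma^\br = \bbeta^{\bfw^0} \times \bbeta^{(\bfw,\bfv)}$ to transport this presentation to the shifted big cell $wB^-B/B(v)$. By \eqref{eq:brhos-Bv}, the map $(\bsigma^\br_{\sB(v)})^*$ is already a Poisson algebra isomorphism, so the remaining work is (i) to identify the action data, and (ii) to check $T$-equivariance via the embedding $E_w$.

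For (i), Remark \ref{re:pi123-mix} combined with Theorem \ref{th:JwQ-poi} shows that $\pi_{1,2}$ is the mixed product of $\pi_1$ on $\O^{w_0w^{-1}}$ and $\pi_2$ on $\O^{(w,v)}$ defined by $\nu = -\sum_{q=1}^{d} w_0w^{-1}(H_q) \otimes H_q \in \h \otimes \h$. Pulling back through $\bbeta^{\bfw^0}$ and $\bbeta^{(\bfw,\bfv)}$, the bracket $\{\,,\,\}_{(\bfw^0|\bfw,\bfv)}$ on $\CC[z_1,\ldots,z_l]$ splits as $\{\,,\,\}_{\bfw^0}$ on $\CC[z_1,\ldots,z_k]$, $\{\,,\,\}_{(\bfw,\bfv)}$ on $\CC[z_{k+1},\ldots,z_l]$, plus a mixed term of the form \eqref{eq:mixed-mu} produced by $\nu$ and the $T$-weights of the $z_j$'s given in Lemma \ref{le:zj-Ou}. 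Thus the resulting Poisson algebra is, by construction, $\cE_{\bfw^0} \otimes_\nu \cE_{(\bfw,\bfv)}$.

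The key bookkeeping step is to verify that the action data $\cD_{(\bfw^0|\bfw,\bfv)}$ in \eqref{eq:D-Bv} matches the data produced by Lemma \ref{le:product}. For $j \in [1,k]$, one needs $\hat h_j = (h_j,\nu^\#(\chi_j))$. Using the $W$-equivariance of the $\#$ map, the identification $\chi_j^\# = h_j$, and $w_0^{-1}=w_0$, we compute
\[
\nu^\#(\chi_j) = -\sum_{q} \chi_j(w_0w^{-1}(H_q))H_q = -(ww_0\chi_j)^\# = -ww_0(h_j),
\]
in agreement with \eqref{eq:D-Bv}; similarly $(\nu^{21})^\#(\chi_j) = -w_0w^{-1}(h_j)$ handles $j \in [k+1,l]$, and $\hat h_j' = -\hat h_j$ because $h_j' = -h_j$ in both constituent extensions. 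Lemma \ref{le:product} then immediately yields that $\cE_{(\bfw^0|\bfw,\bfv)}$ is a symmetric $(T \times T)$-Poisson CGL extension.

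For (ii), $E_w$ is clearly an embedding of split tori, and it remains to check that $(\bsigma^\br_{\sB(v)})^*$ intertwines the left-translation $T$-action on $\CC[wB^-B/B(v)]$ with the $(T\times T)$-action on $\CC[z_1,\ldots,z_l]$ pulled back through $E_w$. By Lemma \ref{le:JwQ-T}, $J^w_{\sB(v)}$ converts left translation by $t$ into the action $(t^{u^{-1}}, t)$ on $\O^{w_0w^{-1}} \times \O^{(w,v)}$, where $u = w_0w^{-1}$ and hence $u^{-1}=ww_0$; combined with the Bott-Samelson $T$-weight formula of Lemma \ref{le:zj-Ou}, this exactly realizes the $T$-action on $\CC[z_1,\ldots,z_l]$ through the characters $\hat\chi_j \circ E_w$. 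The main obstacle is purely notational — keeping straight the $W$-action on $\h$, the $\#$-identification $\h^*\cong\h$, and the conventions for $\nu^\#$ versus $(\nu^{21})^\#$ — but once the mixed product framework is aligned with Theorem \ref{th:JwQ-poi}, no genuine new computation is required.
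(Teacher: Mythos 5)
Your proposal is correct and follows essentially the same route as the paper: identify $\pi_{1,2}$ as the mixed product of $\pi_1$ and $\pi_2$ defined by $\nu$ (Theorem \ref{th:JwQ-poi}), recognize $\cE_{(\bfw^0|\bfw,\bfv)}$ as $\cE_{\bfw^0}\otimes_\nu\cE_{(\bfw,\bfv)}$ via Lemma \ref{le:product}, and obtain $T$-equivariance through $E_w$ from Lemma \ref{le:JwQ-T} and the weight formula \eqref{eq:T-xi}. The only difference is that you carry out explicitly the action-data bookkeeping ($\nu^\#(\chi_j)=-ww_0(h_j)$, $(\nu^{21})^\#(\chi_j)=-w_0w^{-1}(h_j)$) that the paper dispatches with ``one checks directly'' before the theorem, and your computation is accurate.
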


\begin{proof}
Equip $\O^{w_0w^{-1}} \times \O^{(w, v)}$ with the product $(T \times T)$-action, where $T$ acts on $\O^{w_0w^{-1}} \subset F_1$ and
on $\O^{(w, v)}\subset F_2$ via \eqref{eq:T-Fr},
and equip
$\CC[\O^{w_0w^{-1}} \times \O^{(w, v)}]$ with the induced $(T \times T)$-action (see end of Notation \ref{nota:intro}).
Let $T \times T$ act on $\CC[z_1,  \ldots, z_l]$ such that for $j \in [1, l]$, $z_j$  has $(T \times T)$-weight $\hat{\chi}_j$ as given in \eqref{eq:D-Bv},
and let $T$ act on  $\CC[z_1,  \ldots, z_l]$ through the embedding
$E_w: T \to T \times T$.
By \eqref{eq:T-xi},
$\left(\bsigma^\br\right)^*$ in \eqref{eq:brhos} is an isomorphism of $(T \times T)$-Poisson algebras, and
by Lemma \ref{le:JwQ-T} on the $T$-equivariance of the isomorphism $J^w_{\sB(v)}$,
$\left(\bsigma^\br_{\sB(v)}\right)^*$ in \eqref{eq:brhos-Bv} is an isomorphism of $T$-Poisson algebras.
As
\[
\left(\bsigma^\br_{\sB(v)}\right)^*=  (\bsigma^\br)^* \circ \left((J^w_{\sB(v)})^{-1}\right)^*,
\]
the assertion on $\cP_{\sB(v)}^\br$ now follows Theorem \ref{th:JwQ-poi}.
\end{proof}

\begin{remark}\label{re:Ou-cut}
{\rm For each $\br = (\bfw^0, \bfw, \bfv) \in \cR(w_0w^{-1}) \times \cR(w) \times \cR(v)$, recall from
\eqref{eq:bracket-u} the Poisson bracket
$\{\,, \,\}_{(\bfw^0,\bfw, \bfv)}$ on $\CC[z_1, \ldots, z_l] \cong \CC[\O^{(w_0w^{-1}, w, v)}]$. Using the definition of the Poisson
structure $\pi_{1, 2}$ and Theorem \ref{th:Elek-Lu}, it is easy to see that
\[
\{z_i, \; z_j\}_{(\bfw^0|\bfw, \bfv)} = \{z_i, \; z_j\}_{(\bfw^0,\bfw, \bfv)}  \hs \mbox{if} \hs i,j \in [1, k] \hs \mbox{or} \hs i, j \in [k+1, l],
\]
but for $i \in [1, k]$ and $j \in [k+1, l] $, $\{z_i,  z_j\}_{(\bfw^0|\bfw, \bfv)}$ is the {\it negative} of the log-canonical term of
$\{z_i,  z_j\}_{(\bfw^0,\bfw, \bfv)}$. For this reason, we may call $\{\,, \,\}_{(\bfw^0|\bfw, \bfv)}$ the {\it log-canonical cut} of
$\{\,, \,\}_{(\bfw^0,\bfw, \bfv)}$ at the $k$'s place with coefficient $-1$.
\hfill $\diamond$
}
\end{remark}

\medskip
\noindent
{\bf The case of $N(v)$}.
We now turn to the $T$-Poisson algebra $\left(\CC[wB^-B/N(v)], \pi_{\sG/\sN(v)}\right)$.
Recall again that $\omega_1, \ldots, \omega_d$ is a listing of the set of all fundamental weights and that
$z_j =\omega_{j-l} \in \CC[T]$ for $j \in [l+1, l+d]$.

Let $\{\,, \, \}_{(\bfw^0|\bfw, \bfv)}^{\bowtie 0}$ be the Poisson bracket on
$\CC[z_1, \ldots, z_l, z_{l+1}^{\pm 1}, \ldots, z_{l+d}^{\pm 1}]$ such that
\[
(\bsigma^\br \times \sigma)^*:  \left(\CC[\O^{w_0w^{-1}} \!\!\times\!\! \O^{(w, v)} \!\!\times T],  \pi_{1, 2,0}\right)\rightarrow
\left(\CC[z_1, \ldots, z_l, z_{l+1}^{\pm 1}, \ldots, z_{l+d}^{\pm 1}],  \{\,, \, \}_{(\bfw^0|\bfw, \bfv)}^{\bowtie 0}\right)
\]
is a Poisson algebra isomorphism. We first express $\{\,, \, \}_{(\bfw^0|\bfw, \bfv)}^{\bowtie 0}$ in terms of
$\{\,, \, \}_{(\bfw^0|\bfw, \bfv)}$.

\begin{lemma}\label{le:bowtie} One has $\{z_i, \, z_j\}_{(\bfw^0|\bfw, \bfv)}^{\bowtie 0} = \{z_i, \, z_j\}_{(\bfw^0|\bfw, \bfv)}$ for all $i, j \in [1, l]$, and
\[
\{z_i, \, z_j\}_{(\bfw^0|\bfw, \bfv)}^{\bowtie 0} = \begin{cases}
(w_0\omega_{j-l})(h_i) z_iz_j, & \hs i \in [1, k], \, j \in [l+1, l+d],\\
-(w_0\omega_{j-l})(h_i)z_iz_j, & \hs i \in [k+1, l], \, j \in [l+1, l+d],\\
0, & \hs i, j \in [l+1, l+d].\end{cases}
\]
\end{lemma}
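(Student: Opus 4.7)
The plan is to expand the defining formula for $\pi_{1,2,0}$ and evaluate it on the relevant pairs of coordinate differentials. Observe first the regrouping
\[
\pi_{1,2,0} \;=\; (\pi_{1,2},\, 0) + \mu_{13} + \mu_{23},
\]
in which $(\pi_{1,2},0)$ denotes the extension of $\pi_{1,2}$ to $\O^{w_0w^{-1}}\times\O^{(w,v)}\times T$ that is trivial in the $T$-direction; this is valid because $\mu_{12}$ from \eqref{eq:mu-de-1} coincides, after identifying the first two factors, with the mixed term $\mu$ already built into $\pi_{1,2}$ via \eqref{eq:pi-12}. Pulling back via $\bsigma^\br\times\sigma$, the first summand contributes precisely the bracket $\{\,,\,\}_{(\bfw^0|\bfw,\bfv)}$ on the coordinates $z_1,\ldots,z_l$ and nothing involving $z_{l+1},\ldots,z_{l+d}$.

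The cases $i,j\in[1,l]$ and $i,j\in[l+1,l+d]$ now follow quickly. In the first, $z_i$ and $z_j$ do not depend on the $T$-factor, so $H_q^R(z_i)=H_q^R(z_j)=0$ and both $\mu_{13}(dz_i,dz_j)$ and $\mu_{23}(dz_i,dz_j)$ vanish, leaving exactly $\{z_i,z_j\}_{(\bfw^0|\bfw,\bfv)}$. In the second, $z_i = \omega_{i-l}$ and $z_j = \omega_{j-l}$ depend only on $T$, so $(\pi_{1,2},0)$ annihilates $dz_i\wedge dz_j$; and each of $\mu_{13},\mu_{23}$ pairs a first- or second-factor tangent direction (on which $z_i$ and $z_j$ are constant) against the $T$-direction, so these also vanish.

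For the mixed case $i\in[1,k]$, $j\in[l+1,l+d]$, only $\mu_{13}$ can contribute, since $(\pi_{1,2},0)$ kills $dz_j$ and $\mu_{23}$ kills $dz_i$. By Lemma~\ref{le:zj-Ou}, $z_i$ is a $T$-weight vector on $\O^{w_0w^{-1}}$ of weight $\chi_i$, so $\rho_1(H)(z_i)=\chi_i(H)\,z_i$; and a direct calculation on characters gives $H_q^R(z_j)=\omega_{j-l}(H_q)\,z_j$. Therefore
\[
\mu_{13}(dz_i,dz_j) \;=\; z_iz_j\sum_{q=1}^d \chi_i(w_0H_q)\,\omega_{j-l}(H_q) \;=\; (w_0\omega_{j-l})(h_i)\,z_iz_j,
\]
where the last equality follows from the orthonormality of $\{H_q\}$, the $W$-invariance of $\lara$, and the identification $h_i=\chi_i^\#$. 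The symmetric case $i\in[k+1,l]$, $j\in[l+1,l+d]$ is handled identically via $\mu_{23}$, using the $T$-weight of $z_i$ on $\O^{(w,v)}$ and invoking the minus sign in \eqref{eq:mu-de-3} to account for the minus sign in the stated formula.

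The entire argument is bookkeeping; the only subtlety is tracking the Weyl group action conventions on $\h$ and $\h^*$ consistently when reducing sums of the form $\sum_q \chi_i(w(H_q))\,\omega(H_q)$ to the pairing $(w\omega)(h_i)$, and no deeper obstacle arises.
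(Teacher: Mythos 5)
Your overall route is the paper's route: the paper's proof of this lemma is a single sentence (``directly follows from the definition of $\pi_{1,2,0}$''), and you have simply carried out that evaluation. The regrouping $\pi_{1,2,0}=(\pi_{1,2},0)+\mu_{13}+\mu_{23}$, the two vanishing cases $i,j\in[1,l]$ and $i,j\in[l+1,l+d]$, and the $\mu_{13}$ computation for $i\in[1,k]$ — including the identification $h_i=\chi_i^{\#}$ and the reduction $\sum_{q}\chi_i(w_0H_q)\,\omega_{j-l}(H_q)=(w_0\omega_{j-l})(h_i)$ — are all correct.

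The case you dismiss as ``handled identically'' is, however, precisely the one that does not come out as you claim. For $i\in[k+1,l]$ and $j\in[l+1,l+d]$ the only surviving term is $\mu_{23}$, whose non-$T$ leg is $\rho_2(w(H_q))$ — note $w$, not $w_0$, in \eqref{eq:mu-de-3}. Running your computation identically therefore gives
\[
\{z_i,\,z_j\}^{\bowtie 0}_{(\bfw^0|\bfw,\bfv)}
=-\sum_{q=1}^d\chi_i(w(H_q))\,\omega_{j-l}(H_q)\,z_iz_j
=-(w\omega_{j-l})(h_i)\,z_iz_j,
\]
which is not the printed coefficient $-(w_0\omega_{j-l})(h_i)$; the two differ in general (for $G=SL(3,\CC)$, $w=s_{\al_1}$, $v=e$, $i=3$, one has $(w\omega_{\al_1})(h_3)=-\tfrac{1}{2}\la\al_1,\al_1\ra\neq 0$ while $(w_0\omega_{\al_1})(h_3)=0$). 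So your final sentence papers over a real mismatch: either you would need $(w\omega_{j-l})(h_i)=(w_0\omega_{j-l})(h_i)$, which is false, or you should have observed — as the data $\tilde h_j=\bigl(-w_0(\omega_{j-l}^{\#}),\,w(\omega_{j-l}^{\#}),\,\omega_{j-l}^*\bigr)$ in \eqref{eq:D-Nv2} and Theorem \ref{th:CGL-Nv} confirm — that the $w_0$ in the second case of the statement is a misprint for $w$. Write the $\mu_{23}$ evaluation out explicitly and state the corrected coefficient; with that, your argument is complete and agrees with what the paper intends.
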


\begin{proof}
Directly follows from the definition of $\pi_{1, 2,0}$ given in Theorem \ref{th:JwQ-poi}.
\end{proof}

In particular, $\{\, , \, \}_{(\bfw^0|\bfw, \bfv)}^{\bowtie 0}$ restricts to a Poisson bracket on $\CC[z_1,  \ldots, z_l, z_{l+1}, \ldots, z_{l+d}]$.
By Theorem \ref{th:JwQ-poi}, one has an isomorphism of Poisson algebras
\[
(\bsigma^\br_{\sN(v)})^*:\,
\left(\CC[wB^-B/N(v)],  \pi_{\sG/\sN(v)}\right) \longrightarrow
\left(\!\CC[z_1, \ldots, z_l, z_{l+1}^{\pm 1}, \ldots, z_{l+d}^{\pm 1}], \, \{\, , \, \}_{(\bfw^0|\bfw, \bfv)}^{\bowtie 0}\!\right).
\]
Let $\{\omega_1^*, \ldots, \omega_d^*\}$ be the basis of $\h$ dual to $\{\omega_1, \ldots, \omega_d\}$. With
notation as in Notation \ref{nota:bra1} and in particular $\chi_j \in \h^*$ and $h_j \in \h$ for $j \in [1, l]$ given in \eqref{eq:chi-hjl1} and \eqref{eq:chi-hjl2},
let $\cD_{((\bfw^0|\bfw, \bfv), \sT)} = (\tilde{\chi}_1, \ldots, \tilde{\chi}_{l+d},  \tilde{h}_1, \ldots, \tilde{h}_{l+d},
\tilde{h}_1^\prime, \ldots, \tilde{h}_{l+d}^\prime)$, where
\begin{align}\label{eq:D-Nv1}
&\tilde{\chi}_j = \begin{cases}(\chi_j,  0, 0), \;\; \;j \in [1, k], \\
(0,  \chi_j, 0), \;\; \;j \in [k+1, l],\\
(0, 0, \omega_{j-l}), \;\;\;j \in [l+1, l+d],\end{cases}\\
\label{eq:D-Nv2}
&\tilde{h}_j = -\tilde{h}_j^\prime = \begin{cases} (h_j,\,  -ww_0(h_j), \,-w_0(h_j)),  \;\;\; j \in [1, k], \\
(-w_0w^{-1}(h_j), \, h_j,\, w^{-1}( h_j)), \;\;\; j \in [k+1, l],\\
 \left(-w_0 (\omega_{j-l}^\#), \, w(\omega_{j-l}^\#),\, \omega_{j-l}^*\right), \;\;\; j \in [l+1, l+d].\end{cases}
\end{align}

\begin{theorem}\label{th:CGL-Nv}
For any $w, v \in W$ and  $\br = (\bfw^0, \bfw, \bfv) \in \cR(w_0w^{-1}) \times \cR(w) \times \cR(v)$,
\[
\cE_{((\bfw^0|\bfw, \bfv), \sT)}:=\left(\left(\CC[z_1,  \ldots, z_l, z_{l+1}, \ldots, z_{l+d}],\;  \{\, , \, \}_{(\bfw^0|\bfw, \bfv)}^{\bowtie 0}\right), \; \cD_{((\bfw^0|\bfw, \bfv), \sT)}\right).
\]
is a symmetric $(T \times T \times T)$-Poisson CGL extension, and
\[
\cP^\br_{\sN(v)} := \left(\cE_{((\bfw^0|\bfw, \bfv), \sT)}^\prime, \, E_{w}^\prime, \;  (\bsigma^\br_{\sN(v)})^*\right)
\]
is a presentation of the $T$-Poisson algebra $\left(\CC[wB^-B/N(v)], \;\pi_{\sG/\sN(v)}\right)$ as a localized symmetric $(T \times T \times T)$-Poisson CGL
extension, where
\[
\cE_{((\bfw^0|\bfw, \bfv), \sT)}^\prime =\left(\left(\CC[z_1,  \ldots, z_l, z_{l+1}^{\pm 1}, \ldots, z_{l+d}^{\pm 1}],\;
\{\, , \, \}_{(\bfw^0|\bfw, \bfv)}^{\bowtie 0}\right), \; \cD_{((\bfw^0|\bfw, \bfv), \sT)}\right)
\]
and $E_{w}^\prime$ is the embedding $T \to T \times T \times T$ given by
$t \to (t^{ww_0}, t, t^w)$ for $t \in T$.
\end{theorem}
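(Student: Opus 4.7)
The plan is to obtain Theorem \ref{th:CGL-Nv} by combining the previous Theorem \ref{th:CGL-Bv} for $B(v)$ with the mixed product construction of Lemma \ref{le:product}, in the same spirit as in the proof of Theorem \ref{th:CGL-Bv}. Concretely, I would realize $\cE_{((\bfw^0|\bfw, \bfv), \sT)}$ as the mixed product of the symmetric $(T \times T)$-Poisson CGL extension $\cE_{(\bfw^0|\bfw, \bfv)}$ supplied by Theorem \ref{th:CGL-Bv} with the symmetric $T$-Poisson CGL extension of $(\CC[T], 0)$ described in Example \ref{ex:T-0} (with CGL generators $z_{l+j} = \omega_j$, weights $\omega_j$, and action data $(\omega_j^*, -\omega_j^*)$ for $j \in [1, d]$), taking as mixing element
\[
\nu^\prime \;=\; -\sum_{q=1}^d \bigl(w_0(H_q),\; -w(H_q)\bigr) \otimes H_q \;\in\; (\h \oplus \h)\otimes \h,
\]
which is exactly the element identified in Remark \ref{re:pi123-mix}.

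First, by the definition of the mixed product Poisson bracket in \eqref{eq:pinu} combined with the formulas for $\mu_{13}$ and $\mu_{23}$ in \eqref{eq:mu-de-2}--\eqref{eq:mu-de-3} and Lemma \ref{le:bowtie}, the Poisson bracket produced by Lemma \ref{le:product} on $\CC[z_1, \ldots, z_{l+d}]$ coincides with the restriction of $\{\,, \,\}_{(\bfw^0|\bfw, \bfv)}^{\bowtie 0}$. A short computation of $(\nu^\prime)^\#:(\h \oplus \h)^* \to \h$ and $((\nu^\prime)^{21})^\#: \h^* \to \h \oplus \h$, using the orthonormality of $\{H_q\}$, the identity $\chi(h) = \la \chi^\#, h\ra$, and the involutivity $w_0^{-1} = w_0$, then matches the action data produced by Lemma \ref{le:product} with $\cD_{((\bfw^0|\bfw, \bfv), \sT)}$ in \eqref{eq:D-Nv1}--\eqref{eq:D-Nv2}; for example, for $j \in [1, k]$ one gets the third component of $\tilde{h}_j$ as $(\nu^\prime)^\#(\chi_j, 0) = -w_0(h_j)$, and for $j \in [l+1, l+d]$ one gets the first two components of $\tilde{h}_{l+j}$ as $((\nu^\prime)^{21})^\#(\omega_{j-l}) = (-w_0(\omega_{j-l}^\#), \; w(\omega_{j-l}^\#))$. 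This establishes that $\cE_{((\bfw^0|\bfw, \bfv), \sT)}$ is a symmetric $(T \times T \times T)$-Poisson CGL extension.

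Next, by Lemma \ref{le:bowtie} each $z_{l+j}$, $j \in [1, d]$, has log-canonical bracket with every other generator and zero bracket with $z_{l+j^\prime}$ for $j^\prime \in [1, d]$, so $z_{l+1}, \ldots, z_{l+d}$ are $(T \times T \times T)$-homogeneous Poisson prime elements in $\CC[z_1, \ldots, z_{l+d}]$, and localizing at them gives the localized symmetric Poisson CGL extension $\cE_{((\bfw^0|\bfw, \bfv), \sT)}^\prime$ of Definition \ref{de:Poi-CGL}.

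Finally, for the presentation statement, I would write
\[
(\bsigma^\br_{\sN(v)})^* \;=\; (\bsigma^\br \times \sigma)^* \circ \bigl((J^w_{\sN(v)})^{-1}\bigr)^*,
\]
observe that $\bigl((J^w_{\sN(v)})^{-1}\bigr)^*$ is a Poisson isomorphism by Theorem \ref{th:JwQ-poi}, and combine this with the defining property of $\{\,, \,\}_{(\bfw^0|\bfw, \bfv)}^{\bowtie 0}$ to get a Poisson-algebra isomorphism. For the $T$-equivariance through $E_w^\prime: t \mapsto (t^{ww_0}, t, t^w)$, I would combine the $T$-equivariance of $J^w_{\sN(v)}$ from Lemma \ref{le:JwQ-T} with the $(T \times T)$-equivariance of $\bsigma^\br$ from the proof of Theorem \ref{th:CGL-Bv} and the $T$-equivariance of $\sigma$, and verify that the resulting $T$-weight of each $z_j$ matches the pullback of $\tilde{\chi}_j$ in \eqref{eq:D-Nv1} under $E_w^\prime$. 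The main technical step is the computation of $(\nu^\prime)^\#$ and $((\nu^\prime)^{21})^\#$ and its match with \eqref{eq:D-Nv2}, but this reduces to the direct calculation with the orthonormal basis $\{H_q\}$ indicated above and is the only place where one actually uses the specific form of $\nu^\prime$.
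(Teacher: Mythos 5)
Your proposal is correct and follows essentially the same route as the paper: the paper's proof also realizes $\cE_{((\bfw^0|\bfw, \bfv), \sT)}$ as the mixed product, defined by $\nu^\prime$, of $\cE_{(\bfw^0|\bfw, \bfv)}$ with the symmetric $T$-Poisson CGL extension of Example \ref{ex:T-0}, and then obtains the presentation from Theorem \ref{th:JwQ-poi} together with the $T$-equivariance from Lemma \ref{le:JwQ-T} and \eqref{eq:T-xi}. Your explicit computation of $(\nu^\prime)^\#$ and $((\nu^\prime)^{21})^\#$, and the remark that the $z_{l+j}$ are homogeneous Poisson prime elements justifying the localization, merely spell out what the paper leaves as ``one checks directly.''
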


\begin{proof} Let $T \times T\times T$ act on $\CC[z_1, \ldots, z_l, z_{l+1}^{\pm 1}, \ldots, z_{l+d}^{\pm 1}]$ such that $z_j$ has $(T \times T \times T)$-weight
$\tilde{\chi}_j$ as given in \eqref{eq:D-Nv1}.
By the $T$-equivariance of the isomorphism $J^w_{\sN(v)}$ given in Lemma \ref{le:JwQ-T} and by \eqref{eq:T-xi},
the isomorphism $(\bsigma^\br_{\sN(v)})^*$ is
$T$-equivariant, where $t \in T$ acts on $\CC[z_1, \ldots, z_l, z_{l+1}^{\pm 1}, \ldots, z_{l+d}^{\pm 1}]$
via the embedding $E_w^\prime: T \to T \times T\times T$.
One checks directly that
$\cE_{((\bfw^0|\bfw, \bfv), \sT)}$ is the  mixed product (see Definition \ref{de:pi-product}) defined by
\[
\nu^\prime = -\sum_{q=1}^d (w_0(H_q) , \, -w(H_q))\otimes H_q \in (\h\oplus \h) \otimes \h
\]
of the symmetric $(T \times T)$-Poisson CGL extension $\cE_{(\bfw^0|\bfw, \bfv)}$
 and the symmetric $T$-Poisson CGL extension $((\CC[z_{l+1}, \ldots, \omega_{l+d}], 0), \cD)$  in Example \ref{ex:T-0}.
The assertion on $\cP_{\sN(v)}^\br$ now follows from Theorem \ref{th:JwQ-poi}.
\end{proof}

\medskip
\begin{example}\label{ex:SL4-GB}
{\rm Let $G = SL(4, \CC)$ and choose again the Borel subgroups $B^-$ and $B$ to consist of lower triangular and upper triangular matrices respectively.
Let $\al_1, \al_2, \al_3$ be the standard listing of the simple roots and set $s_i = s_{\al_i}$, $i = 1, 2, 3$.
We consider two Bott-Samelson coordinate charts on $G/B$. The Poisson brackets in the coordinates  are computed using the computer program
in GAP language written by Blazs Elek.

Let first $w = e$ and $\br_1 = ((s_3,s_2,s_1,s_3,s_2,s_3), \emptyset, \emptyset)$.
 The corresponding Bott-Samelson parametrization
$\bsigma^{\br_1}:=\bsigma^{\br_1}_{\sG/\sB}: \CC^6 \to B^-B/B$ is given by
\[
\bsigma^{\br_1} (\xi_1, \ldots, \xi_6) =  \left(\begin{array}{cccc} 1 & 0 & 0 & 0\\ \xi_1 & 1 & 0 & 0\\
 \xi_1\xi_4-\xi_2 & \xi_4 & 1 & 0\\ \Delta_1 & \xi_4\xi_6-\xi_5 & \xi_6 & 1\end{array}\right). B \in B^-B/B,
\]
where $\Delta_1 = \xi_1\xi_4\xi_6-\xi_1\xi_5 -\xi_2\xi_6 + \xi_3$. In the
$(\xi_1, \ldots, \xi_6)$ coordinates, the Poisson structure $\pi_{\sG/\sB}$ is given by
\begin{align*}
&\{\xi_1,\xi_2\}=-\xi_1\xi_2,\;\;\;\;\; \{\xi_1,\xi_3\}=-\xi_1\xi_3,\;\;\;\;\; \{\xi_1,\xi_4\}=\xi_1\xi_4-2\xi_2,\\
& \{\xi_1,\xi_5\}=\xi_1\xi_5-2\xi_3,\;\;\;\;\; \{\xi_1,\xi_6\}=0,\;\;\;\;\; \{\xi_2,\xi_3\}=-\xi_2\xi_3,\\
&\{\xi_2,\xi_4\}=-\xi_2\xi_4,\;\;\;\;\; \{\xi_2,\xi_5\}=2\xi_3\xi_4,\;\;\;\;\; \{\xi_2,\xi_6\}=\xi_2\xi_6-2\xi_3,\\
&\{\xi_3,\xi_4\}=0,\;\;\;\;\; \{\xi_3,\xi_5\}=-\xi_3\xi_5,\;\;\;\;\; \{\xi_3,\xi_6\}=-\xi_3\xi_6,\\
&\{\xi_4,\xi_5\}=-\xi_4\xi_5,\;\;\;\;\; \{\xi_4,\xi_6\}=\xi_4\xi_6-2\xi_5,\;\;\;\;\; \{\xi_5,\xi_6\}=-2\xi_5\xi_6.
\end{align*}

 For another example,  take $w = w_0 =s_1s_2s_3s_1s_2s_1$ and $\br_2 = (\emptyset, (s_1, s_3, s_2, s_3, s_1, s_2), \emptyset)$.
The corresponding Bott-Samelson parametrization $\bsigma^{\br_2}:=\bsigma^{\br_2}_{\sG/\sB}: \CC^6 \to w_0B^-B/B = Bw_0B/B$
is given by
\[
\bsigma^{\br_2}(z_1, \ldots, z_6) =
\left(\begin{array}{cccc}
z_1z_5-z_3 & z_4-z_1z_6 & z_1 & -1\\ z_5 & -z_6 & 1 & 0 \\ z_2 & -1 & 0 & 0\\
1 & 0 & 0 &0\end{array}\right){}_\cdot B \in w_0B^-B/B.
\]
In the $(z_1, \ldots, z_6)$ coordinates, the Poisson structure $\pi_{\sG/\sB}$ is given by
\begin{align*}
&\{z_{1},z_{2}\} = 0, \hs
\{z_{1},z_{3}\} = -z_{1}z_{3}, \hs
\{z_{1},z_{4}\} = -z_{1}z_{4}, \\
&\{z_{1},z_{5}\} = z_{1}z_{5}-2z_{3},\hs
\{z_{1},z_{6}\} = z_{1}z_{6}-2z_{4}, \hs
\{z_{2},z_{3}\} = -z_{2}z_{3}, \\
&\{z_{2},z_{4}\} = z_{2}z_{4}-2z_{3}, \hs
\{z_{2},z_{5}\} = -z_{2}z_{5},\hs
\{z_{2},z_{6}\}= z_{2}z_{6}-2z_{5}, \\
&\{z_{3},z_{4}\} = -z_{3}z_{4}, \hs
\{z_{3},z_{5}\} = -z_{3}z_{5}, \hs
\{z_{3},z_{6}\} = -2z_{4}z_{5},\\
&\{z_{4},z_{5}\} = 0, \hs
\{z_{4},z_{6}\} = -z_{4}z_{6}, \hs
\{z_{5},z_{6}\} = -z_{5}z_{6}.\\
\end{align*}
The coordinates $(\xi_1, \ldots, \xi_6)$ on $B^-B/B$ and $(z_1, \ldots, z_6)$ on $\overline{w_0}B^-B/B$
are related by
\begin{align*}
&\xi_1 = \frac{z_5}{z_1z_5-z_3}, \hs \xi_2 = \frac{z_2z_6-z_5}{z_3z_6-z_4z_5}, \hs \xi_3 = \frac{1}{z_2z_4-z_3},\\
&\xi_4 = \frac{z_1z_2z_6-z_2z_4-z_1z_5+z_3}{z_3z_6-z_4z_5}, \hs
\xi_5 = \frac{z_1}{z_2z_4-z_3}, \hs \xi_6 = \frac{z_4}{z_2z_4-z_3},\\
&z_1 = \frac{\xi_5}{\xi_3}, \hs z_2 = \frac{\xi_1\xi_4-\xi_2}{ \xi_1\xi_4\xi_6-\xi_1\xi_5 -\xi_2\xi_6 + \xi_3}, \hs
z_3 = \frac{\xi_1\xi_5-\xi_3}{ \xi_3(\xi_1\xi_4\xi_6-\xi_1\xi_5 -\xi_2\xi_6 + \xi_3)}, \\
&z_4 = \frac{\xi_6}{\xi_3}, \hs z_5 = \frac{\xi_1}{ \xi_1\xi_4\xi_6-\xi_1\xi_5 -\xi_2\xi_6 + \xi_3}, \hs z_6 = \frac{\xi_2\xi_6-\xi_3}{\xi_2\xi_5-\xi_3\xi_4}.
\end{align*}
It is remarkable that such non-trivial changes of coordinates transform one symmetric Poisson CGL extension to another.
\hfill $\diamond$
}
\end{example}

\subsection{Proof of Theorem B and Theorem C}\label{ssec:proof-C}

\medskip
Theorem B stated in $\S$\ref{ssec:intro} is the combination of Theorem \ref{th:CGL-Bv} and
Theorem \ref{th:CGL-Bv}.

Theorem C stated in $\S$\ref{ssec:intro} follows immediately from Theorem B and Theorem \ref{th:hamil}.

\appendix
\addtocontents{toc}{\protect\setcounter{tocdepth}{1}}
\section{Proof of Theorem \ref{th:JwQ-poi} and $T$-leaves of $(G/Q, \pi_{\sG/\sQ})$}\label{sec:proof-JwQ-poi}
In this appendix, we first prove Theorem \ref{th:JwQ-poi} which says that for any $v, w \in W$,
\begin{align}\label{eq:JwBv-poi}
J^w_{\sB(v)}: \;\;& \; (wB^-B/B(v), \; \pi_{\sG/\sB(v)}) \longrightarrow \left(\O^{w_0w^{-1}} \times \O^{(w, v)}, \; \pi_{1, 2}\right),\\
\label{eq:JwNv-poi}
J^w_{\sN(v)}: \;\;& \; (wB^-B/N(v), \; \pi_{\sG/\sN(v)}) \longrightarrow \left(\O^{w_0w^{-1}} \times \O^{(w, v)} \times T, \;
\pi_{1, 2, 0}\right)
\end{align}
are Poisson isomorphisms, where $J^w_{\sB(v)}$ and $J^w_{\sN(v)}$ are given in \eqref{eq:Jw-Bv} and \eqref{eq:Jw-Nv}, and the Poisson structures
$\pi_{1, 2}$ and $\pi_{1, 2,0}$ are given in \eqref{eq:pi-12}  and \eqref{eq:pi-120}.
Here recall that for $Q = B(v)$ or $N(v)$, $\pi_{\sG/\sQ}$ is the projection to $G/Q$ of the standard  Poisson
structure $\pist$ on $G$.
In $\S$\ref{ssec:somefacts-pist}, we review some facts on the Poisson Lie group $(G, \pist)$.
In $\S$\ref{ssec:auxiliary-poi}, we prove certain maps involved in the definitions of $J^w_{\sB(v)}$ and $J^w_{\sN(v)}$ are Poisson,  and we use
these facts to prove Theorem \ref{th:JwQ-poi} in $\S$\ref{ssec:proof-JwQ-poi}. In $\S$\ref{ssec:D}, we determine the $T$-leaves of
$(G/Q, \pi_{\sG/\sQ})$.

\subsection{Some facts on the Poisson Lie group $(G, \pist)$}\label{ssec:somefacts-pist}
We first recall (see, for example, \cite{etingof-schiffmann, Lu-Mou:mixed}) that
given a Poisson Lie group $(L, \pi)$ and a Poisson manifold $(X, \piX)$, a left action
of $L$ on $X$ is said to be Poisson if the action map $(L, \pi) \times (X, \piX) \to (X, \piX)$ is Poisson.  A Poisson manifold $(X, \piX)$ is called a Poisson homogeneous space \cite{dr:homog} of a Poisson Lie group $(L, \pi)$ if
$(X, \piX)$ has a Poisson action by $(L, \pi)$ which is also transitive.

\begin{example}\label{ex:quotient}
 {\em
 If  $M$ is a closed coisotropic subgroup (see $\S$\ref{ssec:piGQ})
 of a Poisson Lie group $(L, \pi)$,
the action of $L$ on $L/M$ by left translation makes $(L/M, \pi_{\sL/\sM})$
a  Poisson
 homogeneous space of  $(L, \pi)$, where $\pi_{\sL/\sM}$ is the projection to $L/M$ of the Poisson structure $\pi$ on $L$.
 Note that
 as $\pi(e) = 0$, $\pi_{\sL/\sM}$ vanishes at $e_\cdot M \in L/M$.  In general,
 it is easy to see from the definitions that if $(X, \piX)$ is a Poisson homogeneous space of $(L, \pi)$ and if $x \in X$ is such that $\piX(x)=0$, then
 the stabilizer subgroup $L_x$ of $L$ at $x$ is a coisotropic subgroup of $(L, \pi)$, and the map
 \[
 (L/L_x, \pi_{\sL/{\sL}_x}) \longrightarrow (X, \piX)\;\; l \longmapsto lx, \hs l \in L,
 \]
 is a Poisson isomorphism.
 \hfill $\diamond$
 }
 \end{example}

Returning to the Poisson Lie group $(G, \pist)$, where $\pist$ is given in \eqref{eq:pist},  for any $v \in W$ and $Q = B(v)$ or $N(v)$,  the Poisson
manifold $(G/Q, \pi_{\sG/\sQ})$ is then a Poisson homogeneous space of $(G, \pist)$.

We now recall a Drinfeld double of the Poisson Lie group $(G, \pist)$:
associated to the standard quasi-triangular $r$-matrix $r_{\rm st} \in \g \otimes \g$ in \eqref{eq:rst},  one has the quasi-triangular
$r$-matrix $r_{\rm st}^{(2)}\in (\g \oplus \g)^{\otimes 2}$ for the direct product Lie algebra $\g \oplus \g$, given
\cite[$\S$6.1]{Lu-Mou:mixed} as
\begin{equation}\label{eq:rst-2}
r_{\rm st}^{(2)} = (r_{\rm st}, \; 0) -(0, \, r_{\rm st}^{21})
- \sum_{q=1}^d (H_q, 0) \wedge (0, H_q) -\sum_{\al\in \Delta^+} \la \al, \al \ra (e_\al, 0) \wedge (0, e_{-\al}),
\end{equation}
where $r_{\rm st}^{21} = \tau(r_{\rm st})$ with
$\tau(x \otimes y) = y \otimes x$ for $x, y \in \g$, and for any vector space $V$ and $u, v \in V$, we use the convention that
\[
u \wedge v = u\otimes v - v\otimes u \in V \otimes V.
\]
Let  $\Lambda_{\rm st}^{(2)} \in \wedge^2 (\g \oplus \g)$ be the skew-symmetric part of $r_{\rm st}^{(2)}$, and let
$\Pist$ be the multiplicative Poisson structure on the product group $G \times G$ given by
\begin{equation}\label{eq:de-PIst}
\Pist = \left(r_{\rm st}^{(2)}\right)^L - \left(r_{\rm st}^{(2)}\right)^R= \left(\Lambda_{\rm st}^{(2)}\right)^L- \left(\Lambda_{\rm st}^{(2)}\right)^R.
\end{equation}
Here, for $A \in  \g^{\otimes k}$, $A^L$ and $A^R$ respectively  denote the left and right invariant tensor fields on $G$ with value
$A$ at the identity of $G$.
It follows from the definitions that
\begin{equation}\label{eq:Pist-mu}
\Pist = (\pist, \, 0) + (0, \, \pist) + \mu_1 + \mu_2,
\end{equation}
where
\begin{align}\label{eq:mu-1}
\mu_1 &= \sum_{q=1}^d (H_q^R, \; 0) \wedge (0, \; H_q^R) +\sum_{\al\in \Delta^+} \la \al, \al\ra(e_\al^R, \, 0) \wedge (0, e_{-\al}^R),\\
\label{eq:mu-2}
\mu_2 & =-\sum_{q=1}^d (H_q^L, \; 0) \wedge (0, \; H_q^L) -\sum_{\al\in \Delta^+} \la \al, \al\ra(e_\al^L, \, 0) \wedge (0, e_{-\al}^L).
\end{align}
The Poisson Lie group $(G \times G, \, \Pist)$ is a Drinfeld double of the Poisson Lie group $(G, \pist)$ (see, for example,
\cite[paragraph after Example 6.11]{Lu-Mou:mixed}). In particular, the embedding
\[
(G, \, \pist) \hookrightarrow (G \times G, \; \Pist), \;\; g \longmapsto (g, g), \hs g \in G,
\]
is Poisson, and the projections $(G \times G, \Pist) \to (G, \pist)$ to both factors are Poisson.

It follows from \eqref{eq:Pist-mu}, \eqref{eq:mu-1} and \eqref{eq:mu-2} that $B \times B$ is a coisotropic subgroup of the Poisson Lie
group $(G \times G, \, \Pist)$. Let $\varpi$ be the projection
\[
\varpi: \;\; G \times G \longrightarrow (G \times G)/(B \times B) \cong G/B \times G/B,
\]
and let  $\Pi = \varpi(\Pist)$.  It follows from \eqref{eq:de-PIst} and  \eqref{eq:Pist-mu} that
\[
\Pi =- \varpi\left(\left(\Lambda_{\rm st}^{(2)}\right)^R\right) = (\pi_{\sG/\sB}, \, 0) + (0, \; \pi_{\sG/\sB})+ \varpi(\mu_1).
\]

Let $G_{\rm diag} = \{(g, g): g \in G\} \subset G \times G$ and consider the $G_{\rm diag}$-orbits in $G/B \times G/B$,
which are precisely of the form
\[
G_{\rm diag} (v) \; \stackrel{{\rm def}}{=}\;  G_{\rm diag}(e_\cdot B, \, \ov_\cdot B)\subset G/B \times G/B, \hs v \in W.
\]
Note that for $v \in W$, the stabilizer subgroup of $G \cong G_{\rm diag}$ at $(e_\cdot B, \ov_\cdot B) \in G/B \times G/B$ is precisely
$B(v) = B \cap \ov B \ov^{\, -1}$.

\begin{lemma}\label{le:Gv}
For each $v \in W$, $G_{\rm diag} (v)$ is a Poisson submanifold of $G/B \times G/B$ with respect to $\Pi$, and the $G$-equivariant map
\begin{equation}\label{eq:GBvG}
(G/B(v), \; \pi_{\sG/{\sB}(v)}) \longrightarrow (G_{\rm diag} (v), \, \Pi),\;\; g_\cdot B(v) \longmapsto (g_\cdot B, \; g\ov_\cdot B), \hs g \in G,
\end{equation}
is a Poisson isomorphism.
\end{lemma}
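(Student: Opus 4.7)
My proof plan for Lemma~\ref{le:Gv}:

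First, I would verify the set-theoretic statement. The stabilizer of $(e_\cdot B, \ov_\cdot B)$ in $G/B\times G/B$ under the diagonal $G$-action is $\{g\in G: g\in B,\ g\ov\in \ov B\} = B\cap \ov B\ov^{-1}=B(v)$, so the orbit map $g\mapsto (g_\cdot B, g\ov_\cdot B)$ descends to a $G$-equivariant bijection $\Psi: G/B(v)\to G_{\rm diag}(v)$, which is a diffeomorphism onto the locally closed orbit $G_{\rm diag}(v)$. The task is then to show $\Psi$ is Poisson, from which it follows automatically that $G_{\rm diag}(v)$ is a Poisson submanifold of $(G/B\times G/B,\Pi)$.

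For the Poisson-ness, I would use that both Poisson structures arise by projection. Factor the lift $\widetilde{\Psi}:G\to G/B\times G/B$, $g\mapsto (g_\cdot B, g\ov_\cdot B)$, as $\widetilde{\Psi}=\varpi\circ\phi$, where $\phi:G\to G\times G$, $g\mapsto (g,g\ov)$. Since $G\to G/B(v)$ is a Poisson submersion taking $\pist$ to $\pi_{\sG/\sB(v)}$ and $\varpi$ takes $\Pist$ to $\Pi$, it suffices to show that for every $g\in G$,
\[
\Pist(g,g\ov) - \phi_*(\pist(g)) \in \ker(d\varpi_{(g,g\ov)}),
\]
i.e., every wedge-term in the difference has at least one leg tangent to $gB\times g\ov B$.

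The crux is the explicit computation of this difference using $\pist=\Lambda_{\rm st}^L-\Lambda_{\rm st}^R$ and $\Pist=(\Lambda_{\rm st}^{(2)})^L-(\Lambda_{\rm st}^{(2)})^R$. A direct calculation with $d\phi_g(L_{g*}Y)=(L_{g*}Y, L_{g\ov*}\mathrm{Ad}_{\ov^{-1}}Y)$ and $d\phi_g(R_{g*}Y)=(R_{g*}Y,R_{g\ov*}Y)$ shows
\[
\phi_*(\Lambda_{\rm st}^L(g)) = \widetilde{\Lambda}^L(g,g\ov),\qquad \phi_*(\Lambda_{\rm st}^R(g)) = (\delta_*\Lambda_{\rm st})^R(g,g\ov),
\]
where $\widetilde{\Lambda}=\sum_{\al>0}\frac{\la\al,\al\ra}{2}(e_{-\al}, \mathrm{Ad}_{\ov^{-1}}e_{-\al}) \wedge (e_\al, \mathrm{Ad}_{\ov^{-1}}e_\al)$ and $\delta_*\Lambda_{\rm st}$ is the diagonal image of $\Lambda_{\rm st}$ in $\wedge^2(\g\oplus\g)$. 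Expanding $\widetilde{\Lambda}$ and $\delta_*\Lambda_{\rm st}$ via the splitting $(u,u')=(u,0)+(0,u')$ and comparing with the formula \eqref{eq:rst-2} for $\Lambda_{\rm st}^{(2)}$, one finds that the left-invariant discrepancy $\Lambda_{\rm st}^{(2)}-\widetilde{\Lambda}$ and the right-invariant discrepancy $\Lambda_{\rm st}^{(2)}-\delta_*\Lambda_{\rm st}$ consist of terms each having one leg either of the form $(X,0)$ with $X\in\b$, or $(0,Y)$ with $Y\in\b$, or $(0,\mathrm{Ad}_{\ov^{-1}}e_\al)$ for $\al>0$ with $v^{-1}(\al)>0$---in all cases killed upon applying $d\varpi$ at $(g,g\ov)$ in the appropriate invariant extension. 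The only potentially non-vanishing contributions, indexed by positive roots with $v^{-1}(\al)<0$, arise symmetrically on the left- and right-invariant sides and cancel after re-indexing $\beta=v^{-1}(\al)$. Once this holds, $\widetilde{\Psi}$ is Poisson, hence so is $\Psi$, and the image $G_{\rm diag}(v)$ inherits a Poisson submanifold structure agreeing with $\Pi|_{G_{\rm diag}(v)}$.

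The main obstacle is the last step: the book-keeping of root contributions under $\mathrm{Ad}_{\ov^{-1}}$ and the precise cancellation between left- and right-invariant discrepancies at a \emph{general} point $g\in G$ (at $g=e$ the cancellation is visible because the left-invariant and right-invariant contributions agree up to sign). The alternative of verifying Poisson-ness only at $(e_\cdot B,\ov_\cdot B)$ and extending by $G$-equivariance does not apply directly, since the diagonal $G$-action on $(G/B\times G/B,\Pi)$ is not a Poisson action of $(G,\pist)$ (the diagonal $\delta:G\to G\times G$ fails to be Poisson for $\Pist$), so the explicit bivector calculation seems unavoidable.
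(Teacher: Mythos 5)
Your plan can be made to work, but as written it has two problems, one of them fatal to your closing justification. The decisive error is your final claim that the diagonal $\delta\colon (G,\pist)\to (G\times G,\Pist)$ fails to be Poisson, so that "the explicit bivector calculation seems unavoidable." In fact $(G\times G,\Pist)$ is a Drinfeld double of $(G,\pist)$ and the diagonal embedding \emph{is} Poisson (the paper records this explicitly in $\S$A.1, citing Lu--Mouquin): one has
\[
\Lambda_{\rm st}^{(2)}-\delta_*\Lambda_{\rm st}\;=\;-\sum_i (x_i,0)\wedge (0,x^i),
\]
where $s=\sum_i x_i\otimes x^i$ is the $\Ad$-invariant symmetric part of $r_{\rm st}$, and this element is invariant under $\Ad_{(g,g)}$, so $\delta_*\pist(g)=\Pist(g,g)$ for all $g$. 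Consequently the diagonal left action of $(G,\pist)$ on $(G/B\times G/B,\Pi)$ is a Poisson action, and this is exactly what the paper's proof exploits: by \cite[Theorem 2.3]{LY:DQ} each $G_{\rm diag}$-orbit, in particular $G_{\rm diag}(v)$, is a Poisson submanifold of $(G/B\times G/B,\Pi)$ and hence a Poisson homogeneous space of $(G,\pist)$; one then checks the single pointwise fact $\Pi(e_\cdot B,\ov_\cdot B)=0$ (since $\pi_{\sG/\sB}(\ov_\cdot B)=0$ and $\mu_1$ projects to zero at that point) and invokes the general principle of Example \ref{ex:quotient} to conclude that $g_\cdot B(v)\mapsto (g_\cdot B, g\ov_\cdot B)$ is a Poisson isomorphism. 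So the route you dismissed is precisely the paper's (short) argument.

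Your alternative, computational route is legitimate in principle, but the proposal does not complete its crux, and you say so yourself: the "book-keeping ... at a general point $g$" is left as the main obstacle. The sketch as stated is not yet a proof, because the right-invariant discrepancy must be transported to the point $(g,g\ov)$ -- i.e.\ hit with $\Ad_{(g,g\ov)^{-1}}$ -- before one can test membership in $\ker d\varpi$, which in the left trivialization is $\b\oplus\b$; a term-by-term inspection of $\Lambda_{\rm st}^{(2)}-\delta_*\Lambda_{\rm st}$ at the identity does not settle this. The gap closes once you use the same invariance fact as above: since $\Lambda_{\rm st}^{(2)}-\delta_*\Lambda_{\rm st}=-\sum_i(x_i,0)\wedge(0,x^i)$ with $s$ $\Ad$-invariant, one gets $\Ad_{(g,g\ov)^{-1}}\bigl(\Lambda_{\rm st}^{(2)}-\delta_*\Lambda_{\rm st}\bigr)=-\sum_i (x_i,0)\wedge(0,\Ad_{\ov^{-1}}x^i)$, which is independent of $g$; adding the left-invariant discrepancy $\Lambda_{\rm st}^{(2)}-\widetilde{\Lambda}$, the cross terms in the $e_{-\al}$ direction cancel and every surviving term has one leg in $\b\oplus 0$ or $0\oplus\b$, so $d\varpi$ kills the difference at every point and $\widetilde{\Psi}$ is Poisson. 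With that supplied your argument is correct (and then the Poisson-submanifold statement does follow, as you say); without it, or without the paper's homogeneous-space argument, the proof is incomplete.
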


\begin{proof} Let $v \in W$.
By \cite[Theorem 2.3]{LY:DQ},  $G_{\rm diag} (v)$ is a Poisson submanifold of $G/B \times G/B$ with respect to $\Pi$, and,
as a $G \cong G_{\rm diag}$-orbit, $(G_{\rm diag} (v), \Pi)$ is a Poisson
homogeneous space of $(G, \pist)$.
It is also easy to see that $\pi_{\sG/\sB}(\ov_\cdot B) = 0$ and $\varpi(\mu_1)(e_\cdot B, \ov_\cdot B) = 0$. It follows that
$\Pi(e_\cdot B, \ov_\cdot B) = 0$. By Example \ref{ex:quotient}, the map in \eqref{eq:GBvG} is an isomorphism of Poisson homogeneous spaces of
$(G, \pist)$.
\end{proof}

Recall the Poisson manifold $(F_2, \pi_2)$ from $\S$\ref{ssec:GBcs}.
By \cite[Theorem 7.8]{Lu-Mou:mixed} (see also \cite[Proposition 5.6]{Lu-Mou:flags}), the map
\begin{equation}\label{eq:J2}
J_2: \;\; (F_2, \, \pi_2) \longrightarrow (G/B \times G/B, \; \Pi), \;\; [g_1, g_2]_{F_2} \longmapsto ({g_1}_\cdot B, \, {g_1g_2}_\cdot B), \;\; g_1, g_2 \in G,
\end{equation}
is a Poisson isomorphism, with
\begin{equation}\label{eq:J-2-inverse}
J_2^{-1}: \;\; (G/B \times G/B, \; \Pi) \longrightarrow (F_2, \pi_2), \;\; ({h_1}_\cdot B, \, {h_2}_\cdot B) \longmapsto [h_1, \, h_1^{-1}h_2]_{\sF_2}.
\end{equation}
Note that $J_2$ is $G$-equivariant if $F_2$ is given the $G$-action by
\begin{equation}\label{eq:G-F2}
g_\cdot [g_1, \, g_2]_{F_2} = [gg_1, \; g_2]_{F_2}, \hs g, g_1, g_2 \in G.
\end{equation}
By Lemma \ref{le:Gv}, we have the following interpretation of the Poisson homogeneous space $(G/B(v), \pi_{\sG/{\sB(v)}})$
as a Poisson submanifold in $(F_2, \pi_2)$.

\begin{lemma}\label{le:Gv-F2}
For any $v \in W$, the map
\[
E_v:\;\; (G/B(v), \; \pi_{\sG/{\sB(v)}}) \longrightarrow (F_2, \, \pi_2), \;\;\; g_\cdot B(v) \longmapsto [g, \, \ov]_{F_2},
\hs g \in G,
\]
is a Poisson embedding.
\end{lemma}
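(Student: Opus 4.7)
The plan is to reduce this statement to Lemma \ref{le:Gv} together with the fact from \eqref{eq:J2} that $J_2 \colon (F_2,\pi_2) \to (G/B \times G/B,\Pi)$ is a Poisson isomorphism. First I would verify that $E_v$ is well defined and injective. For well-definedness, suppose $q \in B(v) = B \cap \ov B \ov^{\,-1}$, so $\ov^{\,-1} q \ov \in B$; then
\[
[gq,\,\ov]_{\sF_2} = [g,\, q\ov]_{\sF_2} = [g,\, \ov(\ov^{\,-1}q\ov)]_{\sF_2} = [g,\, \ov]_{\sF_2},
\]
since right-multiplication by an element of $B$ in the second slot is absorbed by the $B^2$-action. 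For injectivity, if $[g_1,\ov]_{\sF_2} = [g_2,\ov]_{\sF_2}$, then $(g_2, \ov) = (g_1 b_1, b_1^{-1}\ov b_2)$ for some $b_1, b_2 \in B$, and the second equation forces $b_1 = \ov b_2 \ov^{\,-1} \in B(v)$, so $g_1$ and $g_2$ lie in the same $B(v)$-coset.

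Next I would observe that the composition $J_2 \circ E_v$ coincides with the map in \eqref{eq:GBvG}. Indeed, by the formula for $J_2$,
\[
(J_2 \circ E_v)(g_\cdot B(v)) = J_2([g,\ov]_{\sF_2}) = (g_\cdot B,\; g\ov_\cdot B),
\]
which is exactly the Poisson isomorphism $G/B(v) \xrightarrow{\sim} G_{\rm diag}(v) \hookrightarrow G/B\times G/B$ provided by Lemma \ref{le:Gv}.

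Since $J_2$ is a Poisson isomorphism from $(F_2,\pi_2)$ onto $(G/B\times G/B,\Pi)$, the composition $J_2 \circ E_v$ being a Poisson embedding immediately gives that $E_v = J_2^{-1} \circ (J_2 \circ E_v)$ is also a Poisson embedding into $(F_2,\pi_2)$, with image the Poisson submanifold $J_2^{-1}(G_{\rm diag}(v)) \subset F_2$. There is no real obstacle here once one has Lemma \ref{le:Gv} and the Poisson property of $J_2$; the only point requiring a moment's care is the well-definedness check, which uses the precise form $B(v) = B \cap \ov B \ov^{\,-1}$ to move the ambiguity from the first slot to the second slot of $F_2$.
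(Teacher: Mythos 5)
Your proof is correct and follows essentially the same route as the paper: Lemma \ref{le:Gv-F2} is obtained there precisely by composing the Poisson isomorphism of Lemma \ref{le:Gv} with $J_2^{-1}$ from \eqref{eq:J-2-inverse}, which sends $(g_\cdot B,\, g\ov_\cdot B)$ to $[g,\ov]_{\sF_2}$. Your additional well-definedness and injectivity checks are fine and merely make explicit what the paper leaves implicit.
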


\subsection{Some auxiliary Poisson morphisms}\label{ssec:auxiliary-poi}
Recall that for any $w \in W$, $B^-wB/B$ is a Poisson submanifold of $(G/B, \pi_{\sG/\sB})$
(see \cite[Theorem 1.5]{GY:GP}), and recall that $BwB$ is a Poisson submanifold of
$(G, \pist)$.  Recall also from $\S$\ref{ssec:ABS} that using the product decomposition $\ow N^- \ow^{\, 1} = N_wN_w^-=N_w^-N_w$, where again
\[
N_w = N \cap (\ow N^- \ow^{\, -1}) \hs \mbox{and} \hs N_w^- = N^- \cap (\ow N^- \ow^{\, -1}),
\]
every $x \in \ow N^- \ow^{\, -1}$ can be uniquely written as
\begin{equation}\label{eq:ggg}
x = x_+ x_- = x^\prime_- x^\prime_+ \hs \mbox{with} \hs x_+, x^\prime_+ \in N_w, \; x_-, x^\prime_- \in N_w^-.
\end{equation}

\begin{lemma}\label{le:Kw-Poi-1} Let $w \in W$.
With  $x \in \ow N^- \ow^{\, -1}$ decomposed as in \eqref{eq:ggg}, the maps
\begin{align*}
&p^w_1: \;\; (wB^-B, \, \pist) \longrightarrow (B^-wB/B, \, \pi_{\sG/\sB}), \;\; x \ow b \longmapsto x_-\ow_\cdot B, \hs
x \in \ow N^- \ow^{\, -1}, \; b \in B,\\
&p^w_2: \;\; (wB^-B, \, \pist) \longrightarrow (BwB, \pist), \;\; x\ow b \longmapsto x^\prime_+\ow b, \hs
x \in \ow N^- \ow^{\, -1}, \; b \in B,
\end{align*}
are both Poisson. With the zero Poisson structure on $T$, the map
\[
j_w \;\; (wB^-B, \, \pist) \longrightarrow (T, 0), \;\; x\ow nt \longmapsto t, \hs x \in \ow N^- \ow^{\, -1},  n \in N, \, t \in N,
\]
is also Poisson.
\end{lemma}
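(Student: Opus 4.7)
My plan is to treat the three maps separately, each via a different Poisson-geometric technique applied to the multiplicative Poisson structure $\pist$.

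For $j_w$, I will first identify its pullback on characters of $T$: given $g = \ow m n t$ with $m \in N^-$, $n \in N$, $t \in T$, one has $\ow^{-1} g = mnt$, whose $N^- T N$ Cartan component is $t$, so for each fundamental weight $\omega_\al$, $j_w^*(t^{\omega_\al})(g) = t^{\omega_\al} = \Delta^{\omega_\al}(\ow^{-1}g) = \Delta_{w\omega_\al, \omega_\al}(g)$. Since $T$ is generated by these characters, it suffices to show $\{\Delta_{w\omega_\al, \omega_\al}, \Delta_{w\omega_\beta, \omega_\beta}\}_\pist \equiv 0$ on $wB^-B$. These are flag minors — $T$-semi-invariant under both left and right translation, with matching weights $(w\omega_\al, \omega_\al)$ and $(w\omega_\beta, \omega_\beta)$. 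Either an explicit Kogan-Zelevinsky-type computation, or a direct check via $\pist = \Lambda^L - \Lambda^R$ using the fact that $\Lambda_{\rm st}$ pairs $e_\al$ with $e_{-\al}$ (while flag minors annihilate the $N$-direction on one side and $N^-$-direction on the other), yields the vanishing.

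For $p^w_2$, I will use the isomorphism $\mu_w\colon N_w^- \times BwB \to wB^-B$, $(m,h) \mapsto mh$, which is an isomorphism since $\ow N^- \ow^{-1} = N_w^- N_w$ with $BwB = N_w \ow B$. The map $p^w_2$ is then the projection onto the second factor. I will compute $\mu_w^*\pist$ using multiplicativity $\pist(mh) = (L_m)_*\pist(h) + (R_h)_*\pist(m)$ and show it has the form
\[
\mu_w^*\pist \;=\; \pi^{(1)} \;+\; (0,\, \pist|_{BwB}) \;+\; \sum_{\al \in \Delta^+} \lambda_\al\, X_\al \wedge Y_\al,
\]
where in each mixed wedge, $X_\al$ is an $N_w^-$-tangent vector field while $Y_\al$ is a vector field on $BwB$ generated by the left $T$-action (coming from pushing the root vectors $e_{\pm\al}$ through the $N_w^- \subset \ow N^- \ow^{-1}$ slot). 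Since functions in $p^w_2{}^* \CC[BwB]$ are annihilated by $N_w^-$-tangent vector fields, the mixed terms drop out of brackets of such functions, giving $p^w_2$ Poisson.

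For $p^w_1$, I will factor it as $wB^-B \xrightarrow{\,q\,} wB^-B/B \xrightarrow{\,r\,} B^-wB/B$, where $q$ is Poisson since $B$ is coisotropic in $(G, \pist)$. Under $wB^-B/B \cong N_w \times N_w^-$ via $x_+x_-\ow_\cdot B \mapsto (x_+, x_-)$ and $B^-wB/B \cong N_w^-$ via $m\ow_\cdot B \mapsto m$, the map $r$ is projection onto $N_w^-$. Showing $r$ is Poisson will be analogous to Step 2 — the induced $\pi_{\sG/\sB}$ on $wB^-B/B$ splits as a sum whose mixed terms involve $N_w$-tangent and $T$-action vector fields, so they drop out of brackets of $N_w^-$-only functions.

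The main obstacle will be Step 2: extracting the precise mixed terms of $\mu_w^*\pist$ and verifying that their $BwB$-components are exactly $T$-generated (rather than generic) vector fields. The cleanest route will probably be via the Drinfeld double $(G\times G, \Pist)$: pull back along the diagonal embedding and use that $B \times B^-$ is coisotropic in $(G\times G, \Pist)$ with the mixed $\mu_1, \mu_2$ of $\Pist$ (see \eqref{eq:mu-1}--\eqref{eq:mu-2}) providing exactly the root-paired wedges needed to track the off-diagonal contributions.
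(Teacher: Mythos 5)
Your proposal is essentially sound, and for $p^w_1$ and $p^w_2$ it runs on the same engine as the paper's proof: the paper also first pushes $p^w_1$ through the Poisson projection $G \to G/B$ and then uses the Poisson action of $(G,\pist)$ on $(G/B,\pi_{\sG/\sB})$ together with the facts that $B^-wB/B$ is a Poisson submanifold and that $N_w$ is coisotropic in $(G,\pist)$, so that the "mixed" contribution $(\tilde p^w_1\circ\sigma_{x_-\ow_\cdot B})(\pist(x_+))$ dies; and it treats $p^w_2$ by multiplicativity plus coisotropy of $N^-_w$. Your product-coordinate phrasing is the same argument, but note that your stronger structural claim --- that the $BwB$-side legs $Y_\al$ of the mixed terms are generated by the left $T$-action --- is more than you need and is the only part that would cost real work; all the argument requires is that one leg of each mixed term is tangent to the $N_w^-$ (resp. $N_w$) factor, which is exactly coisotropy of $N_w^\pm$ in $(G,\pist)$. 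You should name and justify that coisotropy (the paper refers to the argument of \cite[Lemma 10]{Lu-Mou:groupoid}) rather than the finer $T$-generated-leg statement; your Drinfeld-double detour via coisotropy of $B\times B^-$ is a viable but unnecessary way to get it.

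For $j_w$ you take a genuinely different route. The paper composes with $p^w_2$, observes that $N_w\ow N$ is coisotropic (product of the coisotropic $N_w\ow$ and $N$, by multiplicativity), and uses $\pist(g't)=(r_t)_*\pist(g')$ to conclude the pushforward to $T$ vanishes --- a soft argument with no minor computations. Your route, identifying $j_w^*(t^{\omega_\al})=\Delta_{w\omega_\al,\omega_\al}$ and reducing to Poisson-commutativity of these flag minors, also works: since both minors carry the same pair $(w,e)$, in the matrix-coefficient bracket each wedge $e_{-\gamma}\wedge e_\gamma$ of $\Lambda_{\rm st}$ annihilates the relevant extremal-weight tensors, so the bracket is identically zero on $G$ (equivalently, the Kogan--Zelevinsky log-canonical coefficient is $\la w\omega_\al,w\omega_\beta\ra-\la\omega_\al,\omega_\beta\ra=0$ and there are no correction terms). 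Be aware that your parenthetical heuristic about flag minors "annihilating the $N$-direction on one side and the $N^-$-direction on the other" is not quite the right mechanism --- what matters is that the left indices of the two minors involve the same $w$ --- so if you go this way, carry out the extremal-weight-vector computation (or cite the standard bracket formula) explicitly. Each approach buys something: yours is self-contained at the level of explicit functions, the paper's avoids any computation with minors and fits the coisotropy framework used throughout the appendix.
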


\begin{proof}
Since the projection $(G, \pist)$ to $(G/B, \pi_{\sG/\sB})$ is Poisson, to prove $p^w_1$ is Poisson, it suffices to show that
\[
\tilde{p}^w_1: \;\; (wB^-B/B, \; \pi_{\sG/\sB}) \longrightarrow (B^-wB/B, \, \pi_{\sG/\sB}), \;\; x \ow_\cdot B \longmapsto x_-\ow_\cdot B,
\; x \in \ow N^- \ow^{\, -1}
\]
is Poisson, where again $x \in \ow N^- \ow^{\, -1}$ is decomposed as in \eqref{eq:ggg}. For $g,h \in G$, let
\begin{align*}
&\sigma_{g}:\;\;\; G/B \longrightarrow G/B, \;\; \;g'_\cdot B  \longmapsto gg'_\cdot B, \hs g'\in G,\\
&\sigma_{h_\cdot \sB}:\;\; \;G \longrightarrow G/B, \;\;\; h'\longrightarrow h'h_\cdot B, \hs h' \in G.
\end{align*}
Since the left action of $(G, \pist)$ on $(G/B, \, \pi_{\sG/\sB})$  is Poisson, one has
\[
\tilde{p}^w_1(x \ow_\cdot B) = \tilde{p}^w_1(x_+x_- \ow_\cdot B)= (\widetilde{p}^w_1\circ \sigma_{x_+}) (\pi_{\sG/\sB}(x_-\ow_\cdot B))
+ (\tilde{p}^w_1 \circ \sigma_{x_-\ow_\cdot \sB}) (\pist(x_+)).
\]
Since $B^-wB/B$ is a Poisson submanifold of $(G/B, \pi_{\sG/\sB})$, one has
\[
(\widetilde{p}^w_1\circ \sigma_{x_+}) (\pi_{\sG/\sB}(x_-\ow_\cdot B)) =\pi_{\sG/\sB}(x_-\ow_\cdot B).
\]
Since $N_w$ is a coisotropic subgroup of $(G, \pist)$, one has $(\tilde{p}^w_1 \circ \sigma_{x_-\ow_\cdot \sB}) (\pist(x_+))=0$. Thus
\[
\tilde{p}^w_1(x \ow_\cdot B) = \pi_{\sG/\sB}(x_-\ow_\cdot B).
\]
Hence, $\tilde{p}^w_1$ is Poisson.
Similarly, using the
multiplicativity of $\pist$ and the fact that $N_w^-$ is a coisotropic subgroup of $(G, \pist)$ (which can be proved
using a similar argument as that in the proof of \cite[Lemma 10]{Lu-Mou:groupoid}), one shows that
$p_2^w$ is Poisson.

To show that $j_w$ is Poisson, using the fact that $p_2^w$ is Poisson, it suffices to show that
\[
j_w^\prime: (BwB, \pist) \longrightarrow (T, 0),\;\; j_w^\prime(g' t) = t, \hs g' \in N_w \ow N, \, t \in T,
\]
is Poisson. By \cite[Lemma 10]{Lu-Mou:groupoid},  both $N_w\ow$ and $N$ are coisotropic submanifolds of $(G, \pist)$. By the
multiplicativity of $\pist$, $N_w \ow N$ is also coisotropic with respect to $\pist$. Writing $g \in BwB$ uniquely as $g = g't$,
where $g' \in N_w \ow N$ and $t \in T$, one has
$\pist(g) = r_t \pist(g')$. Hence, $j_w^\prime(\pist(g)) = 0$.
\end{proof}

For $w \in W$ and $Q = B(v)$ or $N(v)$, recall from \eqref{eq:IwQ} the isomorphism
\begin{align*}
I^w_\sQ:& \;\; wB^-B/Q \longrightarrow (B^-wB/B) \times (BwB/Q) \subset (G/B) \times (G/Q), \\
\nonumber &\;\; \;\;\;\; x\ow b_\cdot Q \longmapsto (x_-\ow_\cdot B, \; x^\prime_+ \ow b_\cdot Q), \hs x \in \ow N^- \ow^{\, -1}, \, b \in B,
\end{align*}
where, again, $x \in \ow N^- \ow^{\, -1}$ is decomposed as in \eqref{eq:ggg}.
Note that $I^w_\sQ$ is $T$-equivariant, where $T$ acts on both $G/B$ and $G/Q$ by left translation and on $G/B \times G/Q$ diagonally.
For $\xi \in \h = {\rm Lie}(T)$, let $\rho_{\sG/\sQ}(\xi)$ be the vector field on $G/Q$ given by
\begin{equation}
\rho_{\sG/\sQ}(\xi)(g_\cdot Q) = \frac{d}{dt}|_{t = 0} \exp(t\xi)g_\cdot Q, \hs g \in G.
\end{equation}
 Let again  $\{H_q\}_{q=1}^d$  be
a basis of $\h$ that is orthonormal with respect to $\lara$.
Introduce the mixed product Poisson structure $\pi_{\sG/\sB} \bowtie_{\mu_0} \pi_{\sG/\sQ}$
(see $\S$\ref{ssec:mixed}) on $(G/B) \times (G/Q)$ by
\begin{align*}
&\pi_{\sG/\sB} \bowtie_{\mu_0} \pi_{\sG/\sQ} = (\pi_{\sG/\sB}, \; 0) + (0, \; \pi_{\sG/\sQ}) +\mu_0, \hs \mbox{where}\\
&\mu_0 = \sum_{q=1}^d (\rho_{\sG/\sB}(H_q), \; 0) \wedge (0, \;\rho_{\sG/\sQ}(H_q)).
\end{align*}
It follows from the definition of $\pi_{\sG/\sB} \bowtie_{\mu_0} \pi_{\sG/\sQ}$ that
$(B^-wB/B) \times (BwB/Q)$ is a Poisson submanifold of $((G/B) \times (G/Q), \, \pi_{\sG/\sB} \bowtie_{\mu_0} \pi_{\sG/\sQ})$.

\begin{lemma}\label{le:IwQ-pi}
For every $w \in W$, the
map
\begin{equation}\label{eq:IwQ-pi}
I_\sQ^w: \;\; (wB^-B/Q, \; \pi_{\sG/\sQ}) \longrightarrow ((B^-wB/B) \times (BwB/Q), \;   \pi_{\sG/\sB} \bowtie_{\mu_0} \pi_{\sG/\sQ})
\end{equation}
is a Poisson isomorphism.
\end{lemma}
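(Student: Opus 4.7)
My plan is to establish the Poisson property of $I^w_\sQ$ in three stages.

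First, I will reduce to the case $Q = B$. Since $\pi_{\sG/\sQ}$ is by definition the image of $\pi_{\sG/\sB}$ under the natural projection $\eta : G/B \to G/Q$, the restriction $\eta_w : BwB/B \to BwB/Q$ is Poisson; moreover, the mixed term $\mu_0$ is compatible with $\eta_w$ on the second factor, as it is built from the Cartan generators $H_q \in \h$ and the $T$-action on $G/Q$ is the projection of the one on $G/B$. The map $I^w_\sQ$ equals $(\mathrm{id} \times \eta_w) \circ I^w_\sB$, so once the lemma is established for $Q = B$ it propagates to general $Q$.

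Second, for $Q = B$ it suffices to show that the lift $\tilde I^w : wB^-B \to (B^-wB/B) \times (BwB/B)$, $g \mapsto (p_1^w(g),\, (p_2^w(g))_\cdot B)$, is Poisson. By Lemma \ref{le:Kw-Poi-1}, both $p_1^w$ and the composition of $p_2^w$ with the projection $BwB \to BwB/B$ are Poisson, so brackets of functions pulled back from the same factor are already matched. The only remaining content is the mixed-bracket identity
\[
\bigl\{(p_1^w)^* f,\; (\pi_\sB \circ p_2^w)^* g\bigr\}_{\pist} \;=\; \tilde I^{w,*}\bigl(\mu_0(df \wedge dg)\bigr)
\]
for $f \in \CC[B^-wB/B]$ and $g \in \CC[BwB/B]$, where $\pi_\sB : G \to G/B$ is the natural projection.

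To verify this identity I will use the global parametrization $\Psi : N_w^- \times BwB \to wB^-B$, $(m, h) \mapsto mh$, which is an isomorphism of varieties because $\ow N^- \ow^{\,-1} = N_w^- N_w$. Multiplicativity of $\pist$ gives $\pist(mh) = (l_m)_* \pist(h) + (r_h)_* \pist(m)$. Writing $h = y\ow b$ with $y \in N_w$, $b \in B$, and $my = x_+ x_-$ in the $N_w N_w^-$ decomposition, one has $p_2^w(mh) = h$ and $p_1^w(mh) = x_-\ow_\cdot B$; consequently $(\pi_\sB \circ p_2^w)^* g$ depends only on $h$, while $(p_1^w)^* f$ depends on $m$ and on the $y$-component of $h$ via the birational change $(m, y) \mapsto (x_+, x_-)$. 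Evaluating $\pist(mh)$ against these differentials and using $\Lambda_{\rm st} = \sum_{\al \in \Delta^+} \frac{\la\al,\al\ra}{2}\, e_{-\al}\wedge e_\al$, the root-vector contributions will cancel against those produced by the change of decomposition, and the surviving Cartan contributions from the symmetric part of $r_{\rm st}$ will match precisely $\mu_0(df \wedge dg) = \sum_q \rho_{\sG/\sB}(H_q)(f)\, \rho_{\sG/\sB}(H_q)(g)$.

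The main obstacle I anticipate is carrying out this final computation, since the change of decomposition $(m, y) \mapsto (x_+, x_-)$ defined by $my = x_+ x_-$ has no closed-form expression. A workable approach is to pass to infinitesimal form, differentiating $my = x_+ x_-$ to express $dx_+, dx_-$ in terms of $dm, dy$, and then tracking how the root-space decomposition of $\g$ along $\n_w^-, \b, \n_w$ filters the Poisson pairing. If the direct calculation becomes unwieldy, one can instead verify the identity on a generating set of regular functions, e.g.\ matrix coefficients or the Bott-Samelson coordinate functions constructed in $\S$\ref{sec:ABS}, and conclude by the Leibniz rule.
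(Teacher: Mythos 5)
Your proposal has two genuine gaps. The first is the reduction step, which is oriented the wrong way: since $Q=B(v)$ or $N(v)$ is a subgroup of $B$, the natural projection goes $G/Q\to G/B$, not $G/B\to G/Q$, so the map $\eta$ you invoke does not exist, and $\pi_{\sG/\sQ}$ is not defined as an image of $\pi_{\sG/\sB}$ (it is by definition the projection of $\pist$ from $G$). Consequently the case $Q=B$ does not propagate to general $Q$: for $Q=N(v)$, say, $\CC[BwB/N(v)]$ strictly contains the pullback of $\CC[BwB/B]$, and your mixed-bracket identity, formulated only for functions of the form $(\pi_\sB\circ p^w_2)^*g$ with $g\in\CC[BwB/B]$, gives no control over the remaining functions. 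The correct reduction, and the one the paper makes, goes the other way: because the projections $G\to G/Q$ are Poisson surjections intertwining $I^w_{\{e\}}$ with $I^w_\sQ$, it suffices to prove the statement for $Q=\{e\}$, i.e.\ for $I^w_{\{e\}}:wB^-B\to (B^-wB/B)\times(BwB)$; this is the hardest case, not the easiest one.

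The second gap is that the heart of the lemma, the mixed-bracket identity, is asserted rather than proved: you predict that the root-vector contributions ``will cancel'' under the change of decomposition $my=x_+x_-$ and that the Cartan terms ``will match'' $\mu_0$, while conceding that this computation has no closed form and may be unwieldy. That computation is exactly the content of the lemma, so leaving it as an anticipated cancellation (or deferring it to a check on generators) is not a proof. The paper sidesteps it entirely via the Drinfeld double: one embeds $(wB^-B,\pist)$ diagonally into $\left((wB^-B)\times(wB^-B),\,\Pist\right)$ and pushes $\Pist=(\pist,0)+(0,\pist)+\mu_1+\mu_2$ forward through the map $D_w(x\ow b_1, y\ow b_2)=(x_-\ow_\cdot B,\; y^\prime_+\ow b_2)$; Lemma \ref{le:Kw-Poi-1} handles the first two terms, $D_w$ kills $\mu_2$ because it annihilates the left-invariant vector fields $(x^L,0)$ with $x\in\b$, and each root term $(e_\al^R,0)\wedge(0,e_{-\al}^R)$ of $\mu_1$ dies because one of its two legs is annihilated according to the sign of $w^{-1}(\al)$, leaving exactly $D_w(\mu_1)=\mu_0$. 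To repair your argument you would need both to reverse the reduction (to $Q=\{e\}$, with $p^w_2$ landing in $BwB$ rather than $BwB/B$) and to actually carry out the decomposition computation; adopting the double-and-$D_w$ device is the cleanest way to accomplish the latter.
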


\begin{proof}
Since $\pi_{\sG/\sQ}$ is a quotient Poisson structure, $I^w_\sQ$ is Poisson as long as $I^w_{\{e\}}$
is Poisson.
Assume thus $Q = \{e\}$ and note that in this case $G/Q = G$ so $\pi_{\sG/\sQ} = \pist$.
Consider the open submanifold $(wB^-B) \times (wB^-B)$ of $G \times G$ and the map
\[
D_w: \; (wB^-B) \times (wB^-B) \longrightarrow (B^-wB/B) \times (BwB), \;\; (x\ow b_1, \; y\ow b_2) \longmapsto (x_-\ow_\cdot B, \; y^\prime_+\ow b_2 ),
\]
where $x, y \in \ow N^- \ow^{\, -1}$, $b_1, b_2\in B$, and $x = x_+x_-$ and $y = y^\prime_- y^\prime_+$ with
$x_+, y^\prime_+ \in N_w$ and $x_-, y^\prime_- \in N_w^-$. We  first prove that
\begin{equation}\label{eq:Dw-Poi}
D_w: \; \left((wB^-B) \times (wB^-B), \; \Pist \right) \longrightarrow \left( (B^-wB/B) \times (BwB), \; \pi_{\sG/\sB} \bowtie_{\mu_0} \pist\right)
\end{equation}
is Poisson.
Indeed, recall that $\Pist = (\pist, 0) + (0, \pist) + \mu_1 + \mu_2$, where $\mu_1$ and $\mu_2$
are respectively given in \eqref{eq:mu-1} and \eqref{eq:mu-2}. By Lemma \ref{le:Kw-Poi-1}, one has
\[
D_w((\pist, 0) + (0, \pist)) = (\pi_{\sG/\sB}, \; 0) + (0, \; \pist).
\]
By the definition of $D_w$,  $D_w(x^L, 0) = 0$ for all $x \in \b={\rm Lie}(B)$.  Thus  $D_w(\mu_2) = 0$.
 It also follows from the definition of $D_w$ that for $\alpha \in \Delta^+$, if
$w^{-1}(\al) \in -\Delta^+$, then $D_w((e_\alpha^R, 0)) = 0$, and if $w^{-1}(\alpha) \in \Delta^+$, then
$D_w((0, e_{-\al}^R)) = 0$. Consequently,
\[
D_w(\mu_1) = D_w\left(\sum_{q}^d  (H^R_q, 0) \wedge (0, H^R_q)\right) = \mu_0.
\]
This shows that $D_w$ in \eqref{eq:Dw-Poi} is Poisson.
As the diagonal embedding
$(wB^-B, \pist) \to\left((wB^-B) \times (wB^-B),  \Pist\right)$
is Poisson, we see that $I^w_{\{e\}}$ is Poisson.
\end{proof}

We now turn to the isomorphism $\zeta^w: B^-wB/B\to \O^{w_0w^{-1}}$, for $w \in W$, given in \eqref{eq:zeta-w}.
Recall also that $t^v = \ov^{\, -1} t \ov \in T$ for $t \in T$ and $v \in W$.

\begin{lemma}\label{le:NNw}
For any $w \in W$, the map
\[
\zeta^w: \;(B^-wB/B, \, \pi_{\sG/\sB}) \longrightarrow (\O^{w_0w^{-1}}, \, \pi_{\sG/\sB}),\;\; m\ow_\cdot B \longmapsto
(\overline{w_0w^{-1}}\, m^{-1})_\cdot B, \;\; m \in N_w^-,
\]
is a $T$-equivariant Poisson isomorphism, where $t \in T$ acts on $B^-wB/B$ by left translation by $t$ and on $\O^{w_0w^{-1}}$
by left translation by $t^{ww_0}$.
\end{lemma}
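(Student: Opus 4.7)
Setting $u = w_0 w^{-1}$, the length-additivity $l(u) + l(w) = l(w_0)$ gives $\overline{w_0} = \overline{u}\,\overline{w}$ in $N_\sG(T)$, and a root-system check yields $\overline{u}\,N_w^-\,\overline{u}^{-1} = N_u$: the roots of $N_w^-$ are $\{\beta \in -\Delta^+ : w^{-1}\beta \in -\Delta^+\}$, whose $u$-images are exactly the roots $\{\alpha \in \Delta^+ : u^{-1}\alpha \in -\Delta^+\}$ of $N_u$. Hence for $m \in N_w^-$, writing $n := \overline{u}\,m^{-1}\,\overline{u}^{-1} \in N_u$, one has
\[
\zeta^w(m\overline{w}_\cdot B) = \overline{u}\,m^{-1}_\cdot B = n\,\overline{u}_\cdot B \in N_u\overline{u}_\cdot B = \O^{w_0w^{-1}},
\]
so $\zeta^w$ is well-defined and corresponds, under the parametrizations $N_w^- \cong B^-wB/B$ and $N_u \cong \O^{w_0w^{-1}}$, to the biregular map $m \mapsto \overline{u}\,m^{-1}\,\overline{u}^{-1}$. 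For $T$-equivariance, given $t \in T$:
\[
\zeta^w(t \cdot m\overline{w}_\cdot B) = \overline{u}(tmt^{-1})^{-1}_\cdot B = (\overline{u}\,t\,\overline{u}^{-1})\,\overline{u}\,m^{-1}_\cdot B = t^{ww_0}\,\zeta^w(m\overline{w}_\cdot B),
\]
absorbing the trailing $t^{-1}$ into $B$ and using $\overline{u}\,t\,\overline{u}^{-1} = t^{u^{-1}} = t^{ww_0}$ (since $u^{-1} = ww_0^{-1} = ww_0$).

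For the Poisson property, which is the main content, my plan is to reduce to the group level. By Lemma \ref{le:Kw-Poi-1}, $p_1^w:(wB^-B,\pist) \to (B^-wB/B,\pi_{\sG/\sB})$ is Poisson and a surjective submersion, so it suffices to show that $\Psi := \zeta^w \circ p_1^w : (wB^-B, \pist) \to (\O^{w_0w^{-1}},\pi_{\sG/\sB})$ is Poisson. Writing $g = x\overline{w}b$ uniquely with $x = x_+ x_- \in N_wN_w^-$ and $b \in B$, one computes $\Psi(g) = \overline{u}\,x_-^{-1}_\cdot B$. The strategy is to exhibit $\Psi$ as $g \mapsto x_-\overline{w} \mapsto \overline{w_0}(x_-\overline{w})^{-1} = \overline{u}\,x_-^{-1} \mapsto \overline{u}\,x_-^{-1}_\cdot B$, where the middle step composes the anti-Poisson inversion on $(G,\pist)$ with the (non-Poisson) left translation by $\overline{w_0}$, and the last step is the Poisson projection $(BuB,\pist) \to (\O^{w_0w^{-1}},\pi_{\sG/\sB})$.

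The principal obstacle is that neither inversion (anti-Poisson) nor left translation by $\overline{w_0}$ is Poisson individually, yet their composition becomes Poisson after the $B$-quotient. The mechanism I plan to make precise is that, by multiplicativity of $\pist$, one has $\pist(\overline{w_0}g^{-1}) = L_{\overline{w_0}}\pist(g^{-1}) + R_{g^{-1}}\pist(\overline{w_0})$; the first summand carries the anti-Poisson sign of inversion, while the second is a correction. Restricted to $g = x_-\overline{w}$ and then projected into $\O^{w_0w^{-1}}$, the correction term $R_{g^{-1}}\pist(\overline{w_0})$ is killed by the $B$-quotient (it lies in the ``$B$-directions'' of the $r$-matrix), while the anti-Poisson sign from the first term is absorbed by the $\Ad_{\overline{w_0}}$-action on $\Lambda_{\rm st}$, which sends $e_\alpha \wedge e_{-\alpha}$ to a negative multiple of $e_{-\alpha^*} \wedge e_{\alpha^*}$ and thereby reverses the overall sign of the root-vector part of $\Lambda_{\rm st}$. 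A careful bookkeeping of these signs, combined with the identities relating $\overline{s_\alpha}, \tau, \iota$ and $\alpha^*$ collected in Notation \ref{nota:intro}, should yield the desired Poisson identity. A computationally heavier but more direct alternative is to pull back generalized minors under $\zeta^w$ and verify the matching of Poisson brackets cell-by-cell using their explicit brackets on $(G,\pist)$.
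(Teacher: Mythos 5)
Your preliminary steps are correct and essentially coincide with what the paper leaves to the reader: the root computation giving $\ou N_w^-\,\ou^{\,-1}=N_u$ for $u=w_0w^{-1}$, hence well-definedness and biregularity of $\zeta^w$, the $T$-equivariance computation, and the reduction (via the surjective Poisson map $p_1^w$ of Lemma \ref{le:Kw-Poi-1}) to a statement about $\Psi=\zeta^w\circ p_1^w$ on $wB^-B$. The gap is in the main content, the Poisson property itself, which you only outline, and the outline does not go through as written. First, the factorization of $\Psi$ through $g=x\ow b\mapsto x_-\ow$ is not a factorization through Poisson maps: $N_w^-\ow$ is not a Poisson submanifold of $(G,\pist)$, and the pushforward of $\pist(g)$ under this section map is not $\pist(x_-\ow)$; likewise $\Psi$ does not agree on $wB^-B$ with the composite of $\phi(g)=\overline{w_0}\,g^{-1}$ and the projection $G\to G/B$ (for $g=x\ow b$ that composite gives $\overline{w_0}\,b^{-1}\ow^{\,-1}x_-^{-1}{}_\cdot B$, and the $b$-dependence does not cancel), so your multiplicativity computation of $\pist(\overline{w_0}g^{-1})$ is not computing the quantity $d\Psi(\pist(g))$ that must be compared with $\pi_{\sG/\sB}$ at the image point. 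Second, the two bookkeeping claims are not correct as stated: the term $R_{g^{-1}}\pist(\overline{w_0})$ contains $-\Lambda_{\rm st}^R(\overline{w_0}g^{-1})$ and is not by itself tangent to the $B$-fibres, even at the relevant points $\ou x_-^{-1}$; and since inversion is anti-Poisson, the discrepancy between the value $\pist(\overline{w_0}g^{-1})$ and the pushforward $-L_{\overline{w_0}*}\pist(g^{-1})$ is $2L_{\overline{w_0}*}\pist(g^{-1})+R_{g^{-1}*}\pist(\overline{w_0})$, so there is no term-by-term ``killing of the correction and absorption of the sign''; whatever cancellation occurs happens only between these two contributions, after projection and restriction to the cell, and that is precisely what remains to be proved. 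The alternative you mention (matching brackets of pulled-back generalized minors) is likewise not carried out.

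For comparison, the paper avoids the group-level section entirely and factors $\zeta^w$ through quotient spaces: right translation by $\overline{w_0}$ induces a Poisson isomorphism $(G/B,\pi_{\sG/\sB})\to(G/B^-,\pi_{\sG/{\sB}^-})$, inversion induces a Poisson isomorphism $(G/B^-,\pi_{\sG/{\sB}^-})\to(B^-\backslash G,-\pi_{{\sB}^-\backslash\sG})$, and the crucial sign-absorbing step, quoted from \cite[Lemma 14]{Lu-Mou:groupoid}, is that $B^-\backslash B^-uB^-\to BuB/B$, ${B^-}_\cdot x\mapsto x_\cdot B$ for $x\in N\ou\cap\ou N^-$, is Poisson from $-\pi_{{\sB}^-\backslash\sG}$ to $\pi_{\sG/\sB}$; the composite of the three maps is exactly $\zeta^w$. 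Your instinct that the anti-Poisson sign of inversion is compensated by $\overline{w_0}$ is sound --- indeed $\Ad_{\overline{w_0}}\Lambda_{\rm st}=-\Lambda_{\rm st}$, so $g\mapsto\overline{w_0}\,g^{-1}\overline{w_0}^{\,-1}$ is a Poisson automorphism of $(G,\pist)$ --- but to complete your route you would still need to prove a precise quotient-level statement playing the role of the cited groupoid lemma (in effect reproving it), or simply invoke that lemma as the paper does.
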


\begin{proof}
Let $u = w_0w^{-1}$ so that $\ou = \overline{w_0} \,\ow^{\, -1}$. It follows from
$N_u = \ou \,N_w^{-} \,\ou^{\, -1}$ that
$\zeta^w$ is a well-defined $T$-equivariant isomorphism with the $T$-actions on both sides as described.
To show that $\zeta^w$ is a Poisson isomorphism, consider the two Poisson isomorphisms
\begin{align*}
&\zeta_1: \;\; \; (G/B, \, \pi_{\sG/\sB}) \longrightarrow (G/B^-, \, \pi_{\sG/{\sB}^-}), \;\;\; \zeta_1(g_\cdot B) =
g\overline{w_0}_\cdot B^-, \hs g \in G,\\
&\zeta_2: \;\;\; (G/B^-, \, \pi_{\sG/{\sB}^-}) \longrightarrow (B^-\backslash G, \, -\pi_{{\sB}^-\backslash \sG}), \;\;\;
\zeta_2(g_\cdot B^-) = {B^-}_\cdot g^{-1}, \hs g \in G,
\end{align*}
where $\pi_{\sG/{\sB}^-}$ and $\pi_{{\sB}^-\backslash \sG}$ respectively denote the Poisson structures on
$G/B^-$ and $B^-\backslash G$ that are projections of the Poisson structure $\pist$ on $G$. The restriction of composition
$\zeta_2 \circ \zeta_1$ to $(B^-wB/B, \pi_{\sG/\sB}) \subset (G/B, \pi_{\sG/\sB})$ gives the Poisson isomorphism
\begin{align*}
\zeta_3: &\;\; (B^-wB/B, \pi_{\sG/\sB})\longrightarrow (B^-\backslash B^-uB^-,
\, -\pi_{{\sB}^-\backslash \sG}),\\
&\;\;\zeta_3(m\ow_\cdot B) = {B^-}_\cdot \, \overline{w_0} (m\ow)^{-1} = {B^-}_\cdot (\ou \, m^{-1}), \hs m \in N_w^-.
\end{align*}
By \cite[Lemma 14]{Lu-Mou:groupoid}, the map
\[
\zeta_{\ou}: (B^-\backslash B^-uB^-, \, -\pi_{{\sB}^-\backslash \sG})\longrightarrow (BuB/B, \, \pi_{\sG/\sB}), \;\;
\zeta_{\ou}({B^-}_\cdot  x) = x_\cdot B, \;\;x \in N\ou \cap \ou N^-,
\]
is a Poisson isomorphism. As $\zeta^w = \zeta_{\ou} \circ \zeta_3$, we see that $\zeta^w$ is a Poisson isomorphism.
\end{proof}

Let $v, w \in W$. We now relate $(BwB/N(v), \pi_{\sG/\sN(v)})$ and $(BwB/B(v), \pi_{\sG/\sB(v)})$.  Define
\begin{equation}\label{eq:Kwv}
K^w_v: BwB/N(v) \longrightarrow (BwB/B(v)) \times T, \;\;  n_1\ow n_2 t_\cdot N(v) \longmapsto ({n_1\ow n_2}_\cdot B(v), \; t),
\end{equation}
where $n_1 \in N_w, \, n_2 \in N_v$, and $t \in T$. It is clear that $K^w_v$ is a $T$-equivariant isomorphism, where $t_1 \in T$ acts on
$BwB/N(v)$ by left translation by $t_1$ and on $(BwB/B(v)) \times T$ by
\[
t_1 \cdot (g_\cdot B(v), \, t) = (t_1 g_\cdot B(v), \; t_1^wt), \hs t_1, \, t \in T, \, g \in G.
\]
Define the bi-vector field $\pi_{\sG/\sB(v)} \bowtie_{\mu^\prime} 0$ on $(BwB/B(v)) \times T$ by
\begin{align}\label{eq:mu-prime}
&\pi_{\sG/\sB(v)} \bowtie_{\mu^\prime} 0 = (\pi_{\sG/\sB(v)}, \; 0) + \mu^\prime, \hs \mbox{where} \;\\
\nonumber
&\mu^\prime = -\sum_{q=1}^d (\rho_{\sG/\sB(v)}(w(H_q)), \; 0) \wedge (0, \; H_q^R) .
\end{align}

\begin{lemma}\label{le:Kwv-poi}
For any $w, v \in W$,
\[
K^w_v: \; (BwB/N(v), \; \pi_{\sG/\sN(v)}) \longrightarrow ((BwB/B(v)) \times T, \; \pi_{\sG/\sB(v)} \bowtie_{\mu^\prime} 0)
\]
is a Poisson isomorphism.
\end{lemma}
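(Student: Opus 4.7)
My approach is to reduce the Poisson property of $K^w_v$ to a single cross-bracket computation on $BwB$. First, I would verify that $K^w_v$ is a well-defined $T$-equivariant isomorphism of algebraic varieties, using the unique factorization of every element of $BwB$ as $n_1 \ow n_2 n_3 t$ with $n_1 \in N_w, n_2 \in N_v, n_3 \in N(v), t \in T$, together with the fact that $T$ normalizes $N(v)$ so that $t$ depends only on the coset $n_1 \ow n_2 n_3 t_\cdot N(v)$.

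Since $N(v)$ is a closed coisotropic subgroup of $(G, \pist)$, the projection $p : (BwB, \pist) \to (BwB/N(v), \pi_{\sG/\sN(v)})$ is a Poisson submersion, hence $K^w_v$ is Poisson if and only if the composition $\Psi := K^w_v \circ p$ is Poisson into the mixed-product target. Now $\Psi$ factors into two components: the quotient $q : BwB \to BwB/B(v)$, Poisson because $B(v)$ is coisotropic in $(G, \pist)$, and the restriction $j_w|_{BwB} : BwB \to T$, Poisson by Lemma \ref{le:Kw-Poi-1} (with $T$ given the zero Poisson structure). These two facts yield
\[
\{q^* f_1, q^* f_2\}_{\pist} = q^*(\{f_1, f_2\}_{\pi_{\sG/\sB(v)}})\hs \mbox{and}\hs \{j_w^* \phi_1, j_w^* \phi_2\}_{\pist} = 0,
\]
matching the two pure factor contributions to $\pi_{\sG/\sB(v)} \bowtie_{\mu'} 0$, so all that remains is to verify that the cross brackets produce exactly $\mu'$.

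By $\CC$-linearity and because $\CC[T]$ is spanned by characters, this reduces to showing
\[
\{q^* f, \, j_w^* \lambda\}_{\pist}(g) \; = \; -\,(\rho_{\sG/\sB(v)}(w(\lambda^\#))\,f)(g_\cdot B(v)) \cdot \lambda(j_w(g))
\]
for every $f \in \CC[BwB/B(v)]$, every $\lambda \in X(T)$, and every $g \in BwB$, where the target formula uses $H_q^R \lambda = \lambda(H_q)\lambda$ and orthonormality of $\{H_q\}$. The linchpin is the identity
\[
j_w^*(\omega_\al) \; = \; \Delta_{w\omega_\al,\, \omega_\al}|_{wB^-B}, \hs \al \in \Gamma,
\]
which follows from $\ow^{-1}(a\ow n t) = (\ow^{-1}a\ow)\, n\, t \in N^- N T$ and the formula $\Delta^{\omega_\al}|_{B^-B} = [\cdot]_0^{\omega_\al}$. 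This identifies $j_w^*$ of a character with a generalized minor on $G$, so that the bracket on $wB^-B$ can be computed directly from $\pist = \Lambda_{\rm st}^L - \Lambda_{\rm st}^R$ and the matrix-coefficient identities $e_\beta^L \Delta_{w\omega_\al,\omega_\al} = 0$ and $h^L \Delta_{w\omega_\al,\omega_\al} = \omega_\al(h)\Delta_{w\omega_\al,\omega_\al}$ for $\beta \in \Delta^+$ and $h \in \h$, together with their right-invariant analogues (with weight $-w\omega_\al$), plus the vanishing of $X^L(q^* f)$ for $X$ in the Lie algebra of $B(v)$. The general character case then follows by the Leibniz rule.

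The main obstacle is the bookkeeping in the final collapsing step: after the many terms that vanish by the above, the surviving contributions from $\Lambda_{\rm st}^L$ and $\Lambda_{\rm st}^R$ must combine to give exactly the claimed $w$-twisted formula, with the Weyl twist appearing only on the $BwB/B(v)$ side of $\mu'$ and the pure $H_q^R$ appearing on the $T$ side. A cleaner alternative that avoids most of the case analysis is to transport the problem to the Drinfeld double $(G \times G, \Pist)$ of $\S$\ref{ssec:somefacts-pist} via the diagonal embedding and read the mixed term off the multiplicativity of $\Pist$ together with the explicit formula \eqref{eq:Pist-mu}, after which $T$-equivariance of $K^w_v$ further cuts down the amount of computation needed.
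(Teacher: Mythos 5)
Your reduction is sound as far as it goes: $BwB$ is a Poisson submanifold of $(G,\pist)$, $q$ and (by Lemma \ref{le:Kw-Poi-1}, restricted to the Poisson submanifold $BwB\subset wB^-B$) $j_w$ are Poisson, so the lemma does come down to the single cross-bracket identity you state; that identity is the correct translation of $\mu'$ (via $H_q^R\lambda=\lambda(H_q)\lambda$ and $\sum_q\lambda(H_q)H_q=\lambda^\#$), and $j_w^*\omega_\al=\Delta_{w\omega_\al,\omega_\al}|_{wB^-B}$ is correct. The genuine gap is the cross-bracket computation itself, which you defer to ``bookkeeping'': the identities you list do not make it collapse. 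Since $\Lambda_{\rm st}$ has no Cartan component, the Cartan-type term $\mu'$ cannot come from Cartan terms of $\pist=\Lambda_{\rm st}^L-\Lambda_{\rm st}^R$; it must emerge from the root-vector terms. Your identities kill $e_\beta^L\Delta_{w\omega_\al,\omega_\al}$ for $\beta\in\Delta^+$ and $X^L(q^*f)$ for $X\in\Lie B(v)$, but the partner terms $(e_\beta^L q^*f)\,\bigl(e_{-\beta}^L\Delta_{w\omega_\al,\omega_\al}\bigr)$ for $\beta\in\Delta^+\setminus v(\Delta^+)$, and the right-invariant terms $(e_{\mp\gamma}^R q^*f)\bigl(e_{\pm\gamma}^R\Delta_{w\omega_\al,\omega_\al}\bigr)$ for half of the positive roots $\gamma$, survive and are nonzero in general. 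Already for $G=SL(2,\CC)$ and $w=v=s_\al$, taking $f=cd$ (right $T$-invariant on $BsB$, with $c,d$ matrix entries) and $j_w^*\omega_\al=c$, the bracket $\{cd,c\}$ is a nonzero multiple of $c^2d$ and is produced entirely by such root-vector terms; showing that in general these recombine into exactly the $w$-twisted Cartan expression is the mathematical content of the lemma, and you have not supplied the mechanism. There are also two smaller issues: $q^*f$ is defined only on $BwB$, which is not open in $G$ unless $w=w_0$, so the term-by-term manipulation needs either an extension of $q^*f$ or an intrinsic argument on the Poisson submanifold; and with the paper's conventions the weight of $\Delta_{w\omega_\al,\omega_\al}$ under $h^R$ is $+w\omega_\al(h)$, not $-w\omega_\al(h)$, a sign that would propagate into $\mu'$.

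Your closing ``cleaner alternative'' is indeed the paper's proof, but it needs more than the diagonal embedding and \eqref{eq:Pist-mu}. The paper first reduces to $v=w_0$ (using that $BwB\to BwB/N(v)$ and $BwB/T\to BwB/B(v)$ are Poisson), and then — this is the key device — factors $K^w_{w_0}$ through the map $K':BwB\times G\to (G/T)\times T$, $(g't,g)\mapsto (g_\cdot T,\,t)$, which reads the $T$-component off the \emph{first} copy and the $G/T$-component off the \emph{second}, $BwB\times G$ being a Poisson submanifold of $(G\times G,\Pist)$. Because the two target components then depend on different factors, the pure terms $(\pist,0)$, $(0,\pist)$ contribute only the expected factorwise brackets and all cross terms come from $\mu_1+\mu_2$, whose pushforward is computed from $K'(e_\al^L,0)=K'(e_\al^R,0)=0$, $K'(0,x^L)=0$, $K'(x^R,0)=(0,(w^{-1}(x))^R)$ and $K'(0,x^R)=(\rho_{\sG/\sT}(x),0)$, giving exactly $\mu'$. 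Without this separation of the two components across the two factors (or an equivalent recombination argument on the single copy of $G$), the proof is incomplete.
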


\begin{proof}
Since the projections
\begin{align*}
&(BwB,\; \pist) \longrightarrow (BwB/N(v),  \;\pi_{\sG/\sN(v)}),\\
&(BwB/T, \;\pi_{\sG/\sT}) \longrightarrow (BwB/B(v), \;\pi_{\sG/\sB(v)})
\end{align*}
are Poisson, it is enough to prove Lemma \ref{le:Kwv-poi} for $v = {w_0}$, i.e., to prove that
\[
K := K^w_{w_0}:\;(BwB, \,\pist) \longrightarrow ((BwB/T) \times T, \; \pi_{\sG/\sT} \bowtie_{\mu^\prime} 0)
\]
is Poisson. Consider again $G \times G$ with the multiplicative Poisson structure
$\Pist$ from \eqref{eq:de-PIst}. By \eqref{eq:Pist-mu},
$(BwB) \times G$ is a Poisson submanifold of $(G \times G, \Pist)$.
Define
\[
K^\prime: \; BwB \times G \longrightarrow (G/T) \times T, \;\;(g' t,  \; g) \longmapsto(g_\cdot T, \; t),\hs g' \in N\ow N, \, g \in G, \, t \in T.
\]
Since $K$ is the composition of $K^\prime$ with the diagonal Poisson embedding $(BwB, \pist)\hookrightarrow (BwB \times G, \Pist)$,
$K$ is Poisson once we prove that
\[
K^\prime: (BwB \times G, \;\Pist) \longrightarrow  ((G/T) \times T, \; (\pi_{\sG/\sT}, 0) +  \mu^\prime)
\]
 is Poisson.
Recall again that $\Pist = (\pist, 0) + (0, \pist) + \mu_1 + \mu_2$, with $\mu_1$ and $\mu_2$ respectively given in
 \eqref{eq:mu-1} and \eqref{eq:mu-2}.
By Lemma \ref{le:Kw-Poi-1}, $K^\prime(\pist, 0)=0$. By the definition of $\pi_{\sG/\sT}$,
$K^\prime(0, \pist) = (\pi_{\sG/\sT}, 0)$.  Thus
\[
K^\prime((\pist, 0) + (0, \pist)) = (\pi_{\sG/\sT}, 0).
\]
It follows from the definitions that
$K^\prime(e_\al^L, 0) = K^\prime(e_\al^R, 0) = 0$ for all $\al\in \Delta^+$ and that $K^\prime(0, x^L) = 0$ for $x \in \h$. Furthermore,  it again
follows from the definition of $K^\prime$ that
\[
K^\prime(x^R, 0) = (0, \, (w^{-1}(x))^R) \hs \mbox{and} \hs K^\prime(0, x^R) = (\rho_{\sG/\sT}(x), \, 0), \hs x \in \h.
\]
The fact that $K^\prime$ is Poisson now follows from
\begin{align*}
K^\prime(\mu_1+\mu_2) = K^\prime\left(\sum_{q=1}^d(H_q^R, \,0) \wedge (0, \, H_q^R)\right) \!
=\sum_{q=1}^d (0, \, (w^{-1}(H_q))^R) \wedge (\rho_{\sG/\sT}(H_q), \, 0) = \mu^\prime.
\end{align*}
\end{proof}

For $v, w \in W$, recall from \eqref{eq:BBQ-1} and \eqref{eq:BBQ-2} the isomorphisms
\begin{align*}
&\zeta^{(w, v)}_{\sB(v)}: \;\; BwB/B(v) \longrightarrow \O^{(w, v)}, \;\;\; {n_1\ow n_2}_\cdot B(v) \longmapsto [n_1\ow, \; n_2\ov]_{F_2},\\
&{\zeta}^{(w, v)}_{\sN(v)}: \;\; BwB/N(v) \longrightarrow \O^{(w, v)} \times T, \;\;\; n_1\ow n_2 t_\cdot N(v) \longmapsto ([n_1\ow, \; n_2\ov]_{F_2},\, t),
\end{align*}
where $n_1 \in N_w, \, n_2 \in N_v$ and $t \in T$.  Note that $\zeta^{(w, v)}_{\sB(v)}$ and ${\zeta}^{(w, v)}_{\sN(v)}$
are $T$-equivariant, where $t_1 \in T$ acts on
$BwB/B(v)$ and $BwB/N(v)$ by left translation by $t_1$, on $\O^{(w, v)} \subset F_2$ and on $\O^{(w, v)} \times T$ respectively by
\begin{align*}
&t_1 \cdot [n_1\ow, \, n_2\ov]_{\sF_2} = [t_1n_1\ow, \, n_2\ov]_{\sF_2}, \\
&t_1 \cdot ([n_1\ow, \, n_2\ov]_{\sF_2}, \, t) = ([t_1n_1\ow, \, n_2\ov]_{\sF_2}, \, t_1^wt),
\hs n_1 \in N_w, \, n_2 \in N_v, \, t \in T,
\end{align*}
see \eqref{eq:T-Fr}.
Equip
$\O^{(w, v)}$ with the standard Poisson structure $\pi_{2}$ given in \eqref{eq:pi-r},
and let $\mu^{\prime\prime}$ be the bi-vector field on $\O^{(w, v)} \times T$ by
\begin{equation}\label{eq:mu-prime-prime}
\mu^{\prime\prime} = -\sum_{q=1}^d (\rho_2(w(H_q)), \, 0) \wedge (0, \, H_q^R).
\end{equation}

\begin{lemma}\label{le:BwB-Owv}
The maps
\begin{align*}
&\zeta^{(w, v)}_{\sB(v)}: \;\; (BwB/B(v), \;\pi_{\sG/{\sB(v)}}) \longrightarrow (\O^{(w, v)}, \;\pi_{2}),\\
&{\zeta}^{(w, v)}_{\sN(v)}: \;\; (BwB/N(v), \;\pi_{\sG/{\sN(v)}}) \longrightarrow (\O^{(w, v)} \times T, \;(\pi_{2}, \, 0) + \mu^{\prime\prime})
\end{align*}
are Poisson isomorphisms.
\end{lemma}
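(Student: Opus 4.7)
The plan is to prove the two statements in sequence, deducing the second from the first together with Lemma \ref{le:Kwv-poi}, exploiting the evident factorization $\zeta^{(w,v)}_{\sN(v)} = (\zeta^{(w,v)}_{\sB(v)} \times \Id_T) \circ K^w_v$.

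For the first isomorphism, I would identify $\zeta^{(w,v)}_{\sB(v)}$ as the restriction of the Poisson embedding $E_v: G/B(v) \to F_2$ of Lemma \ref{le:Gv-F2}. Indeed, for $n_1 \in N_w$ and $n_2 \in N_v$, the $B$-equivalence on $F_2$ with $b = n_2 \in B$ gives
\[
E_v(n_1 \ow n_2 {}_\cdot B(v)) = [n_1 \ow n_2, \ov]_{\sF_2} = [n_1 \ow, \, n_2 \ov]_{\sF_2} = \zeta^{(w,v)}_{\sB(v)}(n_1 \ow n_2 {}_\cdot B(v)).
\]
Since $BwB$ is a Poisson submanifold of $(G, \pist)$ (see $\S$\ref{ssec:GBcs}) and projection to $G/B(v)$ is Poisson, $BwB/B(v)$ is a Poisson submanifold of $(G/B(v), \pi_{\sG/\sB(v)})$. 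Moreover $\O^{(w,v)}$ is a Poisson submanifold of $(F_2, \pi_2)$. A straightforward application of the Bruhat decomposition in each factor shows that every element of $\O^{(w,v)}$ can be represented as $[n_1 \ow, n_2 \ov]_{\sF_2}$ with $n_1 \in N_w$ and $n_2 \in N_v$, so $E_v$ maps $BwB/B(v)$ bijectively onto $\O^{(w,v)}$. Being the restriction of the Poisson embedding $E_v$ to Poisson submanifolds on both sides, $\zeta^{(w,v)}_{\sB(v)}$ is a Poisson isomorphism.

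For the second isomorphism, by Lemma \ref{le:Kwv-poi} the map $K^w_v$ intertwines $\pi_{\sG/\sN(v)}$ with $\pi_{\sG/\sB(v)} \bowtie_{\mu^\prime} 0$, with $\mu^\prime$ given by \eqref{eq:mu-prime}. Combining this with the first part of the lemma, it remains to check that $\zeta^{(w,v)}_{\sB(v)} \times \Id_T$ transports $\pi_{\sG/\sB(v)} \bowtie_{\mu^\prime} 0$ to $(\pi_{2}, 0) + \mu^{\prime\prime}$. The $(\pi_{\sG/\sB(v)}, 0)$ component is handled by the first part, so the only remaining item is the mixed term. For this I would verify the direct $T$-equivariance
\[
\zeta^{(w,v)}_{\sB(v)}(t_1 n_1 \ow n_2 {}_\cdot B(v)) = [t_1 n_1 \ow, \, n_2 \ov]_{\sF_2} = t_1 \cdot [n_1 \ow, \, n_2 \ov]_{\sF_2},
\]
where the $T$-action on $F_2$ is as in \eqref{eq:T-Fr}. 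Differentiating, $\zeta^{(w,v)}_{\sB(v)}$ pushes $\rho_{\sG/\sB(v)}(x)$ forward to $\rho_2(x)$ for every $x \in \h$; comparing \eqref{eq:mu-prime} with \eqref{eq:mu-prime-prime} then gives $(\zeta^{(w,v)}_{\sB(v)} \times \Id_T)(\mu^\prime) = \mu^{\prime\prime}$, as desired.

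I do not expect any serious obstacle here, as the entire argument is a bookkeeping exercise, given that the substantive Poisson-geometric content is already packaged in Lemma \ref{le:Gv-F2} and Lemma \ref{le:Kwv-poi}. The only subtlety worth paying attention to is the manipulation of the $B$-equivalence relation on $F_2$ to identify $\zeta^{(w,v)}_{\sB(v)}$ with a restriction of $E_v$, and the matching of the two mixed terms $\mu^\prime$ and $\mu^{\prime\prime}$ via $T$-equivariance of $\zeta^{(w,v)}_{\sB(v)}$.
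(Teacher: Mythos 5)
Your proposal is correct and follows essentially the same route as the paper: the first isomorphism is obtained by restricting the Poisson embedding $E_v$ of Lemma \ref{le:Gv-F2} to the Poisson submanifold $BwB/B(v)$, whose image under $E_v$ is exactly $\O^{(w,v)}$, and the second follows from the factorization ${\zeta}^{(w,v)}_{\sN(v)} = (\zeta^{(w,v)}_{\sB(v)} \times \Id) \circ K^w_v$ together with Lemma \ref{le:Kwv-poi}. Your explicit matching of the mixed terms $\mu^\prime$ and $\mu^{\prime\prime}$ via the $T$-equivariance of $\zeta^{(w,v)}_{\sB(v)}$ is precisely the detail the paper leaves implicit in the phrase ``follows directly''.
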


\begin{proof}
As $BwB/B(v)$ is a Poisson submanifold of $(G/B(v), \pi_{\sG/{\sB(v)}})$,
by Lemma \ref{le:Gv-F2}, one has the Poisson embedding
\[
(BwB/B(v), \; \pi_{\sG/{\sB(v)}}) \longrightarrow (F_2, \, \pi_2), \;\;n\ow b_\cdot B(v)\longmapsto [n\ow b, \, \ov]_{\sF_2} =
[n\ow, \, b\ov]_{\sF_2},
\]
where  $n \in N_w$ and $b \in B$. Since the image of the above embedding is precisely $\O^{(w, v)}$,
we see that the $\zeta^{(w, v)}_{\sB(v)}$ is a Poisson isomorphism. The fact that
${\zeta}^{(w, v)}_{\sN(v)} = (\zeta^{(w, v)}_{\sB(v)} \times {\rm Id}) \circ K^w_v$ is a Poisson isomorphism follows directly from Lemma \ref{le:Kwv-poi}.
\end{proof}

\subsection{Proof of Theorem \ref{th:JwQ-poi}}\label{ssec:proof-JwQ-poi} Let again $w, v \in W$, and let the notation be as in the statement of Theorem
\ref{th:JwQ-poi}.  First let $Q = B(v)$. By
Lemma \ref{le:IwQ-pi}, Lemma \ref{le:NNw}, and Lemma \ref{le:BwB-Owv}, one has the Poisson isomorphisms
\begin{align}\label{eq:comp-1}
(wB^-B/B(v), \; \pi_{\sG/\sB(v)}) &\; \;\stackrel{I^w_{\sB(v)}}{\!\!\!-\!\!\!\longrightarrow}\;
\left((B^-wB/B) \times (BwB/B(v)), \; \pi_{\sG/\sB} \bowtie_{\mu_0} \pi_{\sG/\sB(v)}\right)  \\
\nonumber & \stackrel{\zeta^w \times \zeta^{(w, v)}_{\sB(v)}}{-\!\!\!-\!\!\!-\!\!\!-\!\!\!-\!\!\!\longrightarrow}\;
\left(\O^{w_0w^{-1}} \times \O^{(w, v)}, \; \pi_{1, 2}\right).
\end{align}
Since $J^w_{\sB(v)} = (\zeta^w \times \zeta^{(w, v)}_{\sB(v)}) \circ I^w_{\sB(v)}$, one sees that
\[
J^w_{\sB(v)}: \;\;  (wB^-B/B(v), \; \pi_{\sG/\sB(v)}) \longrightarrow
\left(\O^{w_0w^{-1}} \times \O^{(w, v)}, \; \pi_{1, 2}\right)
\]
is Poisson.
To see that $J^w_{\sN(v)}$ is Poisson, note that
\[
J^w_{\sN(v)} = \left({\rm Id} \times {\zeta}^{(w, v)}_{\sN(v)}\right) \circ
 (\zeta^w \times {\rm Id}) \circ I^w_{\sN(v)} .
\]
By
Lemma \ref{le:IwQ-pi} and  Lemma \ref{le:NNw},
\[
\left((\zeta^w \times {\rm Id})\circ I^w_{\sN(v)}\right)(\pi_{\sG/\sN(v)}) = (\pi_1, \, 0) + (0, \, \pi_{\sG/\sN(v)}) +\mu_0^\prime,
\]
where $\mu_0^\prime =  \sum_{q=1}^d (\rho_1(w_0w^{-1}(H_q)), \, 0)
\wedge (0, \, \rho_{\sG/\sN(v)}(H_q))$.
By Lemma \ref{le:BwB-Owv} and the $T$-equivariance of ${\zeta}^{(w, v)}_{\sN(v)}$, one has
\begin{align*}
&({\rm Id} \times {\zeta}^{(w, v)}_{\sN(v)})((\pi_1, \, 0) + (0, \, \pi_{\sG/\sN(v)}) +\mu_0^\prime)  = (\pi_1, 0, 0) +(0, \, \pi_2, \, 0) + (0, \, \mu^{\prime\prime}) \\
& \;\;\hs + \sum_{q=1}^d \left((\rho_1(w_0w^{-1}(H_q)), \, 0, \, 0) \wedge ((0, \, \rho_2(H_q), \, 0) + (0, \, 0, \, (w^{-1}(H_q))^R))\right)\\
& \hs =(\pi_1, 0, 0) + (0, \, \pi_2, \, 0)  + \mu_{23} + \mu_{12} + \mu_{13} = \pi_{1, 2, 0}.
\end{align*}
It follows that $J^w_{\sN(v)}: (wB^-B/N(v), \, \pi_{\sG/\sN(v)}) \to (\O^{w_0w^{-1}} \times  \O^{(w, v)} \times T, \, \pi_{1,2,0})$ is Poisson.
This finishes the proof of Theorem \ref{th:JwQ-poi}.

\subsection{$T$-leaves of $(G/Q, \pi_{\sG/\sQ})$}\label{ssec:D}
For any Poisson variety $(X, \piX)$ with an action by a complex torus $\TT$ via Poisson isomorphisms, define (see \cite{Lu-Mou:flags})
the $\TT$-leaf of $\piX$ through $x \in X$ to be
$\TT\Sigma_x = \bigcup_{t \in \TT} t\Sigma_x$, where $\Sigma_x$ is the symplectic leaf of $\piX$ through $x$.
For the $T$-Poisson variety $(G/Q, \pi_{\sG/\sQ})$, where $Q = B(v)$ or $N(v)$ for $v \in W$ and $T$ acts on $G/Q$ by left translation, we now determine the
$T$-leaves of $(G/Q, \pi_{\sG/\sQ})$.

 Let $\varpi_{\sG/\sQ}: G \to G/Q$
be the projection. Recall the monoidal product $\ast$ on $W$ determined by $w \ast s_\al = ws_\al$ if $l(ws_\al) =l (w) + 1$ and
$w \ast s_\al = w$ if $l(ws_\al) = l(w) - 1$.

\begin{theorem}\label{th:D}
For $v \in W$ and $Q = B(v)$ or $N(v)$,  the $T$-leaves of $(G/Q, \pi_{\sG/\sQ})$ are precisely the subvarieties
\[
L_{w, y}^{\sG/\sQ} \, \stackrel{\rm def}{=} \,\varpi_{\sG/\sQ} \left((BwB) \cap B^-yB \ov^{\, -1}\right) \subset G/Q,
\]
where $y, w \in W$ and $y \leq w \ast v$.
\end{theorem}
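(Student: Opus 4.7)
The plan is to reduce Theorem \ref{th:D} to the known description of $T$-leaves of generalized Bruhat cells in $(F_2, \pi_2)$. I will first treat the case $Q = B(v)$ via the Poisson embedding $E_v : (G/B(v), \pi_{\sG/\sB(v)}) \hookrightarrow (F_2, \pi_2)$ of Lemma \ref{le:Gv-F2}. Observe that for $g = n_1 \ow n_2 \in N_w \ow N_v \subset BwB$, the equivalence $[g_1 b_1, g_2]_{\sF_2} = [g_1, b_1 g_2]_{\sF_2}$ for $b_1 \in B$ gives
\[
E_v(g_\cdot B(v)) \;=\; [n_1 \ow n_2,\, \ov]_{\sF_2} \;=\; [n_1 \ow,\, n_2 \ov]_{\sF_2}\;\in\; \O^{(w, v)},
\]
so the restriction of $E_v$ to $BwB/B(v)$ coincides with the Poisson isomorphism $\zeta^{(w, v)}_{\sB(v)}: BwB/B(v) \to \O^{(w, v)}$ of Lemma \ref{le:BwB-Owv}. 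Consequently $E_v(G/B(v)) = \bigsqcup_{w \in W} \O^{(w, v)}$, and it suffices to determine the $T$-leaves of $(\O^{(w, v)}, \pi_2)$ for each $w \in W$.

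For this I will invoke the $T$-leaf description of generalized Bruhat cells from \cite{Lu-Mou:flags}: for any $\bfu = (u_1, u_2) \in W^2$, the $T$-leaves of $(\O^\bfu, \pi_2)$ are precisely
\[
\O^{\bfu}_y \;=\;\left\{[g_1, g_2]_{\sF_2} \in \O^{\bfu} \,:\, g_1 g_2 \in B^- y B\right\},\hs y \in W,\; y \leq u_1 \ast u_2.
\]
Specializing to $\bfu = (w, v)$ and pulling back under $E_v$, since $E_v(g_\cdot B(v)) = [g, \ov]_{\sF_2}$ has product $g \ov$, the condition $g \ov \in B^- y B$ rewrites as $g \in B^- y B \ov^{\,-1}$. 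Combined with $g \in BwB$, this gives
\[
E_v^{-1}\!\left(\O^{(w, v)}_y\right) \;=\; \varpi_{\sG/\sQ}\!\left(BwB \,\cap\, B^- y B \ov^{\,-1}\right) \;=\; L^{\sG/\sB(v)}_{w, y}
\]
for each $y \leq w \ast v$, which proves the theorem when $Q = B(v)$. Note that the $T$-leaves are disjoint for distinct pairs $(w, y)$ because different $w$ correspond to disjoint Bruhat strata $\varpi_{\sG/\sQ}(BwB)$.

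For $Q = N(v)$, I will exploit the $T$-equivariant Poisson surjection $\varpi: G/N(v) \to G/B(v)$ arising from $N(v) \subset B(v)$, whose fibers are right $T$-torsors via $B(v)/N(v) \cong T$. Lemma \ref{le:Kwv-poi} identifies this projection on each shifted big cell with the first projection $BwB/B(v) \times T \to BwB/B(v)$ equipped with the mixed-product structure $\pi_{\sG/\sB(v)} \bowtie_{\mu^\prime} 0$: the $T$-factor carries the zero Poisson structure, and the mixed term $\mu^\prime$ couples the $T$-factor only to $\h$-vector fields on the first factor. Since the left $T$-action on the source is $t_1 \cdot (x, t) = (t_1 \cdot x, t_1^w t)$ and $t_1 \mapsto t_1^w$ is a bijection of $T$, this action sweeps the $T$-factor freely. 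From these facts one concludes that every $T$-leaf of $G/N(v)$ is the full preimage under $\varpi$ of a unique $T$-leaf of $G/B(v)$. A direct check using $B(v) \subset B \cap \ov B \ov^{\,-1}$ gives
\[
\left(BwB \,\cap\, B^- y B \ov^{\,-1}\right) \cdot B(v) \;=\; BwB \,\cap\, B^- y B \ov^{\,-1},
\]
so $\varpi^{-1}(L^{\sG/\sB(v)}_{w, y}) = L^{\sG/\sN(v)}_{w, y}$, completing the proof.

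The main obstacle will be the last step: one must show that the mixed structure $\pi_{\sG/\sB(v)} \bowtie_{\mu^\prime} 0$ together with the twisted left $T$-action creates no ``new'' $T$-leaves beyond those pulled back from $G/B(v)$, which requires explicitly verifying that the motion along the $T$-factor generated by $\mu^\prime$ is absorbed by the left $T$-action. The other essential input, which we take from \cite{Lu-Mou:flags}, is the $T$-leaf description of $(\O^\bfu, \pi_2)$ along with the non-emptiness criterion $y \leq u_1 \ast u_2$.
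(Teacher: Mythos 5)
Your treatment of $Q=B(v)$ is essentially the paper's own proof: you embed $G/B(v)$ into $(F_2,\pi_2)$ via $E_v$ of Lemma \ref{le:Gv-F2}, identify the image with $\bigsqcup_{w\in W}\O^{(w,v)}$ (indeed $E_v|_{BwB/B(v)}=\zeta^{(w,v)}_{\sB(v)}$), quote the $T$-leaf description of $(F_2,\pi_2)$ from \cite{Lu-Mou:flags}, and pull the leaves $\O^{(w,v)}\cap\mu_{\sF_2}^{-1}(B^-yB/B)$, $y\leq w\ast v$, back to $L^{\sG/\sB(v)}_{w,y}$. That half is complete and correct, as is your closing set-theoretic observation that $BwB\cap B^-yB\,\ov^{\,-1}$ is stable under right multiplication by $B(v)$, so that $\varpi^{-1}\bigl(L^{\sG/\sB(v)}_{w,y}\bigr)=L^{\sG/\sN(v)}_{w,y}$.

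For $Q=N(v)$ you also follow the paper's route (restrict to the $T$-invariant Poisson pieces $BwB/N(v)$ and transport the problem through $K^w_v$ of Lemma \ref{le:Kwv-poi}), but the decisive step is left unproved: the claim that the $T$-leaves of $\bigl((BwB/B(v))\times T,\ \pi_{\sG/\sB(v)}\bowtie_{\mu^\prime}0\bigr)$ are exactly the products $L\times T$ with $L$ a $T$-leaf of $(BwB/B(v),\pi_{\sG/\sB(v)})$. You state that "from these facts one concludes" this, and then yourself flag it as the main obstacle; the heuristic that motion along the $T$-factor is "absorbed by the left $T$-action" is not an argument. What has to be checked is two-sided: for a function pulled back from the $T$-factor the mixed term $\mu^\prime$ in \eqref{eq:mu-prime} produces a Hamiltonian vector field lying in $\rho_{\sG/\sB(v)}(\h)$ on the first factor, so one must verify these toral vector fields do not enlarge the leaf beyond $L$ (they are tangent to $T$-leaves but not to symplectic leaves), while for a function on the first factor the Hamiltonian vector field acquires a component along the $T$-factor, and one must see that together with the twisted $T$-action this saturates the full $T$-direction. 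This is precisely the content of \cite[Lemma 2.23]{Lu-Mi:Kostant}, which the paper invokes at exactly this point; citing it, or reproducing its short computation with the explicit form of $\mu^\prime$, is what is needed to close your argument.
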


\begin{proof}
Consider first the case of $Q = B(v)$ and recall from Lemma \ref{le:Gv-F2} the $T$-equivariant
Poisson embedding
\[
E_v:\;\; (G/B(v), \; \pi_{\sG/{\sB(v)}}) \longrightarrow (F_2, \, \pi_2), \;\;\; g_\cdot B(v) \longmapsto [g, \, \ov]_{F_2},
\hs g \in G,
\]
where $T$ acts on $F_2$ by \eqref{eq:T-Fr}.
It follows from the Bruhat decomposition $G = \bigsqcup_{w \in W} BwB$ that the image of $E_v$ is given by
\[
E_v(G/B(v)) = \bigsqcup_{w \in W} \O^{(w, v)} \subset F_2.
\]
The $T$-leaves of $(F_r, \pi_r)$, for any $r \geq 1$, are determined in \cite[Theorem 1.1]{Lu-Mou:flags}. For the case of $r = 2$ at hand,
let
\[
\mu_{\sF_2}: \;\; F_2 \longrightarrow G/B, \;\; [g_1, g_2]_{\sF_2} \longmapsto {g_1g_2}_\cdot B.
\]
By \cite[Theorem 1.1]{Lu-Mou:flags}, the $T$-leaves of $(F_2, \pi_2)$ are precisely the intersections
\[
R^{(w, x)}_y \, \stackrel{\rm def}{=} \, \O^{(w, x)} \cap \mu_{\sF_2}^{-1}(B^-yB/B),
\]
where $w, x, y \in W$ and $y \leq w\ast x$. Thus, for each $w \in W$, $\O^{(w, v)}\subset F_2$ is a union of $T$-leaves $R^{(w, v)}_y$
with $y \leq w \ast v$. It is straightforward to see that
$L_{w, y}^{\sG/\sQ} = E_v^{-1}(R^{(w, v)}_y)$ for all $w, y \in W$ with $y \leq w\ast v$. Thus the $L_{w, y}^{\sG/\sQ}$'s are precisely all the $T$-leaves of
$(G/B(v), \pi_{\sG/\sB(v)})$.

Consider now $Q = N(v)$ and the decomposition $G/N(v) = \bigsqcup_{w \in W} BwB/N(v)$, where note that each $BwB/N(v)$ is a $T$-invariant
Poisson submanifold with respect to $\pi_{\sG/\sN(v)}$. Let $w \in W$ and recall from Lemma \ref{le:Kwv-poi} the
$T$-equivariant Poisson isomorphism
\[
K^w_v: \; (BwB/N(v), \; \pi_{\sG/\sN(v)}) \longrightarrow ((BwB/B(v)) \times T, \; \pi_{\sG/\sB(v)} \bowtie_{\mu^\prime} 0),
\]
where $t_1 \in T$ acts on $(BwB/B(v)) \times T$ by
\[
t_1 \cdot (g_\cdot B(v), \, t) = (t_1 g_\cdot B(v), \; t_1^wt), \hs t_1, \, t \in T, \, g \in G.
\]
By \cite[Lemma 2.23]{Lu-Mi:Kostant}, the $T$-leaves of $((BwB/B(v)) \times T, \; \pi_{\sG/\sB(v)} \bowtie_{\mu^\prime} 0)$ are precisely of
the form $L \times T$, where $L$ is a $T$-leaf of $(BwB/B(v), \pi_{\sG/\sB(v)})$. Applying the $T$-equivariant Poisson isomorphism, one sees that
the $T$-leaves of $(BwB/N(v),  \pi_{\sG/\sN(v)})$ are precisely $L_{w, y}^{\sG/\sN(v)}$, where $y \in W$ and $y \leq w \ast v$.
It follows that $L_{w, y}^{\sG/\sN(v)}$, where $w, y \in W$ and $y \leq w \ast v$, are all the $T$-leaves of $(G/N(v), \pi_{\sG/\sN(v)})$.
\end{proof}

\begin{example}\label{ex:Tleaves-GB}
{\rm
When $v = w_0$, so that $G/N(v) = G$, one has $w\ast w_0 = w_0$ for every $w \in W$, so the condition $y \leq w\ast w_0$ is satisfied
for every $y \in W$, and one has $B^-yB\overline{w_0}^{-1} = B^-yw_0B^-$.
In this case, Theorem \ref{th:D} recovers the well-know result \cite{hodges, reshe-4, Kogan-Z}
that the $T$-leaves (for the $T$-action on $G$ by left translation) of $(G, \pist)$
are precisely all the double Bruhat cells $G^{w, u} = BwB \cap B^-uB^-$, where $w,u \in W$.

When $v = e$, so that $G/B(v) = G/B$, Theorem \ref{th:D} recovers the well-know result from \cite{GY:GP} that the $T$-leaves of
$(G/B, \pi_{\sG/\sB})$ are precisely the open Richardson varieties $(BwB/B) \cap (B^-yB/B)$, where $w, y \in W$ and $y \leq w$.
\hfill $\diamond$
}
\end{example}

The next examples shows that for any $w, v \in W$, the double Bruhat cell $G^{w, v^{-1}}$, as a $T$-leaf of $(G, \pist)$,
 can also be embedding into $G/N(v)$ as a $T$-leaf of $(G/N(v), \pi_{\sG/\sN(v)})$.

\begin{example}\label{ex:Tleaves-G}
{\rm
For an arbitrary $v \in W$, consider the $T$-leaf
\[
L_{w, e}^{\sG/\sN(v)} =\varpi_{\sG/\sN(v)} \left((BwB) \cap B^-B \ov^{\, -1}\right)
\]
of $(G/N(v), \pi_{\sG/\sN(v)})$. Recall from \eqref{eq:BBN} and Lemma \ref{le:nn-v} that every $g \in B^-B\ov^{\, -1}$ is uniquely written
as $g =g_1 n$ with $g_1 \in B^-v^{-1}B^-$ and $n \in N(v)$, and that
we have the embedding
\[
\delta_v: \;\; B^-v^{-1}B^- \longrightarrow G/N(v), \;\; g \longmapsto g_\cdot N(v).
\]
For $w \in W$, denote the restriction of $\delta_v$ to $G^{w, v^{-1}} = BwB \cap B^-v^{-1}B^-$ by
\begin{equation}\label{eq:delta-wv}
\delta_{w, v} = \delta_v|_{\sG^{w, v^{-1}}}:\;\;\; G^{w, v^{-1}} \longrightarrow G/N(v).
\end{equation}
It then follows that the image of $\delta_{w, v}$ is precisely the $T$-leaf $L_{w, e}^{\sG/\sN(v)}$ of $(G/N(v), \pi_{\sG/\sN(v)})$.
As $G^{w, v^{-1}}$ is a $T$-leaf of $(G, \pist)$, we conclude that
\[
\delta_{w, v}:\;\; (G^{w, v^{-1}}, \pist) \stackrel{\sim}{\longrightarrow} (L_{w, e}^{\sG/\sN(v)}, \pi_{\sG/\sN(v)})
\]
is a Poisson isomorphism of $T$-leaves.
\hfill $\diamond$
}
\end{example}

\end{document}